\newcommand{\secref}[1]{\S\ref{#1}}
\theoremstyle{plain}
\newtheorem{theorem}{Theorem}[section]
\newtheorem{lemma}[theorem]{Lemma}
\newtheorem{corollary}[theorem]{Corollary}
\newtheorem{proposition}[theorem]{Proposition}
\newtheorem{conjecture}[theorem]{Conjecture}
\theoremstyle{definition}
\newtheorem{definition}[theorem]{Definition}
\newtheorem{definition-theorem}[theorem]{Definition-Theorem}
\newtheorem{example}[theorem]{Example}
\theoremstyle{remark}
\newtheorem{remark}[theorem]{Remark}
\DeclareFontFamily{OT1}{rsfs}{}
\DeclareFontShape{OT1}{rsfs}{n}{it}{<->rsfs10}{}
\DeclareMathAlphabet{\curly}{OT1}{rsfs}{n}{it}
\newfont{\bfc}{cmbsy10 scaled 1200}  
\newfont{\dr}{msbm10 scaled \magstep1}  
\newfont{\sdr}{msbm8}  
\newfont{\gl}{eufm10 scaled \magstep1}  
\newcommand{\tr}{\operatorname{tr}}
\newcommand{\pr}{p}
\newcommand{\Id}{\operatorname{Id}}
\newcommand{\End}{\operatorname{End}}
\newcommand{\Hom}{\operatorname{Hom}}
\newcommand{\ad}{\operatorname{ad}}
\newcommand{\Ad}{\operatorname{Ad}}
\newcommand{\Aut}{\operatorname{Aut}}
\newcommand{\dbar}{\bar{\partial}}
\newcommand{\imag}{\mathop{{\fam0 {\textbf{i}}}}\nolimits}
\newcommand{\CC}{{\mathbb C}}
\newcommand{\PP}{{\mathbb P}}
\newcommand{\RR}{{\mathbb R}}
\newcommand{\ZZ}{{\mathbb Z}}
\renewcommand{\(}{\left(}
\renewcommand{\)}{\right)}
\newcommand{\vol}{\operatorname{vol}}
\newcommand{\Vol}{\operatorname{Vol}}
\newcommand{\defeq}{\mathrel{\mathop:}=} 
\newcommand{\wt}[1]{\widetilde{#1}}
\newcommand{\surj}{\to\kern-1.8ex\to}
\newcommand{\hra}{\hookrightarrow}
\newcommand{\lto}{\longrightarrow}
\newcommand{\lra}[1]{\stackrel{#1}{\longrightarrow}}
\renewcommand{\implies}{\Rightarrow}
\newcommand{\cA}{\mathcal{A}}
\newcommand{\cE}{\mathcal{E}}
\newcommand{\cI}{\mathcal{I}}
\newcommand{\cJ}{\mathcal{J}}
\newcommand{\cJi}{\mathcal{J}^{i}}
\newcommand{\cK}{\mathcal{K}}
\newcommand{\cKt}{\wt{\cK}}
\newcommand{\cM}{\mathcal{M}}
\newcommand{\cP}{\mathcal{P}}
\newcommand{\cF}{\mathcal{F}}
\newcommand{\cW}{\mathcal{W}}
\newcommand{\cY}{\mathcal{Y}}
\newcommand{\cYt}{\wt{\cY}}
\newcommand{\cG}{\mathcal{G}}
\newcommand{\cL}{\mathcal{L}}
\newcommand{\cO}{\mathcal{O}}
\newcommand{\cR}{\mathcal{R}}
\newcommand{\Lie}{\operatorname{Lie}}
\newcommand{\Tot}{\operatorname{Tot}}
\newcommand{\LieG}{\operatorname{Lie} \cG}
\newcommand{\cX}{\widetilde{\mathcal{G}}}
\newcommand{\LieX}{\operatorname{Lie} \cX}
\newcommand{\cH}{\mathcal{H}} 
\newcommand{\LieH}{\Lie\cH}
\newcommand{\GL}{\operatorname{GL}}
\newcommand{\Diff}{\operatorname{Diff}}
\newcommand{\Hilb}{\textnormal{Hilb}}
\newcommand{\Quot}{\textnormal{Quot}}
\DeclareFontFamily{OT1}{rsfs}{}
\DeclareFontShape{OT1}{rsfs}{n}{it}{<->rsfs10}{}
\DeclareMathAlphabet{\curly}{OT1}{rsfs}{n}{it}
\newcommand{\EEE}{{\curly E}}
\newcommand{\LLL}{{\curly L}}
\newcommand{\XXX}{{\curly X}}
\numberwithin{equation}{section}
\newenvironment{entry}
        {\begin{list}{}%
                {%
                  \setlength{\labelsep}{8mm}%
                  \setlength{\labelwidth}{40mm}%
                  \setlength{\leftmargin}{54mm}%
                  \setlength{\rightmargin}{5pt}%
                }%
        }%
        {\end{list}}
\newlength{\Mylen}
\newcommand{\Lentrylabel}[1]{%
        \settowidth{\Mylen}{\emph{#1}}%
        \ifthenelse{\lengthtest{\Mylen > \labelwidth}}%
                {\parbox[b]{\labelwidth}
                        {\makebox[0pt][l]{\emph{#1}}\\}}%
                {\emph{#1}}
        \hfil\relax}
\newenvironment{Pentry}
        {%
                \begin{entry}}
        {\end{entry}}
\newcommand{\Mentrylabel}[1]%
        {\raisebox{0pt}[1ex][0pt]{\makebox[\labelwidth][l]%
                {\parbox[t]{\labelwidth}{\hspace{0pt}\emph{#1:}}}}}
        {\begin{entry}}%
        {\end{entry}}
\begin{document}

\pagenumbering{roman}

\newpage{\pagestyle{empty}\cleardoublepage}

   \thispagestyle{empty}
   \begin {center}
   \Large{Instituto de Ciencias Matem\'aticas}\\
   \Large{CSIC-UAM-UC3M-UCM}\vspace{0.4cm}\\
   \Large{Universidad Aut\'onoma de Madrid}\\
   \Large{Facultad de Ciencias}\\
   \Large{Departamento de Matem\'aticas}\\
   \end{center}

\vskip 5cm

   \begin{center}
      \Huge
      {\bf{
        Coupled equations \\ for K\"ahler metrics and \\
        Yang-Mills connections
                }}
   \end{center}
\vskip 1cm
\begin{center}
        {\large Mario Garcia-Fernandez}
\end{center}
\vskip 3.5cm

\hfill
\begin{center}
{\large
Thesis submitted in partial fulfillment of the requirements\\
for the degree of\\
Doctor in Mathematics at the Universidad Aut\'onoma de Madrid,\\
supervised by Dr. Luis \'Alvarez C\'onsul and Dr. \'Oscar Garc\'{\i}a Prada
\vskip 1cm
Madrid, September 2009}
\end{center}

\newpage{\pagestyle{empty}\cleardoublepage}

\vspace*{8cm}

\begin{flushright}
{\scshape\ A Andrea y a Andrea}
\end{flushright}

\chapter*{\bf Acknowledgements}

First of all, I would like to thank my advisors Luis \'Alvarez C\'onsul and \'Oscar Garc\'{i}a Prada for giving me the opportunity of doing this thesis and for suggesting this problem, a fascinating adventure throughout geometry. I am very grateful for their advice, teaching, patience and encouragement, and for many discussions and constant help during all these years. I would like to thank Luis \'Alvarez C\'onsul for being always willing to talk about maths and life, and for many blackboards and napkins full of mathematics. Many thanks also for his invaluable help with Chapters \ref{chap:Ceq} and \ref{chap:AlgStability}.

I would like to thank Professor Richard Thomas for his generosity, teaching and suggestions during my visits to the Imperial College. I am very grateful for his help with the thesis and for having explained to me the underlying geometric idea of Theorem \ref{thm:ExistenceCYMeq2}. I would like to thank Professeur Olivier Biquard for suggesting to look at the equations on $\CC^2$ and for his invaluable help with Chapter \ref{chap:C2}. I am very grateful for his advice and teaching during my visit to the Institut de Math\'ematiques de Jussieu.

I would also like to thank:

Julien Keller, for several and very useful discussions, suggestions and comments about the text, and also for his help with Proposition~\ref{propo:ANboundedcMH}. Ignasi Mundet, for his help with the text, several relevant discussions concerning Chapter \ref{chap:analytic} and for his teaching through \cite{MR1}, \cite{MR2} and \cite{MR3} and the Winter School `Geometry and Physics' in Castro Urdiales. Julius Ross, for relevant discussions about Chapter \ref{chap:AlgStability}, for his help with the text and his useful comments. Jacopo Stoppa, for several and very useful discussions during lunch time and tea breaks at the Imperial.

Professor S. K. Donaldson, for relevant discussions and suggestions and his teaching through \cite{D3}, \cite{D1}, \cite{D6}, \cite{D4}, \cite{D8}. Professor N. Hitchin, for relevant discussions and suggestions. Professeur Paul Gauduchon, for his generosity with the notes on toric varieties and extremal metrics, from which I learned many things concerning the cscK problem that I adapted for my problem in Chapter \ref{chap:analytic}.

Marina Logares, for her help with the text, relevant discussions and for giving me the opportunity to talk about this topic in the `Geometry Seminar' of the Universidade do Porto. Vicente Muñoz, for his teaching, his comments and help with the thesis. Rafael Hern\'andez and Jes\'us Gonzalo, my supervisors for the works of DEA in the UAM, for several and very useful discussions and for having made many useful comments. Ignacio Sols Lucia, for his generosity with the notes on Riemannian submersions and ruled surfaces. Tom\'{a}s Luis Gómez, Francisco Presas, Marco Castrillón, Dan Fox and Francisco Gancedo for several relevant discussions.

I am very grateful to Marta Aparicio, Roberto Rubio, Alvaro Ant\'on, Emilio Franco, Ana Pe\'on, Raul Oset, Dionisio Y\'a\~nez and Andr\'e Gama for their constant encouragement, productive comments and for their help along these years.

I thank the Consejo Superior de Investigaciones Cient\'ificas for supporting this research and the Royal Society, especially Professor Peter Newstead, for supporting a one week visit to the Imperial College. I thank the Instituto de Ciencias Matem\'{a}ticas (CSIC-UAM-UC3M-UCM) for its hospitality during these years. I thank Imperial College, Institut de Math\'ematiques de Jussieu and Universidade do Porto for the hospitality during my visits. Thanks also to Adhesivos Kefr\'en S.A. and Dr. Juan Carlos Fern\'andez Garc\'ia for the confidence on this research.

Personally, I would like to thank the people who has contributed in my first steps as a mathematician; to my advisors,
to the people in the `Seminario de F\'isica Matem\'atica' de la UCM and the `Grupo de Trabajo de Geometr\'ia Compleja'
of the ICMAT from whom I have learned a lot during these years. To the people in the `Groupe de travail de g\'eométrie k\"ahl\'erienne torique' during my visit to Jussieu. To the people in the `cscK working group' during my first visit to the Imperial. To my teachers in the Universidad Aut\'onoma de Madrid. To my teachers in the Universidad de Valencia, especially to Rafa\'el Sivera and Francisca Mascar\'o for $\pi_1$, Carmen Romero for $H^*$, Jose Mar\'ia Maz\'on and Fuensanta Andreu for $\Delta$, and Olga Gil and Antonio M. Naveira for $\nabla$. To Antonio P\'erez and my brother Ignacio, for showing me up first this wonderful world of mathematics.

\vspace{0.2cm}

A mi familia y amigos, especialmente a mis padres por su apoyo constante, ahora y siempre, y a mi hermano y a Marta por ayudarme tanto todos estos años.

\vspace{0.2cm}

Y muy en especial a Andrea, por estar a mi lado estos \'ultimos $17$ meses y ayudarme a cerrar esta etapa de mi vida, y por su valent\'ia al empezar una nueva juntos . . .

\tableofcontents

\pagenumbering{arabic}

\chapter*{\bf Introduction}

This thesis is concerned with coupled equations for a K\"ahler metric on a compact complex manifold and a connection on a holomorphic principal bundle over it. In the framework of K\"ahler geometry it is natural to consider equations with symplectic interpretation, appearing as the vanishing of a moment map. Many partial differential equations in geometry and mathematical physics appear in this way. Fundamental examples include the Hermite--Yang--Mills equation for connections and the constant scalar curvature equation for K\"ahler metrics. The moment map interpretation of these two equations provides a conceptual framework and a guideline to understand many phenomena in Yang-Mills theory and K\"ahler geometry. Our equations have a moment map interpretation, that intertwines the scalar curvature of a K\"ahler metric with the curvature of a Hermite--Yang--Mills connection. The coupling occurs as a direct consequence of the structure of the group of symmetries. We expect that the symplectic reduction process will provide moduli spaces of solutions endowed with a K\"ahler structure.

A fundamental result in complex geometry is the Hitchin--Kobayashi correspondence~\cite{D3,UY,RS}. It relates the existence of solutions to the Hermite--Yang--Mills (HYM) equation for a connection on a holomorphic principal bundle over a compact K\"ahler manifold with the Mumford stability of the bundle. As pointed out first by Atiyah and Bott for the case of Riemann surfaces \cite{AB} and generalised by S.K. Donaldson to arbitrary compact K\"ahler manifolds, solutions to the HYM equation arise as points of a symplectic reduction for the action of the gauge group $\cG$ on the space of connections. This moment map interpretation, leads one to think of the Hitchin--Kobayashi correspondence as an infinite dimensional version of the Kempf-Ness Theorem relating symplectic reduction with GIT quotients. A similar result for constant scalar curvature K\"ahler (cscK) metrics on a compact complex manifold has been conjectured by S.T. Yau, G. Tian and S.K. Donaldson. As observed by Fujiki \cite{Fj} and Donaldson~\cite{D6}, the cscK equation has a moment map interpretation where the group of symmetries is the group $\cH$ of Hamiltonian symplectomorphisms of a fixed symplectic structure. This gives a fundamental evidence of the \emph{Yau--Tian--Donaldson conjecture}, that relates the existence of solutions of the cscK equation with the stability of the complex structure on the manifold. The notion of K-stability, introduced by G. Tian in \cite{T2} and generalised by S.K. Donaldson in \cite{D4}, plays a key role in this context. The search of such a correspondence for the cscK equation is, today, one of the central problems in complex and algebraic geometry. In \secref{sec:cscK} we give a brief survey of the state of the art of the cscK problem, centering attention on those aspects relevant to the contents of this thesis. General overviews of the Kempf-Ness Theorem and the Hitchin-Kobayashi correspondence for the HYM equation are provided in \secref{sec:KNtheorem} and \secref{sec:HYM}.

The moment map interpretations of the cscK equation and the HYM equation provide a guide that we follow, in order to define our system of equations for K\"ahler metrics and connections. The moment map interpretation for the cscK equation arises when we fix a symplectic form $\omega$ on a smooth compact K\"ahlerian manifold $X$, i.e. a manifold that admits a K\"ahler structure. Let $G$ be a real compact Lie group and $E$ a smooth principal $G$-bundle over $X$. The phase space for our problem is provided by a suitable subspace of the product
\begin{equation}\label{eq:pairsinvacs0}
\cP \subset \cJ \times \cA,
\end{equation}
where $\cJ$ is the space of $\omega$-compatible complex structures on the base $X$ and $\cA$ is the space of connections on the bundle $E$. Let $G^c$ be the complexification of the group $G$. The space $\cP$ parameterizes pairs consisting of a K\"{a}hler structure on the base $X$, with K\"{a}hler form $\omega$, together with a structure of holomorphic principal bundle on the associated principal $G^c$-bundle
$$
E^c = E \times_G G^c
$$
over the corresponding complex manifold. At this point there are mainly two issues that will lead us to a `good' system of equations: the identification of the group of symmetries and the choice of a symplectic form on the space of parameters. For the first issue, let us consider the group $\Aut(E)$ of $G$-equivariant diffeomorphisms of $\Tot(E)$, the total space of the bundle $E$. Note that any element of $\Aut(E)$ covers a unique diffeomorphism on the base. The group of symmetries is taken to be the subgroup $\cX \subset \Aut(E)$ covering $\cH$, the group Hamiltonian symplectomorphisms of $(X,\omega)$. The \emph{extended gauge group} $\cX$ is an extension of $\cH$ by the gauge group $\cG$ of the bundle $E$ (see~\secref{sec:Ceqcoupling-term}), i.e. there is an extension of groups
$$
1 \to \cG \to \cX \to \cH \to 1.
$$
Recall that the groups $\cG$ and $\cH$ are, respectively, the symmetry groups for the HYM problem and the cscK problem. For the choice of symplectic form we rely on the symplectic forms used in these problems. Each factor in the product \eqref{eq:pairsinvacs0} has a natural symplectic structure provided, on $\cJ$ and on $\cA$, by the Fujiki--Donaldson symplectic structure $\omega_\cJ$ \cite{Fj,D1} and by the Atiyah--Bott symplectic structure $\omega_\cA$ \cite{AB}. Our choice of symplectic structure on $\cP$ is the simplest possible: we take a pair of non-zero coupling constants $\alpha = (\alpha_0,\alpha_1) \in \RR^2$ and consider the $2$-form
\begin{equation}\label{eq:Sympfamily0}
\omega_\alpha = \alpha_0 \cdot \omega_\cJ + \frac{4\alpha_1}{(n-1)!} \cdot \omega_\cA,
\end{equation}
where $2n$ is the real dimension of the manifold $X$. The previous normalization of $\alpha_1$ is convenient to simplify the shape of the equations. When $\frac{\alpha_1}{\alpha_0} > 0$, the symplectic form $\omega_\alpha$ is K\"{a}hler with respect to the structure of complex manifold on $\cP$ (see~\eqref{eq:complexstructureI} and Proposition~\ref{prop:CeqintegrableI}). Thus, a moduli construction using the symplectic reduction process on $(\cP,\omega_\alpha)$ will lead to a K\"ahler moduli space. The remaining ingredient is the fact that the extended gauge group $\cX$ acts on the space $\cP$ by holomorphic isometries and there exists a $\cX$-equivariant moment map for any value of the coupling constants (see~Proposition~\ref{prop:momentmap-pairs})
$$
\mu_\alpha\colon \cP \to \LieX^*,
$$
where $\LieX$ is the Lie algebra of $\cX$. Our coupled system of equations arise as the zero locus of this moment map. A priori it may seem strange that a single moment map gives rise to a system of equations. The reason for this fact is that any pair $(J,A)\in\cP$ determines a vector space splitting $\LieX^* \cong \LieG^* \oplus \LieH^*$ provided by the connection $A$, and we use this to split in two terms the condition
$$
\mu_\alpha(J,A) = 0.
$$
The details of the previous construction are provided in Chapter \ref{chap:Ceq}.

Let us now introduce the coupled equations that arise when we split the condition above. The unknown variables are a K\"ahler structure on the base $X$ and a connection on the bundle. Let $(g,\omega,J)$ be a K\"ahler structure on the smooth compact manifold $X$, where $g$, $\omega$ and $J$ are respectively the metric, the symplectic form and the
complex structure. We denote by $\mathfrak{g}$ the Lie algebra of the real compact group $G$. Recall that $E$ is a smooth principal $G$-bundle over $X$. We fix a positive definite inner product $(\cdot,\cdot)$ on $\mathfrak{g}$ invariant under the adjoint representation. Considering the space $\Omega^k(\ad E)$ of smooth $k$-forms on $X$ with values in the adjoint bundle $\ad E$, such an inner product induces a pairing
$$
\Omega^p(\ad E) \times \Omega^q(\ad E) \to \Omega^{p+q},
$$
that we write as $(a^p \wedge a^q)$ for any~$a^j \in
\Omega^j(\ad E)$, $j = p,q$. We will
say that a K\"{a}hler structure $(g,\omega,J)$ on $X$ and a connection $A$ on $E$ satisfy the \emph{coupled
equations} with coupling constants
$\alpha_0, \alpha_1 \in \RR$ if
\begin{equation}\label{eq:CYMeq00}
\left. \begin{array}{l}
\Lambda F_A = z\\
\alpha_0 S_g \; + \; \alpha_1 \Lambda^2 (F_A \wedge F_A) = c
\end{array}\right \}.
\end{equation}
Here $S_g$ is the scalar curvature of the K\"{a}hler metric, $F_A$ is the curvature of the connection, $z$
is an element in the center $\mathfrak{z}$ of the Lie algebra $\mathfrak{g}$ and $c \in \RR$ is a
topological constant. The map $\Lambda\colon \Omega^{p,q}(\ad E) \to \Omega^{p-1,q-1}(\ad E)$ is the contraction
operator acting on $(p,q)$-type valued forms determined by the K\"{a}hler structure. The link with complex geometry is provided by the additional compatibility condition
\begin{equation}\label{eq:integrabilityconnection}
F_A^{0,2} = 0,
\end{equation}
between the complex structure $J$ on the base and the connection $A$. Here $F_A^{0,2}$ denotes the $(0,2)$ part of the curvature, regarded as an $\ad E$ valued smooth form on $X$. Let $G^c$ be the complexification of the group $G$. When \eqref{eq:integrabilityconnection} holds, the pair $(J,A)$ endows the associated principal $G^c$-bundle $E^c = E \times_G G^c$ with a structure of holomorphic principal bundle over the K\"{a}hler manifold $X$. In the language of the moment map interpretation of the coupled equations \eqref{eq:CYMeq00} already explained, the compatibility condition \eqref{eq:integrabilityconnection} is encoded in the definition of the space $\cP$ \eqref{eq:pairsinvacs0}.

Similar coupled equations, for pseudo-Riemannian metrics and connections on smooth bundles, have been studied for a long time by physicists in the context of unification theory. In this context, the K\"ahler structure and the connection can be thought of, respectively, as a gravitational field and a gauge field whose interaction is governed by the system of equations. The coupling of these two variables, provided by the equations, reflects their mutual interaction. It is natural to ask if the coupled equations have a variational interpretation, i.e. if their solutions arise as critical points or absolute minima of a suitable action, since many equations in physics follow this variational principle. A fundamental example is provided by the Einstein--Yang--Mills (EYM) equations \cite{BM}, for which the action is obtained by adding the Yang--Mills functional, considered as a functional in both the metric and the connection, to the Hilbert--Einstein action. The coupled equations \eqref{eq:CYMeq00} also have a variational interpretation as the absolute minima of a suitable functional that we define in~\secref{sec:CeqscalarKal-K}. This functional is purely Riemannian, i.e. it can be considered as a functional for a Riemannian metric $g$ on the smooth compact manifold $X$ and for a connection $A$ on the $G$-bundle $E$. Given such a pair $(g,A)$, the functional is defined by
\begin{equation}\label{eq:CYMfunctional00}
CYM(g,A) = \int_X (\alpha_0 S_g - 2\alpha_1 |F_A|^2)^2 \cdot \vol_g \; + \;  2\alpha_1 \cdot \|F_A\|^2,
\end{equation}
where $\vol_g$ is the volume form of the metric $g$ and the coupling constants $\alpha_0$ and $\alpha_1$ are as in \eqref{eq:CYMeq00}. The term $\|F_A\|^2$ is the Yang--Mills functional, considered as a functional in both the metric and the connection, and $|F_A|^2 \in C^{\infty}(X)$ is the corresponding density. In Proposition~\ref{propo:Variational} we prove that, when we restrict the functional to K\"ahler metrics $g$ compatible with a fixed symplectic structure, the solutions to \eqref{eq:CYMeq00} satisfying the compatibility condition \eqref{eq:integrabilityconnection} arise as absolute minima (after suitable re-scaling of the coupling constants). In order to understand this functional better, in \secref{sec:CeqscalarKal-K} we think of the data $(g,A)$ as defining a $G$-invariant metric $\hat g$ on $\Tot(E)$. This interpretation shows that the first summand in the definition of the functional \eqref{eq:CYMeq00} can be though of, up to scaling, as the $L^2$ norm of the scalar curvature of a suitable $G$-invariant metric $\hat g$ on $E$ associated to $(g,A)$. To construct the metric $\hat g$ we have to assume the positivity of the ratio of the coupling constants
\begin{equation}\label{eq:introratio}
\frac{\alpha_1}{\alpha_0} > 0.
\end{equation}
This ratio measures, roughly speaking, the volume of the fibers of the bundle equipped with $\hat g$ with respect to the projection $E \to X$. An unexpected fact here is that \eqref{eq:introratio} is precisely the condition for the symplectic form $\omega_\alpha$ \eqref{eq:Sympfamily0} on $\cP$ \eqref{eq:pairsinvacs0} to be K\"ahler. Thus, one can expect that a moduli construction for such metrics using the symplectic reduction process leads to a K\"ahler moduli space.

This positivity of the ratio \eqref{eq:introratio} is also a necessary assumption for the main result in \secref{chap:analytic} (Theorem~\ref{thm:ExistenceCYMeq2}). In this chapter, we fix a holomorphic principal $G^c$ bundle $(E^c,I)$ over a compact complex manifold $(X,J)$, where $I$ denotes the holomorphic structure on $E^c$. We consider the space
$$
\cKt = \cK \times \cR,
$$
where $\cK$ and $\cR$ denote, respectively, the space of K\"ahler metrics in a fixed K\"ahler class and the space of smooth $G$-reductions $\cR$ on $E^c$. The \emph{coupled equations} are then equations  for a pair $(\omega,H) \in \cKt$, where the role of the connection in \eqref{eq:CYMeq00} is played by the associated Chern connection. In \secref{sec:ANgeneral-framework} we discuss the general framework provided by the Kempf-Ness theorem where our problem fits and establish the link with the moment map interpretation in~\secref{sec:Ceqcoupled-equations}. The formalism introduced provides the preliminary material necessary for the rest of this chapter in a way that is completely independent of the choice of the symplectic form in \eqref{eq:pairsinvacs0}. Using the previous formalism, in \secref{sec:ANdeformingsolutions} we prove an existence criterion for the existence of solutions to the coupled system \eqref{eq:CYMeq00} for small values of the ratio of the coupling constants $\frac{\alpha_1}{\alpha_0} > 0$. This criterion is formulated in terms of a real subgroup $\cX_I$ of the automorphism group $\Aut(E^c,I)$ of the holomorphic bundle. This lead us to the following general result (see Theorem~\ref{thm:ExistenceCYMeq2}) that we apply in \secref{sec:example1} to provide non trivial examples to the existence of solutions.
\begin{theorem}\label{thm:ExistenceCYMeq00}
Let $(X,L)$ be a compact polarised manifold, $G^c$ be a complex reductive Lie group and $E^c$ be a holomorphic
$G^c$-bundle over $X$. If there
exists a cscK metric $\omega \in c_1(L)$, $X$ has finite automorphism group and $E^c$ is stable with respect to $L$
then, given a pair of positive
real constants $\alpha_0, \alpha_1 > 0$ with small ratio $0 < \frac{\alpha_1}{\alpha_0} \ll 1$, there exists a solution
$(\omega_\alpha,H_\alpha)$ to
\eqref{eq:CYMeq00} with these coupling constants and $\omega_\alpha \in c_1(L)$.
\end{theorem}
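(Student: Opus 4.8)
The plan is to deduce the statement from the general deformation result of \secref{sec:ANdeformingsolutions} (Theorem~\ref{thm:ExistenceCYMeq2}): that result produces, for $0 < \frac{\alpha_1}{\alpha_0} \ll 1$, a solution of the coupled system \eqref{eq:CYMeq00} by perturbing a solution of the decoupled system (the limit $\alpha_1 = 0$), under the hypothesis that a certain real subgroup $\cX_I \subset \Aut(E^c,I)$ — the symmetries of the holomorphic pair that survive in the Kempf--Ness picture of \secref{sec:ANgeneral-framework} — is as small as possible. Accordingly the argument has two steps: first, exhibit a solution of the decoupled equations; second, verify the algebraic hypothesis on $\cX_I$ that forces the linearisation of the coupled operator to be surjective, so that the implicit function theorem applies with $\frac{\alpha_1}{\alpha_0}$ as deformation parameter.

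For the first step, when $\alpha_1 = 0$ the system \eqref{eq:CYMeq00} together with \eqref{eq:integrabilityconnection} decouples into the constant scalar curvature equation $\alpha_0 S_g = c$ for a K\"ahler metric in $c_1(L)$ and the Hermite--Yang--Mills equation $\Lambda F_A = z$ for a connection on $E$. A cscK metric $\omega \in c_1(L)$ exists by hypothesis; and since $E^c$ is stable with respect to $L$, the Hitchin--Kobayashi correspondence for holomorphic principal bundles \cite{RS,UY} provides a Hermite--Einstein metric $H_0$ on $E^c$, whose Chern connection $A_0$ relative to $\omega$ satisfies $\Lambda F_{A_0} = z$ and $F_{A_0}^{0,2} = 0$. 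Hence $(\omega,H_0) \in \cKt$ solves the coupled equations with $\alpha_1 = 0$, and this is the pair to be deformed.

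For the second step, in the moment map formalism of \secref{sec:ANgeneral-framework} the obstruction to the deformation is identified with (the dual of) the Lie algebra of the stabiliser of $(\omega,H_0)$ inside $\cX_I$, and here both hypotheses enter. Because $X$ has finite automorphism group there are no nonzero holomorphic vector fields on $X$, so the Lichnerowicz operator governing the cscK factor has no kernel beyond constants, and moreover the identity component of $\cX_I$ covers the identity of $X$; equivalently $\Lie\cX_I$ consists of infinitesimal automorphisms of $(E^c,I)$ covering the zero vector field. Stability of $E^c$ forces these to be the central ones, so $\Lie\cX_I \cong \mathfrak{z}$, the centre of $\mathfrak{g}$. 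This residual central piece acts by isometries and is precisely absorbed by the freedom in the constant $z \in \mathfrak{z}$ on the right-hand side of the HYM equation, so it does not obstruct the deformation; equivalently the $\dbar_{A_0}$-Laplacian on $\ad E^c$ is invertible on the orthogonal complement of $\mathfrak{z}$. Combining this with the invertibility of the Lichnerowicz operator yields surjectivity of the linearised coupled operator onto the relevant slice. I expect this to be the delicate point: one must run the implicit function theorem in the genuinely coupled setting, controlling the first-order interaction terms between the metric and the connection that appear in the linearisation, and checking that the two separate invertibility statements glue into a single invertibility statement for the full linearisation at $(\omega,H_0)$.

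Granting this, Theorem~\ref{thm:ExistenceCYMeq2} applies and produces, for each sufficiently small $\frac{\alpha_1}{\alpha_0} > 0$, a solution $(\omega_\alpha,H_\alpha)$ of \eqref{eq:CYMeq00} with $\omega_\alpha$ a small perturbation of $\omega$ inside $c_1(L)$ and $H_\alpha$ a small perturbation of $H_0$; this is the solution asserted in the statement, and \secref{sec:example1} exhibits concrete triples $(X,L,E^c)$ satisfying all three hypotheses.
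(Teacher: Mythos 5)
Your proposal is correct and follows essentially the same route as the paper: the cscK metric together with the Hitchin--Kobayashi correspondence supplies the solution at $\alpha_1=0$, finiteness of $\Aut(X)$ plus stability of $E^c$ reduce $\Lie\cX_I$ to the centre $\mathfrak{z}$, and the deformation is then carried out by the implicit function theorem exactly as in Proposition~\ref{thm:ExistenceCYMeq}\,(ii), with the central directions absorbed by the freedom in $z$. The only slip is one of labelling: the general deformation result you invoke is Proposition~\ref{thm:ExistenceCYMeq}, not Theorem~\ref{thm:ExistenceCYMeq2}, which is the statement being proved.
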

In the following sections we consider the coupled equations \eqref{eq:CYMeq00} with fixed positive coupling constants $\alpha_0$, $\alpha_1 > 0$. In \secref{sec:ANIntegralinvariants}, we give an obstruction coming from a character
$$
\cF_\alpha \colon \Lie\Aut (E^c,I) \to \CC
$$
that generalizes the usual Futaki invariant for the cscK problem \cite{Ft0} and also other invariants defined by A. Futaki in \cite{Ft1}. This character can be considered, roughly speaking, as a $\CC$-linear extension of the moment $\mu_\alpha$ whose zero locus correspond to the solutions to the coupled equations. In \secref{sec:ANcoupledintmmap} we construct the integral of the moment map
$$
\cM_\kappa \colon \cKt \to \RR
$$
and determine a sufficient condition for its convexity and existence of lower bounds in terms of the existence of suitable smooth curves on $\cKt$ joining any two points. These curves play a role analogue of the one played by the geodesics in $\cK$ for the cscK problem. In \secref{sec:ANcomplexsurfaces} we restrict to the case in which the base is a compact complex surface. Given a pair $(\omega,H) \in \cKt$ we consider the functional $\cM_H = \cM_\kappa(\cdot,H)\colon \cK \to \RR$ and prove convexity properties of $\cM_H$ along smooth geodesics on $\cK$. We use this to prove a uniqueness result for the coupled equations \eqref{eq:CYMeq00} in the case of toric complex surfaces (see Proposition~\ref{propo:ANtoric}). We obtain lower bounds of $\cM_H$ supposing the existence of solutions of \eqref{eq:CYMeq00} (see Proposition~\ref{propo:ANboundedcMH}), when $c_1(X) \leq 0$. For the proof we adapt the argument of X. X. Chen in \cite{Ch1} concerning the Mabuchi K-energy and $\epsilon$-\emph{approximate geodesics}. We extend the previous result for K\"ahler surfaces with nonnegative bisectional curvature, making use of the weak solutions of the geodesic equation in $\cK$ found by X. X. Chen and G. Tian in \cite{ChT} and adapting their argument concerning the Mabuchi K-energy (see Proposition~\ref{propo:ANboundedcMH}).

In \secref{chap:AlgStability} we define an algebraic notion of stability for triples $(X,L,E)$ (see Definition \ref{def:extKstable}) consisting of a smooth projective variety $X$, an ample line bundle $L$ and a holomorphic vector bundle $E$ over $X$ of rank $r$ and zero degree. Our stability notion depends on a positive real parameter $\alpha$ and relies on Donaldson's definition of $K$-stability for polarized varieties (see~\secref{subsec:cscKalgKstab}). As in \cite{D4}, it is formulated in terms of flat degenerations of the triple over $\CC$, called \emph{test configurations}. The central fiber of each test configuration is endowed with a $\CC^*$-action and the stability of the triple, that we call $\alpha$-K-stability, is measured in terms of the non negativity of a suitable weight $F_\alpha$ (see \eqref{eq:extendedfutaki}) associated to this action. In \secref{sec:ASTheconjecture} we make the following conjecture (see~\ref{conjecture:alphaKpolystab}), where the constant $\alpha = \frac{r\pi^2\alpha_1}{\alpha_0}$ is a normalized ratio of the coupling constants in \eqref{eq:CYMeq00}.
\begin{conjecture}\label{conjecture:alphaKpolystab0}
If there exists a K\"{a}hler metric $\omega \in c_1(L)$ and a Hermitian metric $H$ on $E$ satisfying the coupled equations \eqref{eq:CYMeq00} with coupling constants $\alpha_0,\alpha_1$ then the triple $(X,L,E)$ is $\alpha$-K-polystable.
\end{conjecture}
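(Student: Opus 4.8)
The plan is to establish, for the coupled equations \eqref{eq:CYMeq00}, the ``easy'' direction of a Hitchin--Kobayashi/Yau--Tian--Donaldson type correspondence, in exact analogy with the way the existence of a cscK metric forces $K$-semistability, but now in the coupled setting governed by the moment map $\mu_\alpha$ of Chapter~\ref{chap:Ceq}; it would also be the converse of Theorem~\ref{thm:ExistenceCYMeq00}, but valid for \emph{all} positive ratios and producing polystability. Concretely, one must show that for every test configuration $(\mathcal{X},\mathcal{L},\mathcal{E})$ for the triple $(X,L,E)$ in the sense of Definition~\ref{def:extKstable} the weight $F_\alpha$ of \eqref{eq:extendedfutaki} attached to the induced $\CC^*$-action on the central fibre satisfies $F_\alpha \geq 0$, with equality only for product configurations. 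First I would reduce this to a statement about the asymptotics of the integral of the moment map $\cM_\kappa\colon\cKt\to\RR$ of \secref{sec:ANcoupledintmmap}: recall that $\cM_\kappa$ is convex along smooth geodesics on $\cKt$ and that a pair $(\omega,H)$ solving \eqref{eq:CYMeq00} is a critical point; hence, granting enough convexity, such a solution forces $\cM_\kappa$ to be bounded below along any ray out of $(\omega,H)$. The core of the argument is then the identification
\[
\lim_{s\to\infty}\frac{d}{ds}\,\cM_\kappa(\omega_s,H_s)\;=\;c\cdot F_\alpha(\mathcal{X},\mathcal{L},\mathcal{E}),\qquad c=c(\alpha,n,r)>0,
\]
for a suitable ray $(\omega_s,H_s)\subset\cKt$ attached to the test configuration, after which $F_\alpha\geq 0$ is immediate; the normalization $\alpha=\tfrac{r\pi^2\alpha_1}{\alpha_0}$ is exactly the one that makes the constant $c$ come out naturally.

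To build the ray I would, following the pattern of \cite{D4} and its refinements, resolve the total space of $\mathcal{X}$ so that it dominates $X\times\CC$, choose a relatively K\"ahler form in the class $c_1(\mathcal{L})$ together with a Hermitian metric on $\mathcal{E}$, and let the $S^1$-part of the $\CC^*$-action sweep out a path $(\omega_s,H_s)$ of K\"ahler metrics in $c_1(L)$ and $G$-reductions on $E$, the Chern connection of $H_s$ playing the role of $A$ in \eqref{eq:CYMeq00}. The slope computation then splits into two pieces matching the two lines of \eqref{eq:CYMeq00}: the scalar-curvature term reproduces Donaldson's expansion of the $K$-energy along a test configuration (an equivariant Riemann--Roch/localization computation yielding the Donaldson--Futaki weight), while the coupling term $\alpha_1\Lambda^2(F_A\wedge F_A)$ contributes a term expressed through the equivariant Chern character of $\mathcal{E}$ on the central fibre; summing and comparing with \eqref{eq:extendedfutaki} and with the character $\cF_\alpha$ of \secref{sec:ANIntegralinvariants} identifies the two slopes and the positive constant $c$ under the standing hypothesis $\tfrac{\alpha_1}{\alpha_0}>0$. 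Since $\cF_\alpha$ is, morally, the $\CC$-linear extension of $\mu_\alpha$, this also pins down the equality case: if $F_\alpha=0$ then $\cM_\kappa$ is asymptotically affine and bounded along the ray, and invoking a uniqueness theorem for \eqref{eq:CYMeq00} together with the vanishing of $\cF_\alpha$ on the generator forces the ray to be generated by a one-parameter subgroup of $\Aut(E^c,I)$ preserving the solution, i.e. the test configuration is a product.

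The hardest parts, and the reason this remains a conjecture rather than a theorem, are essentially two. First, $\cKt$ carries no usable smooth geodesics between arbitrary points, so the convexity and lower-bound input must be replaced by the weak and $\epsilon$-approximate geodesic technology of X.~X. Chen \cite{Ch1} and Chen--Tian \cite{ChT}, and one has to push the asymptotic slope estimate through this reduced regularity while controlling the lower-order Monge--Amp\`ere-type coupling term $\Lambda^2(F_A\wedge F_A)$ coupled to the bundle --- precisely the analytic obstacle that confines the unconditional results of \secref{sec:ANcoupledintmmap} to surfaces and to $c_1(X)\leq 0$ (Proposition~\ref{propo:ANboundedcMH}), and that forces the uniqueness input for the equality case to be presently available only in special cases such as Proposition~\ref{propo:ANtoric}. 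Second, for \emph{singular} test configurations one needs an $S^1$-invariant extension of both the metric and the reduction across the central fibre together with a matching of the analytic slope with the purely algebraic weight $F_\alpha$, in the style of Ross--Thomas. A reasonable intermediate target, parallel to the cscK story, is therefore to first prove $\alpha$-K-semistability ($F_\alpha\geq 0$) for test configurations with smooth central fibre, where the ray can be taken smooth and the Riemann--Roch expansion is clean, and only afterwards tackle singular configurations and the polystability refinement.
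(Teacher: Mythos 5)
The statement you are asked to prove is stated in the paper as Conjecture~\ref{conjecture:alphaKpolystab0} (= Conjecture~\ref{conjecture:alphaKpolystab}); the paper does \emph{not} prove it, and indeed emphasises that it is open even for $\dim_\CC X=1$, where it reduces to showing that Mumford--Takemoto stability of $E$ implies $\alpha$-K-stability of the triple. What the paper actually establishes is only partial evidence: the vanishing $F_\alpha=0$ on product configurations with smooth central fibre (Corollary~\ref{cor:ASVanishingalphainvariant}, via Proposition~\ref{prop:extendedalgebraicfutaki} and the character $\cF_\alpha$), the inequality $F_\alpha\geq 0$ on base-preserving configurations via the Hitchin--Kobayashi correspondence (Corollary~\ref{cor:ASnonnegativealphainvariant}), and the reduction to K-stability of $(X,L)$ at $\alpha=0$ (Proposition~\ref{prop:stab-alpha=0}). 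Your text is likewise a strategy rather than a proof, and you say so yourself; so the honest verdict is that you have not proved the statement, but neither does the paper, and no reviewer should pretend otherwise.

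As a strategy, what you propose is coherent and its verifiable pieces match what the paper can actually do: the convexity of $\cM_\kappa$ and the lower-bound/uniqueness statements are exactly Propositions~\ref{propo:ANvirtualoneparameter} and~\ref{propo:ANuniqueness}, but they are \emph{conditional} on the existence of smooth solutions of the generalized geodesic equation \eqref{eq:virtualoneparameter}, which is unknown except for the trivial solutions of Example~\ref{example:Ceqgeodesic} and, in weakened form, for complex surfaces under curvature hypotheses (Propositions~\ref{propo:ANtoric} and~\ref{propo:ANboundedcMH}); and the slope identity you place at the core is only proved in the paper as the pointwise identity $F_{k\alpha}=\frac{-1}{4\vol(X)\alpha_0}\cF_\alpha(\zeta)$ for configurations with \emph{smooth} central fibre, not as an asymptotic derivative of $\cM_\kappa$ along a degenerating ray attached to a general (singular) test configuration. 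Establishing that asymptotic identity through the low regularity of weak geodesics, and supplying the uniqueness input needed for the equality (polystability) case, are precisely the missing steps; be careful also with the sign and orientation conventions relating the generator $\zeta$ of the $S^1$-action, the direction $t\to 0$ in the test configuration, and the parameter $s\to\infty$ along the ray, since the paper's own normalisation carries a minus sign. Note finally that the route the paper itself flags as most promising is different from yours: not the analytic geodesic-ray argument, but a quantization argument in the style of Donaldson~\cite{D8} or Mabuchi~\cite{Mab2}, realising $\alpha$-K-stability as an asymptotic Chow-type stability in the relative Quot scheme of \secref{sec:ASTconfigoneparameter}. Your approach, if it could be pushed through, would be more directly tied to the moment map picture; the quantization approach trades the geodesic regularity problem for finite-dimensional GIT estimates, which is why the paper prefers it.
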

The rest of this chapter is devoted to give evidence that the above conjecture holds. In \secref{sec:ASDiffeformalphainvariant}, Proposition \ref{prop:extendedalgebraicfutaki}, we prove that the weight $F_\alpha$ satisfies
$$
F_{\alpha} = \frac{-1}{4 \vol(X) \alpha_0}\cF_{\alpha}(\zeta)
$$
for degenerations with smooth cental fiber, where $\cF_\alpha$ is the character defined in \secref{sec:ANIntegralinvariants} and $\zeta$ is the generator of the $S^1 \subset \CC^*$ action on the central fiber. Thus, up to multiplicative constant factor, $F_\alpha$ equals the value of the moment map $\mu_\alpha$ \eqref{eq:thm-muX} at $\zeta$. Recall that the zero locus of $\mu_\alpha$ corresponds to the solutions of the coupled equations. Once more this fits in the general picture of the Kempf-Ness Theorem explained in \secref{sec:KNtheorem} if we view a test configuration as a $1$-parameter subgroup (see \secref{sec:ASTconfigoneparameter}) and the associated $\alpha$-invariant as the corresponding Mumford weight (see \secref{subsec:KNproof} and cf. \eqref{eq:weightmmap1}). This fact motivates the definition of $\alpha$-K-stability (see~Definition~\ref{def:extKstable}) and leads to think that this stability notion should play an important role in the existence problem for the coupled equations \eqref{eq:CYMeq2}. Moreover, this fact gives the following more direct evidence.

\emph{$1)$ If there exists a solution to the coupled equations \eqref{eq:CYMeq00} with positive coupling constants $\alpha_0$, $\alpha_1 \in \RR$ then $F_\alpha = 0$ for any product test configuration (see Corollary~\ref{cor:ASVanishingalphainvariant}).}

In \secref{sec:ASTconfigoneparameter} we interpret test configurations for a triple as $1$-parameter subgroups of suitable product group $G_k$ acting on a relative Quot scheme parameterizing triples $(X,L,E)$. This gives the following evidence.

\emph{$2)$ In the limit case $\alpha = 0$ the notion of $\alpha$-K-stability is equivalent to the notion of algebraic K-stability of the underlying polarized manifold (see Proposition~\ref{prop:stab-alpha=0}).}

In \secref{sec:ASBptestconfig} we characterize the notion of Mumford--Takemoto stability of the bundle in terms of \emph{base preserving test configurations} (see Proposition~\ref{prop:Bptestconfig}). This leads to the following evidence.

\emph{$3)$ If there exists a solution to the coupled equations \eqref{eq:CYMeq00} with positive coupling constants $\alpha_0$, $\alpha_1 \in \RR$, then $F_\alpha \geq 0$ for any base preserving test configuration, with equality only if the test configuration is a product (see Corollary~\ref{cor:ASnonnegativealphainvariant}).}

We hasten to add that Conjecture \ref{conjecture:alphaKpolystab0} is difficult to prove even in complex dimension $1$. Note that in this case the coupled equation are a system in separated variables independent of the coupling constants. In fact, they reduce to the HYM equation for the bundle due to the uniformisation theorem, that asserts that any Riemann surface has a unique cscK metric if we prescribe the total area. When $\dim_\CC X = 1$, Conjecture~\ref{conjecture:alphaKpolystab0} reduces to prove that Mumford--Takemoto stability of the bundle implies $\alpha$-K-stability of the triple for any $\alpha > 0$, due to the Hitchin--Kobayashi correspondence. The algebro-geometric approach to the proof of this fact seems to be difficult in the light of previous work by R. Thomas and J. Ross in \cite{RRT2}, where the algebraic K-stability of Riemann surfaces is analysed with purely algebro-geometric methods. A more likely approach, valid for arbitrary dimension of $X$, seems to be provided by Donaldson's methods in \cite{D8} or Mabuchi's methods in \cite{Mab2}. The main issue to adapt the arguments in \cite{D8} or \cite{Mab2} is to prove that $\alpha$-K-stability arises as a suitable asymptotic version of a Chow-type stability notion in the relative quot scheme considered in \secref{sec:ASTconfigoneparameter}. The verification of this stability issue would provide a strong evidence of the fact that \eqref{eq:CYMeq00} are gauge-theoretic equations related to the algebro-geometric moduli problem for triples $(X,L,E)$. The moduli problem for such triples has been considered by R. Pandharipande in \cite{Pd} when the base $X$ is a Riemann surface. In~\cite{Pd}, Geometric Invariant Theory is used in order to compactify the moduli space of pairs consisting of a ($10$-canonically polarised) smooth curve of genus greater than one and a semistable vector bundle over the curve. For arbitrary dimension of the base very little has been done. In \cite{ST}, G. Schumacher and M. Toma construct a moduli space of (non-uniruled) polarized K\"{a}hler manifolds equipped with isomorphism classes of stable vector bundles via the method of versal deformations. In this work the authors were able to endow the moduli space with a K\"ahler metric, under suitable cohomological restrictions (see~\secref{sec:example1}), when the base is K\"ahler--Einstein and the bundle is projectively flat. As we see in \secref{sec:example1}, the gauge-theoretical equations corresponding to the moduli construction in~\cite{Pd} and the K\"ahler moduli in~\cite{ST} provide particular solutions of the coupled equations \eqref{eq:CYMeq00}.

Finally, in \secref{sec:ASTheconjecture} we analyze the converse of the statement in Conjecture \ref{conjecture:alphaKpolystab0} in the cases $\dim_\CC X = 1$, $E$ arbitrary and $E \cong \oplus^r \cO_X$, $X$ arbitrary. We respectively check that it holds and that it reduces to the Yau--Tian--Donaldson Conjecture for the cscK problem. The latter gives evidence of the fact that, as a recent example in \cite{ACGT} suggests for the cscK problem, the notion of $\alpha$-K-stability introduced in Definition~\ref{def:extKstable} may not be sufficient to guarantee the existence of solutions of \eqref{eq:CYMeq00} and so, to formulate a converse of \ref{conjecture:alphaKpolystab0}, a stronger notion may be required. We indicate how to strengthen this condition following \cite{RRT1}.

Chapter \secref{chap:examples} contains some examples of solutions to \eqref{eq:CYMeq00}. We also discuss how the
existence of solutions in the limit case $\alpha_0 = 0$ for $G^c = SL(r,\CC)$ can be applied in certain cases, using
results of Y. J. Hong in \cite{Ho2}, to obtain a cscK metric in the projectivization of the associated vector bundle. In \secref{sec:example1} we deal with the case of vector
bundles over Riemann surfaces and projectively flat bundles over K\"{a}hler manifolds satisfying a topological
constraint. In both situations, the coupled system \eqref{eq:CYMeq00} reduces to the limit case $\alpha_1 = 0$ (cscK equation and HYM equation). In
\secref{sec:example2} we consider
homogeneous Hermitian holomorphic bundles $E$ over homogeneous K\"{a}hler manifolds and show the existence of
invariant
solutions to
\eqref{eq:CYMeq00} in any K\"{a}hler class provided that the representation inducing the homogeneous bundle $E$ is irreducible. The invariant
solutions arise as
(simultaneous) solutions to the limit systems $\alpha_0 = 0$ and $\alpha_1 = 0$ in \eqref{eq:CYMeq00}. When the
complex
dimension of the base is $2$
the invariant solutions satisfy in addition the K\"{a}hler analogue of the Einstein--Yang--Mills equation (see
equation
\eqref{eq:CEYMeq}) considered
in  \secref{sec:CeqscalarKal-K}. In \secref{sec:example3} we discuss some (well known) examples of stable bundles over
K\"{a}hler-Einstein manifolds
without infinitesimal automorphisms where Theorem \ref{thm:ExistenceCYMeq00} applies. Using a bubbling argument for Anti-Self-Dual connections, this setting provides our first genuine examples in which the metric on the base is not cscK. In \secref{sec:example4} we
discuss the relation of the limit
$\alpha_0 = 0$ in \eqref{eq:CYMeq00} with the cscK equation on ruled manifolds as mentioned above.

In \secref{chap:C2} we consider the problem of finding solutions to \eqref{eq:CYMeq00} on the non-compact (standard) complex manifold $\CC^2$ endowed with the trivial smooth principal bundle $E = \CC^2 \times SU(2)$. We prove that there is no obstruction to finding solutions for arbitrary coupling constants with fixed behaviour at infinity (see Theorem~\ref{proposition:solutionsarbitrarycoupling}). The fixed behavior at infinity is the analogue in this non-compact case of fixing the K\"{a}hler class and the topological type of the bundle. To prove Theorem~\ref{proposition:solutionsarbitrarycoupling}, we use a deformation argument and a suitable re-scaling of the solutions. As initial data we take the standard Euclidean metric and a classical solution of 't Hooft to the instanton equation. This procedure gives solutions for which the K\"ahler metric is not cscK, making explicit the `bubbling' argument in \secref{sec:example3}.

\newpage
\section*{List of symbols}
\label{sec:notation}

{\bf Lie groups, Lie algebras, manifolds and bundles}

\begin{Pentry}

\item[$G^c$] Complex reductive Lie group
\item[$\mathfrak{g}^c$] Lie algebra of $G^c$
\item[$G \subset G^c$] Maximal compact subgroup of $G^c$
\item[$\mathfrak{g}$] Lie algebra of $G$
\item[$\mathfrak{z} \subset \mathfrak{g}$] centre of $\mathfrak{g}$
\item[$X$] Compact differentiable manifold (projective scheme in Chapter \secref{chap:AlgStability})
\item[$Z$] Projective variety
\item[$E$] Smooth principal $G$-bundle over $X$ (coherent sheaf over $X$ in Chapter \secref{chap:AlgStability})
\item[$\ad E$] Adjoint bundle of $E$
\item[$E^c$] Smooth principal $G^c$-bundle over $X$
\item[$L$] Line bundle over $X$ (ample holomorphic in Chapter \secref{chap:AlgStability})
\item[$\Tot(E)$] Total space of the bundle $E$
\item[$\Tot(E^c)$] Total space of the bundle $E^c$
\item[$\Tot(L)$] Total space of the bundle $L$
\item[$(X,L,E)$] Triple consisting of a projective scheme $X$, an ample invertible sheaf $L$ and a coherent sheaf $E$ over $X$

\end{Pentry}

{\bf Geometric structures}

\begin{Pentry}

\item[$\omega$] Symplectic structure on $X$
\item[$J$] Almost complex structure on $X$
\item[$g$] Riemannian metric; also elements of the groups $\Aut \;E$ or $\Aut \;E^c$ (below)
\item[$\omega_\phi$] $\omega + dd^c \phi$, for a K\"ahler potential $\phi$ on $(X,J,\omega)$
\item[$S_g$, $S_J$] Scalar curvature of $g$, Hermitian scalar curvature of $J$
\item[$A$] Connection on $E$
\item[$\theta_A$] Vertical projection determined by $A$
\item[$\theta^{\perp}_A$] Horizontal lift determined by $A$
\item[$F_A$] Curvature of $A$
\item[$I$] $G^c$-invariant almost complex structure on $E^c$
\item[$H$] $G$-reduction on $E^c$
\item[$A_H$] Chern connection of $H$, when $E^c$ is holomorphic
\item[$F_H$] Curvature of $A_H$
\end{Pentry}

{\bf Constants and parameters}

\begin{Pentry}

\item[$2n$] Real dimension of $X$
\item[$r$] Rank of of a vector bundle $E$
\item[$\hat c \in \RR$] Normalized integral of the $2n$-form $\Lambda^2(F_A \wedge F_A) \wedge \omega^{n-2}$
\item[$\hat S \in \RR$] Normalized integral of the scalar curvature
\item[$\alpha_0, \alpha_1\in \RR$] Coupling constants
\item[$\alpha \in \RR$] Normalized ratio of the coupling constants
\item[$c \in \RR$] weighted sum of $\hat c$ and $\hat S$ with weights $\alpha_0$ and $\alpha_1$, respectively
\item[$z \in \mathfrak{z}$] Element in the center of $\mathfrak{g}$

\end{Pentry}

{\bf Spaces of geometric structures}

\begin{Pentry}

\item[$C^{\infty}(X)$] Smooth functions on $X$
\item[$C_0^{\infty}(X)$] Smooth functions with zero integral on $(X,\omega)$
\item[$\Omega^k(F)$] Space of $F$--valued smooth $k$--forms, $F$ being a vector bundle
\item[$\cJ$] Space of compatible almost complex structure on $(X,\omega)$
\item[$\Omega^{p,q}_J$] Space of smooth $(p,q)$-type forms determined by $J \in \cJ$
\item[$\cJi \subset \cJ$] Space of integrable elements of $\cJ$
\item[$\cA$] Space of connections on $E$
\item[$\cA_J^{1,1} \subset \cA$] Connections of $(1,1)$-type with respect to $J \in \cJ$
\item[$\cP$] Space of pairs $p = (J,A)$, with $J \in \cJi$ and $A \in \cA_J^{1,1}$
\item[$\cK$] Space of K\"{a}hler metrics on $X$, on a fixed K\"{a}hler class.
\item[$\cR$] Space of $G$-reductions of $E^c$
\item[$\cKt$] $= \cK \times \cR$

\end{Pentry}

{\bf More (infinite dimensional) Lie groups}

\begin{Pentry}

\item[$\cH$] Group of Hamiltonian symplectomorphisms of $(X,\omega)$
\item[$\Diff X$] Group of diffeomorphisms of $X$
\item[$\Diff_0 X$] Identity component of $\Diff X$
\item[$\Aut \;(X,J)$] Group of biholomorphisms of a complex manifold
\item[$\Aut \;(X,J,\omega)$] Group of holomorphic isometries of a K\"{a}hler manifold
\item[$\cG$] Gauge group of $E$
\item[$\Aut \;E$] Group of $G$-equivariant diffeomorphisms of $E$
\item[$\cX \subset \Aut \;E$] Extended gauge group of $E$ and $(X,\omega)$
\item[$\cG^c$] Gauge group of $E^c$
\item[$\Aut \;E^c$] Group of $G$-equivariant diffeomorphisms of $E^c$

\end{Pentry}

{\bf Vector fields and functions}

\begin{Pentry}

\item[$\phi \in C^{\infty}(X)$] Smooth function on $X$. Also K\"ahler potential on $(X,J,\omega)$
\item[$y$] Vector field on $X$ or $E$
\item[$\eta_\phi$] Hamiltonian vector field of $\phi \in C^{\infty}(X)$. Also element of $\LieH$
\item[$\eta$] Element of $\LieH$
\item[$\zeta$] Element of $\LieX$
\item[$\xi$] Element of $\LieG$
\item[$Y_\zeta$] Infinitesimal action of $\zeta$ on $\cP$

\end{Pentry}

\chapter{\bf Preliminaries}

\label{chap:preliminaries}

\section{The Kempf-Ness Theorem}
\label{sec:KNtheorem}

We start recalling some well known facts about symplectic and algebraic quotients that apply formally to our
problem. We refer the reader to \cite{McS} for the material concerning symplectic quotients and to \cite{MFK} for that
concerning Geometric Invariant Theory (GIT). For the proof of the Kempf-Ness Theorem we have mainly used the theory
developed in \cite{MR2} but also the classical references \cite{Kir} and \cite{MFK}. For further results on this topic
for the case of non-projective K\"{a}hler manifolds we refer to \cite{MR3}.

Let $V$ be a complex Hermitian vector space and let $Z \subseteq \mathbb{P}(V)$ be a compact smooth projective variety. We consider $Z$ endowed with the K\"{a}hler metric $\omega$ given by restricting the Fubini-Studi metric of
$\mathbb{P}(V)$. Suppose that a reductive complex algebraic group $G^c$ acts on $Z$ via an algebraic representation
$\rho: G^c \to SL(V)$, thus the $G^c$-action is naturally linearized with respect to the line bundle $L =
\mathcal{O}_Z(1)$. Let $G \subset G^c$ be a maximal compact subgroup that maps onto $SU(V)$. Then $G$ acts on $Z$ by
holomorphic isometries and the linearization provides a $G$-equivariant moment map $\mu:Z \to \mathfrak{g}^*$ (see
e.g.~\cite[\S 6.5.1]{DK}). In this situation there are two quotients, provided by symplectic geometry and algebraic
geometry, and a well known correspondence between them that will be explained along this section.

On the other hand, the symplectic quotient of $(Z,\omega)$ by $G$ is constructed via the moment map $\mu$. Recall that a map $\mu:Z \to
\mathfrak{g}^*$ is called a $G$-equivariant moment map if it satisfies
$$
\langle d \mu, \zeta \rangle = i_{Y_\zeta} \omega, \qquad \mu(g z) = \Ad(g^{-1})^*\mu(z),
$$
for all $z \in Z$, $g \in G$ and $\zeta \in \mathfrak{g}$. Here the vector field $Y_\zeta$ denotes the infinitesimal
action of $\zeta$ on $Z$. If $0 \in \mathfrak{g}^*$ is a regular value of $\mu$ then $\mu^{-1}(0)$ is a smooth
submanifold of $Z$ preserved by $G$ and the quotient $\mu^{-1}(0)/G$ is a compact manifold (orbifold if the $G$-action
on $\mu^{-1}(0)$ is not free) that inherits a natural symplectic structure, called the symplectic quotient of
$(Z,\omega)$ by $G$.

On the other hand, the (algebraic) GIT quotient of $Z$ by $G^c$ with respect to the linearization $L$ is constructed as follows. Let
$$
R = \bigoplus_{k \geq 0} H^0(X,L^k)
$$
be the graded ring given by tensorization, where $H^0(X,L^k)$ denotes the space of global holomorphic sections of
$L^k$. The GIT quotient of $Z$ by $G^c$ is the projective variety
$$
Z//G^c \defeq \textrm{Proj} \; R^{G^c},
$$
where $R^{G^c}$ is the (graded) ring of $G^c$-invariant elements of $R$. In more down-to-earth terms, there is an
isomorphism of graded rings $R \cong \CC[\hat{Z}]$, where $\hat{Z} \subset V$ is the affine cone of $Z$ and
$\CC[\hat{Z}]$ denotes its ring of regular functions. Since $G^c$ is a reductive complex algebraic group we can take a
finite set of generators $\{f_j\}_{j=0}^m$ of the ring $R^{G^c}$. Let $\CC[z_0, \ldots, z_m]$ be the ring of
polynomials in $m+1$ free variables and consider the (graded) ring morphism defined by
\begin{equation}\label{eq:morphismrings}
\CC[z_0, \ldots, z_m] \to R^{G^c} \colon z_j \mapsto f_j.
\end{equation}
The kernel of \eqref{eq:morphismrings} is an homogeneous ideal of $\CC[z_0, \ldots, z_m]$ cutting a projective variety
on $\mathbb{P}^m$ that turns out to be isomorphic to $Z//G^c$.

The link between the symplectic and GIT quotients is provided by the Kempf-Ness Theorem that states that there exists
an homeomorphism between them (see~\cite{KN, Kir} or~\cite[Theorem~5.4]{MR2}).
\begin{theorem}[Kempf-Ness]\label{theorem:Kempf-Ness}
\begin{equation}\label{eq:KNcorrespondence}
\mu^{-1}(0)/G \cong Z//G^c.
\end{equation}
\end{theorem}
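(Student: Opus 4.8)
The plan is to run the classical Kempf--Ness argument: exhibit one convex function on the symmetric space of $G^c$ that simultaneously controls the geometry of the $G^c$-orbits (hence the GIT quotient) and the vanishing of $\mu$ (hence the symplectic quotient), and then deduce the homeomorphism by a soft compactness argument. First I would introduce the \emph{Kempf--Ness function}: for $z \in Z$ choose a lift $\hat z \in \hat Z \setminus \{0\} \subset V$ and set $p_{\hat z}(g) \defeq \log \|\rho(g)\hat z\|^2$ for $g \in G^c$. Since $\rho(G) \subset SU(V)$ acts by unitaries, $p_{\hat z}$ is left $G$-invariant and so descends to the symmetric space $\cS \defeq G \backslash G^c$ of non-compact type, whose geodesics through $Gg$ are the curves $t \mapsto G\exp(t\imag\zeta)g$, $\zeta \in \mathfrak g$. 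Along such a geodesic, diagonalizing the Hermitian operator $\imag\rho_*(\zeta)$ exhibits $p_{\hat z}$ as $\log\big(\sum_j e^{2t\lambda_j}|c_j|^2\big)$ for suitable reals $\lambda_j$ and coordinates $c_j$ of $\rho(g)\hat z$; this is convex, and strictly convex unless $\exp(t\imag\zeta)$ fixes the point $g \cdot z \in Z$, i.e. unless $\imag\zeta$ lies in the Lie algebra of the stabilizer $G^c_{g\cdot z}$. Differentiating at $t = 0$ and using that $\mu$ is the moment map for the linearized $G$-action, the differential of $p_{\hat z}$ at $Gg$ equals $\zeta \mapsto c\,\langle \mu(g \cdot z), \zeta\rangle$ for a fixed constant $c > 0$; hence $Gg$ is a critical point of $p_{\hat z}$ iff $\mu(g\cdot z) = 0$, and by convexity a critical point is a global minimum.

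Next I would translate the GIT notions into properties of $p_{\hat z}$, using the standard dictionary for the affine cone (here $R \cong \CC[\hat Z]$ and $Z//G^c = \operatorname{Proj} R^{G^c}$): $z$ is semistable iff $0 \notin \overline{G^c\hat z}$, which cuts out the open $G^c$-invariant semistable locus $Z^{ss}$; $z$ is polystable (its orbit $G^c\hat z$ closed in $V$) iff $\|\cdot\|^2$ attains a positive minimum on $G^c\hat z$; and the closure in $Z^{ss}$ of any semistable orbit contains a unique closed orbit, which is polystable and is the one separated by $R^{G^c}$. Then: \emph{(i)} if $\mu(z) = 0$, then $Ge$ minimizes $p_{\hat z}$ on $\cS$, and I would show $G^c\hat z$ is closed — otherwise Hilbert--Mumford provides a one-parameter subgroup, which after conjugation has the form $\exp((\log t)\imag\zeta)$ with limit as $t \to 0$ existing, nonzero, and of strictly smaller norm than $\hat z$, so $p_{\hat z}$ drops below $p_{\hat z}(Ge)$ along the corresponding geodesic ray, contradicting minimality; hence $\mu^{-1}(0) \subseteq Z^{ss}$ and consists of polystable points. \emph{(ii)} Conversely, if $z$ is polystable then $p_{\hat z}$ attains its infimum on $\cS$ (a minimizing sequence cannot escape to the boundary at infinity, because $G^c\hat z$ is closed), and at any minimizer $Gg$ one has $\mu(g \cdot z) = 0$; thus every polystable orbit — in particular the unique closed orbit inside each semistable orbit closure — meets $\mu^{-1}(0)$. \emph{(iii)} If $\mu(z) = \mu(g\cdot z) = 0$ then $Ge$ and $Gg$ are both minima of the convex $p_{\hat z}$, so $p_{\hat z}$ is affine on the geodesic joining them; the strict-convexity clause then forces the relevant $\imag\zeta$ into the Lie algebra of $G^c_z$, so $g \in G^c_z \cdot G$ and $g \cdot z \in G \cdot z$. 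Hence $G^c z \cap \mu^{-1}(0)$ is exactly one $G$-orbit.

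Finally I would assemble the statement. Let $\pi \colon Z^{ss} \to Z//G^c$ be the GIT quotient map: continuous, surjective, and identifying two semistable points precisely when their orbit closures meet, equivalently when they lie over a common closed orbit. By (i), $\pi$ restricts to $\mu^{-1}(0)$; by (ii) and the uniqueness of the closed orbit in each semistable orbit closure, this restriction is surjective; and by (iii), together with the fact that distinct closed orbits give distinct points of $Z//G^c$, it is constant exactly on $G$-orbits. So it descends to a continuous bijection $\mu^{-1}(0)/G \to Z//G^c$. Since $\mu^{-1}(0)$ is closed in the compact $Z$ and $G$ is compact, $\mu^{-1}(0)/G$ is compact; since $Z//G^c$ is a projective variety it is Hausdorff; and a continuous bijection from a compact space onto a Hausdorff space is a homeomorphism. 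This yields \eqref{eq:KNcorrespondence}. (The assertion is purely topological: regularity of $0 \in \mathfrak g^*$ — needed only to give $\mu^{-1}(0)/G$ a manifold structure — plays no role.) I expect the main obstacle to be steps (i)--(ii), the two-way bridge between closedness of $G^c$-orbits and the existence and location of minima of $p_{\hat z}$ on the non-compact $\cS$: the converse direction rests on the Hilbert--Mumford criterion and on conjugating one-parameter subgroups into a maximal torus, and the existence direction requires controlling minimizing sequences at the boundary at infinity via the Cartan decomposition $G^c = G \exp(\imag\mathfrak g) G$; by contrast the convexity computation of the first step and the topological bookkeeping of the last are routine.
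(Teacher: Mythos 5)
Your proposal is correct and follows essentially the same route as the paper: both reduce the theorem to the fundamental lemma that a $G^c$-orbit meets $\mu^{-1}(0)$ exactly when it is polystable (with a unique $G$-orbit of zeros), proved via the convex Kempf--Ness functional on $G\backslash G^c$ together with the Hilbert--Mumford criterion, and then conclude by the continuous-bijection-from-compact-to-Hausdorff argument. The only cosmetic difference is that the paper defines the functional as the integral $\cM_z$ of the moment-map $1$-form and then computes it to be $\frac{2}{\pi}(\log|g\hat z|-\log|\hat z|)$, whereas you start from the log-norm function and differentiate to recover the moment map.
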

While the points in the symplectic quotient have a clear geometric meaning, they represent $G$-orbits on
$\mu^{-1}(0)$,
it is not clear a priori which $G^c$-orbits on $Z$ are represented in the GIT quotient. In order to make the
homeomorphism explicit we shall clarify this point, for what we need to introduce the notion of stability.

\subsection{GIT Stability}
\label{subsec:KNGITstab}

Note that the inclusion
$R^{G^c} \hookrightarrow R$ induces a $G^c$-invariant rational map
\begin{equation}\label{eq:GITmorphism}
Z \dashrightarrow Z//G^c.
\end{equation}
In fact it is not always possible to make this map every where well-defined; if all the $G^c$-invariant functions of $\hat{Z}$ vanish at some point $z \in Z$ then clearly \eqref{eq:GITmorphism} is not well defined at $z$. This leads to the fundamental definition of GIT.

\begin{definition}\label{def:GITstability}
A point $z \in Z$ is said to be \emph{semistable} if the there exists a nonconstant function $f \in R^{G^c}$ such that
$f(z) \neq 0$. We say $z$ is \emph{polystable} if it is semistable and the orbit $G^c \cdot \hat{z} \subset V$ of any
lift $\hat{z} \in \hat{Z}$ of $z$ is closed in $V$. We say $z$ is \emph{stable} if it is polystable and $z$
has finite stabilizer on $G^c$.
\end{definition}

The locus of semistable, polystable and stable points will be denoted respectively by $Z^{ss}$, $Z^{ps}$ and $Z^{s}$.
It is clear that the notion of stability do not depend on the representant of any $G^c$ orbit, so it makes sense to
talk about stability of an orbit of the action. The semistable orbits are precisely those that appear in the GIT
quotient, i.e. $Z^{ss}$ is the domain of definition of the rational map \eqref{eq:GITmorphism}. Any semistable orbit
is
glued in the quotient to the unique polystable orbit on its closure and, only when restricted to $Z^s$, the map
\eqref{eq:GITmorphism} parameterizes $G^c$-orbits. This justifies the equality
$$
Z//G^c = Z^{ss}//G^c.
$$
A point will be referred to as \emph{unstable} if it is not semistable. Unstable points are characterized by the
following property: $z \in Z$ is unstable if and only if the orbit of any lift $\hat{z} \in \hat{Z}$
contains $0 \in V$ on its closure.

The main result that makes workable Definition \ref{def:GITstability} is a numerical criterion, due to David Hilbert
and David Mumford, that provides a systematic procedure to check which are the semistable points. It is stated in
terms
of the restriction of the action to \emph{$1$-parameter subgroups}. Let $\lambda: \CC^* \to G^c$ be a $1$-parameter
subgroup, i.e. a morphism of algebraic groups from $\CC^*$ to $G^c$. Since
$Z$ is projective for any $z \in Z$ there exists a limit point
$$
z_0 = \lim_{t \to 0} \lambda(t) \cdot z,
$$
that is fixed by the $\CC^*$-action. The \emph{maximal weight} $w(z,\lambda)$ associated to $\lambda$ at $z$ is the
weight of the induced $\CC^*$-action on the fiber $L_{z_0}$. Note that the morphism $\lambda$ induces a representation
of $\CC^*$ on $V$ that, in suitable coordinates $\hat{z} = (z_0, \ldots, z_n)$,
reads
$$
\lambda(t) \cdot \hat{z} = (t^{q_0}z_0, \ldots, t^{q_n}z_n),
$$
and so the maximal weight equals
$$
w(z,\lambda) = \textrm{max}\{- q_i : x_i \neq 0\}.
$$
If $w(\lambda,z) < 0$ then, taking $t \to 0$, the origin is contained in the closure of the orbit of
$\hat{z}$ in $V$ so the point $z$ is unstable. The Hilbert-Mumford theorem says that the converse is true, i.e. that
if
$z$ is unstable then $w(\lambda,z)$ is positive for some $1$-parameter subgroup $\lambda$. Moreover,

\begin{theorem}[Hilbert-Mumford Criterion]
A point $z \in Z$ is semistable if and only if $w(z,\lambda) \geq 0$ for all $1$-parameter subgroups. A point $z \in
Z$
is polystable if and only if $w(z,\lambda) \geq 0$ for all $1$-parameter subgroups with equality only if $\lambda$
fixes $z$. A point $z \in Z$ is stable if and only if $w(z,\lambda) > 0$ for all non-trivial $1$-parameter subgroups.
\end{theorem}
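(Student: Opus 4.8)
The plan is to reduce all three assertions to the single geometric fact, already recorded above, that $z$ is unstable exactly when the origin $0\in V$ lies in the closure of the orbit $G^c\cdot\hat z$ of a lift $\hat z\in\hat Z$, and then to detect such orbit closures by $1$-parameter subgroups. Writing $\mathcal{O}=G^c\cdot\hat z$, the core is the equivalence
\[
0\in\overline{\mathcal{O}}\quad\Longleftrightarrow\quad\text{there is a $1$-parameter subgroup }\lambda\text{ with }\lim_{t\to0}\lambda(t)\cdot\hat z=0,
\]
that is, with $w(z,\lambda)<0$. The direction "$\Leftarrow$" is immediate and supplies the easy half of the semistability statement: if $\lambda(t)\cdot\hat z\to0$ then for every $k>0$ and every $f\in H^0(X,L^k)^{G^c}$ the associated homogeneous function $\hat f$ on $\hat Z$ satisfies $\hat f(\hat z)=\hat f(\lambda(t)\hat z)=\hat f(0)=0$ by invariance, continuity and homogeneity; hence $z\in Z^{ss}$ forces $w(z,\lambda)\ge0$ for all $\lambda$.

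For the non-trivial converse I would use the valuative criterion together with the Cartan decomposition of the loop group. Assuming $0\in\overline{\mathcal{O}}$, applying the valuative criterion to the orbit map $g\mapsto g\cdot\hat z$ (and passing to a ramified cover, both to normalise and to lift the generic point back to the group) yields a discrete valuation ring, which we may take to be $\CC[[t]]$ with fraction field $\CC((t))$, and an element $g(t)\in G^c(\CC((t)))$ such that $g(t)\cdot\hat z$ extends to a $\CC[[t]]$-point of $V$ with value $0$ at $t=0$. By the Cartan (Iwahori) decomposition one may write $g(t)=k_1(t)\,\lambda(t)\,k_2(t)$ with $k_1,k_2\in G^c(\CC[[t]])$ and $\lambda$ a cocharacter of a maximal torus. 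Since $k_2(0)\in G^c$, the vector $\hat z_1:=k_2(0)\cdot\hat z$ lies in $\mathcal{O}$, and since $k_1(0)$ is invertible one extracts from $g(t)\hat z\to0$ that $\lambda(t)\cdot\hat z_1\to0$; the one delicate point, handled by a standard estimate, is that $\lambda(t)$ cannot inflate the $O(t)$ discrepancy between $k_2(t)\hat z$ and $\hat z_1$ into a nonzero limit. Conjugating, $\mu:=k_2(0)^{-1}\lambda\,k_2(0)$ is a $1$-parameter subgroup with $\mu(t)\cdot\hat z\to0$, so $w(z,\mu)<0$ and $z$ is unstable. This establishes the semistability criterion.

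The polystability and stability statements then follow by the usual orbit-closure bookkeeping. If $z$ is semistable but $\mathcal{O}$ is not closed, choose a boundary orbit of minimal dimension inside $\overline{\mathcal{O}}$ and run the same valuative-criterion-plus-Cartan-decomposition argument at one of its points to produce a $\lambda$ for which $\lim_{t\to0}\lambda(t)\hat z$ exists; semistability rules out $w(z,\lambda)<0$, so $w(z,\lambda)=0$, and the limit lying off $\mathcal{O}$ forces $\lambda$ not to fix $z$. Conversely, if some $\lambda$ with $w(z,\lambda)=0$ does not fix $z$, its limit point generates an orbit in $\overline{\mathcal{O}}\setminus\mathcal{O}$, so $\mathcal{O}$ is not closed; this is the polystability criterion. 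For stability, observe that if $z$ is polystable then any $\lambda$ fixing the point $z\in\PP(V)$ must act trivially on the line $\CC\hat z$—otherwise $\overline{\CC^*\cdot\hat z}$ would contain $0$, contradicting semistability—so, the stabiliser of a polystable point being reductive, $z$ has finite stabiliser if and only if no non-trivial $\lambda$ fixes $z$; combined with the polystability criterion this is exactly $w(z,\lambda)>0$ for every non-trivial $\lambda$.

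The main obstacle is the converse in the semistability step, and within it the appeal to the Cartan/Iwahori decomposition of $G^c(\CC((t)))$: this is the one genuinely non-formal input, trading the a priori uncontrolled element $g(t)$ for an honest cocharacter, with the accompanying limiting estimate requiring the careful lemma mentioned above. An alternative route, more in keeping with the moment-map viewpoint of Theorem~\ref{theorem:Kempf-Ness}, would replace this algebra by convex analysis: for $\zeta$ in the Lie algebra of $G$ the function $p_\zeta(t):=\log\|\exp(it\zeta)\cdot\hat z\|^2$ is convex, its asymptotic slopes as $t\to\pm\infty$ compute the maximal weights of the $1$-parameter subgroup generated by $\zeta$ and of its inverse, and $z$ is semistable precisely when every $p_\zeta$ is bounded below—the polystable and stable cases corresponding, respectively, to every $p_\zeta$ attaining its minimum and to strict convexity. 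This recovers the criterion and ties it directly to the Kempf--Ness correspondence, where $\mu^{-1}(0)$ singles out the points at which these functions are simultaneously critical.
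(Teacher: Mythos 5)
The paper does not actually prove this theorem: it records only the trivial implication (if $w(z,\lambda)<0$ for some $\lambda$ then $0$ lies in $\overline{G^c\cdot\hat z}$, so $z$ is unstable) and quotes the rest from the classical literature, so your proposal has to be judged on its own merits. Your treatment of the semistability part is the standard and correct route: the valuative criterion together with the Iwahori--Cartan decomposition $g(t)=k_1(t)\lambda(t)k_2(t)$ in $G^c(\CC((t)))$, and the ``delicate point'' you flag is genuinely handled by decomposing $k_2(t)\hat z$ into $\lambda$-weight spaces, since a weight-$r$ component with $r<0$ and nonvanishing leading term would force $t^r v_r(t)$ to blow up rather than converge.

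The genuine gap is in the polystability step, in the direction ``polystable $\Rightarrow$ every $\lambda$ with $w(z,\lambda)=0$ fixes $z$''. Your argument rests on the claim that the limit point of such a $\lambda$ ``generates an orbit in $\overline{\mathcal{O}}\setminus\mathcal{O}$'', and this is false. Take $G^c=SL(2,\CC)$ acting on $V=\mathfrak{sl}_2(\CC)$ by the adjoint representation, $Z=\PP(V)$, $\hat z=h+e$ with $h=\mathrm{diag}(1,-1)$ and $e$ the strictly upper-triangular nilpotent, and $\lambda(t)=\mathrm{diag}(t,t^{-1})$. Every lift $c(h+e)$ has orbit equal to the fibre $\{A:\det A=-c^2\}$ (all such matrices are regular semisimple, hence conjugate), which is closed, and $\det$ is a nonvanishing invariant, so $z=[h+e]$ is polystable in the sense of Definition~\ref{def:GITstability}. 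Yet $\lambda(t)\cdot(h+e)=h+t^2e$, so $w(z,\lambda)=0$, $\lambda$ does not fix $z$, and the limit $h$ lies \emph{inside} $\mathcal{O}$, being conjugate to $h+e$. This is in fact a counterexample to the theorem as literally stated; the correct criterion replaces ``$\lambda$ fixes $z$'' by ``$\lim_{t\to 0}\lambda(t)\cdot\hat z$ lies in $G^c\cdot\hat z$'' (cf.~\cite{MR3}), and your proof cannot close the gap because the statement it is aimed at is not true. Note that the other half of your polystability argument does survive: when $\mathcal{O}$ is not closed, the Iwahori argument aimed at a boundary point $y$ produces a limit lying in $\overline{G^c\cdot y}$, which is contained in $\overline{\mathcal{O}}\setminus\mathcal{O}$ because orbits are open in their closures; and the stability criterion is unaffected, since with the corrected polystability statement the equivalence ``stable iff $w>0$ for all nontrivial $\lambda$'' still follows from reductivity of the stabiliser exactly as you argue.
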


\subsection{The character of a complex orbit}
\label{subsec:KNcharacter}

As we have seen, the GIT quotient does not represent all the $G^c$-orbits in the K\"{a}hler manifold $(Z,J,\omega)$ so, in
order to prove Theorem
\ref{theorem:Kempf-Ness}, one has to identify which $G^c$-orbits cut the zero locus of the moment map. Before going
into the proof of the theorem we introduce a well known invariant of the $G^c$-orbit of a point $z \in Z$ that gives a necessary
condition for this fact. Our approach differs slightly from the ones in \cite{FO} and \cite{W}, since we make an
explicit use of an exact $1$-form on $G^c$ defined in~\cite{MR2}. Let $\sigma_z \in \Omega^1(G^c)$ be given by
\begin{equation}\label{eq:sigmaz}
\sigma_z(v_g) = \langle \mu(gz), - \imag \pi_1 (R_{g^{-1}})_* v_g\rangle
\end{equation}
for any $v_g \in T_g G^c$, where $R_{g^{-1}}$ denotes right translation by $g^{-1}$ and $\pi_1 \colon \mathfrak{g}^c \to \imag
\mathfrak{g}$ is the standard projection. This form is exact, invariant by the left $G$ action on $G^c$ and satisfies
\begin{equation}\label{eq:cocyclesigma}
R_g^*\sigma_z = \sigma_{gz}
\end{equation}
for any $g \in G^c$ (see~\cite[Lemma~3.1]{MR2}). Since $G^c$ retracts to $G$ and the restriction of $\sigma_z$ to $G$
is zero, this $1$-form is exact if and only if it is closed. We sketch a proof of the latter for completeness.
\begin{proposition}[Mundet i Riera \cite{MR1}]
The $1$-form $\sigma_z$ is closed.
\end{proposition}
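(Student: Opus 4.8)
The plan is to show that $d\sigma_z$ vanishes identically, reducing this to a computation at the identity $e\in G^c$. Differentiating the cocycle relation \eqref{eq:cocyclesigma} gives $R_g^*(d\sigma_z)=d\sigma_{gz}$ for every $g\in G^c$; evaluating at $e$ and using that $(R_g)_*\colon T_eG^c\to T_gG^c$ is a linear isomorphism shows that $d\sigma_z|_g$ is determined by $d\sigma_{gz}|_e$. Hence it is enough to prove $d\sigma_w|_e=0$ for every $w\in Z$.

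To evaluate $d\sigma_z$ at $e$ I would use the right-invariant vector fields $\xi^R,\eta^R$ generated by $\xi,\eta\in\mathfrak{g}^c$, for which $\sigma_z(\eta^R)(g)=\langle\mu(gz),-\imag\pi_1\eta\rangle$ and the flow of $\xi^R$ is $g\mapsto\exp(t\xi)g$. The invariant formula $d\sigma_z(\xi^R,\eta^R)=\xi^R(\sigma_z(\eta^R))-\eta^R(\sigma_z(\xi^R))-\sigma_z([\xi^R,\eta^R])$, together with $[\xi^R,\eta^R]_e=-[\xi,\eta]$, expresses $d\sigma_z|_e(\xi^R,\eta^R)$ as two ``derivative of $\mu$'' terms plus the bracket term $\sigma_z([\xi,\eta])|_e$. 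Now decompose $\xi=a+\imag b$ and $\eta=c+\imag d$ with $a,b,c,d\in\mathfrak{g}$, and use that the $G^c$-action is holomorphic, so that the infinitesimal action of $\imag b$ is $\pm JY_b$. The moment map equation $\langle d\mu,\zeta\rangle=i_{Y_\zeta}\omega$ together with the K\"ahler identity $\omega(\,\cdot\,,J\,\cdot\,)=g(\,\cdot\,,\,\cdot\,)$ turns the two derivative terms into expressions of the form $\omega(Y_d,Y_a)\pm g(Y_d,Y_b)$ and $-\omega(Y_b,Y_c)\mp g(Y_b,Y_d)$, while expanding $[\xi,\eta]$ into its real and imaginary parts identifies the bracket term with $\langle\mu(z),[a,d]+[b,c]\rangle$.

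It then remains to add the three contributions and verify the cancellation. For this I would invoke the $G$-equivariance of $\mu$, which in infinitesimal form reads $\omega(Y_\zeta,Y_\nu)=-\langle\mu(z),[\nu,\zeta]\rangle$ for $\zeta,\nu\in\mathfrak{g}$; this rewrites the two $\omega$-terms as the pairings of $\mu(z)$ with $[d,a]$ and with $-[b,c]$. They now cancel against the bracket term $\langle\mu(z),[a,d]+[b,c]\rangle$ by antisymmetry of the Lie bracket, while the two metric contributions $g(Y_d,Y_b)$ and $-g(Y_b,Y_d)$ cancel by symmetry of $g$. This yields $d\sigma_z|_e=0$, and hence $d\sigma_z\equiv 0$ by the reduction above.

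The only place the hypotheses really enter is the interplay of the complex and metric structures: holomorphicity of the $G^c$-action is what allows one to differentiate the ($G$-)moment map $\mu$ along the noncompact directions $\imag\mathfrak{g}$, and it is exactly the K\"ahler compatibility $\omega=g(J\,\cdot\,,\,\cdot\,)$ that makes the extra terms produced by those directions symmetric in the relevant arguments, hence vanishing upon antisymmetrization. The main obstacle is therefore bookkeeping rather than conceptual --- keeping track of the signs in the $\mathfrak{g}\oplus\imag\mathfrak{g}$ splitting, in the right-invariant bracket, and in the moment map and equivariance conventions --- after which the algebra is a two-line cancellation. One could instead differentiate \eqref{eq:cocyclesigma} twice at $e$ and avoid vector fields altogether, but the underlying identity, and the role of the K\"ahler condition, would be the same.
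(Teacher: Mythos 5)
Your proposal is correct and follows essentially the same route as the paper: reduce to the identity via the cocycle relation \eqref{eq:cocyclesigma}, then use the infinitesimal $G$-equivariance of $\mu$ to kill the terms involving a real direction and the symmetry of $g=\omega(\cdot,J\cdot)$ to kill the purely imaginary contribution. The only difference is bookkeeping --- you carry the full decomposition $\xi=a+\imag b$, $\eta=c+\imag d$ through one computation, whereas the paper invokes bilinearity of $d\sigma_z$ to treat the three cases ($\mathfrak{g}$--$\mathfrak{g}$, mixed, $\imag\mathfrak{g}$--$\imag\mathfrak{g}$) separately.
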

\begin{proof}
By property \eqref{eq:cocyclesigma} it is enough to check it at the tangent space of the identity element on $G^c$.
Given $\zeta_j \in \mathfrak{g}^c$, $j = 1,2$, the simbol $\zeta_j^r$ means the right invariant vector field on $G^c$
generated by $\zeta_j$. We have to check that
\begin{equation}\label{eq:sigmazclosed1}
d \sigma_z (\zeta_1^r,\zeta_2^r) = \zeta_1^r(\sigma_z (\zeta_2^r)) - \zeta_2^r(\sigma_z (\zeta_1^r)) - \sigma_z
([\zeta_1^r,\zeta_2^r]) = 0.
\end{equation}
Since $\sigma_z$ vanishes on $G$ we can skip the case $\zeta_1, \zeta_2 \in \mathfrak{g}$. By $G$-equivariance of the
moment map it follows that \eqref{eq:sigmazclosed1} holds if $\zeta_1 \in \mathfrak{g}$ and $\zeta_2 \in \imag
\mathfrak{g}$. Finally, for the remaining case we take $\imag \zeta_j \in \imag \mathfrak{g}$ and so
\begin{equation}
\label{eq:sigmazclosed2}
\begin{split}
d \sigma_z (\imag \zeta_1^r,\imag \zeta_2^r) 
 & = \omega_z(Y_{\zeta_2},JY_{\zeta_1}) - \omega_z(Y_{\zeta_1},JY_{\zeta_2}) = 0.\qedhere
\end{split}
\end{equation}
\end{proof}
Let $(G^c)_z$ be the isotropy group of $z$ in $G^c$, with Lie algebra $(\mathfrak{g}^c)_z$ and consider the
$\CC$-linear extension of the moment map $\mu: Z \to (\mathfrak{g}^c)^*$. We define a complex valued map
$f_{z,g}\colon
(\mathfrak{g}^c)_z \to \CC$ by
\begin{equation}\label{eq:futakifinite1}
\begin{split}
f_{z,g}(\zeta) & = \langle \mu(gz), \Ad(g) \zeta \rangle \\
& = \imag \sigma_z(\zeta^l)_{|g} + \sigma_z((\imag \zeta)^l)_{|g},
\end{split}
\end{equation}
where $\pi_0\colon \mathfrak{g}^c \to \mathfrak{g}$ is the standard projection and $\zeta^l$ is the left invariant vector field on $G^c$ determined by any $\zeta \in \mathfrak{g}^c$. Then we
obtain the desired obstruction.
\begin{proposition}[\cite{W}]\label{propo:futakifinite}
The map $f_{z}\colon (\mathfrak{g}^c)_z \to \CC$ given by \eqref{eq:futakifinite1} is independent of $g \in G^c$. It defines a
character of $(\mathfrak{g}^c)_z$ that vanishes if $G^c \cdot z$ contains a zero of the moment map.
\end{proposition}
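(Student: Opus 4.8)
The plan is to exploit the closed $1$-form $\sigma_z$ and its equivariance property \eqref{eq:cocyclesigma}, which already package the moment-map calculus that the proof needs. First I would record two facts about the isotropy algebra. Since the $G^c$-action on $Z$ is holomorphic, the infinitesimal action $\mathfrak{g}^c\to T_zZ$ is $\CC$-linear, so $(\mathfrak{g}^c)_z$ is a $\CC$-linear subspace and a complex Lie subalgebra of $\mathfrak{g}^c$; in particular $\imag\zeta$ and $[\zeta_1,\zeta_2]$ lie in $(\mathfrak{g}^c)_z$ whenever $\zeta,\zeta_1,\zeta_2$ do, and $\exp(t\zeta)\in(G^c)_z$ fixes $z$ for all $t\in\RR$. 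Using the second expression in \eqref{eq:futakifinite1}, the map in question can be written as
\[
f_{z,g}(\zeta)=\imag\,(\iota_{\zeta^l}\sigma_z)(g)+(\iota_{(\imag\zeta)^l}\sigma_z)(g),
\]
so the independence of $g$ reduces to showing that the function $g\mapsto(\iota_{\eta^l}\sigma_z)(g)$ on $G^c$ is constant for every $\eta\in(\mathfrak{g}^c)_z$.

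For that step I would apply Cartan's formula together with the closedness of $\sigma_z$ (the preceding Proposition) to get $d(\iota_{\eta^l}\sigma_z)=\mathcal{L}_{\eta^l}\sigma_z$, and then evaluate the Lie derivative through the flow of the left-invariant field $\eta^l$, which is right translation $g\mapsto g\exp(t\eta)$. By \eqref{eq:cocyclesigma} this flow acts on $\sigma_z$ by $R_{\exp(t\eta)}^*\sigma_z=\sigma_{\exp(t\eta)z}=\sigma_z$, since $\exp(t\eta)$ fixes $z$; hence $\mathcal{L}_{\eta^l}\sigma_z=0$, the function $\iota_{\eta^l}\sigma_z$ is locally constant, and (using that $G^c$ is connected) $f_{z,g}(\zeta)$ is independent of $g$. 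Evaluating at $g=e$ identifies the common value with $f_z(\zeta)=\langle\mu(z),\zeta\rangle$ for the $\CC$-linear extension of $\mu$; in particular $f_z$ is $\CC$-linear.

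Granted independence, the character property is short. Substituting $g=\exp(s\zeta_1)$ with $\zeta_1\in(\mathfrak{g}^c)_z$ into the identity $f_z(\zeta_2)=\langle\mu(gz),\Ad(g)\zeta_2\rangle$ and using $\exp(s\zeta_1)z=z$ gives $\langle\mu(z),\Ad(\exp s\zeta_1)\zeta_2\rangle=\langle\mu(z),\zeta_2\rangle$ for all $s$; differentiating at $s=0$ yields $f_z([\zeta_1,\zeta_2])=\langle\mu(z),[\zeta_1,\zeta_2]\rangle=0$, and a $\CC$-linear functional vanishing on brackets is a Lie algebra character. Finally, if $\mu(g_0z)=0$ for some $g_0\in G^c$, then by $g$-independence $f_z(\zeta)=\langle\mu(g_0z),\Ad(g_0)\zeta\rangle=0$ for all $\zeta\in(\mathfrak{g}^c)_z$, which is the asserted vanishing.

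The proof is short precisely because the real analytic input — the moment-map identity $\langle d\mu,\xi\rangle=\iota_{Y_\xi}\omega$ and its interaction with $J$ — has already been absorbed into the closedness of $\sigma_z$ and the cocycle relation \eqref{eq:cocyclesigma}. If one insisted on a self-contained argument bypassing $\sigma_z$, the one genuinely delicate point would be extending the moment-map identity to complexified directions $\xi\in\imag\mathfrak{g}$, where one must use $Y_{\imag\xi}=JY_\xi$ and the $\omega$-compatibility of $J$; channelling everything through $\sigma_z$ is what makes that difficulty disappear. The only other care needed is the connectedness of $G^c$ (or restriction to its identity component) when upgrading ``locally constant'' to ``constant''.
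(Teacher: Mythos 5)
Your proof is correct and follows the paper's own argument: the independence of $g$ is established exactly as in the text, via Cartan's formula, the closedness of $\sigma_z$, and the cocycle relation \eqref{eq:cocyclesigma} applied to the flow $g\mapsto g\exp(t\eta)$ of $\eta^l$, using that $\exp(t\eta)$ fixes $z$ for $\eta\in(\mathfrak{g}^c)_z$. The only (harmless) divergence is in the character property, where you differentiate $\langle\mu(z),\Ad(\exp s\zeta_1)\zeta_2\rangle=\langle\mu(z),\zeta_2\rangle$ in $s$ instead of combining $d\sigma_z(\zeta^l)=0$ with the expansion \eqref{eq:sigmazclosed1} of $d\sigma_z$ as the paper does; both routes give $f_z([\zeta_1,\zeta_2])=0$.
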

\begin{proof}
For the first part of the statement it is enough to check that $\sigma_z(\zeta^l)$ is constant on $G^c$ for any $\zeta
\in (\mathfrak{g}^c)_z$ and using \eqref{eq:cocyclesigma} we just have to check it at $1$. Thus, given any $\eta \in
\mathfrak{g}^c$,
$$
\frac{d}{dt}_{|t=0}\sigma_z(\zeta^l)_{|e^{t\eta}} = d(\sigma_z(\zeta^l)) \cdot \eta = (-i_{\zeta^l} d\sigma_z +
L_{\zeta^l} \sigma_z) \cdot \eta  = L_{\zeta^l} \sigma_z \cdot \eta = 0,
$$
where the last equality follows from \eqref{eq:cocyclesigma}.
Combining $d\sigma_z(\zeta^l) = 0$ with \eqref{eq:sigmazclosed1}, we obtain
$$
f_{z,g}([\zeta_1,\zeta_2]) = \imag \sigma_z([\zeta_1^l,\zeta_2^l]) + \sigma_z([(\imag\zeta_1)^l,\zeta_2^l]) = 0,
$$
for any pair $\zeta_1$, $\zeta_2 \in (\mathfrak{g}^c)_z$ as claimed.
\end{proof}

\begin{remark}
The complex group $G_z^c$, the complexification of $G_z$, is a complex subgroup of $(G^c)_z$ but they may not coincide
for every $z \in Z$. However, if $z \in Z$ contains a zero of the moment map in the orbit $G^c \cdot z$ then $G_z^c =
(G^c)_z$ (see~\cite[Proposition~2.4]{FO}).
\end{remark}

\subsection{The proof of Kempf-Ness Theorem}
\label{subsec:KNproof}

The key idea to prove the existence of the homeomorphism \eqref{eq:KNcorrespondence}, due to G. Kempf
and L. Ness in \cite{KN}, is to construct a convex functional on each $G^c$ orbit on $V$ whose critical points coincide with
the lifts of zeros of the moment map $\mu$. Using the Hilbert-Mumford criterion it follows that the $G^c$ orbits of
polystable points are precisely those that cut the zero locus of $\mu$. Here we adapt the proof of I. Mundet i Riera
in
\cite{MR2} to the polystable case, since it provides a general argument for non projective K\"{a}hler manifolds that
will be useful later in this thesis.

First we shall relate the tools used to construct the symplectic and GIT quotients, i.e. the moment map and the
linearization. As pointed out before, the representation $\rho: G^c \to SL(V)$ gives a lift of the $G^c$-action to the
line bundle $L$ that determines a moment map
$$
\langle \mu(z), \zeta \rangle = \frac{\imag}{2\pi} \frac{h(\hat{z},\rho_*(\zeta)\hat{z})}{|\hat{z}|^2}
$$
for the $G$ action (~\cite[\S 6.5.1]{DK}), where $\zeta \in \mathfrak{g}$, $\hat{z} \in V$ is any vector lying over $z
\in Z$ and the squared norm $|\hat{z}|^2$ is taken with respect to the Hermitian metric $h$ on $V$. The moment map and
the linearization are matched together via an infinitesimal identity on $\Tot(L)$. Indeed, if $\zeta \in
\mathfrak{g}$ and we denote respectively by $Y_\zeta$ and $\hat{Y}_\zeta$ the infinitesimal actions on $Z$ and $L$, we
have
\begin{equation}\label{eq:infinitesimalidentity}
\hat{Y}_\zeta = \theta^\perp_A(Y_\zeta) + 2\pi\imag \langle \mu , \zeta \rangle \Id.
\end{equation}
Here $A$ is the Chern connection of the Hermitian metric in $L$ (induced by $h$) and $\theta^\perp_A$ denotes the
horizontal lift of vector fields on $Z$ with respect to $A$. By $\Id$ we denote the canonical vertical vector field on
$L$ induced by the identity endomorphism. The identity \eqref{eq:infinitesimalidentity} shows that if $\zeta \in
\mathfrak{g}$ generates a $1$-parameter subgroup $\lambda_\zeta$ the corresponding maximal weight satisfies
\begin{equation}\label{eq:weightmmap1}
w(z_0,\lambda_\zeta) = 2\pi\langle\mu(z_0),\zeta\rangle.
\end{equation}
The right hand side of this expression can be rewritten using that (cf.~\cite[Definition~2.3]{MR2})
\begin{equation}\label{eq:weightmmap2}
\langle\mu(z_0),\zeta\rangle = \langle \mu(\lim_{t \to 0} \lambda(t) z), \zeta \rangle = \langle \mu(\lim_{t \to
\infty} e^{\imag t \zeta} z), \zeta \rangle = \lim_{t \to \infty} \langle \mu(e^{\imag t \zeta}z), \zeta \rangle.
\end{equation}

We now recall from \cite{MR2} the construction of a convex functional, very often referred as the \emph{integral of
the moment map}. Given $z \in Z$ the integral of the moment map $\mathcal{M}_z \colon G\backslash G^c \to \RR$ is
defined as the unique functional that satisfies the properties
\begin{equation}\label{eq:intmappropertypre}
d \mathcal{M}_z = \sigma_z \qquad \textrm{and} \qquad \mathcal{M}_z([1]) = 0,
\end{equation}
where $\sigma_z$ denotes now the $1$-form on $G\backslash G^c$ induced by the $G$-invariant form \eqref{eq:sigmaz}.
Property \eqref{eq:cocyclesigma} of $\sigma_z$ is translated into the following cocycle condition
\begin{equation}\label{eq:cocycleMz}
\mathcal{M}_z(g h) = \mathcal{M}_{h z}(g) + \mathcal{M}_z(h).
\end{equation}
We will now use $\cM_z$ to give a prove of the fundamental lemma from which Kempf-Ness Theorem follows. This lemma
says
that the polystability of the point $z$ is equivalent to the fact that $\cM_z$ attains a minimum, the latter being
equivalent to $G^c z \cap \mu^{-1}(0) \neq \emptyset$ due to \eqref{eq:intmappropertypre}. To give a clue of why this
is true recall that $G\backslash G^c$ is a symmetric space of noncompact type that can be endowed with a preferred
negatively curved Riemannian metric. The geodesics corresponding to this metric are given by maps $t \in \RR \to [e^{\imag t
\zeta}g] \in G\backslash G^c$ for any $\zeta \in \mathfrak{g}$ and $g \in G^c$. Along a geodesic, the integral of the
moment has a nice explicit expression:
\begin{equation}\label{eq:Mzalonggeod}
\mathcal{M}_z([e^{\imag t \zeta}g]) = \mathcal{M}_z([g]) + \int_0^t \langle
\mu(e^{\imag s \zeta} g z), \zeta\rangle ds.
\end{equation}
We conclude from \eqref{eq:weightmmap1} and \eqref{eq:weightmmap2} that the maximal weights of the $1$-parameter
subgroups in $G^c$ control the asymptotic behavior of $\mathcal{M}_z$ along geodesics (a priori only of those arising
from $1$-parameter subgroups, but in fact of all of them by~\cite[Lemma~2.4.8]{MR1}). We go now into the proof of the
lemma.

\begin{lemma}[Kempf-Ness]\label{lemma:KempfNess}
A point $z \in Z$ is polystable if and only if $G^c \cdot z$ contains a zero of the moment map. There is at most one
$G$-orbit inside each $G^c$-orbit on which the moment map vanishes.
\end{lemma}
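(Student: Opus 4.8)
The plan is to use the integral of the moment map $\cM_z\colon G\backslash G^c \to \RR$ from \eqref{eq:intmappropertypre} as the convex functional underlying the Kempf--Ness argument: by construction a critical point $[g]$ of $\cM_z$ is precisely a point where $\sigma_z|_{[g]} = 0$, i.e.\ where $\langle \mu(gz),\cdot\rangle = 0$ by \eqref{eq:sigmaz}, hence a zero of $\mu$ inside $G^c\cdot z$. The analytic input is convexity along the geodesics $s\mapsto[e^{\imag s\zeta}g]$, $\zeta\in\mathfrak{g}$: differentiating the integrand in \eqref{eq:Mzalonggeod} once more, using the moment map equation $\langle d\mu,\zeta\rangle = i_{Y_\zeta}\omega$ together with the fact that the infinitesimal action of $\imag\zeta\in\mathfrak{g}^c$ is $JY_\zeta$, one finds
\begin{equation*}
\frac{d^2}{ds^2}\cM_z([e^{\imag s\zeta}g]) = \omega_{e^{\imag s\zeta}gz}\bigl(Y_\zeta,JY_\zeta\bigr) \geq 0,
\end{equation*}
with equality for all $s$ precisely when $\zeta$ lies in the isotropy algebra $(\mathfrak{g}^c)_{gz}\cap\mathfrak{g}$. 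Thus $\cM_z$ is convex on the (Hadamard) symmetric space $G\backslash G^c$, strictly convex transverse to the isotropy directions, and by \eqref{eq:weightmmap1}--\eqref{eq:weightmmap2}, extended to arbitrary geodesics via \cite[Lemma~2.4.8]{MR1}, its asymptotic slope along such a geodesic equals $\frac{1}{2\pi}w(z,\lambda_\zeta)$.

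For the \emph{if} direction, suppose $\mu(z_0) = 0$ with $z_0 = g_0 z$. Then $d\cM_{z_0}|_{[1]} = \sigma_{z_0}|_{[1]} = \langle \mu(z_0),\cdot\rangle = 0$, so convexity makes $[1]$ a global minimum of $\cM_{z_0}$, and by the cocycle relation \eqref{eq:cocycleMz} the coset $[g_0]$ is a global minimum of $\cM_z$. Hence $\cM_z$ is nondecreasing along every geodesic ray from $[g_0]$, so all its asymptotic slopes are $\geq 0$; by the Hilbert--Mumford criterion $z$ is semistable. If in addition some maximal weight vanishes, $w(z,\lambda_\zeta) = 0$, then $\cM_z$ is convex with a minimum and zero asymptotic slope along the corresponding geodesic, hence constant there, so by the strict convexity computation $Y_\zeta$ vanishes along the orbit and $\lambda_\zeta$ fixes $z$; this is exactly the Hilbert--Mumford condition for polystability.

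For the \emph{only if} direction, assume $z$ polystable. By Hilbert--Mumford the asymptotic slopes of $\cM_z$ are $\geq 0$ along every geodesic and vanish only along the isotropy directions $\zeta\in(\mathfrak{g}^c)_z\cap\mathfrak{g}$, along which $\cM_z$ is constant; factoring out these flat directions yields a convex function on a lower-dimensional Hadamard space all of whose asymptotic slopes are strictly positive, and such a function attains its infimum by the standard symmetric-space argument of \cite{MR2}. A minimizing coset gives a zero of $\mu$ in $G^c\cdot z$. For uniqueness, if $\mu(z_1) = 0 = \mu(z_2)$ with $z_2 = g z_1$, then $[1]$ and $[g]$ are both global minima of $\cM_{z_1}$, so $\cM_{z_1}$ is constant on the geodesic joining them; writing that geodesic as $s\mapsto[e^{\imag s\zeta}]$ with $e^{\imag\zeta}$ the positive factor in the Cartan decomposition $g = k\,e^{\imag\zeta}$, $k\in G$, strict convexity forces $Y_\zeta = 0$ at $z_1$, hence $e^{\imag\zeta}z_1 = z_1$ and $z_2 = k z_1 \in G\cdot z_1$.

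The step I expect to be the main obstacle is the \emph{only if} direction --- specifically, upgrading the polystability inequalities on maximal weights to the assertion that $\cM_z$ actually attains (not merely approaches) its infimum. This needs the identification of the asymptotic slopes of $\cM_z$ along \emph{all} geodesics, not only those arising from algebraic $1$-parameter subgroups, with the corresponding maximal weights (\cite[Lemma~2.4.8]{MR1}), together with a careful convexity and properness analysis on $G\backslash G^c$ after quotienting by the flat isotropy directions; this is the technical core of the proof of Mundet i Riera in \cite{MR2} that is being adapted here.
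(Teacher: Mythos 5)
Your proof is correct in outline and shares the paper's core mechanism (convexity of $\cM_z$ along the geodesics $s\mapsto[e^{\imag s\zeta}g]$ with second derivative $\|Y_\zeta\|^2$, asymptotic slopes identified with maximal weights, and uniqueness forced by flatness of $\cM_z$ along the geodesic joining two minima, giving $e^{\imag\zeta}\in G^c_z$), but the \emph{only if} direction is argued by a genuinely different route. The paper works directly from Definition~\ref{def:GITstability}: using the linearization it computes the closed form $\cM_z([k e^{\imag\zeta}]) = \tfrac{2}{\pi}\bigl(\log|e^{\imag\zeta}\hat{z}| - \log|\hat{z}|\bigr)$ as in \eqref{eq:Integralcomputation}, so a minimizing sequence $[e^{\imag\zeta_j}]$ gives a bounded sequence $e^{\imag\zeta_j}\hat{z}$ in $V$, and closedness of the orbit $G^c\cdot\hat{z}$ (the paper's definition of polystability) places the limit back in the orbit, producing a minimizer with no properness analysis at all. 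You instead take the Hilbert--Mumford characterization of polystability as the hypothesis and run an abstract Hadamard-space argument: nonnegative asymptotic slopes, factor out the flat isotropy directions, strict positivity on the quotient, hence attainment of the infimum. Your route is the one that survives in the non-projective and infinite-dimensional settings (it is essentially Theorem~\ref{theorem:Kempf-Ness2} and the linear-properness mechanism \eqref{eq:linearproper} of \cite{MR2}), but it buys this generality at the cost of two nontrivial inputs you leave implicit: the semicontinuity/uniform positivity of the maximal-weight function needed to convert ``strictly positive slopes'' into properness, and the reductivity of $(G^c)_z$ needed to split off the flat directions as a totally geodesic factor; the paper's linear-algebraic argument avoids both. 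One smaller point: in the \emph{if} direction your geodesics only see $1$-parameter subgroups generated by $\zeta\in\mathfrak{g}$, so to conclude polystability via Hilbert--Mumford you need Kirwan's refinement that such subgroups suffice --- the paper cites this explicitly (\cite{Kir}, p.~107) and you should too.
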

\begin{proof}
Suppose first that $G^z \cdot z$ contains a zero of the moment map. We will show that $z$ is polystable using a
refinement of the Hilbert-Mumford criterion due to F. Kirwan (see \cite{Kir} pg. $107$), that shows that it is
sufficient to consider $1$-parameter subgroups $\lambda: \CC^* \to G^c$ with $\lambda(S^1) \subset G$. Let $\lambda$
be
such a $1$-parameter subgroup and denote by $\zeta \in \mathfrak{g}$ the generator of the $S^1$ action. Then, from
\eqref{eq:weightmmap1}, \eqref{eq:weightmmap2} and $\mu(z) = 0$, we have $w(z,\lambda) \geq 0$, since
$$
\frac{d}{dt} \langle \mu(e^{\imag t \zeta}z), \zeta \rangle = \omega(Y_\zeta,JY_\zeta) = \|Y_\zeta\|^2.
$$
Moreover, $w(z,\lambda) = 0$ only if $Y_{\zeta|e^{\imag t \zeta}} = 0$ for all $t > 0$. In particular $Y_{\zeta|z} =
0$
which implies that $\lambda$ fixes $z$. Suppose now that $z \in Z$ is polystable. We claim that $\mathcal{M}_z$
attains
a minimum on $G\backslash G^c$. Using the polar decomposition of $G^c$ any element $g \in G^c$ can be written as $g =
k
e^{\imag \zeta}$, with $k \in G$ and $\zeta \in \mathfrak{g}$. Combined with \eqref{eq:Mzalonggeod}, we thus obtain
the
equality
\begin{equation}
\label{eq:Integralcomputation}
\begin{split}
\mathcal{M}_z([g]) & = \int_0^1 \langle \mu(e^{\imag t \zeta}z, \zeta\rangle dt = \int_0^1 \frac{\imag}{2\pi}
\frac{h(e^{\imag t \zeta} \hat{z},\rho_*(\zeta)e^{\imag t \zeta} \hat{z})}{|e^{\imag t \zeta} \hat{z}|^2} dt\\
 & =  \frac{1}{\pi} \log (|e^{\imag t \zeta} \hat{z}|^2)|_0^1 = \frac{2}{\pi}( \log |g \hat{z}| - \log |\hat{z}|).
\end{split}
\end{equation}
Let $\{[g_j]\}_{j=0}^{\infty}$ be a minimizing sequence for $\mathcal{M}_z$ on $G\backslash G^c$. We take
representatives
$e^{\imag \zeta_j}$ of $[g_j]$, with $\zeta_j \in \mathfrak{g}$. By \eqref{eq:Integralcomputation}, the sequence $e^{\imag
\zeta_j} \hat{z}$ is bounded on $V$, so we can suppose that it converges to $\hat{z}_0$ (by taking a convergent
subsequence). Since $z$ is polystable $\hat{z}_0$ is in the $G^c$-orbit of $\hat{z}$, i.e. there exists $g \in G^c$
such that $[g]$ attains the minimum as claimed. By property \eqref{eq:intmappropertypre} and $G$-equivariance of the
moment map we conclude that $G \cdot g z \subset \mu^{-1}(0)$. Finally, let $z, z' \in G^c$ such that $\mu(z) =
\mu(z')
= 0$. Then $z' = g z$ for some $g \in G^c$ and $[1], [g] \in G\backslash G^c$ are critical points of $\cM_z$. Let $g =
k e^{\imag \zeta}$ be the polar decomposition of $g$. The second derivative of $\cM_z$ along $t \to [e^{\imag t
\zeta}]$ must vanish since
$$
\frac{d^2}{dt^2} \cM_z ([e^{\imag t \zeta}]) = \langle \mu(e^{\imag t \zeta}z), \zeta \rangle =
\omega(Y_\zeta,JY_\zeta) = \|Y_\zeta\|^2.
$$
This implies that $e^{\imag \zeta} \in G^c_z$, so $z' = g z = k e^{\imag \zeta} z = k z \in G \cdot z$ as claimed.
\end{proof}
The proof of the Kempf-Ness theorem is now straightforward. By Lemma \ref{lemma:KempfNess} we have an inclusion $\mu^{-1}(0) \subset Z^{ps} \subset Z^{ss}$ that induces a continuous map
$$
i: \mu^{-1}(0)/G \to Z^{ss}//G^c.
$$
Since any semistable orbit in the GIT quotient is glued to the unique polystable orbit on its closure, this map is
surjective (again by Lemma \ref{lemma:KempfNess}). If $i([z]) = i([z'])$ then $z' = gz$ for some $g \in G^c$, since
both
points are polystable and by Lemma \ref{lemma:KempfNess} their $G$-orbits must coincide. Therefore, $i$ is a
continuous
bijection from a compact space to a Hausdorff space and hence it is an homeomorphism.

\subsection{The non projective case}
\label{subsec:KNnonprojective}

The identification between the symplectic and algebraic quotients given by Kempf-Ness Theorem can be extended to an
arbitrary K\"{a}hler manifold (neither necessarily compact, nor projective) in a suitable sense. We follow \cite{MR3}
for this topic (see also references therein). It is in this setting where the correspondence
concerning the stable $G^c$ orbits and the zeros locus of the moment map (as in Lemma \eqref{lemma:KempfNess}) provides a
general framework that formally applies in many situations where the objects involved are infinite dimensional.

Let $(Z,\omega,J)$ be a K\"{a}hler manifold endowed with a Hamiltonian left action of a compact group $G$. Let us fix a moment map $\mu$ and suppose that $G$ acts by holomorphic isometries. By a theorem of Guillemin and Sternberg (see
\cite{GS}), the action of $G$ extends to a unique holomorphic action of $G^c$ on $Z$. The action of $G^c$, however, no
longer preserves the symplectic form of $Z$ nor does it preserve the zero level set $\mu^{-1}(0)$. The corresponce
\eqref{eq:KNcorrespondence} in the projective case motivates the search for a result analogue to Lemma
\eqref{lemma:KempfNess} in this general setup. A partial answer to this problem was given in \cite{MR2}, where a suitable notion of stability, called analytic stability, was defined using the right hand side of formula \eqref{eq:weightmmap2} as the
definition of the maximal weight. A point $z \in Z$ is then called \emph{analytically stable} if the maximal weight
\begin{equation}\label{eq:maximalweight}
w(z,\zeta) = \lim_{t \to \infty} \langle \mu(e^{\imag t \zeta}z),\zeta\rangle
\end{equation}
is strictly positive for any non-zero $\zeta \in \mathfrak{g}$. The notion of analytically stable is a priori stronger than GIT
stable in the projective case but they are in fact equivalent (a proof can be found in ~\cite[Lemma~2.4.8]{MR1}). In
general it is not straightforward that analytic stability is an invariant of the $G^c$-orbit of a point. This is
proved
in~\cite[Theorem~5.4]{MR2} as a consequence of the characterization of analytic stability in terms of the so-called
linear properness of the integral of the moment map $\cM_z$, which is constructed exactly as in the projective case. Recall that $\cM_z$ is said to be linearly proper if
\begin{equation}\label{eq:linearproper}
|\zeta| \leq C_1 \cM_z([e^{\imag \zeta}]) + C_2,
\end{equation}
for all $\zeta \in \mathfrak{g}$, suitable norm $|\cdot|$ in $\mathfrak{g}$, and positive real constants $0 < C_1, C_2
\in \RR$. The general result concerning this notion of stability can be stated as follows (see~\cite[Theorem~5.4]{MR2}).
\begin{theorem}[\cite{MR2}]\label{theorem:Kempf-Ness2}
A point $z \in Z$ is analytically stable if and only if the stabiliser $G^c_z$ of $z$ in $G^c$ is finite and $ G^c
\cdot z \cap \mu^{-1}(0) \neq \emptyset$. Furthermore, there is at most one $G$-orbit inside each $G^c$-orbit on which
the moment map vanishes.
\end{theorem}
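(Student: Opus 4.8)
The plan is to follow closely the strategy of Lemma~\ref{lemma:KempfNess} and its proof, replacing the explicit Fubini--Study computation \eqref{eq:Integralcomputation} by the intrinsic properties of the \emph{integral of the moment map} $\cM_z\colon G\backslash G^c\to\RR$, which on a general K\"ahler manifold is defined exactly as in \secref{subsec:KNproof}: it satisfies $d\cM_z=\sigma_z$, the cocycle identity \eqref{eq:cocycleMz}, the formula \eqref{eq:Mzalonggeod} along geodesics $t\mapsto[e^{\imag t\zeta}g]$, and, differentiating \eqref{eq:Mzalonggeod} once more,
\[
\frac{d^2}{dt^2}\cM_z([e^{\imag t\zeta}g])=\frac{d}{dt}\langle\mu(e^{\imag t\zeta}gz),\zeta\rangle=\omega(Y_\zeta,JY_\zeta)=\|Y_\zeta\|^2\ge 0,
\]
so $\cM_z$ is convex along geodesics. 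Here one uses that $G\backslash G^c$, with its symmetric-space metric, is a Hadamard manifold on which the right $G^c$-action is by isometries, and that every point of $G\backslash G^c$ is of the form $[e^{\imag\zeta}]$ for a unique $\zeta\in\mathfrak{g}$.

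The first and main step is the characterisation: $z$ is analytically stable if and only if $\cM_z$ is linearly proper in the sense of \eqref{eq:linearproper}. By \eqref{eq:Mzalonggeod} and the definition \eqref{eq:maximalweight} the asymptotic slope of $\cM_z$ along the ray $t\mapsto[e^{\imag t\zeta}]$ is precisely the maximal weight $w(z,\zeta)$; by \cite[Lemma~2.4.8]{MR1} the asymptotic slope of $\cM_z$ along an \emph{arbitrary} geodesic ray is bounded below by these maximal weights. Since $\cM_z$ is convex and $\cM_z([1])=0$, linear properness is then equivalent to the positivity $w(z,\zeta)>0$ for all $\zeta\neq 0$, the passage from pointwise positivity to a uniform estimate $w(z,\zeta)\ge\epsilon|\zeta|$ being automatic by compactness of the unit sphere in $\mathfrak{g}$ and lower semicontinuity of $\zeta\mapsto w(z,\zeta)$. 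Moreover, combining the cocycle \eqref{eq:cocycleMz} with the fact that right translation $R_g$ is an isometry of $G\backslash G^c$ gives $\cM_{gz}=\cM_z\circ R_g-\cM_z([g])$, so linear properness --- hence analytic stability --- depends only on the $G^c$-orbit of $z$.

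Granting this, the theorem follows as in Lemma~\ref{lemma:KempfNess}. If $z$ is analytically stable, $\cM_z$ is a convex, linearly proper function on the Hadamard manifold $G\backslash G^c$, hence attains its minimum at some $[g]$; at a minimum $\sigma_z|_{[g]}=d\cM_z|_{[g]}=0$, which by \eqref{eq:sigmaz} amounts to $\langle\mu(gz),\eta\rangle=0$ for all $\eta\in\mathfrak{g}$, i.e.\ $\mu(gz)=0$, so $G^c\cdot z\cap\mu^{-1}(0)\neq\emptyset$. Were the stabiliser $G^c_z$ infinite, so would be $G^c_{gz}$, which by \cite[Proposition~2.4]{FO} coincides with the complexification of the compact stabiliser $G_{gz}$ since $\mu(gz)=0$; choosing $0\neq\xi\in\Lie G_{gz}\subset\mathfrak{g}$ gives $Y_\xi|_{gz}=0$, hence $Y_{\imag\xi}|_{gz}=JY_\xi|_{gz}=0$, so $e^{\imag t\xi}$ fixes $gz$ and \eqref{eq:Mzalonggeod} forces $\cM_z([e^{\imag t\xi}g])\equiv\cM_z([g])$; but $t\mapsto[e^{\imag t\xi}g]$ leaves every bounded set of $G\backslash G^c$, contradicting properness. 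Conversely, if $G^c_z$ is finite and $\mu(gz)=0$, then for each $0\neq\zeta\in\mathfrak{g}$ the function $t\mapsto\langle\mu(e^{\imag t\zeta}gz),\zeta\rangle$ is nondecreasing with value $0$ at $t=0$, hence $\ge 0$, so $w(gz,\zeta)\ge 0$; equality would give $Y_\zeta|_{gz}=0$ and thus $\imag\zeta\in(\mathfrak{g}^c)_{gz}$, contradicting finiteness, so $w(gz,\zeta)>0$ and $gz$ --- equivalently $z$, by orbit-invariance --- is analytically stable. Finally, if $\mu(z)=\mu(z')=0$ with $z'=gz$ and polar decomposition $g=ke^{\imag\zeta}$, then $\mu(e^{\imag\zeta}z)=\Ad(k)^*\mu(z')=0$, so both $[1]$ and $[e^{\imag\zeta}]$ are minima of the convex $\cM_z$; hence $\cM_z$ is constant along the geodesic joining them and the second-derivative identity above forces $Y_\zeta$ to vanish along it, so $e^{\imag\zeta}\in G^c_z$ and $z'=ke^{\imag\zeta}z=kz\in G\cdot z$, which is the uniqueness statement.

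I expect the crux of the argument to be the first step --- the equivalence of analytic stability with linear properness of $\cM_z$ --- and within it the input \cite[Lemma~2.4.8]{MR1} guaranteeing that the maximal weights computed along the distinguished rays $t\mapsto[e^{\imag t\zeta}]$ actually govern the growth of $\cM_z$ along \emph{every} geodesic ray; once this is available, the compactness argument producing a uniform linear lower bound, the orbit-invariance via the isometric right $G^c$-action, the existence of a minimiser on the Hadamard manifold, and the conversion of the vanishing of $\sigma_z$ into the equation $\mu=0$ are all comparatively routine, and the remainder is the convex-analytic bookkeeping already carried out in the proof of Lemma~\ref{lemma:KempfNess}.
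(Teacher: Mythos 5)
Your proposal is correct and follows essentially the same route as the paper, which itself only remarks that the proof of Lemma~\ref{lemma:KempfNess} carries over once the explicit computation \eqref{eq:Integralcomputation} is replaced by the linear properness \eqref{eq:linearproper} of $\cM_z$. Your expansion of that remark --- deriving linear properness from positivity of the maximal weights via convexity, monotonicity in $t$ and compactness of the unit sphere of $\mathfrak{g}$, then extracting a minimiser on the Hadamard manifold $G\backslash G^c$ and handling the stabiliser and uniqueness exactly as in the projective case --- is the intended argument.
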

The proof follows as the one given for Lemma \ref{lemma:KempfNess} in the previous section. The only point that
slightly differs is the ``only if'' part, but it can be easily adapted using the linear properness of $\cM_z$ (compare
\eqref{eq:Integralcomputation} with \eqref{eq:linearproper}). Generalizing previous work of A. Teleman, M. Riera \cite{MR3} has extended the notion of
analytic stability to the polystable case . He imposes some mild restrictions on the growth of $\mu$ and the $G$-action. The
maximal weights are viewed as defining a collection of maps, for each $z \in Z$, $\lambda_z: \partial(G\backslash G^c)
\to \RR$, where $\partial(G\backslash G^c)$ is the boundary at infinity of the symmetric space $G\backslash G^c$. A suitable result, similar to Lemma \ref{lemma:KempfNess}, is proved for this notion of stability (see~\cite[Theorem~1.2]{MR3}). It is very likely that this new notion of analytic polystability will be relevant for our problem and also for the cscK problem, where no notion of analytic polystability is known (analytic (semi)stability was introduced by X. X. Chen in \cite{Ch2} with the name of \emph{geodesic (semi)stability}).

\section{The Hitchin-Kobayashi correspondence for the HYM equation}
\label{sec:HYM}

In this section we recall the moment map interpretation of the HYM equation and explain how the general framework
provided by \secref{sec:KNtheorem} applies, leading to a Hitchin-Kobayashi correspondence. First we give a short
historical review of the problem concerning the HYM equation.

\subsection{Overview}
\label{subsec:HYMoverview}

In 1965 Narasimhan and Shesadri proved that holomorphic bundles over a compact Riemann surface $X$ are stable if and only if
they arise from irreducible unitary representations of $\pi_1(X)$. This connection between holomorphic and unitary structures,
that in the classical theory of line bundles is essentially equivalent to the identification between holomorphic and
harmonic 1-forms, was studied by Atiyah and Bott in \cite{AB} from a gauge-theoretic point of view. They
identified the stable bundles over $X$ with the irreducible, projectively flat unitary connections and pointed out
that the curvature of a connection can be viewed as a moment map for the action of the gauge group. This interpretation led them to consider the Yang-Mills functional as the squared norm of this moment map and the correspondence provided by the Narashiman-Seshadri Theorem as an infinite dimensional version of Kempf-Ness theorem relating symplectic reduction with GIT quotients. The proper generalization of this fact for holomorphic vector bundles over a K\"{a}hler manifold of arbitrary dimension was suggested independently by Hitchin and Kobayashi. They conjectured that the stability of a
holomorphic bundle should be related to the existence of a unitary Hermitian-Yang-Mills (HYM) connection (equivalently
Hermite-Einstein metric). The Hitchin-Kobayashi correspondence in the case of the HYM equation for
$\GL(r,\CC)$-bundles
is a  theorem of Donaldson (for complex projective varieties) and Uhlenbeck and Yau (for compact K\"ahler
manifolds)~\cite{D3, UY}, generalized by Ramanathan and Subramanian (\cite{RS}) in the algebraic case for principal
bundles with arbitrary complex reductive Lie group.

\subsection{Moment map interpretation}
\label{subsec:HYMmmap}

In this section we review the notion of Hermitian-Yang-Mills connection, explaining its moment map interpretation.
Further details on this topic are given in sections \S \ref{sec:Ceqcoupling-term} and
\S\ref{sec:Ceqcoupled-equations}.

Let $X$ be a compact symplectic manifold of dimension $2n$, with symplectic form $\omega$ and volume form $\vol_\omega
= \frac{\omega^n}{n!}$. We fix a real compact Lie group $G$ with Lie algebra $\mathfrak{g}$ and a smooth principal
$G$-bundle $E$ over $X$. We also fix a positive definite inner product
$$
(\cdot,\cdot): \mathfrak{g} \times \mathfrak{g} \to \RR
$$
on $\mathfrak{g}$ invariant under the adjoint action. Recall from the introduction that this metric gives a
pairing $\Omega^p(\ad E) \times \Omega^q(\ad E) \to \Omega^{p+q}\colon (a,b) \to (a\wedge b)$. A connection on $E$ is a $G$-equivariant splitting
$A\colon TE\to VE$ of the short exact sequence
$$
  0 \to VE\lto TE\lto \pi^*TX \to 0,
$$
where $VE=\ker d\pi$ is the vertical bundle. It follows by definition that the space $\cA$ of principal connections on
$E$ is an affine space modelled on $\Omega^1(\ad E)$. Let denote by $\cG$ the \emph{gauge group} of the bundle $E$,
i.e. the group of $G$-equivariant diffeomorphisms of $E$ that project to the identity on $X$. The action of
$g\in \cG$ on $TE$ defines a left $\cG$-action on $\cA$ given by $g \cdot A \defeq g\circ A \circ g^{-1}$ and the
2-form on $\cA$ given by
\begin{equation}\label{eq:SymfC}
\omega_{\cA}(a,b) = \int_X (a \wedge b) \wedge \omega^{n-1},
\end{equation}
for $a,b \in T_A \cA = \Omega^1(\ad E)$, with $A\in\cA$, is a symplectic structure preserved by $\cG$. It was first
observed by Atiyah and Bott~\cite{AB} when $X$ is a Riemann surface and by Donaldson~\cite{D3} in higher dimensions
that the $\cG$-action on $\cA$ is hamiltonian, with $\cG$-equivariant moment map $\mu_\cG\colon \cA\to
(\Lie\cG)^*$ given by
\begin{equation}
\label{eq:momentmap-cG}
  \langle\mu_\cG(A),\zeta\rangle = \int_X (\zeta\wedge F_A)\wedge \omega^{n-1},
\end{equation}
where $\zeta \in \LieG$ and $F_A\in\Omega^2(\ad E)$ is the curvature of $A\in\cA$. Note here that the $\cG$-invariant bilinear pairing in $\LieG = \Omega^0(\ad E)$ given by
\begin{equation}
\label{eq:LieG-dual}
\langle\zeta_1,\zeta_2\rangle_\cG \defeq \int_X (\zeta_1\wedge\zeta_2) \vol_\omega,
\end{equation}
where $\zeta_1,\zeta_2 \in \LieG$, allows us to identify (up to multiplicative constant factors) the image of the moment
map \eqref{eq:momentmap-cG} with $\Lambda_\omega F_A \in \LieG$.

We now suppose that $X$ is a K\"ahler manifold with K\"ahler form $\omega$ and consider the associated principal
$G^c$-bundle $E^c = E \times_G G^c$, where the group $G^c$ denotes the complexification of $G$. There is a
$\cG$-invariant distinguished subspace $\cA^{1,1} \subseteq \cA$ of connections $A$ with $F_A\in\Omega^{1,1}(\ad E)$.
This space is in correspondence with the space of holomorphic structures on the principal $G^c$-bundle $E^c$ over the
complex manifold $X$. Away from the singularities, it inherits a complex structure compatible with $\omega_\cA$ and
hence a K\"{a}hler structure.
Thus, the smooth locus of $\cA^{1,1}$ is a K\"{a}hler manifold endowed with a Hamiltonian $\cG$-action and we can
consider the K\"{a}hler reduction
\begin{equation}
\label{eq:SympredHYM}
  \mu_\cG^{-1}(z)/\cG,
\end{equation}
where $\mu_\cG$ is now the restriction of the moment map to $\cA^{1,1}$ and $z$ is an element of the centre
$\mathfrak{z}$ of $\mathfrak{g}$. Note
here that any element $z \in \mathfrak{z}$ determines a $\cG$-invariant element in $\LieG$ and we use the
identification $\mu_\cG(A) \in \LieG$, for
$A \in \cA$, provided by the $\cG$-invariant pairing $\langle\cdot,\cdot\rangle_\cG$ to make sense of
\eqref{eq:SympredHYM}. The elements in
$\mu^{-1}(z)$ for $z \in \mathfrak{z}$ are the \emph{Hermitian-Yang-Mills} connections that we recall in the
following definition.

\begin{definition}
A connection $A \in \cA$ is called \emph{Hermitian--Yang--Mills} if $F_A^{0,2}=0$ and it satisfies the
\emph{Hermitian--Yang--Mills equation}
\begin{equation}
\label{eq:HYM}
\Lambda F_A=z, \qquad z \in \mathfrak{z}.
\end{equation}
\end{definition}
The element $z \in \mathfrak{z}$ in the right-hand side of~\eqref{eq:HYM} is determined by
the cohomology class $\Omega\defeq [\omega]\in H^2(X)$ and the topology of the
principal bundle $E$. This follows because applying $(z_j,\cdot)$
to~\eqref{eq:HYM}, for an orthonormal basis $\{z_j\}$ of
$\mathfrak{z}\subset\mathfrak{g}$, and then
integrating over $X$, we obtain
\begin{equation}
\label{eq:z(Omega,E)}
z=\sum_j \frac{\langle z_j(E)\cup \Omega^{[n-1]},[X]\rangle}{\Vol(\Omega)} z_j.
\end{equation}
Here, $\Omega^{[k]}\defeq \Omega^k/k!$, $\Vol(\Omega)\defeq \langle
\Omega^{[n]},[X]\rangle$ and $z_j(E)\defeq [z_j \wedge F_A]\in H^2(X)$ is the
Chern--Weyl class associated to the $G$-invariant linear form $(z_j, \cdot)$
on $\mathfrak{g}$, which only depends on the topology of the bundle $E$
(see~\cite[Ch XII, \S 1]{KNII}). When the associated K\"{a}hler reduction \eqref{eq:SympredHYM} of $\cA^{1,1}$ is non-empty it turns out that this reduction has finite
dimension and inherits, in the smooth locus, a structure of K\"{a}hler manifold
(see~\cite[Corollary~3.32]{Ko}).

\subsection{The correspondence}
\label{subsec:HYMHKproof}

Consider the space of isomorphism classes of holomorphic structures on the smooth principal $G^c$-bundle $E^c$
over the complex manifold $X$, with fixed K\"{a}hler form $\omega$. Set theoretically this space can be realized as the
quotient of $\cA^{1,1}$ by the left action of the gauge group $\cG^c$ of $E^c$, often referred as the \emph{complex
gauge group}. The action of $g \in \cG^c$ on $\cA^{1,1}$ can be thought of as the push-forward of the (integrable) almost
complex structure on $\Tot(E^c)$ defined by any element $A \in \cA^{1,1}$. The notation for the complex
gauge group in not accidental, since it can be considered a complexification of $\cG$. We are thus in position to formulate an infinite dimensional version of Lemma \ref{lemma:KempfNess} characterizing $\cG^c$-orbits in $\cA^{1,1}$ that contain a HYM connection. This correspondence is often referred as \emph{the Hitchin-Kobayashi correspondence} for the HYM equation~\cite{D3, UY, RS}.

Following \cite{MR2} we define stability in terms of maximal weights. A connection $A \in \cA^{1,1}$ is \emph{stable}
if
$$
w(A,\zeta) = \lim_{t \to \infty} \langle\mu_\cG(e^{\imag t\zeta} A),\zeta \rangle > 0
$$
for any non-zero $\zeta$ in a suitable Sobolev completion of $\LieG \cong \Omega^0(\ad E)$ (see~\cite[Section~4.1.4]{MR2}). The
picture provided by \secref{sec:KNtheorem} applies formally here giving an integral of the moment map $\mathcal{M}_A:
\cG\backslash \cG^c \to \RR$, the Donaldson functional in the case of vector bundles \cite{D3}, whose critical points can be identified with zeros
of
$\mu_\cG$. The existence of the functional is due to the fact that $\cG\backslash \cG^c$ is contractible. This
functional is convex along geodesics $t \mapsto [e^{\imag t \zeta}g]$ in the infinite dimensional symmetric space
$\cG\backslash \cG^c$ and its behaviour at infinity is controlled by the corresponding maximal weights. Recall that a
holomorphic $G^c$-bundle $E^c$ is simple if the space of holomorphic global sections of $\ad E^c$ coincides with the centre of $\mathfrak{g}^c$. Identifying $\cG\backslash \cG^c$ with the space of smooth $G$-reductions of the holomorphic bundle
determined by $A$ we can state the Hitchin-Kobayashi correspondence as follows (see~\cite[Remark~4]{RS})
\begin{theorem}[{\cite{D4, RS, UY}}]\label{theorem:HKcorrespondence}
A holomorphic $G^c$-bundle is stable if and only if it is simple and there exists a unique $G$-reduction whose
associated Chern connection is Hermite-Yang-Mills.
\end{theorem}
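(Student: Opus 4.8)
The statement is the Hitchin--Kobayashi correspondence for the HYM equation, and the plan is to prove it by transporting the finite-dimensional Kempf--Ness picture of \secref{subsec:KNnonprojective} to the infinite-dimensional setting of \secref{subsec:HYMmmap}, with the genuinely analytic input supplied by \cite{D3,UY,RS}. First I would fix the dictionary: the role of the K\"ahler manifold $(Z,\omega,J)$ is played by the smooth locus of $\cA^{1,1}$ with the Atiyah--Bott form $\omega_\cA$ and its compatible complex structure, the role of $G$ by the gauge group $\cG$, the role of $G^c$ by the complex gauge group $\cG^c$, and the role of the moment map by the shift $A\mapsto\mu_\cG(A)-z$, with $z$ the central element determined by \eqref{eq:z(Omega,E)}. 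In this dictionary a single $\cG^c$-orbit in $\cA^{1,1}$ is exactly one isomorphism class of holomorphic $G^c$-bundle structure on $E^c$, the quotient $\cG\backslash\cG^c$ is identified with the space $\cR$ of smooth $G$-reductions of that holomorphic bundle, the points of $\mu_\cG^{-1}(z)$ inside the orbit are precisely the reductions whose Chern connection is Hermite--Yang--Mills, and the hypothesis that the bundle be \emph{simple} says that the Lie algebra of the stabiliser $\cG^c_A$ is the centre of $\mathfrak{g}^c$, the infinite-dimensional counterpart of ``finite stabiliser''. Thus the assertion becomes the exact analogue of Theorem~\ref{theorem:Kempf-Ness2}, and the scheme of \secref{subsec:KNproof} is what I would follow.

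Next I would build the integral of the moment map $\cM_A\colon\cR\to\RR$, the Donaldson functional: since $\cG\backslash\cG^c$ is contractible, the closed $1$-form $\sigma_A$ associated to $\mu_\cG-z$ as in \eqref{eq:sigmaz} is exact and determines $\cM_A$ up to a constant, and along the geodesic $t\mapsto[e^{\imag t\zeta}g]$ one computes $\frac{d}{dt}\cM_A=\langle\mu_\cG(e^{\imag t\zeta}A)-z,\zeta\rangle$ with second derivative $\|Y_\zeta\|^2\ge 0$. Hence $\cM_A$ is convex along such geodesics and its asymptotic slope is the maximal weight $w(A,\zeta)$ occurring in the definition of stability. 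This reduces the theorem to the two implications of Lemma~\ref{lemma:KempfNess} transcribed into this setting.

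For the implication $(\Leftarrow)$ I would assume the bundle simple and a HYM reduction given; choosing the representative $A$ of the orbit with $\mu_\cG(A)=z$, the point $[1]$ is a critical point of the convex functional $\cM_A$, hence a global minimum, so for every nonzero admissible $\zeta$ one gets $w(A,\zeta)\ge\langle\mu_\cG(A)-z,\zeta\rangle=0$, with equality only if $Y_\zeta$ vanishes identically along the geodesic. The latter forces $\zeta$ into the Lie algebra of $\cG^c_A$, which by simplicity is the centre and is therefore excluded from the admissible destabilising directions; hence $w(A,\zeta)>0$ and the bundle is stable. Uniqueness of the $G$-reduction up to the (central, hence finite) automorphisms is the ``at most one $G$-orbit'' clause of Lemma~\ref{lemma:KempfNess}.

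The implication $(\Rightarrow)$ is where the formal picture is no longer sufficient, and it is the main obstacle. The easy half is that stability implies simple: if $\cG^c_A$ had a non-central one-parameter subgroup generated by $\zeta$ then $\mu_\cG(e^{\imag t\zeta}A)=\mu_\cG(A)$ is constant, so $w(A,\zeta)$ and $w(A,-\zeta)$ are opposite and one is nonpositive, contradicting stability. The substantial half is the existence of a HYM reduction: one must show that stability forces $\cM_A$ to be linearly proper in the sense of \eqref{eq:linearproper} --- the infinite-dimensional counterpart of the ``only if'' part of Theorem~\ref{theorem:Kempf-Ness2} --- and then that linear properness produces a minimiser of $\cM_A$, which by $d\cM_A=\sigma_A$ and $\cG$-equivariance lies in $\mu_\cG^{-1}(z)$, i.e.\ its Chern connection is Hermite--Yang--Mills. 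Here the finite-dimensional compactness used in \secref{subsec:KNproof} is unavailable and must be replaced by hard analysis: Uhlenbeck compactness and elliptic regularity to control a minimising sequence and extract the solution, together with the weak-subbundle argument of \cite{UY} (or Donaldson's determinant and evolution-equation argument of \cite{D3}, extended to principal bundles in \cite{RS}) to rule out the degenerate case by exhibiting a destabilising coherent subsheaf. Once the minimiser exists, finiteness of the stabiliser promotes ``one $G$-orbit'' to a genuinely unique reduction, which completes the argument.
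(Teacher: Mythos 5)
Your proposal follows essentially the same route as the paper: both set up the Kempf--Ness dictionary identifying $\cG\backslash\cG^c$ with the space of $G$-reductions, obtain the ``if'' direction and uniqueness from convexity of the Donaldson functional $\cM_A$ along geodesics, and reduce the ``only if'' direction to linear properness of $\cM_A$ via the Uhlenbeck--Yau weak-subbundle theorem followed by a minimising sequence and regularity. The only slip is the parenthetical ``central, hence finite'' --- the centre of $G^c$ need not be finite, and uniqueness of the reduction is just the ``at most one $G$-orbit'' clause --- but this does not affect the argument.
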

The proof of the Hitchin-Kobayashi correspondence follows formally as in the finite dimensional case: uniqueness
follows from the existence of geodesics in $\cG\backslash \cG^c$ joining any two points and the convexity of $\cM_A$
along them. The ``if'' part is the easy one and follows as in the proof of Lemma \ref{lemma:KempfNess}. For the ``only if''
part, a theorem of Uhlenbeck and Yau \cite{UY} on weak subbundles of vector bundles allows to prove that $A$ is stable
if and only if $\cM_A$ is linearly proper (see~\cite[Lemma~6.1]{MR2}). Then, by taking a minimizing sequence and
proving regularity the latter is equivalent to attaining a minimum in $\cG\backslash \cG^c$, thus achieving the proof
of Theorem \ref{theorem:HKcorrespondence}.
\begin{remark}
Note that the statement of Theorem \ref{theorem:HKcorrespondence} for the holomorphic structure determined by $A \in
\cA^{1,1}$ is analogue to that of Theorem \ref{theorem:Kempf-Ness2}. To see this note that a $G$-reduction (a Hermitian metric when $G = U(r)$) determines $g \in \cG^c$. Then, the associated Chern connection is HYM if and only if so is $g \cdot A$.
\end{remark}

\subsection{Mumford-Takemoto slope stability}
\label{subsec:HYMslope}

In this section we restrict to the algebraic setting and explain why the analytic stability condition defined in
\secref{subsec:HYMHKproof} coincides with the slope stability for coherent sheaves due to D. Mumford and F. Takemoto.
First we recall the definition of slope stability for coherent sheaves over a polarised scheme.

Let $X$ be an $n$-dimensional projective scheme with an ample invertible sheaf $L$.  Then the \emph{slope} $\mu_L(E)=d_1(E)/d_0(E)$ of a non-zero coherent sheaf $E$ with $n$-dimensional support is defined (up to an additive factor independent of $E$) as the ratio of the first two leading coefficients of its Hilbert polynomial with respect to $L$ (cf. e.g.~\cite[\S 1.2]{HL}), given by
\[
P_L(E,k)\defeq \sum_{p=0}^n (-1)^p \dim H^p(X,E\otimes L^k) = k^n d_0(E) + k^{n-1} d_1(E) + \cdots + d_n(E).
\]
Following Simpson~\cite{Sp}, a coherent sheaf on a scheme has \emph{pure dimension $k$} if it has $k$-dimensional support and no subsheaves have lower dimensional support. Let $E$ be a coherent sheaf on $X$ which is pure of dimension $n$. Then $E$ is called \emph{Mumford semistable} (with
respect to $L$) if
\[
\mu_L(F)\leq \mu_L(E)
\]
for each proper subsheaf $F\subset E$.  Such an $E$ is \emph{Mumford stable} if the inequality is always strict and
\emph{Mumford polystable} if it is a direct sum of stable sheaves, all of them with the same slope.

Suppose now that $(X,L)$ is a smooth compact polarised variety and let $E$ be a holomorphic vector bundle (locally
free
sheaf) over $X$. As usual holomorphic $\GL(r,\CC)$-principal bundles are identified with holomorphic vector budles
over
$X$ via the standard representation on $\CC^r$. Fix some Hermitian metric on $E$ and suppose that a \emph{weak}
endomorphism $\zeta$ destabilizes the Chern connection $A$, i.e. has negative maximal weight, in the sense of the
previous section. Then it has almost everywhere constant eigenvalues and defines a filtration of $E$ by holomorphic
subbundles in the
complement of a complex codimension $2$ subvariety of $X$. Using a theorem of Uhlenbeck and Yau \cite{UY} on weak
subbundles of vector bundles this leads to a filtration of $E$ by reflexive (coherent) subsheaves (see~\cite{UY}) and
the maximal weight $w(A,\zeta)$ has then an explicit expression in terms of the slopes of the elements of the
filtration (see~\cite[Lemma~4.2]{MR2}). From this it follows that both definitions are equivalent in the stable and
semistable case and so the Theorem \ref{theorem:HKcorrespondence} can be stated in this context as follows
(see~\cite[Corollary~4]{D3}): A holomorphic vector bundle $E$ is polystable if and only if there exists a Hermitian
metric on $E$ whose associated Chern connection is Hermite-Yang-Mills.

\section{The cscK problem}
\label{sec:cscK}

In this section we briefly review the state of the art of the cscK problem, centering the attention in
those aspects relevant for the contents of this thesis. We discuss the moment map interpretation of the scalar
curvature of a K\"{a}hler metric~\cite{Fj,D1} and how the problem fits into the general framework of Section
\secref{sec:KNtheorem} \cite{D4}.

\subsection{Overview}
\label{subsec:cscKoverview}

In the $1980$'s E. Calabi \cite{Ca} settled the problem of finding K\"ahler metrics, within a fixed K\"{a}hler class on a compact complex manifold, which are critical points of the $L^2$ norm of the scalar curvature. These metrics $g$ are called \emph{extremals}, and those satisfying the cscK equation
\begin{equation}\label{eq:cscK}
S_g = const.
\end{equation}
are absolute minimizers of the action. The notion of cscK metric can be considered the natural generalization of the K\"{a}hler Einstein (KE) condition when the fixed K\"{a}hler class is not a multiple of the first Chern class $c_1(X)$. The first result concerning the cscK equation \eqref{eq:cscK} is the classical uniformisation theorem, which states that every compact Riemann surface admits a metric of constant curvature, unique modulo holomorphic automorphisms if we prescribe the total area. This result establishes a correspondence between cscK metrics and complex structures for compact surfaces. In higher dimensions this correspondence is not true, and some stability of the complex manifold is necessary in order to carry a cscK metric. The solution of the Hitchin-Kobayashi correspondence for the HYM equation gives a clue of this fact, since the existence of a KE metric on $X$ implies polystability of the holomorphic tangent bundle.

A correspondence between stable complex structures and cscK metrics was first conjectured by S. T. Yau for the KE
case. The problem of whether a compact complex manifold admits a KE metric was solved in the case $c_1(X)\leq 0$ in the $1970$'s with positive answer. The celebrated solution of Yau to the Calabi conjecture established the existence
of a Ricci flat metric, now named Calabi-Yau metric, in each K\"{a}hler class on a compact K\"{a}hler manifold $X$
with $c_1(X) = 0$. If $c_1(X) < 0$, the existence of KE metrics was proved by Aubin and Yau, independently. When $c_1(X) > 0$ it was known for a long time that there are obstructions to the existence of solutions coming from the holomorphic vector fields on the manifold (see~\cite{Ft1,Ca}). A remarkable obstruction for the existence of cscK metrics in a given K\"{a}hler class was defined by A. Futaki in \cite{Ft0} for the case of KE metrics. The \emph{Futaki invariant} is a character of the Lie algebra of the automorphism group of the complex manifold associated to any K\"{a}hler class. In \cite{T1} G. Tian proved that any compact complex surface with $c_1(X) > 0$ admits a KE metric if and only if it has vanishing Futaki invariant. In \cite{T6} the same author proved that the existence of KE metrics in arbitrary dimensions is equivalent to a suitable properness of the \emph{Mabuchi K-energy} \cite{Mab1}, the integral of the moment map.

In the analytic setting concerning cscK and extremal metrics, i.e. for arbitrary K\"{a}hler class, several
achievements have taken place during the last years. In \cite{ChT} G. Tian and X. X. Chen have proved uniqueness of solutions for the extremal case in full generality and that the Mabuchi K-energy is bounded below if there exists a cscK metric in the K\"{a}hler class. Their work is based on the study of the geodesic equation in the space of K\"{a}hler potentials introduced independently by A. Mabuchi \cite{Mab1} and S. K. Donaldson \cite{D6}. In \cite{Ch2} Chen defined a suitable notion of analytic stability, called \emph{geodesic stability}, and proved that it is an obstruction for the existence of cscK metrics.

For the case of polarised K\"{a}hler manifolds $(X,L)$, i.e. when the K\"{a}hler class is $c_1(L)$ for an ample line
bundle $L$, a new obstruction concerning GIT called $K$-stability was defined by Tian in \cite{T2} for the KE case.
Tian conjectured that a K\"{a}hler manifold with positive first Chern class admits a KE metric if and only if it is
K-stable and proved the ``only if'' part. In \cite{D4} S. K. Donaldson generalized the notion of K-stability to any
polarised manifold $(X,L)$ introducing the notion of (algebraic) test configuration. Roughly speaking, algebraic test
configurations are degenerations of the polarized manifold to general schemes with a rational number attached (the
\emph{generalized Futaki invariant}) which will be positive if the manifold is stable. We know that
$K$-stability is an obstruction for the existence of cscK metrics in polarized manifolds mainly due to the work of
Donaldson \cite{D8}, T. Mabuchi \cite{Mab2} and J. Stoppa \cite{Stp}. Donaldson has proved the converse for the case
of toric surfaces \cite{D8}. Recent work of Arezzo, Della and La Nave in \cite{ADL}, based on previous work of Ross and Thomas in \cite{RT}, has simplified the notion of $K$-stability in certain cases showing that only suitable nice degenerations need to be considered. However, the work of Apostolov, Calderbank, Gauduchon, T$\o$nnesen and Friedman in \cite{ACGT} and of G. Sz\'{e}kelyhidi in \cite{Sz1} concerning extremal metrics suggests that the algebraic notion of K-stability should be strengthened for the general case, allowing more general analytic (instead of just algebraic) test configurations.

\subsection{The scalar curvature as a moment map}
\label{subsec:cscKmmap}

We now briefly explain the moment map interpretation of the scalar curvature. This fact was first observed by
Fujiki~\cite{Fj} for the scalar curvature of K\"{a}hler manifolds and independently by Donaldson~\cite{D1} for the
Hermitian scalar curvature of almost K\"{a}hler manifolds. We hasten to add that both quantities coincide in the K\"{a}hler case, up to positive  multiplicative constant factor. In this section we follow closely Donaldson's
approach~\cite{D1}.

Throughout this section $X$ is a fixed compact symplectic manifold of dimension $2n$, with
symplectic form $\omega$. First we recall the notion of Hermitian scalar curvature of an almost K\"{a}hler manifold. An almost complex structure
$J\colon TX\to TX$ is
compatible with $\omega$ if $\omega(J\cdot,J\cdot) = \omega$ and $g_J = \omega(\cdot,J\cdot)$ is a Riemannian metric
in
$X$. Any compatible almost
complex structure $J$ defines a hermitian metric on $T^*X$ and there is a unique unitary connection on $T^*X$ whose $(0,1)$ component is the operator
$$
\dbar_J\colon \Omega^{1,0}_J\to \Omega^{1,1}_J
$$
induced by $J$. We denote by $\Omega^{p,q}_J$ the space of $(p,q)$-type forms with respect to the decomposition provided by $J$. The real $2$-form $\rho_J$ is defined as $-\imag$
times the curvature form of the
induced connection on the canonical line bundle $K_X = \Lambda^n_{\CC}T^{\ast}X$. The \emph{Hermitian scalar
curvature}
$S_J$ is the real function on
$X$ defined by
\begin{equation}
\label{eq:def-S}
  S_J \omega^{n} = n \, \rho_J \wedge \omega^{n-1}.
\end{equation}
Note that the average $\hat{S}$ of the Hermitian scalar curvature does not depend on $J$ but only on the cohomology
class of $\omega$,
\begin{equation}
\label{eq:hat-S}
  \hat{S}\defeq \frac{\int_X S_J \vol_\omega}{\int_X\vol_\omega}  = 2\pi n \cdot \frac{\langle c_1(X)\cup
  [\omega]^{n-1},[X]\rangle}{\Vol([\omega])}.
\end{equation}
The space $\cJ$ of almost complex structures on $X$ which are compatible with $\omega$ has a natural structure of
infinite dimensional K\"{a}hler
manifold. The K\"{a}hler form $\omega_{\cJ}$ is
\begin{equation}
\label{eq:SympJ} \omega_{\cJ} (\dot{J}_1,\dot{J_2}) \defeq \int_{X}\tr(J\dot{J}_1 \dot{J_2}) \vol_\omega, \text{ for } \dot{J}_1, \dot{J}_2\in T_J\cJ,
\end{equation}
whereas the complex structure $\mathbf{J}\colon T_J\cJ \to T_J\cJ$ is $\mathbf{J}\dot{J} \defeq J\dot{J}$. Here we are identifying the tangent space $T_J\cJ$ at $J \in \cJ$ with the set of endomorphisms $\dot{J}\colon TX \to TX$
such that $\dot{J} J = - J \dot{J}$ and $\dot{J}$ is symmetric with respect to the induced metric $g_J = \omega(\cdot,J\cdot)$.

The group $\cH \subset \Diff X$ of Hamiltonian symplectomorphisms consists of those diffeomorphisms $h: X \to X$ such
that there exists a Hamiltonian isotopy
$$
[0,1] \times X \to X \colon (t,x) \mapsto h_t(x)
$$
from $h_0=\Id$ to $h_1=h$ (see e.g.~\cite[\S 3.2]{McS}). Note that the Lie algebra $\LieH$ of $\cH$, consisting of the
hamiltonian vector fields on $X$, can be identified with the Lie algebra $C^{\infty}(X)/\mathbb{R}$ of smooth real
functions on $X$ modulo the constants, endowed with the Poisson bracket. There is a natural $\cH$-action on $\cJ$ by
push-forward, that is $h \cdot J \defeq h_* J = dh \circ J \circ dh^{-1}$, preserving the K\"{a}hler structure
and,
as was proved by Donaldson in \cite[Proposition~9]{D1}, it is hamiltonian, with equivariant moment map $\mu_\cH\colon
\cJ\to \LieH^*$ given by
\begin{equation}
\label{eq:scmom}
  \mu_\cH(J) = -(S_J - \hat{S}).
\end{equation}
Here we use the $L^2$ pairing provided by $\vol_\omega$ to identify the set $C_0^\infty(X)$ of functions with zero integral over $X$, with the dual of $\LieH=C^{\infty}(X)/\RR$.

The $\cH$-invariant subspace $\cJi \subseteq \cJ$ of integrable almost complex structures is (away from the
singularities) a complex submanifold so it inherits a K\"{a}hler structure. Thus, $\cJi$ is a symplectic manifold
endowed with a Hamiltonian $\cH$-action and we can construct the K\"{a}hler reduction
\begin{equation}\label{eq:modulicscK}
\mu_{\cH}^{-1} (0)/\cH
\end{equation}
where $\mu_\cH$ is now the restriction of the moment map to $\cJi$. Over $\cJi$, the Hermitian scalar curvature $S_J$
is, up to a multiplicative
constant factor, the usual Riemannian scalar curvature of the K\"{a}hler metric determined by $J$ and $\omega$. We
conclude that the K\"{a}hler
reduction \eqref{eq:modulicscK} is the moduli space of K\"{a}hler metrics with fixed K\"{a}hler form $\omega$ and
constant scalar curvature.

\subsection{Complexified orbits}
\label{subsec:cscKorbits}

We discuss now how the cscK problem fits into the general framework provided by Section \secref{sec:KNtheorem}. We
follow the argument given by Donaldson in~\cite[Section~4]{D6}.

Let $(X,J)$ be a compact complex manifold where we fix a K\"{a}hler class $[\omega]$. Let $\cK$ be the space of
K\"{a}hler metrics in $(X,J)$ with this K\"{a}hler class. We fix a K\"{a}hler form $\omega$ and consider the
corresponding group of Hamiltonian symplectomorphisms and the infinite dimensional K\"{a}hler manifold $\cJi$ of
compatible complex structures. For simplicity we will suppose that the first cohomology group $H^{1}(X)$ vanishes, so
the group of Hamiltonian symplectomorphisms $\cH$ coincides with the identity component of the group of
symplectomorphisms. As was pointed out by Donaldson in \cite{D6}, the main obstacle to apply the picture of \secref{sec:KNtheorem}
formally in this situation comes from the fact that $\cH$ does not admit a complexification (a reason for this will be
given later in this section). However, there is an integrable distribution $D \subseteq \cJi$ whose leaves play the
role of the orbits under the action of an ideal complexification. At a point $J \in \cJi$ this distribution is given
by
\begin{equation}\label{eq:distributioncscK}
D_J = \{Y_{\eta|J}; \; \eta \in \LieH^c\},
\end{equation}
where $\LieH^c = \LieH \otimes \CC$ is the complexification of $\LieH$ and
\begin{equation}\label{eq:notationcominfinitesimalcscK}
Y_\eta = Y_{\eta_0} + \mathbf{J} Y_{\eta_1} \text{ for all $\eta = \eta_0 + \imag \eta_1 \in \LieH^c$}.
\end{equation}
Recall that $\mathbf{J}$ is the complex structure on $\cJi$. The distribution $D$ is closed under the Lie bracket in
$\cJi$, since the Nijenhuis tensor of $\mathbf{J}$ vanishes, although it is not obvious that this fact implies the existence of
leaves integrating $D$ through every point $J \in \cJi$ (because $\cJi$ is infinite dimensional). To see that $D$ indeed
integrates we consider the principal $\cH$-bundle
\begin{equation}\label{eq:YsubJ}
\cY \to \cK,
\end{equation}
given by pairs $(\omega',f) \in \cK \times \Diff_0(X)$, such that $f^*\omega' = \omega$. Here we use the assumption
$H^{1}(X) = 0$ to have $\cH$ as the structure group of $\cY$, instead of the identity component of the group of
symplectomorphisms. The map
\begin{equation}\label{eq:tauJ}
\tau_J\colon \cY\to \cJi\colon (\omega',f)\mapsto f^* J,
\end{equation}
plays the role of the complexified action, in the sense that the image $\tau_J(\cY)$ of this map is a leaf of $D$
through $J$ (for this last fact it is important to work with $\cJi$ instead of the whole $\cJ$). The image of $\tau_J$
can be then thought of as the orbit of an ideal complexification of $\cH$ acting on $\cJi$. It follows by definition of
the complexified orbit $\tau_J(\cY)$ of a point $J \in \cJi$ that it contains a zero of the $\cH$-equivariant moment
map $\mu_\cH$ \eqref{eq:scmom} if and only if there exists a cscK metric in $\cK$.

The principal bundle $\cY$ is endowed with a canonical connection that provides the link between the complex
and
the symplectic points of view (see~\cite[Section~2]{D6}). To define this connection on $\cY$ we consider a smooth curve
$$
\omega_t = \omega_{\phi_t} = \omega + dd^c\phi_t \subset \cK
$$
and define its horizontal lift. The horizontal lift $f_t$ through $(\omega_0,f) \in \cY_J$
is given by the Moser curve of $\omega_t$, i.e. the curve of diffeomorphisms with initial condition $f_0 = f$,
integrating the vector $-J y_t$ defined by
$$
i_{y_t}\omega_{t} = d\dot{\phi}_t.
$$
It is straightforward that $f_t$ satisfies $f_t^* \omega_t = \omega_0$ for all $t$, as required. The curve of metrics
$\omega_t$ determines another one $J_t = \tau_J(\omega_t,f_t) \subset \cJi$ which satisfies
\begin{equation}\label{eq:derivativeJtcscK}
\dot{J_t} = \frac{d}{dt} f^{*}_t J = f^{*}_t (L_{-J y_t} J) = - J_t L_{f_t^*y_t} J_t = \mathbf{J} Y_{\eta_t},
\end{equation}
where $\mathbf{J}$ is the complex structure on $\cJi$ and $Y_{\eta_t}$ is the infinitesimal action of the Hamiltonian
vector field $\eta_t = y_{\dot{\phi}_t \circ f_t} \in \LieH$. Special curves $\phi_t$ in the space of K\"{a}hler
potentials are those for which $\dot{\phi}_t \circ f_t$ remains constant, since they determine curves $J_t$ in $\cJi$
via the horizontal lift that follow a pure imaginary direction $\mathbf{J} Y_{\eta_0}$ of the missed complexification
of $\cH$. This last condition is equivalent to
\begin{equation}\label{eq:geodesiccscK}
\ddot{\phi}_t - |d \dot{\phi}_t|^2_{\omega_t} = 0,
\end{equation}
that can be interpreted as the equation of geodesics with respect to suitable negatively curved metric in the space of
K\"{a}hler potentials, as proved independently by T. Mabuchi \cite{Mab1} and S. K. Donaldson \cite{D6}. The analogue
formulation of the geodesic equation in $\cK$ is obtained considering \eqref{eq:geodesiccscK} modulo the constant
functions. A deep analytic study of equation \eqref{eq:geodesiccscK} has been carried out by X. X. Chen and G. Tian in
\cite{ChT}, where the authors proved the existence of a unique $C^{1,1}$ geodesic (with bounded mixed second
derivatives) joining any two points in $\cK$ and that this regularity is sharp. The lack of regularity of the
solutions
gives a clue of why the $\cH$-action in $\cJi$ cannot be complexified (at least in the $C^{\infty}$-category).

\begin{example}\label{example:cscKgeodesic}
We discuss now a well known example of solution to \eqref{eq:geodesiccscK} provided by holomorphic isometries in
$(X,\omega,J)$. Let $\eta \in \LieH_J$, the Lie algebra of the isotropy group of $J$ in $\cH$ (equivalently, the Hamiltonian
Killing vector fields on $(X,J,\omega)$). Given any vector field $y$ on $X$ we denote by $f^{y}_{t}$ the flow of $y$
on
$X$. We claim that
$$
\phi_t = - \int_0^t \phi \circ (f^{J\eta}_t)^{-1},
$$
is a solution to \eqref{eq:geodesiccscK}. Note first that $\omega_t = \omega_{\phi_t} = (f_t^{J\eta})_* \omega$,
since
$$
\frac{d}{dt} (f_t^{J\eta})_* \omega = (f_t^{J\eta})_* d (i_{-J\eta}\omega) = - (f_t^{J\eta})_* (dd^c
\phi) = dd^c \dot{\phi}_t.
$$
This implies that $\eta = - y_{\dot{\phi}_t,\omega_t}$ from which it follows that
$$
\ddot{\phi}_t = (f^{J\eta}_t)_* d\phi(J\eta) = d\dot{\phi}_t(Jy_{\dot{\phi}_t,\omega_t}) = |d
\dot{\phi}_t|^2_{\omega_t},
$$
as claimed.
\end{example}

We make now more explicit the parallelism with the finite dimensional picture explained in Section
\secref{sec:KNtheorem}. Recall that $Z$, $G \subset G^c$ denote, respectively, a K\"{a}hler manifold, a compact group
acting by holomorphic isometries and its complexification. The distribution $D$ in \eqref{eq:distributioncscK} plays the
role of the tangent space $T_z(G^c\cdot z)$ to the orbit of the complexified group $G^c$, while the space $\cK$ is the
analogue of the negatively curved symmetric spaces $G \backslash G^c$. Note that the isomorphism
\begin{equation}\label{eq:GcmodGisocscK}
G \backslash G^c \cong \imag \mathfrak{g}
\end{equation}
given by the exponential map (where $\mathfrak{g}$ denotes the Lie algebra of $G$) corresponds now to an inclusion
$$
\cK \subseteq C^{\infty}(X)/\RR \cong \LieH,
$$
where $\cK$ is viewed as the space of K\"{a}hler potentials of a fixed K\"{a}hler form modulo the constant functions. The
$\cH$-principal bundle $\cY \to \cK$ and the map $\tau_J$ are respectively, the analogues of the projection $G^c \to G \backslash
G^c$
and the map $G^c \to G^c \cdot z \subset Z$, provided by the $G^c$-action on $Z$. The canonical connection on the principal $G$-bundle
$$
G^c \to G\backslash G^c
$$
is provided by the lift $[e^{\imag \zeta_t}] \to g e^{\imag \zeta_t}$ passing trough $g \in G$ and so the smooth
solutions to \eqref{eq:geodesiccscK} are the analogues of the geodesics $t \to [e^{\imag t \zeta} g]$ in
\eqref{eq:GcmodGisocscK}.

\subsection{The Futaki character}
\label{subsec:cscKFutaki}

In this section we explain an obstruction for the existence of cscK metrics in a K\"{a}hler class $[\omega_0]$ due to
A. Futaki \cite{Ft0}. This obstruction is the analogue of the character of a complex orbit introduced in \secref{subsec:KNcharacter} for the finite dimensional case. Together with the discussion in the previous sections, leads to conjecture of the
existence of a Hitchin-Kobayashi correspondence for cscK metrics. We keep the notation of the previous
sections.

To define the invariant, we proceed as in \secref{subsec:KNcharacter} and define first a closed $1$-form in the space
of
K\"{a}hler metrics. Consider the space of K\"{a}hler metrics $\cK$ of $(X,J)$ with fixed K\"{a}hker
class $[\omega]$. Given any $\dot{\phi} \in C^{\infty}(X)$, we identify it with the constant vector field on $\cK$
given
by the flow $\omega_{t\dot{\phi}} = \omega + t dd^c_J\dot{\phi}$, for any $\omega \in \cK$. Then,
the \emph{Mabuchi $1$-form} in $\cK$ is given by
\begin{equation}\label{eq:oneformMabuchi}
\sigma_J(\dot{\phi}) = - \int_X \dot{\phi}(S_{\omega} - \hat{S})\frac{\omega^n}{n!},
\end{equation}
where $\dot{\phi}$ is regarded as a tangent to $\omega \in \cK$. In order to see the parallelism with \eqref{eq:sigmaz}, recall
from \secref{subsec:cscKorbits} that given a curve $\omega_t = \omega_{\phi_t}$ in $\cK$ we can associate, via its
Moser curve of diffeomorphisms $f_t$, a curve of $\omega_0$-compatible complex structures $J_t \in \cJi$. Then, doing
a
change of variable, we can write
\begin{equation}\label{eq:sigmaJmuH}
\sigma_J(\dot{\phi}_t) = \langle \mu_\cH (J_t),\dot{\phi}_t \circ f_t \rangle
\end{equation}
along the curve $\omega_t$, where $\mu_{\cH}$ denotes the moment map in the space of $\omega_0$-compatible complex
structures $\cJi$. From this expression it is straightforward to prove that the $1$-form \eqref{eq:oneformMabuchi} is
closed. We give a proof of this fact for completeness.
\begin{proposition}[T. Mabuchi]\label{propo:oneformMabuchiclosed}
The $1$-form $\sigma_J$ is closed.
\end{proposition}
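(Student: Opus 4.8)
The plan is to deduce closedness from the moment-map reformulation \eqref{eq:sigmaJmuH}, mimicking in this infinite-dimensional setting the proof of the analogous statement for the $1$-form $\sigma_z$ in \secref{subsec:KNcharacter}. Recall that $\cK$ plays the role of the symmetric space $G\backslash G^c$, that the principal $\cH$-bundle $\cY\to\cK$ of \eqref{eq:YsubJ} (with its Moser connection) plays the role of the projection $G^c\to G\backslash G^c$, and that the map $\tau_J$ of \eqref{eq:tauJ} plays the role of the complexified orbit map. First I would record that, by the same change of variables used to obtain \eqref{eq:sigmaJmuH}, for \emph{any} smooth curve $\omega_t=\omega+dd^c\phi_t$ in $\cK$ with Moser lift $f_t$ (so $f_0=\Id$, $f_t^{\ast}\omega_t=\omega$) and associated curve $J_t=f_t^{\ast}J$ in $\cJi$, and for any fixed $\dot\psi\in C^\infty(X)$, one has
\begin{equation*}
(\sigma_J)_{\omega_t}(\dot\psi)=-\int_X\dot\psi\,(S_{\omega_t}-\hat S)\,\frac{\omega_t^n}{n!}=\langle\mu_\cH(J_t),\dot\psi\circ f_t\rangle ,
\end{equation*}
since $f_t$ is a K\"ahler isometry from $(X,\omega,J_t)$ to $(X,\omega_t,J)$ and $\mu_\cH(J_t)=-(S_{J_t}-\hat S)$ by \eqref{eq:scmom}.

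Next I would compute $d\sigma_J$ at a point $\omega\in\cK$ on two tangent vectors $\dot\phi_1,\dot\phi_2\in C^\infty(X)$, extended to the corresponding \emph{constant} vector fields on $\cK$ in the sense of the definition \eqref{eq:oneformMabuchi}; these commute, so their Lie bracket vanishes and $d\sigma_J(\dot\phi_1,\dot\phi_2)$ is the antisymmetrisation in $\dot\phi_1\leftrightarrow\dot\phi_2$ of the directional derivative in the direction $\dot\phi_1$ of the function $\omega'\mapsto(\sigma_J)_{\omega'}(\dot\phi_2)$ on $\cK$. Thus it suffices to show that $Q(\dot\phi_1,\dot\phi_2):=\frac{d}{dt}\big|_{0}(\sigma_J)_{\omega_t}(\dot\phi_2)$, taken along $\omega_t=\omega+t\,dd^c\dot\phi_1$, is symmetric. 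Using the displayed identity with $\dot\psi=\dot\phi_2$ and differentiating the ($t$-independent) $L^2(\vol_\omega)$ pairing at $t=0$ (where $f_0=\Id$, $J_0=J$), the product rule gives two terms. The first, $\langle d\mu_\cH(\dot J_0),\dot\phi_2\rangle$, equals $\omega_\cJ(Y_{\dot\phi_2},\dot J_0)$ by the moment-map property of $\mu_\cH$ on $\cJi$; and $\dot J_0=\mathbf{J}Y_{\dot\phi_1}$ by \eqref{eq:derivativeJtcscK}, so this term is $\omega_\cJ(Y_{\dot\phi_2},\mathbf{J}Y_{\dot\phi_1})$. The second term is $\langle\mu_\cH(J),\frac{d}{dt}\big|_{0}(\dot\phi_2\circ f_t)\rangle$; since the Moser curve satisfies $\frac{d}{dt}\big|_{0}f_t=-J\eta_{\dot\phi_1}$ with $\eta_{\dot\phi_1}$ the Hamiltonian vector field of $\dot\phi_1$, one computes $\frac{d}{dt}\big|_{0}(\dot\phi_2\circ f_t)=-d\dot\phi_2(J\eta_{\dot\phi_1})=-g_J(\eta_{\dot\phi_1},\eta_{\dot\phi_2})$, so this term equals $\int_X(S_J-\hat S)\,g_J(\eta_{\dot\phi_1},\eta_{\dot\phi_2})\,\frac{\omega^n}{n!}$.

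To finish I would observe that both terms are symmetric under $\dot\phi_1\leftrightarrow\dot\phi_2$: the second manifestly, since the pointwise inner product $g_J(\eta_{\dot\phi_1},\eta_{\dot\phi_2})$ is symmetric; the first because $\omega_\cJ(\,\cdot\,,\mathbf{J}\,\cdot\,)$ is the Riemannian metric underlying the K\"ahler structure on $\cJi$ determined by $\omega_\cJ$ and $\mathbf{J}$, hence symmetric, so that $\omega_\cJ(Y_{\dot\phi_2},\mathbf{J}Y_{\dot\phi_1})=\omega_\cJ(Y_{\dot\phi_1},\mathbf{J}Y_{\dot\phi_2})$. This is the infinite-dimensional counterpart of the identity $d\sigma_z(\imag\zeta_1,\imag\zeta_2)=\omega_z(Y_{\zeta_2},JY_{\zeta_1})-\omega_z(Y_{\zeta_1},JY_{\zeta_2})=0$ used in \eqref{eq:sigmazclosed2}. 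Hence $Q(\dot\phi_1,\dot\phi_2)=Q(\dot\phi_2,\dot\phi_1)$ and $d\sigma_J=0$.

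I expect the only genuine work to lie in the second paragraph: identifying $\dot J_0$ with $\mathbf{J}Y_{\dot\phi_1}$ and $\frac{d}{dt}\big|_{0}(\dot\phi_2\circ f_t)$ with $-g_J(\eta_{\dot\phi_1},\eta_{\dot\phi_2})$ through the Moser flow, together with the routine but necessary justifications of differentiating under the integral sign and of manipulating constant vector fields on the infinite-dimensional manifold $\cK$; once the reformulation of the first paragraph is in hand, the symmetry argument itself is immediate. (Alternatively, one may prove the statement by a direct computation, differentiating \eqref{eq:oneformMabuchi} via the linearisation of the scalar curvature and integrating by parts, but the route above is closer in spirit to \secref{subsec:KNcharacter}.)
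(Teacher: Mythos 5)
Your proposal is correct and follows essentially the same route as the paper's proof: both reduce closedness to the symmetry of $\dot{\phi}_1(\sigma_J(\dot{\phi}_2))$ using the vanishing Lie bracket of the constant vector fields, then differentiate the reformulation \eqref{eq:sigmaJmuH} along the Moser lift to obtain the two terms $\omega_{\cJ}(Y_{\dot{\phi}_2},\mathbf{J}Y_{\dot{\phi}_1})$ and the pairing of $\mu_\cH(J)$ against $g_J(\eta_{\dot{\phi}_1},\eta_{\dot{\phi}_2})$, each manifestly symmetric. Your version merely spells out the justifications (the moment-map property giving the first term, the Moser flow computation giving the second) that the paper leaves implicit.
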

\begin{proof}
Let $\dot{\phi}_1, \dot{\phi}_2 \in C^{\infty}(X)$, regarded as vector fields in $\cK$. Then it is straightforward that
their Lie bracket vanishes: $[\dot{\phi}_1, \dot{\phi}_2] = 0$. Hence, by \eqref{eq:sigmazclosed1}, we just have to
check
that $\dot{\phi}_1(\sigma_J(\dot{\phi}_2))$ is symmetric in $j = 1,2$. Let $\omega_t$ be the curve determined by
$\dot{\phi}_1$ and consider its horizontal lift $f_t$ starting at any point over $\omega_0$. Then, using the equations
\eqref{eq:sigmaJmuH} and \eqref{eq:derivativeJtcscK}, we obtain
$$
\frac{d}{dt}_{|t = 0} (\sigma_J(\dot{\phi}_2))_{\phi + t\dot{\phi}_1} =
\omega_{\cJi}(Y_{\dot{\phi}_2},\mathbf{J}Y_{\dot{\phi}_1}) - \langle \mu_\cH(J), \omega(y_{\dot{\phi}_2},J
y_{\dot{\phi}_1})\rangle,
$$
that is symmetric, as required.
\end{proof}

Note that the $1$-form  \eqref{eq:oneformMabuchi} is invariant under the right action on $\cK$ of the group of
automorphisms $\Aut(X,J)$ given by pullback. The infinitesimal action of $\Lie \Aut(X,J)$ defines then an invariant
$\cF = \cF_{J,[\omega]}$ of the complex structure and the K\"{a}hler class $[\omega]$, due to A. Futaki \cite{Ft0},
exactly as in \secref{subsec:KNcharacter}. To give an explicit expression of the  \emph{Futaki invariant},
choose any K\"{a}hler metric $\omega \in [\omega]$ and recall that any $\eta \in \Lie \Aut(X,J)$ can be written as (see
e.g.~\cite{LS1})
$$
\eta = y_{\dot{\phi}_1} + J y_{\dot{\phi}_2} + \beta,
$$
where $y_{\dot{\phi}_j}$ is the $\omega$-Hamiltonian field associated to $\dot{\phi}_j$, $j = 1,2$, and $\beta$ is the dual
of a harmonic $1$-form in $(X,J,\omega)$. Then the infinitesimal action of $\eta$ at the K\"{a}hler metric $\omega$
is
$\eta^l = 2 \imag \partial_J\dbar_J\dot{\phi}_2$ and so the Futaki invariant given by $\imag \sigma_J(\eta^l) +
\sigma_J(J \eta)^l$ can be written as
\begin{equation}\label{eq:Futakicharacter}
\cF(\eta) = - \int_X (\dot{\phi}_1 + \imag \dot{\phi}_2)(S_{\omega} - \hat{S})\frac{\omega^n}{n!}
\end{equation}
for any $\eta \in \Lie \Aut(X,J)$. The following result is the analogue of Proposition \ref{propo:futakifinite}.
\begin{proposition}[A. Futaki]
The map $\cF\colon \Lie \Aut(X,J) \to \CC$ is independent of the K\"{a}hler metric $\omega \in [\omega]$. It defines a
character of $\Lie \Aut(X,J)$ that vanishes if $[\omega]$ contains a cscK metric.
\end{proposition}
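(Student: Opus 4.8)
The plan is to follow, essentially line by line, the finite-dimensional argument of Proposition~\ref{propo:futakifinite}, with the space $\cK$ of K\"ahler metrics playing the role of the symmetric space $G\backslash G^c$ and with $\Lie\Aut(X,J)$ in the role of the stabiliser algebra $(\mathfrak{g}^c)_z$. The two inputs that make this possible are already recorded above: the Mabuchi $1$-form $\sigma_J$ on $\cK$ is closed (Proposition~\ref{propo:oneformMabuchiclosed}), and $\sigma_J$ is invariant under the right action of $\Aut(X,J)$ on $\cK$ by pullback, which is the analogue here of the cocycle identity used in the finite-dimensional proof.

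First I would establish independence of the K\"ahler metric. For $\eta\in\Lie\Aut(X,J)$, write $\eta^l$ for the fundamental vector field of the pullback action on $\cK$. By Cartan's formula $\mathcal{L}_{\eta^l}\sigma_J = d\bigl(\iota_{\eta^l}\sigma_J\bigr) + \iota_{\eta^l}\,d\sigma_J$; since $\sigma_J$ is closed and $\Aut(X,J)$-invariant, this forces $d\bigl(\sigma_J(\eta^l)\bigr)=0$, so the function $\sigma_J(\eta^l)$ is constant on the connected space $\cK$ (this is exactly the first computation in the proof of Proposition~\ref{propo:futakifinite}). Hence $\cF(\eta) = \imag\,\sigma_J(\eta^l) + \sigma_J\bigl((J\eta)^l\bigr)$ is the same for every $\omega\in[\omega]$. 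To obtain the explicit formula~\eqref{eq:Futakicharacter} I would then fix one metric $\omega$, decompose $\eta = y_{\dot\phi_1} + Jy_{\dot\phi_2} + \beta$ with $\beta$ dual to a harmonic $1$-form (such a decomposition of a holomorphic vector field on a compact K\"ahler manifold being standard, see~\cite{LS1}), note that the Hamiltonian and harmonic parts act trivially on $\omega$ so that $\eta^l = 2\imag\,\partial_J\dbar_J\dot\phi_2$ and likewise $(J\eta)^l = 2\imag\,\partial_J\dbar_J\dot\phi_1$, and read off $\cF(\eta)$ from~\eqref{eq:oneformMabuchi} (equivalently from~\eqref{eq:sigmaJmuH} and~\eqref{eq:scmom}).

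Next, for the character property, I would feed the constancy $d\bigl(\sigma_J(\eta^l)\bigr)=0$ into the closedness identity~\eqref{eq:sigmazclosed1} for $\sigma_J$, applied to the pair $\eta_1^l,\eta_2^l$: since the directional derivatives $\eta_i^l\bigl(\sigma_J(\eta_j^l)\bigr)$ vanish, one gets $\sigma_J\bigl([\eta_1^l,\eta_2^l]\bigr)=0$, and the same argument applied to the mixed pair gives $\sigma_J\bigl([(J\eta_1)^l,\eta_2^l]\bigr)=0$. Using that $\eta\mapsto\eta^l$ respects brackets (up to sign) and that $J[\eta_1,\eta_2] = [J\eta_1,\eta_2]$ by complex bilinearity of the bracket, this yields $\cF\bigl([\eta_1,\eta_2]\bigr) = \imag\,\sigma_J\bigl([\eta_1^l,\eta_2^l]\bigr) + \sigma_J\bigl([(J\eta_1)^l,\eta_2^l]\bigr) = 0$, which is word for word the second half of the proof of Proposition~\ref{propo:futakifinite}. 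Finally, vanishing in the presence of a cscK metric is immediate: if $\omega_0\in[\omega]$ satisfies $S_{\omega_0}=\hat{S}$, then by~\eqref{eq:oneformMabuchi} the $1$-form $\sigma_J$ vanishes identically at the point $\omega_0\in\cK$, and by the metric-independence just proved $\cF(\eta) = \imag\,\sigma_J(\eta^l)|_{\omega_0} + \sigma_J\bigl((J\eta)^l\bigr)|_{\omega_0} = 0$ for every $\eta$.

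The formal skeleton above is identical to that of the finite-dimensional Proposition~\ref{propo:futakifinite}, so the genuine content — and the only step that is not a transcription — is the explicit identification of the infinitesimal $\Aut(X,J)$-action on $\cK$ through the Hodge decomposition of holomorphic vector fields, together with the check that it reproduces the classical Futaki integrand $-(\dot\phi_1 + \imag\dot\phi_2)(S_\omega - \hat{S})$; this is the step where I would expect to have to be careful. One also has to make the infinite-dimensional manipulations (Cartan's formula on $\cK$, connectedness of $\cK$, the bracket computation) legitimate, but in this Fr\'echet setting these are routine and pose no real difficulty.
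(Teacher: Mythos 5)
Your proposal is correct and follows exactly the route the paper intends: it transcribes the finite-dimensional argument of Proposition~\ref{propo:futakifinite} to $\cK$, using the closedness of the Mabuchi $1$-form (Proposition~\ref{propo:oneformMabuchiclosed}) and its $\Aut(X,J)$-invariance in place of the cocycle identity, which is precisely the Bourguignon-style proof the paper cites without writing out. The identification of the infinitesimal action via the Hodge decomposition $\eta = y_{\dot{\phi}_1} + J y_{\dot{\phi}_2} + \beta$, giving $\eta^l = 2\imag\,\partial_J\dbar_J\dot{\phi}_2$ and recovering the integrand of \eqref{eq:Futakicharacter}, is also handled correctly.
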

An elegant proof, similar to the one of Proposition~\ref{propo:futakifinite}, was provided by J. P Bourguignon \cite{Bo}.

\subsection{Towards an analytic Hitchin-Kobayashi correspondence}
\label{subsec:cscKHK}

As we have seen in the previous section, not all the complexified orbits of the $\cH$-action in $\cJi$ carry a cscK
metric, due to the obstruction given by the Futaki invariant. This leads to conjecture the existence of an analytic
Hitchin-Kobayashi correspondence for cscK metrics in a given K\"{a}hler class similar to the one explained in
\secref{subsec:HYMHKproof} for the HYM equation. The key ingredient in this correspondence will be the integral of the moment map, that in this context was defined by T. Mabuchi \cite{Mab1} and it is usually called the \emph{Mabuchi K-Energy}. In this section we recall the definition of the K-energy and discuss some recent progress
in\cite{ChT} and \cite{Ch2} concerning crucial steps towards an analytic Hitchin-Kobayashi correspondence for the cscK
problem.

Let $(X,J)$ be a compact complex manifold. Consider the space $\cK$ of K\"{a}hler metrics
with
fixed K\"{a}hler class $[\omega]$. Recall from the previous section that \eqref{eq:oneformMabuchi} defines a closed
$1$-form in $\cK$. Since the space $\cK$ is contractible, the $1$-form $\sigma_J$ integrates giving the K-energy map
$$
\cM_{\omega}: \cK \to \RR,
$$
which satisfies $d \cM_{\omega} = \sigma_J$ and $\cM_{\omega}(\omega) = 0$. This implies that the critical points of
the
K-energy are the cscK metrics with fixed K\"{a}hler class $[\omega]$. Given any curve of metrics $\omega_t =
\omega_{\phi_t}$ in $\cK$, we can write explicitly
\begin{equation}\label{eq:mabuchiKenergy}
\cM_{\omega}(\omega_t) = \cM_{\omega}(\omega_0) - \int_0^t \int_X \dot{\phi}_s(S_{\omega_s} -
\hat{S})\frac{\omega^n_s}{n!} \wedge ds,
\end{equation}
that combined with \eqref{eq:sigmaJmuH} shows that $\cM_{\omega}$ is convex along geodesics in the space of K\"{a}hler
metrics. The lack of regularity of the geodesic equation makes very difficult to apply the recipe provided by the
proof of Kempf-Ness Theorem, however it has been sufficient for the proof with full generality of the following two fundamental
results, due to Chen and Tian \cite{ChT}. The first one is that there exists a unique solution of the cscK
equation in $[\omega_0]$ modulo the action $\Aut(X,J)$. The second one is that the Mabuchi $K$-energy is bounded from
below if there exists a cscK metric in $[\omega_0]$. Other remarkable progress has been made by X. X. Chen in
\cite{Ch2}, where a notion of geodesic stability has been defined and proved to be an obstruction for the existence of
cscK metrics in $[\omega_0]$. As in Section \secref{sec:KNtheorem}, Chen defines maximal weights using the
$1$-form $\sigma_J$ and suitable weak solutions to \eqref{eq:geodesiccscK} defined for all time.

\subsection{Algebraic K-stability}
\label{subsec:cscKalgKstab}

In this section we recall the definition of $K$-stability for polarized manifolds. This stability notion was introduced by G. Tian in \cite{T2} and generalized by S.K. Donaldson in \cite{D4}.

Let $(X,L)$ be a pair consisting of a projective scheme $X$ and an ample invertible sheaf $L$ over $X$. The notion of K-stability for such pairs is formulated in terms of flat degenerations of the pair over $\CC$. The central fiber of each degeneration is endowed with a $\CC^*$-action and the stability of the pair, is measured in terms of a suitable weight $- F_1$ associated to this action, the generalized Futaki invariant. When the degeneration considered is smooth it was proved by Donaldons in \cite{D4} that $- F_1$ coincides, up to multiplicative constant factor, with the evaluation of the Futaki character \eqref{eq:Futakicharacter} in the generator of the $S^1 \subset \CC^*$ action.

We start defining the degenerations of $(X,L)$ that we will consider. In what follows a \emph{$\CC^*$-action} on such a pair means a $\CC^*$-action on $X$ with a $\CC^*$-linearisation on $L$.

\begin{definition}\label{def:testconfigurationcscK}
A \emph{test configuration for $(X,L)$} is a pair $(\mathcal{X},\cL)$ consisting of a scheme $\mathcal{X}$ and an invertible sheaf $\cL$ over $\mathcal{X}$, together with a $\CC^*$-action on $(\mathcal{X},\cL)$ and a flat morphism $\pi\colon \mathcal{X}\to \CC$, such that
\begin{enumerate}
\item[(1)] %
the invertible sheaf $\cL$ is ample,
\item[(2)] %
$\pi: \mathcal{X} \to \CC$ is $\CC^*$-equivariant, where $\CC^*$ acts
on $\CC$ by multiplication in the standard way,
\item[(3)]
the fibre $(X_t,L_t)$ is isomorphic to $(X,L)$ for all
$t \in\CC\setminus\{0\}$, where $X_t = \pi^{-1}(t)$ and $L_t =
\cL_{|X_t}$.
\end{enumerate}
\end{definition}

Any $\CC^*$-action on $(X,L)$ determines a special type of test configuration, called a \emph{product configuration}, where $\mathcal{X} \cong X \times \CC$ with the induced $\CC^*$-action, $\cL$ is obtained by pulling the $\CC^*$-linearised sheave $L$, and $\pi$ is the canonical projection. This product configuration is called a \emph{trivial configuration} if the given $\CC^*$-action on $(X,L)$ is trivial, i.e. if it consists of the trivial $\CC^*$-action on $X$ and the $\CC^*$-action defined by scalar mutiplication on $L$.

Let $(X,L)$ be as in Definition~\ref{def:testconfiguration}. Let us consider a test configuration $(\mathcal{X},\cL)$ for $(X,L)$. As $0\in\CC$ is fixed, the central fibre $(X_0,L_0)$ has an induced $\CC^*$-action. For each integer $k$ we consider the integer $w(k)$ given by the weight of the $\CC^*$-action on the determinant of the cohomology of $L_0^k$, i.e. on the line (cf. e.g.~\cite[\S\S 1.2 and 2.1]{HL})
\[
  \det H^*(X_0,L_0^k)
  \defeq \bigotimes_{p=0}^n \(\det H^p (X_0,L_0^k)\)^{(-1)^{p}}.
\]
Then, by the equivariant Riemann--Roch theorem, $w(k)$ is a polynomial of degree at most $n+1$ in $k$ (see~\cite[\S 2.1]{D4}).  Thus the following quotient has an expansion
\begin{equation}\label{eq:defF(W,k)cscK}
F(k) \defeq \frac{w(k)}{kP(k)} = F_0 + k^{-1} F_1 + k^{-2} F_2 + O(k^{-3})
\end{equation}
with rational coefficients $F_i$, where $P(k)$ is the Hilbert polynomial of $\cO_{X_0}$ which has degree $n$ in $k$. Now, the Futaki invariant of the test configuration $(\mathcal{X},\cL,)$ is
\begin{equation}
\label{eq:extendedfutakicscK}
- F_1.
\end{equation}
We already have all the ingredients to give the notion of K-stability.

\begin{definition}
\label{def:extKstablecscK}
Let $(X,L)$ be a pair consisting of a projective scheme and an ample invertible sheaf. Then the pair $(X,L)$ is said to be \emph{K-semistable} if
\begin{equation}
  \label{eq:extKstablecscK}
  - F_1 \geq 0
\end{equation}
for all integers $k>0$ and all test configurations $(\mathcal{X},\cL)$ for $(X,L^k)$. This pair is \emph{K-stable} if the inequality~\eqref{eq:extKstablecscK} is strict for all non-trivial $(\mathcal{X},\cL,\cE)$. It is called \emph{$\alpha$-K-polystable} if it is K-semistable and the inequality~\eqref{eq:extKstablecscK} is always strict unless $(\mathcal{X},\cL)$ is a product configuration.
\end{definition}

As usual, any pair as in Definition~\ref{def:extKstablecscK} satisfies the following chain of implications: $(X,L)$ is K-stable $\implies$ $(X,L)$ is K-polystable $\implies$ $(X,L)$ is K-semistable. A \emph{K-unstable} triple is a triple which is not K-semistable. We know that any smooth compact polarized manifold in $K$-polystable mainly due to the work of Donaldson \cite{D8}, T. Mabuchi \cite{Mab2} and J. Stoppa \cite{Stp}. Donaldson has proved the converse for the case of toric surfaces \cite{D8}. However, as remarked by J. Ross and R. Thomas~\cite{RRT1}, a recent example in \cite{ACGT} suggests that the algebraic notion of K-stability we have just defined for may not be sufficient to guarantee the existence of a cscK metric and so a stronger notion may be required.

\chapter{\bf The coupled equations}
\label{chap:Ceq}

In this chapter we explain the moment map interpretation of the \emph{coupled equations} \eqref{eq:CYMeq00}, that
arises when we consider an extension of the infinite dimensional Lie groups involved in the moment map problems for the HYM and the cscK equation. Symplectic reductions by Lie group extensions have been studied in the literature in various degrees of generality (see~\cite{MMOPR} and references therein). Previous work includes split group extensions and more general ones, although it seems that the moment map calculations of~\secref{sec:Ceqcoupling-term}, based on Proposition~\ref{prop:ham-act-ext-grp}, have not been previously made (cf.~\cite[\S 3.2]{MMOPR}). We also describe solutions of \eqref{eq:CYMeq00} in terms of the Kaluza-Klein Theory for $G$-invariant metrics on $E$ and relate them with the Einstein--Yang--Mills equation.

Throughout this chapter, $X$ is a fixed compact symplectic manifold of dimension $2n$, with symplectic form $\omega$
and volume form $\vol_\omega =
\frac{\omega^n}{n!}$. We fix a real compact Lie group $G$ with Lie algebra $\mathfrak{g}$ and a smooth principal
$G$-bundle $E$ over $X$. We also fix
a positive definite inner product
$$
(\cdot,\cdot): \mathfrak{g} \times \mathfrak{g} \to \RR
$$
on $\mathfrak{g}$ invariant under the adjoint action, that induces a metric on the adjoint bundle $\ad E$. The
space of smooth $k$-forms on
$X$ is denoted by $\Omega^{k}$ and for any associated vector bundle $F$, $\Omega^{k}(F)$ denote the space of smooth
$F$-valued $k$-forms on $X$.
Considering the space $\Omega^k(\ad E)$, the metric on $\ad E$ extends to give a pairing
\begin{equation}\label{eq:Pairing}
\Omega^p(\ad E) \times \Omega^q(\ad E) \to \Omega^{p+q}.
\end{equation}
If $X$ has a fixed almost complex structure compatible with $\omega$ the operator
\begin{equation}
\label{eq:Lambda}
  \Lambda:\Omega^{p,q}\to \Omega^{p-1,q-1}
\end{equation}
acting  on the space of smooth $(p,q)$-forms is the adjoint of the Lefschetz operator $\Omega^{p-1,q-1}\to
\Omega^{p,q}\colon \alpha \mapsto \alpha
\wedge \omega$, with respect to the metric determined by the symplectic form $\omega$ and the given almost complex
structure. Its linear extension
$\Lambda:\Omega^{p,q}(\ad E)\to \Omega^{p-1,q-1}(\ad E)$ will be denoted in the same way (we use the same notation as,
e.g., in~\cite{D3}).

\section{Hamiltonian actions of extended Lie groups}
\label{sec:Ceqham-act-ext-grp}

The aim of this section is to describe, under certain assumptions, hamiltonian actions of a Lie group on a symplectic
manifold, in terms of a normal
Lie subgroup and the quotient Lie group. Later in this thesis we will be interested in the case in which the symplectic
manifold and the Lie groups are
infinite dimensional.

Suppose that there is an extension of Lie groups
\begin{equation}\label{eq:Ext-Lie-groups}
 1 \to \cG \lra{\iota} \cX \lra{\pr} \cH \to 1.
\end{equation}
Such an extension determines another extension of Lie algebras
\begin{equation}\label{eq:Ext-Lie-alg}
 0 \to \LieG \lra{\iota} \LieX \lra{\pr} \LieH \to 0,
\end{equation}
where the use of the same symbols $\iota$ and $\pr$ should lead
to no confusion. 
Let $\cA$ be a manifold with an action of the `extended' Lie group
$\cX$. For each $\zeta\in\LieX$, let $Y_\zeta$ be the vector field
on $\cA$ generated by the infinitesimal action of $\zeta$.

Note that the short exact sequence~\eqref{eq:Ext-Lie-alg} does not
generally split as a sequence of Lie algebras, but it always does
as a short exact sequence of vector spaces. Let $W\subset
\Hom(\LieX,\LieG)$ be the affine space of vector space splittings.
Since $\cG\subset\cX$ is a normal subgroup, there is a
well-defined $\cX$-action on $W$, given by
\[
  g\cdot \theta \defeq \Ad(g)\circ \theta \circ \Ad(g^{-1}),
  \text{ for $g\in\cX$, $\theta\in W$.}
\]
Let $\cW \subset C^{\infty}(\cA,W)$ be the space of $\cX$-equivariant smooth maps $\theta\colon \cA\to W$. The case considered in this thesis (see~\secref{sec:Ceqcoupling-term}) is an example where the space $\cW$ is non-empty. Observe that if $\cA$ is a point, then $\cW$ is the space of $\cX$-equivariant splittings of~\eqref{eq:Ext-Lie-alg} and any such equivariant splitting determines an isomorphism $\LieX \cong \LieG\rtimes\LieH$, so in this case the condition that $\cW\neq\varnothing$ is very strong.

Let $\omega_\cA$ be a symplectic form on $\cA$ such that the $\cX$-action on $\cA$ is symplectic. We want to
characterise hamiltonian $\cX$-actions on
$\cA$ in terms of $\cG$ and $\cH$ under the assumption that there exists some $\theta\in\cW$. Suppose that the
$\cX$-action is Hamiltonian with
$\cX$-equivariant moment map $\mu_{\cX}\colon \cA\to (\LieX)^*$. Using $\theta\in\cW$ we will break this map into
pieces corresponding to the Lie
algebras $\LieG$ and $\LieH$. Note first that the isomorphism
$$
W\to\Hom(\LieH,\LieX) \colon \theta\mapsto \theta^\perp,
$$
where $\theta^\perp \colon \LieH \to \LieX$ is uniquely defined by the equation
\begin{equation}
\label{eq:theta-perp} \Id - \iota\circ \theta = \theta^\perp\circ p,
\end{equation}
is $\cX$-equivariant with respect to the action in $\Hom(\LieH,\LieX)$ given by $g\cdot\theta^\perp= Ad(g)\circ
\theta^\perp \circ \Ad(\pr (g^{-1}))$
for all $g\in\cX$. Here we are using the $\cX$-action in $\LieH$ induced by the surjective homomorphism $p$ in
the exact sequence
\eqref{eq:Ext-Lie-groups}. Thus, given now $\theta\in\cW$, the map
\[
  \theta^\perp \colon \cA \to \Hom(\LieH,\LieX)
\]
is $\cX$-equivariant and we can break the moment map into
\begin{equation}
\label{eq:piecescXmmap}
\langle \mu_{\cX},\zeta\rangle = \langle \mu_{\cX}, \iota \theta\zeta\rangle + \langle \mu_{\cX},
\theta^{\perp}\pr(\zeta) \rangle, \text{ for all
$\zeta\in\LieX$,}
\end{equation}
where the summands in the right hand side define a pair of $\cX$-equivariant maps $\mu_{\cG}\colon \cA\to (\LieG)^*$
given by
$$
\langle \mu_{\cG},
\zeta \rangle \defeq \langle \mu_{\cX}, \iota\zeta\rangle
$$
for all $\zeta\in\LieG$, and $\sigma_\theta\colon \cA\to (\LieH)^*$
\begin{equation}
\label{eq:def-sigma} \langle \sigma_\theta, \eta \rangle \defeq \langle\mu_{\cX},
\theta^\perp \eta \rangle, \text{ for all $\eta\in\LieH$.}
\end{equation}
Note that since $\cG$ is a normal subgroup of $\cX$ we can require the map $\mu_{\cG}$ to be $\cX$-equivariant. It is
now straightforward from the
moment map condition for $\mu_{\cX}$ to check that $\mu_{\cG}$ is a $\cG$-equivariant moment map for the $\cG$-action
on $\cA$, i.e. $d\langle
\mu_{\cG},\zeta\rangle = i_{Y_{\zeta}}\omega_{\cA}$ for all $\zeta \in \LieG$. We will see in the next proposition
that
$\sigma_\theta$ satisfies a
similar infinitesimal condition that provides our characterization of Hamiltonian $\cX$-action, for what we first
introduce some notation. Given a
smooth map $\zeta\colon \cA\to\LieX$, let us denote by $Y_\zeta$ the vector field on $\cA$ given by
\[
  \(Y_\zeta\)_A \defeq \frac{d}{dt}\bigg|_{t=0}\exp(t\zeta_A)\cdot A,
  \text{ for all $A\in\cA$}.
\]
In particular, in the notation~\eqref{eq:theta-perp}, any smooth $\theta\colon \cA \to W$ defines a map
\[
Y_{\theta^\perp}\colon \LieH
\to\Omega^0(T\cA)\colon \eta\mapsto Y_{\theta^\perp\eta}.
\]
Note also that $d\theta$ is a $\cX$-invariant $\Hom(\Lie\cH,\Lie\cG)$-valued 1-form on $\cA$, as $\theta\in\cW$ and the affine space $\cW$ is modelled on the space of $\cX$-equivariant smooth maps $\tau \colon \cA \to \Hom(\LieH,\LieG)$.

\begin{proposition}\label{prop:ham-act-ext-grp}
Suppose that there exists some $\theta\in\cW$. Then the $\cX$-action on $\cA$ is hamiltonian if and only if so is the
action of $\cG\subset\cX$ on
$\cA$, with a $\cX$-equivariant moment map $\mu_{\cG}\colon \cA\to (\LieG)^*$, and there exists a smooth
$\cX$-equivariant map $\sigma_\theta\colon
\cA\to (\LieH)^*$ satisfying
\begin{equation}
\label{eq:muX}
 i_{Y_{\theta^\perp\eta}}\omega_\cA = \langle \mu_\cG, (d\theta) \eta\rangle + d\langle \sigma_\theta, \eta\rangle,
\text{ for all $\eta\in\LieH$}.
\end{equation}
In this case, a $\cX$-equivariant moment map $\mu_{\cX}\colon
\cA\to (\LieX)^*$ is given by
\begin{equation}
\label{eq:def-muX}
  \langle \mu_{\cX},\zeta\rangle = \langle \mu_\cG, \theta\zeta\rangle +
  \langle \sigma_\theta, \pr (\zeta) \rangle, \text{ for all $\zeta\in\LieX$.}
\end{equation}
\end{proposition}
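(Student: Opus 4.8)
The plan is to prove the two implications of the biconditional separately, then establish formula~\eqref{eq:def-muX}. For the ``if'' direction, assume $\cG$ acts Hamiltonianly with $\cX$-equivariant moment map $\mu_\cG$ and that a smooth $\cX$-equivariant $\sigma_\theta\colon\cA\to(\LieH)^*$ satisfying~\eqref{eq:muX} exists. I would \emph{define} $\mu_\cX$ by the right-hand side of~\eqref{eq:def-muX} and check directly that it is an $\cX$-equivariant moment map. Equivariance is immediate from the $\cX$-equivariance of $\mu_\cG$, of $\theta$ (which forces $d\theta$ to be $\cX$-invariant), of $\sigma_\theta$, and of $\pr$. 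The moment map identity $d\langle\mu_\cX,\zeta\rangle=i_{Y_\zeta}\omega_\cA$ is the real content: I would fix $\zeta\in\LieX$, decompose $Y_\zeta=Y_{\iota\theta\zeta}+Y_{\theta^\perp\pr(\zeta)}$ using~\eqref{eq:theta-perp}, and compute $d\langle\mu_\cX,\zeta\rangle$ by differentiating the right-hand side of~\eqref{eq:def-muX}. The term $d\langle\mu_\cG,\theta\zeta\rangle$ splits, via the Leibniz rule, into $\langle d\mu_\cG,\cdot\rangle$ applied with the (pointwise) value $\theta\zeta$ plus a term $\langle\mu_\cG,(d\theta)\zeta\rangle$; the first of these is $i_{Y_{\iota\theta\zeta}}\omega_\cA$ by the moment map property of $\mu_\cG$ for $\cG$. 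The term $d\langle\sigma_\theta,\pr(\zeta)\rangle$ is handled by~\eqref{eq:muX} with $\eta=\pr(\zeta)$, which contributes $i_{Y_{\theta^\perp\pr(\zeta)}}\omega_\cA - \langle\mu_\cG,(d\theta)\pr(\zeta)\rangle$. Adding, the two $\langle\mu_\cG,(d\theta)\cdot\rangle$ terms should cancel because $(d\theta)\zeta$ and $(d\theta)\pr(\zeta)$ agree — here one uses that $d\theta$ kills the $\LieG$-directions, i.e.\ $d\theta\circ\iota=0$, which follows by differentiating the splitting relation $\theta\circ\iota=\Id_{\LieG}$ (a constant map, since $\theta$ takes values in splittings), so $(d\theta)\zeta=(d\theta)(\iota\theta\zeta+\theta^\perp\pr\zeta)=(d\theta)\theta^\perp\pr\zeta$, and one further checks $(d\theta)\theta^\perp\pr\zeta$ matches $(d\theta)\pr\zeta$ under the $\cW$-modelling identification. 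This yields exactly $i_{Y_\zeta}\omega_\cA$.

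For the ``only if'' direction, assume the $\cX$-action is Hamiltonian with $\cX$-equivariant $\mu_\cX$. Set $\langle\mu_\cG,\zeta\rangle\defeq\langle\mu_\cX,\iota\zeta\rangle$ for $\zeta\in\LieG$ and $\langle\sigma_\theta,\eta\rangle\defeq\langle\mu_\cX,\theta^\perp\eta\rangle$ for $\eta\in\LieH$, as in~\eqref{eq:def-sigma}. Since $\cG\subset\cX$ is normal, $\mu_\cG$ is automatically $\cX$-equivariant, and its moment map property for the $\cG$-action is the restriction of that for $\mu_\cX$ to constant sections $\zeta\in\LieG$. For $\sigma_\theta$: $\cX$-equivariance follows from that of $\mu_\cX$ together with the $\cX$-equivariance of the assignment $A\mapsto\theta^\perp_A$ established before the proposition. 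To get~\eqref{eq:muX}, fix $\eta\in\LieH$ and differentiate $\langle\sigma_\theta,\eta\rangle=\langle\mu_\cX,\theta^\perp\eta\rangle$: by Leibniz, $d\langle\sigma_\theta,\eta\rangle = \langle d\mu_\cX, \theta^\perp\eta\rangle + \langle\mu_\cX, (d\theta^\perp)\eta\rangle$. The first term is $i_{Y_{\theta^\perp\eta}}\omega_\cA$ by the moment map property of $\mu_\cX$. For the second, differentiate~\eqref{eq:theta-perp} in the form $\theta^\perp\circ\pr = \Id - \iota\circ\theta$ to get $(d\theta^\perp)\pr = -\iota\circ(d\theta)$ (note $d\theta^\perp$ is valued in $\Hom(\LieH,\LieX)$ and $\pr$ is surjective, so this determines $d\theta^\perp$ on all of $\LieH$); hence $\langle\mu_\cX,(d\theta^\perp)\eta\rangle = -\langle\mu_\cX,\iota(d\theta)\eta\rangle = -\langle\mu_\cG,(d\theta)\eta\rangle$. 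Rearranging gives precisely~\eqref{eq:muX}.

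Finally, for the displayed formula~\eqref{eq:def-muX} in the ``only if'' case: from~\eqref{eq:piecescXmmap}, which is just the decomposition $\Id = \iota\theta + \theta^\perp\pr$ of~\eqref{eq:theta-perp} paired against $\mu_\cX$, we get $\langle\mu_\cX,\zeta\rangle = \langle\mu_\cX,\iota\theta\zeta\rangle + \langle\mu_\cX,\theta^\perp\pr(\zeta)\rangle = \langle\mu_\cG,\theta\zeta\rangle + \langle\sigma_\theta,\pr(\zeta)\rangle$, by the very definitions of $\mu_\cG$ and $\sigma_\theta$. This shows the $\mu_\cX$ one starts with is recovered by~\eqref{eq:def-muX}, and combined with the ``if'' direction it shows~\eqref{eq:def-muX} always defines a valid $\cX$-equivariant moment map whenever the data exist.

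I expect the main obstacle to be the careful bookkeeping of where $d\theta$ versus $d\theta^\perp$ acts and the verification that the $\langle\mu_\cG,(d\theta)\cdot\rangle$ correction terms cancel — in other words, making rigorous the claims that $d\theta\circ\iota = 0$ and that $(d\theta^\perp)\circ\pr = -\iota\circ(d\theta)$, and keeping straight that $d\theta$ is $\Hom(\LieH,\LieG)$-valued (as noted just before the proposition) so that $\langle\mu_\cG,(d\theta)\eta\rangle$ is the natural pairing. Everything else is a routine application of the moment map axiom and $\cX$-equivariance. No serious analytic difficulty arises even in the infinite-dimensional setting, since all identities are pointwise/formal.
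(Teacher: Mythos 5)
Your proposal is correct and follows essentially the same route as the paper: the ``only if'' direction is exactly the Leibniz-rule computation obtained by differentiating the decomposition $\langle\mu_\cX,\zeta\rangle=\langle\mu_\cG,\theta\zeta\rangle+\langle\sigma_\theta,\pr(\zeta)\rangle$ together with the linearity $Y_\zeta=Y_{\theta\zeta}+Y_{\theta^\perp\pr(\zeta)}$, and your key observations that $d\theta$ is $\Hom(\LieH,\LieG)$-valued (so $(d\theta)\zeta=(d\theta)\pr(\zeta)$) and that $(d\theta^\perp)\circ\pr=-\iota\circ(d\theta)$ are precisely what the paper relies on. The only difference is that you also write out the ``if'' direction in full, which the paper declares straightforward and leaves to the reader; your version of it is the same cancellation argument run in reverse and is correct.
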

\begin{proof}
To prove the ``only if'' part it remains to check that \eqref{eq:muX} holds. This follows by definition, by taking
differentials in
\eqref{eq:piecescXmmap} and using that
\begin{gather*}
d \langle \mu_\cG,\theta \zeta\rangle =
 \langle d\mu_\cG,\theta \zeta\rangle + \langle  \mu_\cG,(d\theta) \eta \rangle
\, \text{ and }\\
  i_{Y_\zeta}\omega =  i_{Y_{\theta\zeta}}\omega + i_{Y_{\theta^\perp\eta}}\omega,
\text{ with $\eta \defeq \pr(\zeta)$,}
\end{gather*}
where the first equation is obtained applying the chain rule, and the second one holds because $\zeta = \theta\zeta +
\theta^\perp\eta$ and $Y_\zeta$
is linear in $\zeta$. The ``if'' part is straightforward from the statement and is left to the reader.
\end{proof}

To see the dependence of $\sigma_\theta$ on the choice of $\theta\in\cW$, recall that $\cW$ is an affine space
modelled
on the vector space of
$\cX$-equivariant smooth maps $\tau \colon \cA \to \Hom(\LieH,\LieG)$, so the elements of $\cW$ are all of the form
$\theta+p^*\tau$. Now, if $\theta$
and $\sigma_\theta$ satisfy the conditions of Proposition~\ref{prop:ham-act-ext-grp}, then the same conditions is
obviously satisfied by
$\sigma_{\theta + pr^*\tau}$ with respect to $\theta+\pr^*\tau$, where
$$
\langle \sigma_{\theta+\pr^*\tau}, \eta\rangle = \langle \sigma_\theta, \eta\rangle - \langle \mu_{\cG}, \tau
\eta\rangle, \text{for all
$\eta\in\LieH$}.
$$
Condition~\eqref{eq:muX} for $\sigma_\theta$ in proposition \ref{prop:ham-act-ext-grp} generalizes the usual
infinitesimal condition
$i_{Y_\eta}\omega_\cA = d\langle\mu_\cH,\eta\rangle$ ($\eta\in\cH$) for moment maps $\mu_\cH$ for the induced
$\cH$-action on $\cA$ when the Lie group
extension~\eqref{eq:Ext-Lie-groups} splits.  To see this, suppose that there exists such a splitting. It determines an
$\cH$-action on $\cA$ and a
$\cX$-invariant $\theta_0\in W$, so for all $\eta\in\cH$, $Y_{\theta^\perp_0\eta}$ is the vector field  induced by the
infinitesimal action of $\eta$.
Then the constant map $\theta\colon \cA\to W\colon A\mapsto \theta_0$ is in $\cW$ and $d\theta=0$, so~\eqref{eq:muX}
is
precisely the condition that
$\sigma_\theta$ is a moment map for the $\cH$-action on $\cA$. Thus, for a fixed Lie group splitting of
\eqref{eq:Ext-Lie-groups},
the $\cX$-action on $\cA$ is Hamiltonian if and only if so are the induced actions of $\cG$ and $\cH$, with
\emph{$\cX$-equivariant} moment maps
$\mu_{\cG}$ and $\sigma_\theta$, respectively.

\section{The extended gauge group action on the space of connections}
\label{sec:Ceqcoupling-term}

Let $X$ be a compact symplectic manifold of dimension $2n$, with symplectic form $\omega$. We fix a compact Lie group
$G$ and a (smooth) principal $G$-bundle $E$ on $X$, with projection map $\pi\colon E\to X$. Let $\cH$ be the group of
Hamiltonian symplectomorphisms of $X$, as defined in~\secref{subsec:cscKmmap}. An \emph{automorphism} of $E$ is a
$G$-equivariant diffeomorphism $g\colon E\to E$. Any such automorphism covers a unique diffeomorphism $h\colon X\to
X$,
i.e. a unique $h$ such that $\pi\circ g=h\circ \pi$.  The (Hamiltonian) \emph{extended gauge group} $\cX$ of $E$ is
the
group  of automorphisms of $E$ which cover elements of $\cH$. Then the gauge group of $E$, already mentioned
in~\secref{subsec:HYMmmap}, is the normal subgroup $\cG\subset\cX$ of automorphisms covering the identity.
Furthermore,
there is a canonical short exact sequence of Lie groups
\begin{equation}
\label{eq:coupling-term-moment-map-1}
  1\to \cG \lra{\iota} \cX \lra{\pr} \cH \to 1.
\end{equation}
Here $\iota$ is the inclusion map and $\pr$ is the map that assigns to each automorphism the Hamiltonian
symplectomorphisms that it covers.

The only non-trivial fact that needs to be proved to show that~\eqref{eq:coupling-term-moment-map-1} is exact is that
$\pr$ is surjective. Choose a
connection $A$ on $E$. Given $h \in \cH$, by definition there exists a Hamiltonian isotopy $[0,1] \times X \to X
\colon
(t,x) \mapsto h_t(x)$ from $h_0=\Id$ to $h_1=h$, which is the flow of a smooth family of vector fields $\eta_t \in
\LieH$, i.e. with $dh_t/dt = \eta_t \circ h_t^{-1}$ (see e.g.~\cite[\S 3.2]{McS}). Let $\zeta_t\in\LieX$ be the
horizontal lift to $E$ of $\eta_t$ given by the connection $A$. The vector fields $\zeta_t$ are  $G$-invariant so its
time-dependent flow $g_t$ exists for all $t\in [0,1]$ and the $g_t\colon E\to E$ are $G$-equivariant. Since $\zeta_t$
is a lift of $\eta_t$ to $E$, its flow $g_t$ covers $h_t$, so in particular $g_t\in\cX$ for all $t$ and in particular
$g_1\in\cX$ covers $h=h_1$. Thus $\pr$ is surjective and so~\eqref{eq:coupling-term-moment-map-1} is exact, as
claimed.

\begin{remark}
Note that the existence of the short exact sequence~\eqref{eq:coupling-term-moment-map-1} holds even when the
structure
group or even the base are
non-compact. The crucial fact is that $\cH$ lies in the connected component of the identity of $\Diff X$. For
generalizations of the exact
sequence~\eqref{eq:coupling-term-moment-map-1} see~\cite{ACMM}.
\end{remark}

There is an action of the extended gauge group on the space $\cA$ of connections on $E$. To define this action,
we view the
elements of $\cA$ as $G$-equivariant splittings $A\colon TE\to VE$ of the short exact sequence
\begin{equation}
\label{eq:principal-bundle-ses}
  0 \to VE\lto TE\lto \pi^*TX \to 0,
\end{equation}
where $VE=\ker d\pi$ is the vertical bundle. Using the action of $g\in \cX$ on $TE$, the $\cX$-action on $\cA$
is $g \cdot A \defeq
g\circ A \circ g^{-1}$. Any such splitting $A$ induces a vector space splitting of the Atiyah
short exact sequence
\begin{equation}\label{eq:Ext-Lie-alg-3}
0\to \Lie\cG \lra{\iota} \Lie(\Aut E) \lra{\pr} \Lie(\Diff X) \to
0
\end{equation}
(cf.~\cite[equation~(3.4)]{AB}), where $\Lie(\Diff X)$ is the Lie algebra of vector fields on $X$ and $\Lie(\Aut E)$
is
the Lie algebra
of $G$-invariant vector fields on $E$. This splitting is given by maps
$$
\theta_A\colon \Lie(\Aut E)\to \LieG, \quad \theta_A^\perp\colon
\Lie(\Diff X) \to \Lie(\Aut E)
$$
such that $\iota\circ \theta_A + \theta_A^\perp\circ \pr =\Id$, where $\theta_A^\perp$ is the horizontal lift of
vector
fields on $X$ to vector fields
on $E$ given by $A$. This splitting restricts to another one of the exact sequence
\begin{equation}\label{eq:Ext-Lie-alg-2}
 0 \to \LieG \lra{\iota} \LieX \lra{\pr} \LieH \to 0
\end{equation}
induced by~\eqref{eq:coupling-term-moment-map-1}, because $\LieX=\pr^{-1}(\LieH)$. In the notation
of~\secref{sec:Ceqham-act-ext-grp}, it is easy
to see that the map
\begin{equation}\label{eq:theta}
\theta\colon \cA\to W\colon A\mapsto \theta_A
\end{equation}
is an element of $\cW$. It is also clear that the $\cX$-action on $\cA$ is symplectic, for the symplectic
form~\eqref{eq:SymfC}. The methods
of~\secref{sec:Ceqham-act-ext-grp} apply here to provide a moment map. Let $\hat{c}$ be the real constant
\begin{equation}\label{eq:constant-c-hat}
\hat c \defeq \frac{\int_X (F_A \wedge F_A) \wedge \omega^{n-2}}{\int_X\vol_\omega} = \frac{\langle c(E) \cup
[\omega]^{n-2},[X]\rangle}{\Vol([\omega])}
\end{equation}
where $c(E) \in H^4(X)$ is the Chern-Weyl class associated to the biinvariant bilinear form $(\cdot, \cdot)$ in
$\mathfrak{g}$, that depends only on
the topology of the bundle (see Theorem $1.1$, p. $293$ in~\cite{KNII}). Then,
\begin{proposition}\label{prop:momentmap-X}
The $\cX$-action on $\cA$ is hamiltonian, with $\cX$-equivariant moment map $\mu_{\cX}\colon \cA\to (\LieX)^*$ given
by
\begin{equation}\label{eq:thm-muX}
  \langle \mu_{\cX},\zeta\rangle = \langle \mu_\cG, \theta\zeta\rangle +
  \langle \sigma, \pr (\zeta) \rangle, \text{ for all $\zeta\in\LieX$,}
\end{equation}
where $\mu_\cG: \cA \to (\LieG)^*$ and $\sigma\colon \cA\to (\LieH)^*$~are
\begin{equation}
\label{eq:sigma}
\begin{split}
\langle \mu_\cG,\theta\zeta\rangle(A) & = \int_X(\theta_A\zeta \wedge F_A) \wedge \omega^{n-1}\\
\langle\sigma,\eta\rangle(A) & = -\frac{(n-1)}{2}\int_X \eta((F_A \wedge F_A) \wedge \omega^{n-2} - \hat c
\vol_\omega)
\end{split}
\end{equation}
for any $\eta \in \LieH \cong C^{\infty}(X)/\RR$.
\end{proposition}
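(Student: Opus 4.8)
The plan is to deduce the statement from Proposition~\ref{prop:ham-act-ext-grp}, applied to the extension of Lie groups \eqref{eq:coupling-term-moment-map-1}, to the $\cX$-action on the symplectic manifold $(\cA,\omega_\cA)$, and to the element $\theta\in\cW$ furnished by \eqref{eq:theta}. Hence it will be enough to establish the two hypotheses of that proposition: first, that the induced $\cG$-action on $(\cA,\omega_\cA)$ is hamiltonian with an \emph{$\cX$-equivariant} moment map $\mu_\cG$, namely the one in the first line of \eqref{eq:sigma}; and second, that the map $\sigma$ defined in the second line of \eqref{eq:sigma} is smooth, $\cX$-equivariant, and satisfies the infinitesimal identity \eqref{eq:muX}. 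Granting these, the formula \eqref{eq:def-muX} produced by Proposition~\ref{prop:ham-act-ext-grp} is exactly \eqref{eq:thm-muX}, which finishes the proof.

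For the first hypothesis, the formula for $\mu_\cG$ is the Atiyah--Bott--Donaldson moment map \eqref{eq:momentmap-cG} recalled in \secref{subsec:HYMmmap}, so only the upgrade from $\cG$-equivariance to $\cX$-equivariance needs a word. If $g\in\cX$ covers $h\in\cH$, the curvature of $g\cdot A$ is obtained from $F_A$ by the natural action on $\ad E$-valued forms (pullback by $h^{-1}$ on the form part and the induced bundle automorphism on the $\ad E$ part); since $(\cdot,\cdot)$ is $\Ad$-invariant and $h^*\omega=\omega$, a change of variables by $h$ turns $\int_X(\zeta\wedge F_{g\cdot A})\wedge\omega^{n-1}$ into $\int_X(\Ad(g^{-1})\zeta\wedge F_A)\wedge\omega^{n-1}$ for $\zeta\in\LieG$, i.e. $\mu_\cG(g\cdot A)=\Ad(g^{-1})^*\mu_\cG(A)$. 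The same computation, with $\LieH^*\cong C^\infty_0(X)$ identified through the $L^2$ pairing and $\cX$ acting on it via $\pr$, gives the $\cX$-equivariance of $\sigma$: one has $(F_{g\cdot A}\wedge F_{g\cdot A})=(h^{-1})^*(F_A\wedge F_A)$, and a Hamiltonian for $h^{-1}_*\eta$ is $\phi\circ h$ when $\phi$ is one for $\eta$, because $h$ is symplectic. Finally $\sigma$ is well defined on $\LieH=C^\infty(X)/\RR$ and smooth, because $(F_A\wedge F_A)\wedge\omega^{n-2}-\hat c\,\vol_\omega$ has vanishing integral over $X$ by the very definition \eqref{eq:constant-c-hat} of $\hat c$, so the constant ambiguity in $\eta$ is harmless.

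The core of the proof is the identity \eqref{eq:muX}, which I would verify by evaluating its three terms at a point $A\in\cA$ on a tangent vector $a\in T_A\cA=\Omega^1(\ad E)$, writing $\phi\in C^\infty(X)$ for a representative of $\eta$ so that $i_\eta\omega=d\phi$. The infinitesimal $\cX$-action on connections is the standard one, $(Y_\zeta)_A=-d_A(\theta_A\zeta)-i_{\pr\zeta}F_A$; since $\theta_A(\theta^\perp_A\eta)=0$ this gives $(Y_{\theta^\perp\eta})_A=-i_\eta F_A$, so by \eqref{eq:SymfC} the left-hand side of \eqref{eq:muX} evaluated on $a$ is $-\int_X(i_\eta F_A\wedge a)\wedge\omega^{n-1}$. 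For the term $\langle\mu_\cG,(d\theta)\eta\rangle$, differentiating the relations defining $\theta_A$, $\theta^\perp_A$ (cf.~\eqref{eq:theta-perp}) yields $\bigl((d\theta)_Aa\bigr)\eta=i_\eta a$, so this term is $\int_X(i_\eta a\wedge F_A)\wedge\omega^{n-1}$; using the Leibniz rule $i_\eta(a\wedge F_A)=(i_\eta a\wedge F_A)+(i_\eta F_A\wedge a)$ together with $(a\wedge F_A)\wedge\omega^{n-1}=0$ for degree reasons and $i_\eta\omega=d\phi$, it equals $(n-1)\int_X(a\wedge F_A)\wedge\omega^{n-2}\wedge d\phi-\int_X(i_\eta F_A\wedge a)\wedge\omega^{n-1}$. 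For the last term, differentiating \eqref{eq:sigma} and using $\frac{d}{dt}\big|_{t=0}(F_{A+ta}\wedge F_{A+ta})=2(d_Aa\wedge F_A)$, the Bianchi identity $d_AF_A=0$ to write $(d_Aa\wedge F_A)=d(a\wedge F_A)$, and then Stokes together with $d\omega=0$, gives $d\langle\sigma,\eta\rangle(a)=-(n-1)\int_X(a\wedge F_A)\wedge\omega^{n-2}\wedge d\phi$. The first summand of the $(d\theta)\eta$-term and this expression cancel, and what remains is precisely $i_{Y_{\theta^\perp\eta}}\omega_\cA$: this is \eqref{eq:muX}. In particular the normalization $-(n-1)/2$ in \eqref{eq:sigma} is exactly what makes this cancellation work.

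I expect the main obstacle to be this last computation: carrying out the Bianchi-plus-Stokes integration by parts, performing the Leibniz splitting of the $(d\theta)\eta$-term, and tracking every sign (the symmetry and orientation conventions for the pairing $(\cdot\wedge\cdot)$, the sign of the infinitesimal $\cX$-action on $\cA$, the sign of $(d\theta)_Aa$, and the sign picked up when integrating $d\langle\sigma,\eta\rangle$ by parts), so that the three terms combine to \eqref{eq:muX} with precisely the constant appearing in \eqref{eq:sigma}. Everything else --- the applicability of Proposition~\ref{prop:ham-act-ext-grp}, the fact that $\theta\in\cW$ (established in \eqref{eq:theta}), and the equivariance and well-definedness statements --- is either already available in the text or a routine naturality argument.
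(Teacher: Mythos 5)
Your proposal is correct and follows essentially the same route as the paper's proof: reduce to Proposition~\ref{prop:ham-act-ext-grp}, get equivariance of $\mu_\cG$ and $\sigma$ by change of variables, and verify \eqref{eq:muX} by computing $Y_{\theta^\perp\eta}(A)=-i_\eta F_A$, $((d\theta)\eta)(a)=a(\eta)$, and $d\langle\sigma,\eta\rangle(a)$ via Bianchi plus Stokes, then combining the terms through the degree identity $i_\eta\bigl((a\wedge F_A)\wedge\omega^{n-1}\bigr)=0$. The signs and the constant $-(n-1)/2$ all check out, and your observation that $\hat c$ is defined precisely so that $\sigma$ descends to $C^\infty(X)/\RR$ is a correct (if implicit in the paper) point.
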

\begin{proof}
By Proposition~\ref{prop:ham-act-ext-grp} this will follow from the facts that $\mu_\cG$ and $\sigma$ are
$\cX$-equivariant and that the map $\sigma$
defined by~\eqref{eq:sigma} satisfies~\eqref{eq:muX}. The $\cX$-equivariance is immediate from the formulae
\eqref{eq:sigma} and the Change of
Variable Theorem. To prove the last fact, we first compute the terms involved in~\eqref{eq:muX}.

Let $\zeta\in \Lie(\Aut E)$, $A\in\cA$. By the Leibninz rule, for all $v\in\Omega^0(TE)$,
\[
  \frac{d}{dt}\bigg|_{t=0}\(e^{t\zeta}\circ A\circ e^{-t\zeta}(v)\)
  = \theta_A[\zeta,v] - [\zeta,\theta_A(v)]
  = \theta_A[\zeta,v - \theta_A(v)],
\]
where in the second equality we have used the fact that $\zeta$ covers a vector field on $X$, so that the vector field
$[\zeta,\theta_A(v)]$ is
vertical. It is easy to see that this expression is tensorial in $v$, so at each point of $E$ it only depends on its
projection $p(v)$. Thus for any
$\zeta\in \LieX$ the infinitesimal action $Y_\zeta(A)$ on $\cA$ regarded as an element of $\Omega^1(\ad E)$ is given
by
\begin{equation}
\label{eq:infinit-action-connections}
\begin{split}
  Y_\zeta(A)(y) & = \theta_A[\zeta,\theta_A^\perp(y)] \\
  & = [\theta_A\zeta,\theta_A^\perp(y)] + \theta_A[\theta_A^\perp(\pr \zeta),\theta_A^\perp(y)]\\
  & = -d_A(\theta_A\zeta) - i_{\pr(\zeta)} F_A,
\end{split}
\end{equation}
for any $y \in \Omega^0(TX)$ (cf. with the equation before (4.2) and the equation after (3.4) in \cite{AB}. Note that
we are using a different sign
for the curvature).

Note that~\eqref{eq:infinit-action-connections} also applies for
maps $\zeta\colon \cA\to \Lie(\Aut E)$. In particular,
\begin{equation}
\label{eq:momentmap-X-1}
   Y_{\theta_A^\perp\eta}(A)   = - i_\eta F_A, \text{ for $\eta\in\LieH$.}
\end{equation}
The $\Hom(\LieH,\LieG)$-valued 1-form $d\theta$ on $\cA$ is given
by
\begin{equation}
\label{eq:momentmap-X-2}
  i_a(d\theta) = a\colon \LieH \to \LieG \colon \eta\mapsto i_\eta a,
\end{equation}
for $A\in\cA$ and $a\in T_A\cA=\Omega^1(\ad E)$. Hence, with the same notation,
\begin{equation}
\label{eq:momentmap-X-4}
  ((d\theta)\eta)(a) = a(\eta), \text{ for $\eta\in\LieH$.}
\end{equation}
For the last term of~\eqref{eq:muX}, let $A\in\cA$, $a\in
T_A\cA$, $\zeta\in\LieX$. Let $\eta=\eta_\phi\in\LieH$, where
$\phi\in C^\infty(X)/\RR$. Using the path $A_t=A+ta$, we obtain
\begin{equation}
\label{eq:momentmap-X-3}
\begin{split}
  d\langle \sigma,\eta\rangle (a) & = -\frac{n-1}{2} \frac{d}{dt}\bigg|_{t=0}
  \int_X \phi(F_{A_t} \wedge F_{A_t})\wedge \omega^{n-2} - \hat c \vol_\omega)\\
  & = -(n-1) \int_X \phi (d_Aa \wedge  F_A) \wedge \omega^{n-2}\\
  & = -(n-1) \int_X \phi \, d(a \wedge  F_A) \wedge \omega^{n-2}\\
  & = (n-1) \int_X i_{\eta}\omega \wedge (a \wedge F_A) \wedge \omega^{n-2}.\\
\end{split}
\end{equation}
Here we have used $dF_{A_t}/dt = d_A a$ for $t=0$ in the second
equality, the Bianchi identity $d_A F_A=0$ in the third equality,
and we have integrated by parts and used the formula
$d\phi=i_{\eta}\omega$ in the last equality. To compute the
integral in the last equality, note that $(a \wedge F_A)
\wedge \omega^{n-1}=0$, as $\dim X=2n$, so $i_\eta ((a \wedge
F_A) \wedge \omega^{n-1})$ equals
\[
(a(\eta) \wedge F_A)\wedge \omega^{n-1} - (a\wedge i_\eta
F_A)\wedge \omega^{n-1} - (n-1)(a\wedge F_A) \wedge
i_\eta\omega\wedge  \omega^{n-2}=0.
\]
Combined with~\eqref{eq:momentmap-X-3}, we thus obtain
equation~\eqref{eq:muX}:
\begin{equation}
\label{eq:momentmap-X-4}
\begin{split}
  d\langle \sigma, \eta \rangle (a) & =
\int_X (a\wedge i_\eta F_A)\wedge \omega^{n-1} - (a(\eta) \wedge F_A)\wedge \omega^{n-1} \\
& = \int_X (Y_{\theta_A^\perp\eta}(A) \wedge a)\wedge \omega^{n-1}
  - (((d\theta)\eta)(a)\wedge F_A)\wedge \omega^{n-1} \\
& = i_{Y_{\theta_A^\perp\eta}}\omega_\cA (a) -
\langle\mu_\cG,(d\theta)\eta\rangle(a). \qedhere
\end{split}
\end{equation}
\end{proof}

To finish this section we identify the image of the moment map $\mu_{\cX}: \cA \to (\LieX)^*$ in the above proposition
with elements in $\LieX$ as we did in \secref{subsec:HYMmmap} and \secref{subsec:cscKmmap} for the groups $\cG$ and
$\cH$. Though we do not have now a canonical $\cX$-invariant pairing in $\LieX$, we can
use the $\cX$-equivariant map defined in \eqref{eq:theta}, $\theta\colon \cA \to \Hom(\LieH,\LieG)$, to construct an
$\cA$-dependent pairing. Let us identify $\LieH \cong C^{\infty}(X)/\RR$ with $C_0^{\infty}(X)$, the space of functions with zero integral over $X$ with respect to $\vol_\omega$. Then, the $L^2$ pairing on functions on $X$ provides an
$\cH$-invariant pairing
\begin{equation}\label{eq:pairingH}
\langle\eta_1,\eta_2\rangle_{\cH} = \int_X \phi_1 \phi_2 \vol_\omega,
\end{equation}
where $\eta_j \in \LieH$ and $\phi_j \in C^{\infty}_0(X)$ is the Hamiltonian function determined by $\eta_j$, $j =
1,2$. Then, we obtain a map
$$
\langle\cdot,\cdot\rangle_{\cX}\colon \cA \to S^2\LieX^*
$$
that to each $A \in \cA$ assigns a symmetric pairing on $\LieX$ given by
\begin{equation}\label{eq:pairingscX}
\langle\zeta_1,\zeta_2\rangle_{\cX|A} = \langle
p(\zeta_1),p(\zeta_2)\rangle_{\cH} + \langle\theta_A\zeta_1,\theta_A\zeta_2\rangle_{\cG}, \text{ for
$\zeta_1, \zeta_2 \in \LieX$.}
\end{equation}
Since both, $\theta$ and $\langle\cdot,\cdot\rangle_\cG$ are $\cX$-equivariant (recall that $\cG$ is normal in $\cX$),
the map
$\langle\cdot,\cdot\rangle_{\cX}$ is $\cX$-equivariant. Given $A \in \cA$ let
$$
\phi_A = \frac{1}{4}\Lambda^2_\omega (F_A \wedge F_A) \in C^{\infty}(X),
$$
and consider the element of the Lie algebra of $\cX$ given by
\begin{equation}\label{eq:zetaA}
\zeta_A = \Lambda_\omega F_A - \theta_A^{\perp}(\eta_A) \quad \in \quad \LieX,
\end{equation}
where $\Lambda_\omega F_A \in \Omega^0(\ad E)$ is regarded as a vertical $G$-invariant vector field in $E$ and
$\eta_A \in \LieH$ is the Hamiltonian vector field associated to $\phi_A$. Then, given $\zeta \in
\LieX$ covering $\eta_\phi \in \LieH$ we have
\begin{equation}
\label{eq:LievsLiedual0}
\begin{split}
\langle\mu_{\cX}(A),\zeta\rangle & = \int_X(\theta_A\zeta\wedge F_A) \wedge \omega^{n-1} -\frac{(n-1)}{2}\int_X \phi((F_A
\wedge F_A) \wedge \omega^{n-2} - \hat c \vol_\omega)\\
& = \frac{(n-1)!}{4}\big{(}\int_X(\theta_A\zeta \wedge 4\Lambda_\omega F_A) - \phi(\Lambda^2_\omega(F_A \wedge F_A) -
\frac{2\hat c}{(n-2)!}) \cdot \vol_\omega\big{)}\\
& = \frac{(n-1)!}{4}\big{(}\langle\theta_A\zeta \wedge 4\Lambda_\omega F_A\rangle_{\cG} + \langle\phi,-(\Lambda^2_\omega(F_A
\wedge F_A) - \frac{2\hat c}{(n-2)!})\rangle_\cH\big{)}\\
& = (n-1)! \langle\zeta,\zeta_A\rangle_{\cX|A},
\end{split}
\end{equation}
and so the moment map $\mu_{\cX}$ satisfies
\begin{equation}\label{eq:LievsLiedual}
\mu_{\cX}(A) = (n-1)! \langle\zeta_A,\cdot\rangle_{\cX|A} \in \LieX^*.
\end{equation}
For the previous computation in \eqref{eq:LievsLiedual0} we have used the identities
\begin{equation}\label{eq:lambda^2-F_A}
\begin{split}
(n-1)! \Lambda_\omega F_A \vol_\omega & = F_A \wedge \omega^{n-1}\\
(n-2)! \Lambda^2_\omega (F_A \wedge F_A) \vol_\omega & = 2(F_A \wedge F_A)\wedge \omega^{n-2}.
\end{split}
\end{equation}
The $\cX$-equivariance of the r.h.s. of \eqref{eq:LievsLiedual} is justified by the equality $\zeta(gA) = g \cdot
\zeta_A$, for any $A \in \cA$, $g
\in \cX$.

\section{The coupled equations as a moment map condition}
\label{sec:Ceqcoupled-equations}

As in the previous section, let $X$ be a compact symplectic manifold of dimension $2n$, with symplectic form $\omega$,
$G$ be a compact Lie group and
$E$ be a smooth principal $G$-bundle on $X$. Let $\cJ$ be the space of almost complex structures compatible with
$\omega$ and $\cA$ be the space of
connections on $E$. Using the symplectic structures on $\cJ$ and $\cA$ induced by $\omega$ (see~\eqref{eq:SympJ}
and~\eqref{eq:SymfC}), we can define
a symplectic form on the product $\cJ \times \cA$, for each pair of non-zero real constants $\alpha = (\alpha_0,
\alpha_1)$, given by
\begin{equation}
\label{eq:Sympfamily}
\omega_\alpha = \alpha_0 \cdot \omega_\cJ + \frac{4 \alpha_1}{(n-1)!} \cdot \omega_\cA.
\end{equation}
The extended gauge group $\cX$ has a canonical action on $\cJ \times \cA$ and this action is symplectic for any $\omega_\alpha$. In the notation of~\secref{sec:Ceqcoupling-term}, this action is given by
\[
  g\cdot (J,A)=(\pr(g)\cdot J, g\cdot A), \text{ for $g\in\cX$ and
  $(J,A)\in \cJ \times \cA$},
\]
where $\pr$ is the map in the short exact sequence~\eqref{eq:coupling-term-moment-map-1}. Let $z \in \mathfrak{g}$ be an element in the centre of $\mathfrak{g}$. We define the real constant
\begin{equation}\label{eq:constant-c}
c_z = \alpha_0 \hat{S} + \alpha_1 \(\frac{2\hat c}{(n-2)!} - 4 |z|^2\),
\end{equation}
where $\hat{S}$ is as in~\eqref{eq:hat-S} and $\hat{c}$ is given by~\eqref{eq:constant-c-hat}. Combining the moment map for the $\cH$-action on $\cJ$ (see~\S\ref{subsec:cscKmmap}), Proposition~\ref{prop:momentmap-X} and equation
\eqref{eq:LievsLiedual0} we obtain the following.
\begin{proposition}
\label{prop:momentmap-pairs} The $\cX$-action on $\cJ\times \cA$ is hamiltonian with respect to \eqref{eq:Sympfamily},
with $\cX$-equivariant moment map $\mu_{\alpha}\colon \cJ \times \cA\to (\LieX)^*$ given by
\begin{equation} \label{eq:thm-muX}
\begin{split}
\langle \mu_{\alpha}(J,A),\zeta\rangle & = 4\alpha_1\int_X (\theta_A\zeta\wedge(\Lambda_\omega F_A-z)) \cdot \vol_{\omega}\\
&- \int_X  \phi(\alpha_0 S_J + \alpha_1 \Lambda^2_\omega(F_A \wedge F_A) - 4\alpha_1(\Lambda_\omega F_A\wedge z) - c_z)\vol_{\omega}
\end{split}
\end{equation}
for any $\zeta\in\LieX$ covering $\eta_\phi \in \LieH$.
\end{proposition}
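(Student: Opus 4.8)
The plan is to deduce the statement from Proposition~\ref{prop:ham-act-ext-grp}, applied to the $\cX$-action on the product $\cJ\times\cA$, feeding in the two moment maps already available: Donaldson's $\mu_\cH(J)=-(S_J-\hat S)$ for the $\cH$-action on $\cJ$ (see~\eqref{eq:scmom}) and the moment map of Proposition~\ref{prop:momentmap-X} for the $\cX$-action on $\cA$. First I would observe that $\omega_\alpha$ is $\cX$-invariant, since $\omega_\cJ$ is $\cH$-invariant (hence $\cX$-invariant through $\pr$) and $\omega_\cA$ is $\cX$-invariant, and that the map $\theta\colon\cJ\times\cA\to W$, $(J,A)\mapsto\theta_A$, obtained by pulling~\eqref{eq:theta} back along the projection $\cJ\times\cA\to\cA$, is $\cX$-equivariant and lies in $\cW$ --- here one uses that $\cG$ acts trivially on the $\cJ$-factor and is normal in $\cX$. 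By Proposition~\ref{prop:ham-act-ext-grp} it then suffices to produce an $\cX$-equivariant $\cG$-moment map $\mu_\cG\colon\cJ\times\cA\to\LieG^*$ and an $\cX$-equivariant map $\sigma_\theta\colon\cJ\times\cA\to\LieH^*$ satisfying condition~\eqref{eq:muX} for $\omega_\alpha$; the moment map $\mu_\alpha$ is then given by~\eqref{eq:def-muX}.

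For $\mu_\cG$: since $\cG$ preserves only the $\cA$-factor, I would take the $\cG$-moment map of Proposition~\ref{prop:momentmap-X}, rescaled by the coefficient $\tfrac{4\alpha_1}{(n-1)!}$ of $\omega_\cA$ in~\eqref{eq:Sympfamily}, and then shifted by the constant element $-4\alpha_1 z$; using the first identity in~\eqref{eq:lambda^2-F_A} this reads $\langle\mu_\cG(J,A),\zeta\rangle=4\alpha_1\int_X(\zeta\wedge(\Lambda_\omega F_A-z))\vol_\omega$ for $\zeta\in\LieG$. As $z$ lies in the centre of $\mathfrak g$ it determines an $\Ad$-invariant element of $\LieG$, so this shift preserves both the moment-map property and $\cG$-equivariance, and $\cX$-equivariance follows from the change-of-variables argument in Proposition~\ref{prop:momentmap-X}.

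For $\sigma_\theta$: identifying $\LieH\cong C^\infty(X)/\RR$ with $C_0^\infty(X)$, I would take $\langle\sigma_\theta(J,A),\eta_\phi\rangle = -\int_X\phi\bigl(\alpha_0 S_J+\alpha_1\Lambda^2_\omega(F_A\wedge F_A)-4\alpha_1(\Lambda_\omega F_A\wedge z)-c_z\bigr)\vol_\omega$, which, since $\int_X\phi\,\vol_\omega=0$, equals $\alpha_0\langle\mu_\cH(J),\eta_\phi\rangle+\tfrac{4\alpha_1}{(n-1)!}\langle\sigma(A),\eta_\phi\rangle+4\alpha_1\int_X\phi\,(z\wedge\Lambda_\omega F_A)\vol_\omega$, with $\mu_\cH,\sigma$ as in~\eqref{eq:scmom} and~\eqref{eq:sigma}. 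To check~\eqref{eq:muX} I would decompose the vector field $Y_{\theta^\perp\eta}$ on $\cJ\times\cA$ into its components, $Y_\eta$ on $\cJ$ and $-i_\eta F_A$ on $\cA$ (the latter by~\eqref{eq:momentmap-X-1}); then $i_{Y_{\theta^\perp\eta}}\omega_\alpha$ is the sum of $\alpha_0\,d\langle\mu_\cH,\eta\rangle$ (moment-map property of $\mu_\cH$) and $\tfrac{4\alpha_1}{(n-1)!}\bigl(\langle\mu_\cG^0,(d\theta)\eta\rangle+d\langle\sigma,\eta\rangle\bigr)$ (Proposition~\ref{prop:momentmap-X}, with $\mu_\cG^0$ its unshifted, unscaled $\cG$-moment map), and rewriting $\tfrac{4\alpha_1}{(n-1)!}\langle\mu_\cG^0,(d\theta)\eta\rangle=\langle\mu_\cG,(d\theta)\eta\rangle+4\alpha_1\langle z,(d\theta)\eta\rangle$ matches the extra summand of $\sigma_\theta$ provided $d\bigl(4\alpha_1\int_X\phi\,(z\wedge\Lambda_\omega F_A)\vol_\omega\bigr)=4\alpha_1\langle z,(d\theta)\eta\rangle$. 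Equivariance of $\sigma_\theta$ follows from that of each summand, and assembling~\eqref{eq:def-muX} with $\theta_A\zeta\in\LieG$ and $\pr(\zeta)=\eta_\phi$ reproduces~\eqref{eq:thm-muX}.

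The one genuinely new computation --- and the step I expect to require care --- is the identity $d\bigl(4\alpha_1\int_X\phi\,(z\wedge\Lambda_\omega F_A)\vol_\omega\bigr)(a)=4\alpha_1\langle z,(d\theta)\eta\rangle(a)=4\alpha_1\int_X(z\wedge i_\eta a)\vol_\omega$ for $a\in T_A\cA=\Omega^1(\ad E)$. This is proved along the lines of~\eqref{eq:momentmap-X-3}: differentiate along $A_t=A+ta$ using $dF_{A_t}/dt=d_A a$ and the first identity of~\eqref{eq:lambda^2-F_A}, use $d_A z=0$ (as $z$ is central) together with the Bianchi identity to write $(z\wedge d_A a)=d(z\wedge a)$, integrate by parts with $d\phi=i_\eta\omega$, and finally contract the vanishing $(2n{+}1)$-form $(z\wedge a)\wedge\omega^n$ with $\eta$ to convert $(z\wedge a)\wedge i_\eta\omega\wedge\omega^{n-1}$ into $(n-1)!\,(z\wedge i_\eta a)\vol_\omega$. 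Everything else is bookkeeping with the scalings in~\eqref{eq:Sympfamily}, the identities~\eqref{eq:lambda^2-F_A}, and the vanishing of $\int_X\phi\,\vol_\omega$.
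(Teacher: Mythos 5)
Your proposal is correct and is essentially the paper's argument: the paper also obtains $\mu_\alpha$ by summing $\alpha_0\,\pr^*\mu_\cH$ with the rescaled moment map of Proposition~\ref{prop:momentmap-X} and then absorbing the $z$-dependent terms, its entire proof being the remark that $\int_X ((\theta_A\zeta - \phi\Lambda_\omega F_A)\wedge z)\vol_{\omega}$ is constant over $\cA$. Your ``one genuinely new computation'' --- that $d\bigl(\int_X\phi\,(z\wedge\Lambda_\omega F_A)\vol_\omega\bigr)(a)=\int_X(z\wedge i_\eta a)\vol_\omega=\langle z,(d\theta)\eta\rangle(a)$ --- is exactly that remark in differentiated form, so the two proofs differ only in whether one routes the bookkeeping through Proposition~\ref{prop:ham-act-ext-grp} again or simply adds the two known moment maps.
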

\begin{proof}
Simply note that 
$$
\int_X ((\theta_A\zeta - \phi\Lambda_\omega F_A)\wedge z)\vol_{\omega}
$$
is constant over $\cA$.
\end{proof}

The space $\cJ \times \cA$ has a complex structure preserved by the $\cX$-action given by
\begin{equation}
\label{eq:complexstructureI}
\mathbf{I}_{\mid(J,A)}(\dot{J},a) = (J\dot{J},-a(J \cdot)); \quad (\dot{J},a) \in T_J\cJ\times T_A\cA.
\end{equation}
The projection $\cJ \times \cA \to \cJ$ becomes now a holomorphic submersion with respect to the complex structure in
$\cJ$ and it turns out that, for $\alpha_0, \alpha_1$ positive, this complex structure is compatible with the family of symplectic structures \eqref{eq:Sympfamily}. This endows $\cJ\times \cA$ with a structure of K\"{a}hler fibration, i.e. a holomorphic submersion with a K\"{a}hler metric $\omega_\alpha$ that is the sum of the closed form $\frac{4 \alpha}{(n-1)!} \cdot \omega_\cA$ that restricts to a K\"{a}hler metric in the fibers and the pull-back of a
K\"{a}hler metric in the base $\alpha_0 \cdot \omega_\cJ$. The formal integrability of the almost complex structure $\mathbf{I}$ is a priori not obvious
and we give a proof in the following proposition. By formal integrability we mean, as in \cite{D6}, that the associated
Nijenhuis tensor vanishes.
\begin{proposition}\label{prop:CeqintegrableI}
The almost complex structure $\mathbf{I}$ is integrable.
\end{proposition}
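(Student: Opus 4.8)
The plan is to prove that the Nijenhuis tensor $N_{\mathbf{I}}$ of the almost complex structure $\mathbf{I}$ defined in~\eqref{eq:complexstructureI} vanishes identically; as recalled just before the statement, this is exactly what formal integrability means in this infinite-dimensional context. The relation $\mathbf{I}^2=-\Id$ is immediate from $J^2=-\Id$ together with the fact (used in~\secref{subsec:cscKmmap}) that $\mathbf{J}\dot J=J\dot J$ preserves $T_J\cJ$. Since $N_{\mathbf{I}}$ is tensorial, it suffices to evaluate it on vector fields whose values span $T_{(J,A)}(\cJ\times\cA)=T_J\cJ\oplus\Omega^1(\ad E)$ at each point, and the natural choices are the \emph{horizontal} fields $V=(\dot J,0)$, with $\dot J$ a locally defined vector field on the $\cJ$-factor extended to be constant in the $\cA$-direction, and the \emph{vertical} fields $W=(0,a)$ with $a\in\Omega^1(\ad E)$ constant. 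By bilinearity and antisymmetry this leaves three cases: both arguments horizontal, both vertical, or one of each.

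The first two cases I would handle structurally. The product $\cJ\times\cA$ carries two foliations, by the slices $\cJ\times\{A\}$ and by the fibres $\{J\}\times\cA$, and both tangent distributions are involutive and $\mathbf{I}$-invariant: indeed $\mathbf{I}(\dot J,0)=(\mathbf{J}\dot J,0)$ and $\mathbf{I}_{(J,A)}(0,a)=(0,-a(J\,\cdot))$ have vanishing $\cA$- and $\cJ$-component respectively. Hence, for $V,W$ both horizontal, $N_{\mathbf{I}}(V,W)$ equals the Nijenhuis tensor of $\mathbf{I}$ restricted to a slice $\cJ\times\{A\}$, which under the projection to $\cJ$ is identified with $\mathbf{J}$; this vanishes because $(\cJ,\mathbf{J})$ is a K\"ahler manifold (see~\secref{subsec:cscKmmap}). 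For $V,W$ both vertical, $N_{\mathbf{I}}(V,W)$ is the Nijenhuis tensor of $\mathbf{I}$ restricted to a fibre $\{J\}\times\cA$, which is the constant (independent of the $\cA$-point) complex structure $a\mapsto-a(J\,\cdot)$ on the affine space $\cA$, hence integrable.

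The substantive case is the mixed one, $V=(\dot J,0)$ and $W=(0,a)$. Because $V$ and $\mathbf{I}V=(J\dot J,0)$ do not depend on the $\cA$-coordinate and $a$ is constant, the brackets $[V,W]$ and $[\mathbf{I}V,W]$ vanish. For the remaining two, I would use the affine structure of $\cA$: differentiating $\mathbf{I}W=(0,-a(J\,\cdot))$ along $V$ (which moves $J$ by $\dot J$) gives $[V,\mathbf{I}W]=(0,-a(\dot J\,\cdot))$, and similarly $[\mathbf{I}V,\mathbf{I}W]=(0,-a((J\dot J)\,\cdot))$. Applying $\mathbf{I}$ to the first and invoking the defining relation $\dot J J=-J\dot J$ for $\dot J\in T_J\cJ$ (recorded in~\secref{subsec:cscKmmap}; it is the infinitesimal form of $\omega$-compatibility) one obtains $\mathbf{I}[V,\mathbf{I}W]=(0,-a((J\dot J)\,\cdot))=[\mathbf{I}V,\mathbf{I}W]$. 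Therefore $N_{\mathbf{I}}(V,W)=[\mathbf{I}V,\mathbf{I}W]-\mathbf{I}[\mathbf{I}V,W]-\mathbf{I}[V,\mathbf{I}W]-[V,W]=0$.

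I expect this mixed term to be the only real obstacle: it is the one place where the $J$-dependence of the fibrewise complex structure on $\cA$ meets the complex structure $\mathbf{J}$ on the base $\cJ$, and the cancellation is forced by the anticommutation relation $\dot J J=-J\dot J$ — equivalently, by the fact that the projection $\cJ\times\cA\to\cJ$ is a holomorphic submersion for $\mathbf{I}$, as asserted after~\eqref{eq:complexstructureI}. A minor point to treat with care is the reduction to horizontal and vertical test fields: one extends a given tangent vector in $T_J\cJ$ to a local vector field, regards it as a field on the product, and uses the tensoriality of $N_{\mathbf{I}}$ so that the computation is independent of the extension. Beyond this the argument is purely algebraic in $\dot J$, $a$, $J$, so no analytic issues arise.
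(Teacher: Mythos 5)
Your proposal is correct and follows essentially the same route as the paper: reduce, via the product (fibration) structure and the integrability of the complex structures on $\cJ$ and on the affine fibres $\cA$, to the mixed term $N_{\mathbf{I}}(\dot J,a)$, observe that $[V,W]=[\mathbf{I}V,W]=0$, and cancel the remaining two brackets using the anticommutation $\dot J J=-J\dot J$. The only cosmetic difference is that you differentiate along the affine structure where the paper uses flows, and you spell out the horizontal--horizontal and vertical--vertical cases that the paper dismisses in one sentence.
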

\begin{proof}
For this we regard $\cJ \times \cA$ as an almost complex fibration over $\cJ$ with respect to the complex structure
introduced in \S\ref{subsec:cscKmmap}. The complex structure on the base and the one on each fibre are integrable, and
hence the integrability condition of $\mathbf{I}$ reduces to evaluate the Nijenhuis tensor $N_{\mathbf{I}}$ on a pair
of vectors $\dot{J} \in T_J\cJ$, $a \in T_A\cA$ for each $(J,A)$ in the product $\cJ\times\cA$. Since $\cA$ is an
affine space we can regard $a$ as a constant vector field on $\cA$ and so on the product $\cJ\times\cA$. We can also
extend $\dot{J}$ to a vector field on $\cJ$ taking
$$
\dot{J}_{|J'} = \frac{1}{2}(J\dot{J}J' - J'J\dot{J}).
$$
Using this extension it make sense to write
\begin{align*}
N_{\mathbf{I}}(\dot{J},a) & = [\mathbf{I}\dot{J},\mathbf{I}a] - \mathbf{I}[\mathbf{I}\dot{J},a] -
\mathbf{I}[\dot{J},\mathbf{I}a] - [\dot{J},a]\\
& = [\mathbf{I}\dot{J},\mathbf{I}a] - \mathbf{I}[\dot{J},\mathbf{I}a],
\end{align*}
where the brackets denote the Lie bracket between vector fields in $\cJ\times\cA$. We have used that
$[\mathbf{I}\dot{J},a] = [\dot{J},a] = 0$, since the flow of $a$ is the identity on $\cJ$. Given any vector field
$\dot{J}$ on $\cJ$, regarded as a vector field on $\cJ \times \cA$, we denote its flow by $J_t^{\dot{J}}$. Note that
$J_t^{\dot{J}}$ induces the identity on $\cA$ and so
\begin{align*}
N_{\mathbf{I}}(\dot{J},a) & = \frac{d}{dt}_{|t = 0} \mathbf{I}a_{|J_t^{\mathbf{I}\dot{J}}} - \mathbf{I}_{|J}
\frac{d}{dt}_{|t = 0} \mathbf{I} a_{|J_t^{\dot{J}}}\\
& = - \frac{d}{dt}_{|t = 0} a(J_t^{\mathbf{I}\dot{J}} \cdot) + \mathbf{I}_{|J} \frac{d}{dt}_{|t = 0}a(J_t^{\dot{J}}
\cdot)\\
& = - a(J\dot{J} \cdot) - a(\dot{J}J \cdot)\\
& = - a(J\dot{J} + \dot{J}J \cdot) = 0,
\end{align*}
where $a$ is regarded as an element of $\Omega^1(\ad E)$. 
Note that the integrability of $\mathbf{I}$ follows without any assumption on the compatibility of $\dot{J} \in T_J\cJ$
with the symplectic structure.
\end{proof}
\begin{remark}
Note that there is another $\cX$-invariant complex structures in $\cJ \times \cA$ given by
$\mathbf{I}_{+}(\dot{J},a) = (J\dot{J}, a(J \cdot))$,
such that the projection onto $\cJ$ is pseudoholomorphic. However, it follows from the proof of the proposition above
that this almost complex
structure is not integrable and corresponds to a $\cX$-invariant almost K\"{a}hler structure for coupling constants
$\alpha_0 > 0$, $\alpha_1 < 0$ in
\eqref{eq:Sympfamily}.
\end{remark}

Suppose now that there exist K\"{a}hler structures on $X$ with K\"{a}hler form $\omega$. In the notation
of~\secref{subsec:cscKmmap}, this means that
the subspace $\cJi \subset \cJ$ of integrable almost complex structures compatible with $\omega$ is non-empty. For
each
$J\in \cJi$, let
$\cA^{1,1}_J\subset\cA$ be the subspace of connections $A$ such that $F_A \in \Omega_J^{1,1}(\ad E)$
(cf.~\S\ref{subsec:HYMmmap}). Then we have a
$\cX$-invariant (maybe singular) complex submanifold
\begin{equation}
\label{eq:cP}
  \cP\subset \cJ\times \cA,
\end{equation}
consisting of pairs $(J,A)$ with $J\in \cJi$ and $A\in \cA^{1,1}_J$. Since $\cP$ is complex, it inherits a K\"{a}hler
form by restriction and by $\cX$-invariance, $\mu_\alpha$ restricts to a moment map on $\cP$. The following system of coupled equations
\begin{equation}
\label{eq:CYMeq} \left. \begin{array}{l}
\Lambda_\omega F_A = z\\
\alpha_0 S_J \; + \; \alpha_1 \Lambda^2_\omega (F_A \wedge F_A) = c
\end{array}\right \}
\end{equation}
is the condition satisfied by the points in $\cP$ whose orbits are in the corresponding K\"{a}hler reduction
\begin{equation}\label{eq:symplecticreduccX}
\mu_\alpha^{-1}(0)/\cX.
\end{equation}
Here, $S_J$ is the scalar curvature determined by the metric $g_J = \omega(\cdot,J\cdot)$ and $c$ is the topological
constant $c_0$ given by \eqref{eq:constant-c}.

The space of pairs $\cP$ has a geometric interpretation in terms of holomorphic structures that we will use in
the rest of the thesis. Let $G^c$
be the complexification of $G$ with Lie algebra $\mathfrak{g}^c$ and consider the associated principal $G^c$-bundle
$E^c=E\times_G G^c$. Then $\cP$
parameterizes pairs consisting of a K\"{a}hler structure on $X$, with K\"{a}hler form $\omega$, together with a
structure of holomorphic principal
$G^c$-bundle on $E^c$ over the corresponding complex manifold $X$. Alternatively, $\cP$ is the space $G^c$-invariant
integrable structures on the
principal $G^c$-bundle $E^c$ inducing holomorphic structures on $X$ which are compatible with $\omega$. Fixing the
holomorphic structure on $E^c$ and
$X$, the coupled equations \eqref{eq:CYMeq} have another formulation: we can consider the system as equations
for pairs $(\omega,H)$, where $\omega$ is a K\"{a}hler metric on $X$ in a fixed K\"{a}hler class and $H$ is a smooth
$G$-reduction of the holomorphic $G^c$-bundle. The role of the connection $A$ in \eqref{eq:CYMeq} is played now by the
Chern connection of $H$.

In order to interpret $\cP$ as holomorphic structures on $E^c$, we interpret first the bigger space $\cJ \times \cA$ in terms of $G^c$-invariant almost complex structures on $\Tot(E^c)$, the total space of $E^c$. The $G^c$-invariant complex structure on $\mathfrak{g}^c$ induces an almost complex structure $\imag$ on the vertical bundle $VE^c \subset TE$, via the isomorphism $VE^c \cong \ad E^c$. We have a map from $\cJ \times \cA$, that assigns to each pair $(J,A)$ the $G^c$-invariant complex structure
\begin{equation}
\label{eq:pairsinvacs}
(J,A) \mapsto I_{J,A} \defeq \imag A + A^{\perp}J \pi,
\end{equation}
on $\Tot(E^c)$, where $\pi\colon E^c \to X$ is the projection and
$$
A\colon TE^c \to VE^c \qquad A^{\perp}\colon\pi^*TX \to
TE^c
$$
are the maps induced by the connection $A$ (cf. \eqref{eq:principal-bundle-ses}). On the other hand, any $G^c$-invariant almost complex structure $I$ on $\Tot(E^c)$ that
preserves $VE^c$ induces a unique pair of almost complex structures $I_V$ on $VE^c$ and $I_X$ on
$TX$. We claim that the image of
\eqref{eq:pairsinvacs} correspond to the space $\cI$ of $G^c$-invariant almost complex structures $I$ on $\Tot(E^c)$ that
preserve $VE^c$ and such that $I_V = \imag$ and $I_X \in \cJ$. By definition, given $I \in \cI$ it induces a
unique element $J_I \in \cJ$. Moreover, for such an $I$ there exists a unique $G$-connection $A \in \cA$ such that $I = I_{J,A}$ (see \cite{KNII}, Theorem $1.1$ p. $178$ and the remark on p. $185$), so the map
\begin{equation}\label{eq:pairsinvacs2}
\cJ \times \cA \to \cI
\end{equation}
given by \eqref{eq:pairsinvacs} is a bijection. Note that \eqref{eq:pairsinvacs2} commutes with the projections of both spaces onto $\cJ$ and it is $\cX$-equivariant, where the action of $\cX$ on $\cI$ is given by pushforward.
Moreover, \eqref{eq:pairsinvacs2} is a biholomorphisms with respect to the complex structure $\mathbf{I}$ on $\cJ \times \cA$ and the complex structure on $\cI$ given by
$$
\mathbf{I}_{\mid I}B = IB
$$
for any $I \in \cI$, where we
regard $B \in T_I\cI$ as a ($G^c$-invariant) endomorphism of $TE^c$. The integrability condition for an almost complex
structure $I \in \cI$ is
equivalent in terms of the associated pair $(J,A)$ to $J \in \cJi$ and $A \in \cA^{1,1}_J$, hence equivalently to the
condition $(J,A) \in \cP$. It
follows easily now from the naturality of the Nijenhuis tensor of any almost complex structure (see Proposition $4.11$
in \cite{McS}) that $\cP$ is a
$\cX$-invariant complex submanifold of $\cJ \times \cA$.

\begin{remark}
Alternatively, taking $z=0$ in~\eqref{eq:thm-muX}, the coupled equations \eqref{eq:CYMeq} can be defined by the condition (see~\eqref{eq:symplecticreduccX} and \eqref{eq:CYMeq})
$$
\mu_\alpha(p) \in \mathfrak{z} \subset \LieX,
$$
$p \in \cP$, provided by the identification given by the $\cX$-equivariant family of parings $\langle\cdot,\cdot\rangle_{\cX}$ given in \eqref{eq:pairingscX}. Given $p \in \cP$
let $\cX_p$ be its isotropy group
in $\cX$. This is a finite dimensional compact Lie group whose Lie algebra $\LieX_p$ contains $\mathfrak{z}$. When
$\mathfrak{z} \varsubsetneq
\LieX_p$ it makes sense to generalize the condition $\mu_\alpha(p) \in \mathfrak{z}$ (above) to
\begin{equation}\label{eq:extremalpair}
\mu_\alpha(p) \in \LieX_p
\end{equation}
obtaining new, weaker, equations. This new condition is the analogue for our situation of the well-known notion of
extremal K\"{a}hler metric
introduced by Calabi in \cite{Ca}. In fact the extremal pairs defined by \eqref{eq:extremalpair} are critical points
of the quadratic functional
\begin{equation}\label{eq:squaredmomentmap}
p = (J,A) \to \|\mu_\alpha(p)\|^2_{\cX|A},
\end{equation}
over the space $\cJ \times \cA^{HYM}$, where $\cA^{HYM}$ is the set of connections satisfying $\Lambda_\omega F_A = z$. This functional can be considered as the norm-squared of the moment map with respect to the family of metrics $\langle\cdot,\cdot\rangle_{\cX}$ indexed by $\cA$. To see this, given any $p \in \cP$ we define (cf. \eqref{eq:zetaA})
$$
\zeta_p = 4\alpha_1\Lambda_\omega F_A - \theta_A^{\perp}(\eta_p) \quad \in \quad \LieX,
$$
with coupling constants $\alpha_0, \alpha_1, c \in \RR$ as in \eqref{eq:CYMeq}, where $\eta_p$ is the Hamiltonian vector field associated to the smooth function
$$
\phi_p = \alpha_0 S_J \; + \; \alpha_1 \Lambda^2_\omega (F_A \wedge F_A) - c \in C_0^{\infty}(X).
$$
Then, a solution to \eqref{eq:extremalpair} is a critical point of \eqref{eq:squaredmomentmap} over $\cJ \times \cA^{HYM}$ since for any curve $p_t \subset \cJ \times \cA^{HYM}$ we have
$$
\frac{d}{dt} \langle \mu_\alpha(p_t),\zeta_t\rangle = 2 \omega_\alpha(Y_{\zeta_t},\dot{p}_t),
$$
where $Y_{\zeta_t}$ is the infinitesimal action of $\zeta_t = \zeta_{p_t}$.
\end{remark}

\section{Variational interpretation and canonical invariant metrics}
\label{sec:CeqscalarKal-K}

In this section we give a a variational interpretation of the coupled equations \eqref{eq:CYMeq}, as absolute minima of a suitable Calabi--Yang--Mills type functional. The unknown variables in the system \eqref{eq:CYMeq} are considered to be a K\"ahler structure on the base, with fixed symplectic form, and a connection on the bundle. We will also interpret the solutions as a choice of canonical invariant metrics on the total space of the bundle, linking with the classical Kaluza--Klein Theory for metrics and connections (see e.g.~\cite{CJ}). Throughout this section we suppose that the coupling constants $\alpha_0$ and $\alpha_1$ in \eqref{eq:CYMeq} have positive ratio
$$
\frac{\alpha_1}{\alpha_0} > 0.
$$
This will be a crucial assumption for our arguments. For simplicity, we consider that the compact Lie group $G$, with Lie algebra $\mathfrak{g}$, is semisimple. As in the previous section, we fix a metric $(\cdot,\cdot)$ on $\mathfrak{g}$ invariant by the adjoint action.

Let $(X,\omega)$ be a compact symplectic manifold and consider the space $\cJ$ of $\omega$-compatible almost complex structures on $X$. We suppose that $(X,\omega)$ is K\"ahlerian, i.e. that the subspace $\cJi \subset \cJ$ of integrable complex structures is non empty. Let $E$ be a smooth principal $G$-bundle over $X$. Given any $J \in \cJ$ we consider the associated Riemannian metric
\begin{equation}\label{eq:gJ}
J \to g_J = \omega(\cdot,J\cdot).
\end{equation}
Since $\omega$ is fixed we can recover $J$ from $g_J$, and so we will denote the latter simply by $g$. Given a pair $(g,A)$, where $g$ is given by \eqref{eq:gJ} and $A \in \cA$ is a connection on $E$, we consider the functional
\begin{equation}\label{eq:CYMfunctional}
CYM(g,A) = \int_X (\alpha_0 S_g - 2\alpha_1 |F_A|^2)^2 \cdot \vol_g \; + \;  2\alpha_1 \cdot \|F_A\|^2,
\end{equation}
where $\vol_g$ is the volume form of the metric $g$. The term $\|F_A\|^2$ is the Yang--Mills functional, considered as a functional in both the metric and the connection, and the function $|F_A|^2$ is the point-wise norm with respect to metric $g$ and the fixed inner product on $\mathfrak{g}$. Note that \eqref{eq:CYMfunctional} is well defined for an arbitrary Riemannian metric in the base. However, only when considered as a functional for the pairs $(g,A)$, as before, we will obtain that our solutions are absolute minima of the functional. Let $c \in \RR$, $\hat c$ and $\hat S$ be the real constant defined, respectively,  in \eqref{eq:constant-c}, \eqref{eq:constant-c-hat} and \eqref{eq:hat-S}. Recall that $c$, involved in the definition of the coupled system \eqref{eq:CYMeq}, equals
$$
c = \alpha_0 \cdot \hat{S} + \alpha_1 \cdot \frac{2\hat c}{(n-2)!},
$$
where $\hat S$ and $\hat c$ only depend on the topology of $X$ and $E$. Then we have the following.
\begin{proposition}\label{propo:Variational}
Suppose that $\alpha_0$ and $\alpha_1$ are positive and small enough such that $2 c < 1$. Then any pair $(g_J,A)$ satisfying the coupled equations \eqref{eq:CYMeq} and the compatibility condition $F_A^{0,2} = 0$ is an absolute minimum of $CYM$.
\end{proposition}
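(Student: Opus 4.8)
The plan is to bound $CYM$ from below over the space of pairs $(g_J,A)$ with $J$ an $\omega$-compatible complex structure and $A\in\cA$, by a topological constant, and to show this bound is attained exactly at the solutions of \eqref{eq:CYMeq} with $F_A^{0,2}=0$. Two features of this comparison space are used throughout: since $g_J=\omega(\cdot,J\cdot)$ its volume form equals $\vol_\omega$, so the measure in \eqref{eq:CYMfunctional} is fixed; and, writing $S_g$ for the Hermitian scalar curvature $S_J$, the average $\int_XS_J\,\vol_\omega=\hat S\Vol$ is topological. As $G$ is semisimple its centre is trivial, so $z=0$ and \eqref{eq:CYMeq} reads $\Lambda_\omega F_A=0$, $\alpha_0S_J+\alpha_1\Lambda^2_\omega(F_A\wedge F_A)=c$ with $c=\alpha_0\hat S+\tfrac{2\alpha_1\hat c}{(n-2)!}$.

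First I would record the standard pointwise K\"ahler identities for the curvature of the $G$-connection $A$: $|F_A|^2=2|F_A^{0,2}|^2+|F_A^{1,1}|^2$ and, via \eqref{eq:lambda^2-F_A}, $\tfrac12\Lambda^2_\omega(F_A\wedge F_A)=2|F_A^{0,2}|^2+|\Lambda_\omega F_A|^2-|F_A^{1,1}|^2$. Integrating the second identity and using that $\int_X(F_A\wedge F_A)\wedge\omega^{n-2}$ is a Chern--Weil number yields the Yang--Mills topological bound
\begin{equation*}
\|F_A\|^2\;=\;M+\int_X\bigl(4|F_A^{0,2}|^2+|\Lambda_\omega F_A|^2\bigr)\vol_\omega\;\ge\;M,\qquad M\defeq -\tfrac{\hat c}{(n-2)!}\Vol\;\ge\;0,
\end{equation*}
where $M$ is the common value of $\|F_A\|^2$ at any Hermite--Yang--Mills connection, with equality iff $F_A^{0,2}=0$ and $\Lambda_\omega F_A=0$.

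Then I would set $u\defeq\alpha_0S_J-2\alpha_1|F_A|^2$, so $CYM(g_J,A)=\int_Xu^2\,\vol_\omega+2\alpha_1\|F_A\|^2$. Cauchy--Schwarz gives $\int_Xu^2\,\vol_\omega\ge\bar u^2\Vol$, with equality iff $u$ is constant, where by the topological identities $\bar u=\tfrac1\Vol\int_Xu\,\vol_\omega=\alpha_0\hat S-\tfrac{2\alpha_1}{\Vol}\|F_A\|^2$; hence $CYM(g_J,A)\ge f(\|F_A\|^2)$ with $f(t)\defeq\tfrac1{\Vol}(\alpha_0\hat S\Vol-2\alpha_1t)^2+2\alpha_1t$, an upward parabola in $t$ (as $\alpha_1>0$) with $f'(t)=2\alpha_1\bigl(1-2\alpha_0\hat S+\tfrac{4\alpha_1}{\Vol}t\bigr)$ increasing in $t$. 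Using the formula for $c$ one computes $f'(M)=2\alpha_1(1-2c)>0$ by the hypothesis $2c<1$, so $f$ is increasing on $[M,\infty)$, and since $\|F_A\|^2\ge M$ we conclude $CYM(g_J,A)\ge f(M)$. To close the argument I would check the equality case: if $(g_J,A)$ solves \eqref{eq:CYMeq} with $F_A^{0,2}=0$ then $A$ is Hermite--Yang--Mills, so $\|F_A\|^2=M$, and since $\Lambda^2_\omega(F_A\wedge F_A)=-2|F_A|^2$ for a primitive $(1,1)$ curvature the scalar equation forces $u\equiv c$; both inequalities become equalities, so $CYM(g_J,A)=f(M)$. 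Conversely $CYM(g_J,A)=f(M)$ forces $\|F_A\|^2=M$ (hence $F_A^{0,2}=0$, $\Lambda_\omega F_A=0$) and $u$ constant; writing $u=P+c$ with $P\defeq\alpha_0S_J+\alpha_1\Lambda^2_\omega(F_A\wedge F_A)-c$ of zero mean, constancy of $u$ forces $P\equiv0$, i.e.\ the scalar equation. Thus $\min CYM=f(M)$ on this space, attained precisely at the solutions of \eqref{eq:CYMeq} with $F_A^{0,2}=0$.

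The step I expect to require the most care — beyond the bookkeeping of the K\"ahler and Chern--Weil identities with their constants — is seeing why $2c<1$ is exactly the right hypothesis: once Cauchy--Schwarz and the topological bound $\|F_A\|^2\ge M$ are in place, the estimate collapses to the one-variable problem $t\mapsto f(t)$ on $[M,\infty)$, and $2c<1$ is precisely the condition $f'(M)\ge0$ that makes $f$ monotone there; this only becomes visible after feeding the topological value $M$ of $\|F_A\|^2$ at an HYM connection into $f'$ and recognising the resulting combination as $2\alpha_1(1-2c)$. A secondary point is that the comparison must be taken over $\omega$-compatible K\"ahler structures, so that $\vol_g$ and $\int_XS_J\,\vol_\omega$ remain fixed; allowing non-integrable $J$ in the comparison class would require the additional inequality relating $\int_XS_{g_J}\,\vol_\omega$ to its topological K\"ahler value.
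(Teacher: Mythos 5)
Your proof is correct, but it reaches the conclusion by a genuinely different route than the paper. The paper's proof is a single exact identity: writing $u=\alpha_0 S_g-2\alpha_1|F_A|^2$, it completes the square $\int_X u^2\,\vol_g=\|u-c\|^2+2c\int_X u\,\vol_g-c^2\Vol([\omega])$ and then uses \eqref{eq:identitysquaredFA} to convert $2\alpha_1\|F_A\|^2-4c\alpha_1\int_X|F_A|^2\vol_g$ into $2\alpha_1(1-2c)\bigl(\|\Lambda_\omega F_A\|^2+4\|F_A^{0,2}\|^2\bigr)$ plus a topological constant, so that $CYM$ is exhibited as a sum of two manifestly non-negative terms (each vanishing exactly at solutions) plus a constant. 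You instead chain two inequalities — Cauchy--Schwarz, $\int_X u^2\,\vol_\omega\ge\bar u^2\Vol([\omega])$, and the Chern--Weil bound $\|F_A\|^2\ge M$ — and close with a one-variable monotonicity argument for $f$ on $[M,\infty)$, where the hypothesis enters as $f'(M)=2\alpha_1(1-2c)>0$. The two arguments are algebraically equivalent: since $t-M=\|\Lambda_\omega F_A\|^2+4\|F_A^{0,2}\|^2$ one has $f(t)-f(M)\ge 2\alpha_1(1-2c)(t-M)$, which reproduces the paper's second term, while $\|u-\bar u\|^2\le\|u-c\|^2$ makes your pointwise estimate marginally sharper. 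What your version buys is a cleaner explanation of why $2c<1$ is exactly the right threshold and an explicit description of the minimizing locus; what the paper's buys is a one-line closed-form decomposition of $CYM$. Your closing caveat about restricting the comparison class to integrable $J$ is also apt: the paper's computation equally relies on $\int_X S_g\,\vol_g$ being topological, which would fail for merely almost-K\"ahler comparison structures.
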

\begin{proof}
For any pair $(g,A)$ as in \eqref{eq:CYMfunctional}, we have (cf. e.g. proof of Lemma $7.9$ in \cite{MR2})
\begin{equation}\label{eq:identitysquaredFA}
\begin{split}
\frac{1}{2}\Lambda^2_\omega(F_A\wedge F_A) & = - |F_A|^2 + |\Lambda_\omega F_A|^2 + 4|F_A^{0,2}|^2\\
\end{split}
\end{equation}
where the point-wise norms are taken with respect to the metric $g$ and the inner product on $\mathfrak{g}$. Then,
\begin{align*}
CYM(g,A) & = \int_X (\alpha_0 S_g - 2\alpha_1 |F_A|^2 - c)^2 \cdot \vol_g + \;  2\alpha_1 \cdot \|F_A\|^2\\
& + 2 c \cdot \int_X (\alpha_0 S_g - 2\alpha_1 |F_A|^2) \cdot \vol_g \; + \; c^2 \cdot \Vol([\omega])\\
& = \|\alpha_0 S_g - 2\alpha_1 |F_A|^2 - c\|^2\\
& + 2\alpha_1(1 - 2c) \cdot (\|\Lambda_\omega F_A\|^2 + 4 \|F_A^{0,2}\|^2 )\\
& + (2 c \cdot \alpha_0 \hat S + c^2 + \alpha_1 (1 - 2c)\cdot \frac{2 \hat c}{(n-2)!}) \cdot \Vol([\omega]),
\end{align*}
where $\Vol([\omega]) = \frac{[\omega]^n}{n!}$, that only depends on the fixed symplectic form. The statement follows now from \eqref{eq:identitysquaredFA}.
\end{proof}
\begin{remark}\label{remark:Variational}
Note that the condition $2 c < 1$ imposes no restriction since we can always multiply the scalar equation in the system \eqref{eq:CYMeq} by a small positive constant. Note also that $F_A^{0,2} = 0$ is the condition for the pair $(J,A)$ to be in the space of integrable pairs $\cP$ \eqref{eq:cP}, provided that $J \in \cJi$. An analogue result, as in Proposition~\ref{propo:Variational}, is obtained if we fix a complex structure on $X$ and let $\omega$ vary in a fixed K\"ahler class.
\end{remark}

In order to understand better this functional we think in the data $(g,A)$ as defining a $G$-invariant metric $\hat g$ on $\Tot(E)$, the total space of $E$. Let $\pi \colon E \to X$ be the standard projection. The biinvariant positive definite inner product $(\cdot,\cdot)$ on $\mathfrak{g}$ induces a Riemannian metric $g_V$ on the vertical bundle $VE$, via its identification with $\ad E$. Given a connection $A$ on $E$ and a Riemannian metric $g$ on $X$ we can associate a $G$-invariant metric $\hat{g}$ on $\Tot(E)$ given by
$$
\hat g = \pi^*g + g_V(\theta_A \cdot , \theta_A \cdot),
$$
where $\theta_A \colon TE \to VE$ denotes the vertical projection determined by $A$. Endowed with this metric the
projection $\pi\colon (E,\hat{g}) \to (X,g)$ is a Riemannian submersion with totally geodesic fibres (see~\cite[Theorem~9.59]{Be}). The scalar
curvature $\hat{S}$ of $\hat{g}$ is given by (see~\cite[Proposition~9.70]{Be})
\begin{equation}
\label{eq:scalarKal-K}
S_{\hat g} = S_g - |F_A|^2 + s,
\end{equation}
where $S_g$ is the scalar curvature of $g$, $|F_A|^2$ is as in \eqref{eq:identitysquaredFA} and $s$ is a real constant that corresponds to the scalar curvature of the metric in the fibres. Note that, since $\hat g$ is $G$-invariant, $S_{\hat g}$ induces a well defined function on $X$. We can scale the metric in the fibres multiplying by a constant factor $t > 0$ and then the associated metric $\hat{g}_t$ has scalar curvature  (see~\cite[Proposition~9.70]{Be})
$$
\hat{S}_t = S_g - t|F_A|^2 + \frac{1}{t}s.
$$
We set $t = \frac{2\alpha_1}{\alpha_0} > 0$, where the coupling constants are as in \eqref{eq:CYMfunctional}. Then, the first summand in the definition of the functional \eqref{eq:CYMfunctional} can though of, up to scaling, as a Calabi functional for $G$-invariant metrics in $\Tot(E)$. This follows from the invariance of the scalar curvature of a invariant metric on $E$, which implies that the integration on the bundle reduces to an integration on the base (see~\cite{CJ}).

The previous expression for the scalar curvature \eqref{eq:scalarKal-K} of the metric $\hat g$ links with our next interpretation of the equations \eqref{eq:CYMeq}: they define a `good' choice of canonical $G$-invariant metrics on $\Tot(E)$, in the sense that a moduli construction for such metrics would lead to a K\"ahler moduli space. Let $\hat g$ be the $G$-invariant metric on $\Tot(E)$ determined by a pair $(g,A)$, where $g$ is given by \eqref{eq:gJ} and $A \in \cA$ is a connection on $E$. We fix the scale on the fibers
\begin{equation}\label{eq:scale}
t = \frac{2\alpha_1}{\alpha_0} > 0
\end{equation}
as before. We assume further that the connection $A$ is irreducible, $J \in \cJi$ (in the definition of $g$), and that
$$
F_A^{0,2} = 0.
$$
Therefore, $(g,J,\omega)$ is a K\"ahler structure on $X$ and $A$ determines a holomorphic structure on the $G^c$-bundle $E^c = E \times_G G^c$. In order to understand the conditions imposed by the system \eqref{eq:CYMeq} to the metric $\hat g$, we will first interpret the HYM condition \eqref{eq:HYM} in terms of the Einstein equation for $\hat g$. Using the irreducibility of the connection, it follows from the K\"{a}hler identities that the HYM equation is equivalent to the a priori weaker Yang-Mills equation
$$
d_A^* F_A = 0.
$$
For the case $G = U(r)$ see~\cite[Proposition~3]{D3}. In general it follows by taking a faithful representation of the complexification $G^c$ in $GL(r,\CC)$. The isomorphism
$$
TE \cong VE \oplus \pi^*TX,
$$
provided by the connection $A$, splits the Ricci tensor $R_{\hat g}$ and so the Einstein equation can be written as a system with respect to this splitting. Let $R_{hv}$ be the horizontal-vertical part of
$R_{\hat g}$. It follows now from~\cite[Proposition~9.61]{Be} that the Yang-Mills equation for $\hat g$ is equivalent to the vertical-horizontal Einstein equation $R_{hv} = 0$. On the other hand, note that the scalar equation in the coupled system \eqref{eq:CYMeq} can be written as
\[
\alpha_0 S_g \; - \; 2 \alpha_1 \cdot |F_A|^2 = c,
\]
using \eqref{eq:identitysquaredFA}. Hence, it follows from \eqref{eq:scalarKal-K} that: the tuple $(g,\omega,J,A)$ satisfies the coupled equations with coupling constants $\alpha_0$ and $\alpha_1$ if and only if $\hat g$, with scale \eqref{eq:scale} on the fibers, has constant scalar curvature and satisfies the vertical-horizontal Einstein equation $R_{hv} = 0$. Since any Einstein metric has constant scalar curvature it follows from \eqref{eq:CEYMeq} that
$$
R_{\hat g} = \lambda \hat g \quad \Rightarrow  \quad \left. \begin{array}{l}
\Lambda F_A = 0\\
\alpha_0 S_g \; + \; \alpha_1 \Lambda^2(F_A \wedge F_A) = c
\end{array}\right \}  \quad \Rightarrow \quad \quad S_{\hat g} = c,
$$
where $\lambda$ and $c$ are real constants. A crucial ingredient for this chain of implications is the compatibility condition $F_A^{0,2} = 0$, between the K\"ahler structure $(g,\omega,J)$ and the connection $A$. In the language of the moment map interpretation given in \secref{sec:Ceqcoupled-equations}, this condition implies that $(J,A) \in \cP$ \eqref{eq:pairsinvacs0}. An unexpected point here is that the positivity of the ratio $\frac{\alpha_1}{\alpha_0} > 0$, that we have assumed to build our Riemannian metrics $\hat g$ from the data $(g,A)$, is precisely the condition for the symplectic form $\omega_\alpha$ \eqref{eq:Sympfamily0} in $\cP$ to be K\"ahler. Thus, a moduli construction for such metrics using the symplectic reduction process will lead to a K\"ahler moduli space. In this sense, the coupled equations \eqref{eq:CYMeq00} provide a good choice of canonical $G$-invariant metrics in $\Tot(E)$, well suited for the framework of K\"ahler geometry.

It is natural to ask which coupled system of equations is equivalent to the Einstein condition for the metric $\hat g$. From~\cite[Proposition~9.64]{Be} it follows that the Einstein equation for such a metric (with irreducible connection) and arbitrary compact group $G$ reads
\begin{equation}
\label{eq:CEYMeq} \left. \begin{array}{l}
\Lambda_{\omega} F_A = z\\
\alpha_0 (\rho - c'\omega) = \alpha_1 (2(\Lambda_{\omega} F_A,F_A) - \Lambda_\omega (F_A \wedge F_A) - c'' \omega)
\end{array}\right \},
\end{equation}
where $z$ is an element in the centre of $\mathfrak{g}$ and $c'$, $c'' \in \RR$ are suitable constants. The $2$-form $\rho$ denotes the Ricci form of the K\"{a}hler structure on the base.
\begin{remark}
The system \eqref{eq:CEYMeq} can be considered a K\"{a}hler analogue of the Einstein-Yang-Mills equation considered in the pseudoRiemannian context (see e.g. \cite{CJ}). Note that in complex coordinates we have that
$$
2(\Lambda_{\omega} F_A,F_A) - \Lambda_\omega (F_A \wedge F_A) = - \imag \tr g^{\alpha,\overline{\beta}}(F_{\alpha,\overline{\delta}}F_{\gamma,\overline{\beta}} + F_{\gamma,\overline{\beta}}F_{\alpha,\overline{\delta}}) \cdot dz_{\gamma} \wedge d\overline{z}_{\delta},
$$
where
$$
\omega = \imag g_{\alpha,\overline{\beta}}\cdot dz_{\alpha} \wedge d\overline{z}_{\overline{\beta}} \qquad \textrm{and} \qquad F_A = F_{\alpha,\overline{\beta}}\cdot dz_{\alpha} \wedge d\overline{z}_{\overline{\beta}}.
$$
\end{remark}
\begin{remark}
The Einstein condition for the metric $\hat g$ implies strong restrictions on the tuple $(g,\omega,J,A)$. To see this, suppose that $\hat g$ is an Einstein metric. Then, since the Ricci form $\rho$ and $\omega$ in \eqref{eq:CEYMeq} are closed so $\Lambda_\omega (F_A \wedge F_A)$ is also closed. It follows then from the K\"{a}hler identities that the $4$-form $(F_A \wedge F_A)$ is harmonic. From this one has that $\Lambda^2(F_A \wedge F_A)$ is constant. Then, taking traces in the second equation of \eqref{eq:CEYMeq} it follows that the K\"{a}hler metric on the base $g$ has constant scalar curvature.
\end{remark}

\chapter{\bf Analytic obstructions and existence of solutions}
\label{chap:analytic}

Let $(X,J)$ be a compact complex manifold, $G^c$ be a complex Lie group and $(E^c,I)$ a holomorphic principal
$G^c$-bundle over $(X,J)$. We fix a maximal compact subgroup $G \subset G^c$. Then, we can consider the coupled
equations \eqref{eq:CYMeq00} as equations in the space of pairs $(\omega,H) \in \cKt$, consisting of a K\"{a}hler metric $\omega$ in a fixed K\"{a}hler class and a reduction $H$ of $E^c$ to a principal $G$-bundle. Thus, for positive real coupling constants $\alpha_0, \alpha_1$ the \emph{coupled equations} can be rewritten as
\begin{equation}
\label{eq:CYMeq2} \left. \begin{array}{l}
\Lambda_\omega F_H = z\\
\alpha_0 S_{\omega} \; + \; \alpha_1 \Lambda^2_\omega (F_H \wedge F_H) = c
\end{array}\right \}.
\end{equation}
Here, $S_{\omega}$ is the scalar curvature determined by the K\"{a}hler metric, $F_H$ is the curvature of the Chern
connection determined by the reduction $H$ and the complex structure $I$, and $z \in \mathfrak{z}$ and $c \in \RR$ are
as in \eqref{eq:CYMeq}. In this chapter we give an existence criterion for the existence of solutions to the coupled
system \eqref{eq:CYMeq2} for small values of the ratio of the coupling constants $\frac{\alpha_1}{\alpha_0} > 0$ (Theorem~\ref{thm:ExistenceCYMeq2}). This criterion is formulated in terms of a real subgroup $\cX_I$ of the automorphism group $\Aut (E^c,I)$ of the holomorphic bundle. We will also give an obstruction (Proposition~\eqref{prop:futakiobstruction}) controlled by a character
$$
\cF_\alpha \colon \Lie\Aut (E^c,I) \to \CC
$$
that generalizes the usual Futaki invariant for the cscK problem \cite{Ft0} and also other invariants defined by A. Futaki in \cite{Ft1} (see~\secref{sec:ANIntegralinvariants}). In \secref{sec:ANcoupledintmmap} we construct the integral of the moment map
$$
\cM_\kappa \colon \cKt \to \RR
$$
and determine a sufficient condition for its convexity and existence of lower bounds in terms of the existence of suitable smooth curves on $\cKt$ joining any two points. In \secref{sec:ANcomplexsurfaces} we restrict to the case in which the base is a compact complex surface. We use the integral of the moment map to prove some results concerning the uniqueness and the existence problem for the coupled equations \eqref{eq:CYMeq2}.

To obtain these results, we first discuss in \secref{sec:ANgeneral-framework} the general framework provided by Kempf-Ness Theorem where our problem fits and establish its the relation with~\secref{sec:Ceqcoupled-equations}: the pairs $(\omega,H)$ satisfying \eqref{eq:CYMeq2} correspond to zeros of the moment map $\mu_\alpha$ \eqref{eq:thm-muX} in the ``complexified orbit'' of the $G^c$-invariant complex structure $I$ of $E^c$ with respect to the action of the extended gauge group.

Throughout this chapter, we suppose that the $G$-invariant metric $(\cdot,\cdot)$ in $\mathfrak{g}$ used to define the system \eqref{eq:CYMeq2} extends to a $G^c$-invariant (maybe degenerate) bilinear pairing
\begin{equation}\label{eq:pairinggc}
(\cdot,\cdot):\mathfrak{g}^c \otimes \mathfrak{g}^c \to \CC.
\end{equation}
Note that the pullback of minus the trace with respect to any faithful representation of $G^c$ in $GL(r,\CC)$ mapping
$G$ into $U(r)$ always satisfies this property.

\section{General framework}
\label{sec:ANgeneral-framework}

In this section we show how the problem of finding solutions to the coupled equations \eqref{eq:CYMeq2} fits into the
general framework provided by the Kemp--Ness Theorem, already explained in \secref{sec:KNtheorem}. Our argument combines the one for HYM connections explained in \secref{sec:HYM} with the one given by Donaldson for the cscK problem in \cite{D6} (see~\secref{subsec:cscKorbits}). We will study the system \eqref{eq:CYMeq2} with fixed positive coupling constants $\alpha_0$, $\alpha_1 \in \RR$.

Let $(X,J)$ be a compact complex manifold, where we fix a K\"{a}hler class $[\omega]$, and denote by $\cK$ the space of K\"{a}hler metrics on $X$ with this K\"ahler class. Let $(E^c,I)$ be a holomorphic principal $G^c$-bundle over $(X,J)$ and denote by $\cR$ the space of smooth $G$-reductions on the underlying smooth bundle $E^c$. An element $H \in \cR$ will be regarded, depending on the context, as a smooth cross section of the bundle $E^c/G \to X$ or as a $G$-subbundle of $E^c$. Let us consider the product space
\begin{equation}\label{eq:Ktilda}
\cKt = \cK \times \cR.
\end{equation}
The elements in $\cKt$ will be denoted either by $(\omega,H)$ or $\kappa$, when there is no confusion. First we will establish the relation with~\secref{sec:Ceqcoupled-equations}, showing that the pairs $(\omega,H)$ correspond to points in the ``complexified orbit'' of the holomorphic $G^c$-invariant structure $I$ on $E^c$ for the action of the extended gauge group. Those pairs satisfying the coupled equations \eqref{eq:CYMeq2} will be given by the zero locus of the moment map $\mu_\alpha$ \eqref{eq:thm-muX} inside the complexified orbit. It is convenient to consider the following definition (cf.~\cite[Section~1.1]{MMOPR}).

\begin{definition}\label{def:ANmarkedHamiltonian}
A \emph{marked K\"{a}hler Hamiltonian $\cX$-space} (mK-Hamiltonian space, for short) is a  tuple $(\cP,\mathbf{I},\omega_\alpha,\cX,\mu_\alpha,p)$, where $(\cP,\mathbf{I},\omega_\alpha)$ is a K\"{a}hler manifold, $\cX$ is a Lie group that acts on $\cP$ by holomorphic isometries,
$$
\mu_\alpha: \cP \to \LieX^*
$$
is a $\cX$-equivariant moment map and $p$ is a marked point in $\cP$.
\end{definition}

Relying on the results in \secref{sec:Ceqcoupled-equations}, any $\kappa = (\omega,H) \in \cKt$ determines a mK-Hamiltonian space
\begin{equation}\label{eq:ANmarkedHamiltonian}
(\cP,\mathbf{I},\omega_\alpha,\cX,\mu_\alpha,p_I),
\end{equation}
where $\cP \subseteq \cJi \times \cA$ (see~\eqref{eq:cP}) is given by pairs $p = (J',A')$ such that $J' \in \cJi$, the space of compatible complex structures on $(X,\omega)$, and $A'$ is a compatible connection with $J'$ on the $G$-reduction $H$, i.e such that $F_A^{0,2} = 0$. Recall that $\cP$ parameterizes $G^c$-invariant complex structures on $E^c$ that cover elements in $\cJi$ (see~\eqref{eq:pairsinvacs}). Therefore, the marked point $p_I = (J,A) \in \cP$ is determined by the complex structure $I$, where $A$ denotes the Chern connection of $H$ on $(E^c,I)$. The extended gauge group $\cX$ fits into the short exact sequence
$$
1 \to \cH \to \cX \to \cG \to 1,
$$
where $\cH$ and $\cG$ denote respectively the Hamiltonian group of symplectomorphisms of $(X,\omega)$ and the gauge group of the reduction $H$. The symplectic structure, the moment map and the complex structure in \eqref{eq:ANmarkedHamiltonian} are given respectively by \eqref{eq:Sympfamily}, \eqref{eq:thm-muX} and \eqref{eq:complexstructureI}, where the two former are determined by the fixed coupling constants $\alpha_0$ and $\alpha_1$.

Let us fix $\kappa \in \cKt$ and consider the associated mK-Hamiltonian space \eqref{eq:ANmarkedHamiltonian}. As $\cH$, the group $\cX$ does not admit a complexification extending the action on $\cP$. However, there is an integrable distribution $D \subseteq T\cP$ whose leaves play the role of the orbits under the action of an ideal complexification. Recall that given $\zeta \in \LieX$, we denote by $Y_\zeta$ the infinitesimal action of $\zeta$ on $\cP$. Then, at a point $p \in \cP$ the distribution is given by
\begin{equation}\label{eq:distribution}
D_p = \{Y_{\zeta|p}; \; \zeta \in \LieX^c\},
\end{equation}
where $\LieX^c$ is the complexification of $\LieX$ and
\begin{equation}
\label{eq:notationcominfinitesimal} Y_\zeta = Y_{\zeta_0} + \mathbf{I} Y_{\zeta_1} \text{ for all $\zeta = \zeta_0 +
\imag \zeta_1 \in \LieX^c$}.
\end{equation}
The distribution $D$ is closed under the Lie bracket, since the Nijenhuis tensor of the complex structure $\mathbf{I}$ vanishes (see Proposition~\ref{prop:CeqintegrableI}), although it is not obvious from this fact that it integrates, since $\cP$ is infinite dimensional. To see that $D$ indeed integrates, we define a principal $\cX$-bundle
\begin{equation}\label{eq:Ytildap}
\cYt \to \cKt
\end{equation}
that plays the role of the missed complexification, with $\cKt$ regarded as the symmetric space in \secref{sec:KNtheorem} (see Remark~\ref{remark:Ktildasymmetric} below). For simplicity, we suppose that the first cohomology group $H^{1}(X)$ vanishes, so the group $\cH$ coincides with $\textrm{Symp}_0(X,\omega)$, the identity component of the group of symplectomorphisms. The assumption $H^1(X) = 0$ can be easily removed in the discussion replacing $\cX$ by the bigger
group of smooth $G$-invariant automorphisms of the reduction $H$ that project to $\textrm{Symp}_0(X,\omega)$. To define $\cYt$, let us consider $\Aut_0(E^c)$, the connected component of $\Id$ in the group $\Aut(E^c)$ of $G^c$-equivariant diffeomorphisms of $E^c$. Note that any element $f\in \Aut_0(E^c)$ covers a
unique diffeomorphism $\check{f}$ in $\Diff_0 X$. Then, the $\cX$-bundle $\cYt$ determined by $\kappa = (\omega,H)$ consists of triples
\begin{equation}\label{eq:ANcYtdef}
\cYt \defeq \{(\omega',H',f) \in \cKt \times \Aut_0(E^c) \colon (\check{f}^*\omega',f^*H') = (\omega,H)\}.
\end{equation}
The reduction $H'$ is viewed here as a cross section of the bundle $E^c/G \to X$ and so for any point $x \in X$ we have $f^*H'(x) = f^{-1}(H'(\check{f}(x)))$. Given any $p \in \cP$, the map
\begin{equation}\label{eq:taup}
\tau_p\colon \cYt \to \cP\colon (\omega',H',f)\mapsto f^* p
\end{equation}
plays the role of the complexified action, in the sense that the image of this map is a leaf of $D$ through $p$. Let us consider the \emph{complexified orbit} $\tau_{p_I}(\cYt)$ of the marked point $p_I \in \cP$ determined by the complex structure $I$ on $E^c$. Given a pair in $\cKt$, we can associate a point in the complexified orbit of $p_I$ via the map \eqref{eq:taup}, modulo an indeterminacy coming from the extended gauge group $\cX$. Then, it can be readily checked that $\tau_{p_I}(\cYt)$ contains a zero of the $\cX$-equivariant moment map $\mu_\alpha$ (see~\eqref{eq:thm-muX}) if and only if there exists a pair in $\cKt$ satisfying the coupled equations \eqref{eq:CYMeq2}.

\begin{remark}\label{remark:Ktildasymmetric}
When the isotropy group of $p_I$ vanishes, the previous construction shows that $\cYt$ is an infinitesimal complexification of $\cX$ in the sense of \cite{D6}. This would endow $\cKt \cong \cYt/\cX$ with a structure of Riemannian symmetric space, applying~\cite[Proposition~2]{D6}, provided that $\LieX$ has a biinvariant metric. However such a metric is not known to exist by the author (cf.~\eqref{eq:pairingscX}). A priori, i.e. removing the fixed point in $\cKt$, there is a Riemannian submersion structure on the fibration
$$
\cKt = \cK \times \cR \to \cK,
$$
where the space of K\"{a}hler metrics $\cK$ is regarded as a symmetric space with the Mabuchi metric (see~\cite{Mab1}). Note that for each $\omega \in \cK$, the fiber $\cR \cong \Omega^0(X,E^c/G)$ has a structure of Riemannian symmetric space given by the one on $G^c/G$ and integration over $X$ with respect to $\omega$. Thus, the structure of Riemannian submersion on $\cKt$ has trivial connection and varying Riemannian metric on each fiber. It is very likely that this structure is in fact Riemannian symmetric, in the sense that it admits a Levi--Civita connection whose curvature is negative, covariantly constant and admits an explicit expression in terms of the Lie bracket on the Lie algebra of the extended gauge group (see~\cite[Proposition~2]{D6}). To make sense of the latter condition, note that for any element $\kappa = (\omega,H) \in \cKt$ there is a canonical isomorphism
\begin{equation}\label{eq:symmetricspace}
T_\kappa \cKt \cong \LieX
\end{equation}
via the identifications $T_\omega \cK \cong C^{\infty}(X)/\RR \cong \LieH$ and $T_H \cR \cong \imag \LieG$ and the vector space splitting $\LieX \cong \LieH \times \LieG$ provided by the Chern connection of $H$ on $(E^c,I)$. Here the groups $\cG$, $\cX$ and $\cH$ are those corresponding to the mK-Hamiltonian space \eqref{eq:ANmarkedHamiltonian} determined by $\kappa$.
\end{remark}


We will now construct a map that will be used in \secref{sec:ANdeformingsolutions} to deform a solution $\kappa \in \cKt$ of the coupled equations \eqref{eq:CYMeq2} on the attached complexified orbit $\tau_{p_I}(\cYt)$. This will provide an existence criterion for \eqref{eq:CYMeq2} and also canonical curves in $\cKt$ along which the integral of the moment map (see~\secref{sec:ANcoupledintmmap}), whose zeros locus correspond to the solutions of \eqref{eq:CYMeq2}, will be convex. It is very likely that this curves are, in fact, the geodesics on $\cKt$ with respect to the Riemannian structure discussed in Remark~\ref{remark:Ktildasymmetric}. Let us fix a point $\kappa \in \cKt$. We will construct a map
$$
\rho \colon T_\kappa \cKt \to \tau_{p_I}(\cYt) \subset \cP,
$$
where $p_I \in \cP$, $\tau_{p_I}$ and $\cYt$ are, as before, associated to the mK-Hamiltonian space \eqref{eq:ANmarkedHamiltonian} determined by $\kappa$. Note that in the finite dimensional picture provided by \secref{sec:KNtheorem} the complex exponential defines a canonical connection on the principal bundle
$$
G^c \to G^c/G \cong \imag \mathfrak{g},
$$
that can be used to map $G^c/G$ into the K\"{a}hler manifold where the complex group acts. Hence, via the identification $T_\kappa \cKt \cong \imag \LieX$ (see~\eqref{eq:symmetricspace}), the map $\rho$ can be considered as an analogue of the composition of the (missed) complex exponential with the action on $p_I$ of the (missed) complexification of $\cX$. The strategy we will follow is: first, we will define a connection on the $\cX$-bundle $\cYt \to \cKt$. Then, choosing a suitable family of curves in $\cKt$ joining any point to $\kappa$, we will lift the curves starting at the canonical element on the fiber of $\cYt$ over $\kappa$ given by the identity. Finally, using the map $\tau_{p_I}$ \eqref{eq:taup} we will map the lifted $\cKt$ into $\cP$.

To define the connection we define the horizontal lift of a smooth curve $\kappa_t$ in $\cKt$ passing through $f \in \cYt$ in the fiber over the initial point $\kappa_0$. In the following we will denote by a $t$-subscript the elements of the mK-Hamiltonian space determined by $\kappa_t$ \eqref{eq:ANmarkedHamiltonian}. Let $\kappa_t = (\omega_t,H_t)$ be a curve in $\cKt$. As in \secref{subsec:cscKorbits}, let $\check{f}_t \subset \Diff_0(X)$ be the Moser curve of diffeomorphisms of $\omega_t$ determined by the time dependent vector field $\check{y}_t$ with initial condition $\check{f}_0 = \check{f}$, the diffeomorphism covered by $f \in \Aut_0(E^c)$. Recall that $\check{f}_t$ satisfies the condition $\check{f}_t^* \omega_t = \omega_0$ for all $t$. We now
lift $\check{f}_t$ to a curve in the group $\Aut_0(E^c)$ with the desired property $f_t^*H_t = H$ (see~\eqref{eq:ANcYtdef}). Let $f_t \subset \Aut_0(E^c)$ be the
curve integrating
\begin{equation}\label{eq:vectorfieldyft}
y_t = -\frac{1}{2}\dot{H}_t + \theta_t^\perp(\check{y}_t)
\end{equation}
with initial condition $f_0 = f$, where $\dot{H}_t \in \imag\LieG_t$ is the $H_t$-symmetric vector field determined by $\frac{d}{dt}H_t$ and $\theta_t^\perp(\check{y}_t)$ is the horizontal lift of $\check{y}_t$ with respect to the (Chern) connection of $H_t$ on $(E^c,I)$.
\begin{proposition}\label{propo:ANlift}
The curve $f_t \subset \Aut_0(E^c)$ defined by \eqref{eq:vectorfieldyft} is a lift of $(\omega_t,H_t)$ to $\cYt$ passing
trough $f$. The induced curve $p_t = \tau_p(\omega_t,H_t,f_t) \subset \cP$ determined by \eqref{eq:taup} satisfies
\begin{equation}\label{eq:derivativept1}
\dot{p_t} = \mathbf{I} Y_{\zeta_t},
\end{equation}
where $Y_{\zeta_t}$ is the infinitesimal action at $p_t \in \cP$ of
\begin{equation}\label{eq:derivativept2}
\zeta_t = f_t^*(I y_t) \in \LieX.
\end{equation}
\end{proposition}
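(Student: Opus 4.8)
The plan is to imitate, in the presence of the extended gauge group and of the product bundle $\cP\subseteq\cJi\times\cA$, the cscK computation \eqref{eq:derivativeJtcscK} of \secref{subsec:cscKorbits}. Two statements are at stake: that the curve $f_t$ stays inside $\cYt$, i.e.\ $(\check f_t^*\omega_t,f_t^*H_t)=(\omega,H)$ for all $t$ (cf.\ \eqref{eq:ANcYtdef}); and the infinitesimal identity \eqref{eq:derivativept1} together with $\zeta_t\in\LieX$.

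For the lift property I would argue separately on the base and on the fibre. In \eqref{eq:vectorfieldyft} the term $-\tfrac12\dot H_t\in\imag\LieG_t$ is vertical and $\theta_t^\perp(\check y_t)$ is horizontal, so $y_t$ projects onto $\check y_t$; hence $f_t$ covers the Moser curve $\check f_t$ of $\omega_t$, and Moser's lemma yields $\check f_t^*\omega_t=\check f^*\omega_0=\omega$ exactly as in \secref{subsec:cscKorbits}. For the reduction I would differentiate, $\frac{d}{dt}(f_t^*H_t)=f_t^*\bigl(\partial_tH_t+L_{y_t}H_t\bigr)$: the horizontal part gives $L_{\theta_t^\perp(\check y_t)}H_t=\nabla^{A_{H_t}}_{\check y_t}H_t=0$, because the Chern connection $A_{H_t}$ of $H_t$ on $(E^c,I)$ reduces to $H_t$, so that $H_t$ is a parallel section of $E^c/G$; and the vertical part $L_{-\tfrac12\dot H_t}H_t$ cancels $\partial_tH_t$, the coefficient $-\tfrac12$ being exactly the normalization imposed by the identification $T_{H_t}\cR\cong\imag\LieG_t$ between $\dot H_t$ and $\frac{d}{dt}H_t$. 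Since $f_0^*H_0=f^*H_0=H$, this gives $f_t^*H_t\equiv H$ and completes the first assertion.

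For \eqref{eq:derivativept1} I would use that the marked point $p_I\in\cP$ is fixed, so $p_t=\tau_{p_I}(\omega_t,H_t,f_t)=f_t^*p_I$ and hence $\dot p_t=f_t^*(L_{y_t}p_I)=L_{w_t}p_t$, where $w_t:=f_t^*y_t$ and I have used the naturality of the Lie derivative under $f_t$. Regarding $p_t$ as an (integrable, since $p_t\in\cP$) $G^c$-invariant complex structure on $\Tot(E^c)$, the standard identity $L_{p_tw}p_t=p_t(L_wp_t)$ for integrable complex structures (the same one used in \eqref{eq:derivativeJtcscK}) together with $p_t^2=-\Id$ gives
\[
\dot p_t=L_{w_t}p_t=-p_t^2(L_{w_t}p_t)=-p_t(L_{p_tw_t}p_t)=\mathbf{I}_{\mid p_t}Y_{\zeta_t},
\]
where $\zeta_t=f_t^*(Iy_t)=(f_t^*I)(f_t^*y_t)=p_tw_t$, $Y_{\zeta_t}=-L_{\zeta_t}p_t$ is the infinitesimal action of $\zeta_t$ at $p_t$, and the sign conventions are those of \secref{subsec:cscKorbits}. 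It remains to check $\zeta_t\in\LieX$: as in \eqref{eq:derivativeJtcscK}, $\zeta_t$ is the analogue of $f_t^*(Jy_t)$, it is $G^c$-invariant, and it covers $J_t(\check f_t^*\check y_t)$ on $X$, which is the Hamiltonian field $y_{\dot\phi_t\circ\check f_t}$ because $\check f_t\colon(X,\omega)\to(X,\omega_t)$ is a symplectomorphism; together with the fact that its vertical component is $\ad H$-valued --- a consequence of $f_t(H)=H_t$ (which follows from $f_t^*H_t\equiv H$) and the compatibility of $I$ with the reductions --- this places $\zeta_t$ in $\LieX$.

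I expect the delicate point to be the fibre bookkeeping rather than the base direction or the algebra with $\mathbf{I}$: one must identify carefully the various reductions and Chern connections that appear after applying $f_t$, both to pin down the normalization $-\tfrac12$ in the lift property and to verify that $\zeta_t$ genuinely lands in $\LieX$ and not merely in $\Lie\Aut_0(E^c)$. Once the template of \secref{subsec:cscKorbits} and Proposition~\ref{prop:CeqintegrableI} are in place, the remaining manipulations are routine.
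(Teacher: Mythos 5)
Your proposal is correct and follows essentially the same route as the paper: the lift property is checked by differentiating $f_t^*H_t$ and observing that the horizontal part vanishes (the Chern connection of $H_t$ reduces to $H_t$) while the vertical term $-\tfrac12\dot H_t$ cancels $\partial_t H_t$ under the identification $T_{H_t}\cR\cong\imag\LieG_t$, which is exactly the paper's computation $f_t^*(2\pi_1^t\theta_t(y_t)+\dot H_t)=0$. For \eqref{eq:derivativept1} you likewise pass to the $G^c$-invariant complex structure $I_t=f_t^*I$ and use the integrability identity $L_{Iw}I=I(L_wI)$ to rewrite $L_{f_t^*y_t}I_t$ as $\mathbf{I}Y_{f_t^*(Iy_t)}$, with the same bookkeeping for $\zeta_t\in\LieX$.
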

\begin{proof}
For the first part of the statement it is enough to check that $f_t^*H_t$ is constant on $t$. Note first that the
derivative of $f_t^*H$ for any $H \in \cR$ is given by
$$
\frac{d}{dt}f_t^*H = f_t^*(2\pi_1 \theta_H(y_t))
$$
where $\theta_H$ is the (Chern) connection $1$-form of $H$ on the holomorphic bundle $(E^c,I)$ and
$$
\pi_1 \colon \LieG^c \to \imag\LieG
$$
is the standard projection from the Lie algebra of the complex gauge group of $E^c$ onto the Lie algebra of the gauge group of the $G$-reduction $H$. When the
$G$-reduction is parameterized by $t$, we denote this map by $\pi_1^t$. Then we have
$$
\frac{d}{dt}f_t^*H_t = f_t^*(2\pi_1^t \theta_t(y_t) + \dot{H}_t) = f_t^*(- \dot{H}_t + \dot{H}_t) = 0.
$$
To prove \eqref{eq:derivativept1}, recall from \secref{sec:Ceqcoupled-equations} that the space $\cP$ is complex isomorphic to a suitable space of integrable $G^c$-invariant complex structures on $E^c$ and that this isomorphism is $\cX$-equivariant (see~\eqref{eq:pairsinvacs2}). Then, any element of the curve $p_t \in \cP$ determines a unique $G^c$-invariant complex structure $I_t = f_t^*I$ on $E^c$ and so
$$
\frac{d}{dt} I_t = L_{f_t^*y_t}I_t = - I_t L_{f_t^*(I y_t)}I_t = \mathbf{I} Y_{f_t^*(I y_t)}.
$$
Thus, $\zeta_t = f_t^*(I y_t)$ as required. Note that $Iy_t \in \cX_t$ implies that $\zeta_t \in \LieX$.
\end{proof}

The previous proposition shows that the connection we have just defined satisfies reasonable properties that bring us close to the map $\rho$ of our interest. Recall that we have fixed a pair $\kappa = (\omega,H)$. Then, given any function $\dot{\phi} \in C^{\infty}_0(X)$, with zero integral with respect to $\vol_\omega$, and any $\dot{H} \in \imag \LieG$, we identify $(\dot{\phi},\dot{H})$ with the vector field in the tangent space $T_\kappa \cKt$ given by the curve
\begin{equation}\label{eq:ANkappat}
\kappa_t = (\omega_t,H_t) \defeq (\omega_{t \dot{\phi}},(e^{t\dot{H}})^*H) \subset \cKt,
\end{equation}
where
$$
\omega_{t \dot{\phi}} = \omega + t dd^c \dot{\phi}
$$
and $e$ denotes the exponential map in the complex gauge group $\cG^c$ of the smooth $G^c$-bundle $E^c$. Given $\dot{\kappa} = (\dot{\phi},\dot{H}) \in T_\kappa \cKt$ we define the map $\rho$ as
\begin{equation}\label{eq:imaginaryexponential}
\rho \colon T_\kappa \cKt\to \tau_{p_I}(\cYt) : \dot{\kappa} \mapsto \rho(\dot{\kappa}) \defeq f_1^*p_I,
\end{equation}
where $f_1$ is the value at $t = 1$ of the horizontal lift $f_t \subset \Aut_0(E^c)$ (see~\eqref{eq:vectorfieldyft}) of the curve $\kappa_t$ \eqref{eq:ANkappat} starting at $f_0 = \Id$. Equivalently we can write (see~\eqref{eq:taup})
$$
\rho(\dot{\kappa}) = \tau_{p_I}(\kappa_1,f_1).
$$
Note that this map is only well defined on the open subset given by those pairs $(\dot{\phi},\dot{H})$ such that $\dot{\phi}$ is a K\"{a}hler potential on $(X,J,\omega)$. Given $\dot{\kappa} = (\dot{\phi},\dot{H}) \in T_\kappa \cKt$ as before, we will denote
\begin{equation}\label{eq:zetaphixi}
\zeta_{\dot{\kappa}} = - \imag \dot{H} + \theta_A^\perp(\eta_{\dot{\phi}}) \in \LieX,
\end{equation}
where $\eta_{\dot{\phi}} \in \LieH$ is the Hamiltonian vector field on $(X,\omega)$ corresponding to $\dot{\phi}$ and $\theta_A^\perp(\eta_{\dot{\phi}})$ denotes its horizontal lift with respect to the connection $A$ determined by the marked point $p_I \in \cP$. Then, by Proposition~\ref{propo:ANlift}, we have the following.
\begin{corollary}
The differential of $\rho$ at the origin $0 \in T_\kappa \cKt$ is
\begin{equation}\label{eq:imaginaryexponential2}
d \rho_{|0}(\dot{\kappa}) = \mathbf{I} Y_{\zeta_{\dot{\kappa}}},
\end{equation}
where $Y_{\zeta_{\dot{\kappa}}}$ is the infinitesimal action of $\zeta_{\dot{\kappa}}$ given by~\eqref{eq:zetaphixi} at $p_I \in \cP$.
\end{corollary}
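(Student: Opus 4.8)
The plan is to reduce the statement to Proposition~\ref{propo:ANlift}, using the reparametrization invariance of the horizontal lift of \eqref{eq:vectorfieldyft}, and then to evaluate the $G^c$-invariant complex structure $I$ on a single vector field on $\Tot(E^c)$.

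First I would observe that, for $\dot{\kappa}=(\dot{\phi},\dot{H})\in T_\kappa\cKt$ and $s\in\RR$, the curve \eqref{eq:ANkappat} attached to $s\dot{\kappa}$ is the reparametrization $\kappa^{(s)}_t=\kappa_{ts}$ of the curve $\kappa_t$ attached to $\dot{\kappa}$. The generator \eqref{eq:vectorfieldyft} of the horizontal lift then scales linearly, $y^{(s)}_t=s\,y_{ts}$: indeed the $H^{(s)}_t$-symmetric velocity of $(e^{ts\dot{H}})^*H$ is $s\dot{H}_{ts}$, the Moser field of \secref{subsec:cscKorbits} for $\omega_{ts\dot{\phi}}$ is $s\check{y}_{ts}$, and the Chern connection of $H^{(s)}_t=H_{ts}$ is $\theta_{ts}$. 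Hence the horizontal lift of $\kappa^{(s)}$ through $f_0=\Id$ is $f^{(s)}_t=f_{ts}$, so that $\rho(s\dot{\kappa})=(f^{(s)}_1)^*p_I=f_s^*p_I=p_s$ in the notation of Proposition~\ref{propo:ANlift}; differentiating at $s=0$ gives $d\rho_{|0}(\dot{\kappa})=\dot p_0=\mathbf{I}\,Y_{\zeta_0}$ with $\zeta_0=f_0^*(Iy_0)=Iy_0$, since $f_0=\Id$.

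It then remains to identify $Iy_0$ with $\zeta_{\dot{\kappa}}$ of \eqref{eq:zetaphixi}. At $t=0$ the curve is $\kappa_t=(\omega_{t\dot{\phi}},(e^{t\dot{H}})^*H)$, so the Moser vector field is $\check{y}_0=-J\eta_{\dot{\phi}}$, while the velocity $\dot{H}_0\in\imag\LieG$ of $(e^{t\dot{H}})^*H$ equals $2\dot{H}$ by the formula $\tfrac{d}{dt}f_t^*H=f_t^*(2\pi_1\theta_H(y_t))$ from the proof of Proposition~\ref{propo:ANlift} applied to $f_t=e^{t\dot{H}}$, for which $y_t=\dot{H}$ and $\pi_1\theta_H(\dot{H})=\dot{H}$. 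Thus $y_0=-\tfrac12\dot{H}_0+\theta^\perp_A(\check{y}_0)=-\dot{H}+\theta^\perp_A(-J\eta_{\dot{\phi}})$. Since the marked point is $p_I=(J,A)$ and $I=I_{J,A}=\imag A+A^\perp J\pi$ by \eqref{eq:pairsinvacs}, the operator $I$ acts as $\imag$ on $VE^c$ and sends $\theta^\perp_A(w)$ to $\theta^\perp_A(Jw)$ for a vector field $w$ on $X$; therefore $Iy_0=-\imag\dot{H}+\theta^\perp_A(J(-J\eta_{\dot{\phi}}))=-\imag\dot{H}+\theta^\perp_A(\eta_{\dot{\phi}})=\zeta_{\dot{\kappa}}$, which is exactly \eqref{eq:imaginaryexponential2}.

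I expect the only point requiring care to be the normalization bookkeeping: the factor $2$ relating $\dot{H}_0$ to $\dot{H}$ (i.e.\ the normalization of the symmetric-space structure on $\cR$ implicit in \eqref{eq:vectorfieldyft}), together with the sign of the Moser flow, must be matched exactly with \secref{subsec:cscKorbits} and Proposition~\ref{propo:ANlift}, so that the two occurrences of $J$ cancel and the vertical component comes out to $-\imag\dot{H}$ rather than $-\tfrac12\imag\dot{H}$. Once these normalizations are pinned down, each step above is a direct substitution and the corollary follows.
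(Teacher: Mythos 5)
Your proof is correct and follows essentially the same route as the paper's: the reparametrization identity $\rho(s\dot{\kappa})=f_s^*p_I$ obtained from the linear scaling of the generator \eqref{eq:vectorfieldyft}, followed by Proposition~\ref{propo:ANlift} and the identification $Iy_0=\zeta_{\dot{\kappa}}$, with the normalizations ($\dot{H}_0=2\dot{H}$ and the sign of the Moser field) handled exactly as in the paper. The only cosmetic difference is that the paper first rewrites $y_t=-I(-\imag\dot{H}+\theta_t^\perp(J\check{y}_t))$ for all $t$ before scaling, whereas you apply $I$ only at $t=0$.
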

\begin{proof}
We claim that the horizontal lift $f_t$ of the curve $\kappa_t$ \eqref{eq:ANkappat} is defined by the time dependent vector field
\begin{equation}\label{eq:vectorfieldyft3}
y_t = -I(- \imag \dot{H} + \theta_t^\perp(J \check{y}_t)),
\end{equation}
where $\check{y}_t$ is the vector field on $X$ covered by $y_t$. To prove this, note that the $H_t$-symmetric tangent vector $\dot{H}_t$ is given by
\begin{equation}\label{eq:ANHdot}
\dot{H}_t  = s_t^*(s_t^{-1}\dot{s}_t + \dot{s}_ts_t^{-1}) = 2 s_t^*(\pi_1 y_{s_t}) \quad \in \imag \LieG_t,
\end{equation}
where $s_t = e^{t\dot{H}}$, and so $y_{s_t} = \dot{s}_ts_t^{-1} = \dot{H}$ implies $\dot{H}_t = 2\dot{H}$ for all $t$. Then, it is straightforward from \eqref{eq:vectorfieldyft} that
\begin{align*}
y_t & = - \dot{H} + \theta_t^\perp(\check{y}_t)\\
& = - I(- \imag \dot{H} + \theta_t^\perp(J\check{y}_t)),
\end{align*}
as claimed. Recall from \secref{subsec:cscKorbits} that $J \check{y}_t = y_{\dot{\phi},\omega_{t\dot{\phi}}}$, where the latter is the Hamiltonian vector field of $\dot{\phi}$ with respect to $\omega_{t\dot{\phi}}$. Then, given a curve $s \mapsto (s\dot{\phi},s\dot{H}) \in T_\kappa \cKt$, we have $\rho(s\dot{\phi},s\dot{\xi}) = (f_1(s))^*p_I$ by definition, where $f_1(s)$ is the value at $t = 1$ of the curve $f_t(s) \subset \Aut(E^c)$ determined by
\begin{align*}
y_t(s) & = - I(- \imag s\dot{H} + \theta_{ts}^\perp(y_{s\dot{\phi},\omega_{ts\dot{\phi}}}))\\
& = - s I(- \imag \dot{H} + \theta_{ts}^\perp(y_{\dot{\phi},\omega_{ts\dot{\phi}}}))\\
& = y_{ts}
\end{align*}
and $f_0(s) = \Id$. Then, $\rho(s\dot{\phi},s\dot{\xi}) = (f_{ts})^*_{|t=1}p_I = f_s^*p_I$ and \eqref{eq:imaginaryexponential2} follows from \eqref{eq:derivativept1} in Proposition~\ref{propo:ANlift}.
\end{proof}

The map \eqref{eq:imaginaryexponential} behaves at the origin, due to the identity \eqref{eq:imaginaryexponential2}, as the composition of the missed complex exponential with the action of the complexified group on the marked point $p_I$. In the next section we will compose $\rho$ with the moment map $\mu_\alpha$ given by \eqref{eq:thm-muX} obtaining a new map that will allow us to deform a given solution to the coupled equations \eqref{eq:CYMeq2}, thereby obtaining a criterion of existence. Note that
the choice of curve $\kappa_t$ made in \eqref{eq:ANkappat} for the definition of $\rho$ is not canonical. In fact, due to the identity \eqref{eq:derivativept1}, it would have been more natural to consider curves $\kappa_t$ such that the associated vector field $f_t^*(Iy_t) \in \LieX$ \eqref{eq:derivativept2} satisfies
\begin{equation}\label{eq:virtualoneparameter0}
\frac{d}{dt} f_t^*(Iy_t) = 0.
\end{equation}
If $\kappa_t = (\omega_t,H_t)$ solves \eqref{eq:virtualoneparameter0}, the K\"{a}hler metric $\omega_t$ is a geodesic in $\cK$ (see Proposition~\eqref{propo:ANvirtualoneparameter}). This provides an evidence of the fact that \eqref{eq:virtualoneparameter0} is the geodesic equation with respect to the Riemannian structure on $\cKt$ described in Remark~\ref{remark:Ktildasymmetric}. Recall that that the Riemannian submersion structure on $\cKt$ has trivial connection and varying Riemannian metric on each fiber that, together with the previous fact, fits with~\cite[Theorem~3.3]{Vi}.

The existence of smooth solutions to \eqref{eq:virtualoneparameter0} has important consequences for the
existence and uniqueness problem for the coupled equations, as we will see in \secref{sec:ANcoupledintmmap}. In this sense, the solutions to \eqref{eq:virtualoneparameter0} are the analogue in our context of the geodesics in the space of K\"{a}hler metrics in the cscK problem. We finish this section with an example of smooth solution to \eqref{eq:virtualoneparameter0} defined for all $t \in [0,\infty[$, coming from the Lie algebra of the isotropy group $\cX_I$ of the marked point $p_I \in \cP$ on the extended gauge group $\cX$ (cf.~\secref{example:cscKgeodesic}). It is very likely that a general existence result for short time, as the one for the geodesics in $\cK$ (see~\cite[Remark~3.3]{Mab1}), should hold for \eqref{eq:virtualoneparameter0}. Note however that, the lack of a uniform $0 < t_0 \in \RR$ over $\cKt$ for which the solutions to \eqref{eq:virtualoneparameter0} are defined does not allow one to construct a suitable map $\rho$ as in \eqref{eq:imaginaryexponential}. This can be considered an indirect proof of the fact that the extended gauge group $\cG$ does not admit a complexification extending the $\cX$-action on the complex manifold $\cP$.

\begin{example}\label{example:Ceqgeodesic}
Let $\zeta \in \LieX_I \subset \LieX$, where we suppose fixed $(\omega,H) \in \cKt$. Then we can write
$$
\zeta = \xi + \theta_A^\perp(\eta),
$$
where $\xi \in \LieG$ and $\eta \in \LieH$ is the Hamiltonian vector field of $\phi \in C^{\infty}(X)$. Given any vector field $y$ on $X$ (resp. on $E^c$), we denote by $f^{y}_{t}$ the flow of $y$ on $X$ (resp. on $E^c$). Let us define the curve
$$
(\omega_t,H_t) = ((f^{J\eta}_t)_*\omega,(f^{I\zeta}_t)_*H),
$$
where $I\zeta = \imag \xi + \theta_A^\perp(J\eta)$. We claim that $(\omega_t,H_t)$ is a solution to $\frac{d}{dt}\zeta_t = 0$ in $\cKt$, where $\zeta_t$ is as in
\eqref{eq:derivativept2}. Recall from Example \ref{example:cscKgeodesic} that $J\eta = \check{y}_t$ in
\eqref{eq:vectorfieldyft}. Combined with
$$
\dot{H}_t = (f_t^{I\zeta})_*(-2\pi_1 \theta_H(I\zeta)) = -2 (f_t^{I\zeta})_*(\imag \xi),
$$
it implies that
\begin{align*}
y_t & = (f_t^{I\zeta})_*(\imag \xi) + \theta_t^\perp(J\eta))\\
& = (f_t^{I\zeta})_*(\imag \xi + \theta_{f_t^{-I\zeta}\cdot A}^\perp(J\eta))\\
&= (f_t^{I\zeta})_* I\zeta = I\zeta,
\end{align*}
and so $\zeta_t = f_t^*(Iy_t) = -\zeta$, since $I\zeta$ is holomorphic.
\end{example}

\begin{remark}\label{remark:ANcomputations}
For further computations it is convenient to express the time dependent vector fields $y_t$ and $\zeta_t$, defined respectively in \eqref{eq:vectorfieldyft} and \eqref{eq:derivativept2}, in a different way. Given a curve
$(\omega_t,H_t) \in \cKt$ starting at the fixed point $\kappa = (\omega,H) \in \cKt$, we can suppose that $H_t = s_t^*H$ for a smooth curve $s_t = e^{\imag \xi_t}$, where $\xi_t \subset \imag \LieG$. Recall from \eqref{eq:ANHdot} that the $H_t$-symmetric tangent vector $\dot{H}_t$ is given by
$$
\dot{H}_t  = 2 s_t^*(\pi_1 y_{s_t}) \quad \in \imag \LieG_t,
$$
where $y_{s_t} = \dot{s}_ts_t^{-1}$. Setting $\omega_t = \omega + dd^c \phi_t$, it follows from \eqref{eq:vectorfieldyft} that
\begin{equation}
\label{eq:vectorfieldyft2}
\begin{split}
y_t & = -\pi^t_1 y_{s_t} + \theta_t^\perp(\check{y}_t)\\
& = s^*_t\(-\pi_1 y_{s_t} + \theta_{s_t \cdot A}^\perp(\check{y}_t)\)\\
& = - I s^*_t\(- \imag \pi_1 y_{s_t} + \theta_{s_t A}^\perp(J \check{y}_t)\),
\end{split}
\end{equation}
where $\pi_1^t\colon \LieG^c \to \imag\LieG_t$ is the standard projection of $H_t$ and $p_I = (J,A)$ is the marked point associated to $\kappa$ \eqref{eq:ANmarkedHamiltonian}. Let us define
\begin{equation}\label{eq:vectorfieldygt}
y_{g_t} = \pi_0(y_{s_t}) + \theta_{s_t \cdot A}^\perp(\check{y}_t).
\end{equation}
Note that $y_{g_t}$ preserves $H_0$. From \eqref{eq:vectorfieldyft2} it follows that $y_t = s_t^*(- y_{s_t} +
y_{g_t})$ and so the lift equals $f_t = s^{-1}_t g_t$. Finally, this gives
\begin{equation}\label{eq:vectorfieldzetat}
\zeta_t = g_t^*\(- \imag \pi_1 y_{s_t} + \theta_{s_t \cdot A}^\perp(y_{\dot{\phi}_t,\omega_t})\) = g_t^*(- \imag \pi_1
y_{s_t}) + \theta_{f_t^{-1} \cdot A}^\perp(y_{\dot{\phi}_t \circ
\check{f}_t}).
\end{equation}
\end{remark}

\section{Deforming solutions in a complexified orbit}
\label{sec:ANdeformingsolutions}

In this section we perturb a solution to the coupled equations \eqref{eq:CYMeq2} with coupling constants $\alpha_0$, $\alpha_1$ to new solutions for close values of the coupling constants. In order to do this we need to impose conditions on the spaces of holomorphic vector fields of the manifold and of the total space of the bundle.

Let us first explain the underlying geometric idea of our deformation argument. Recall that $(Z,J,\omega)$, $G \subset G^c$ are
a K\"{a}hler manifold, a compact group acting by holomorphic isometries on $Z$ and its complexification. Pick $z \in
Z$, a zero of the moment map $\mu: Z \to \mathfrak{g}^*$, and consider the map
\begin{equation}\label{eq:imaginaryexponentialfinite}
L: \mathfrak{g} \to \mathfrak{g}^* \colon \zeta \to L(\zeta) \defeq \mu(e^{\imag\zeta}\cdot z).
\end{equation}
The kernel of $dL_{|0}$ is the Lie algebra of the isotropy group $G_z$ of $z$ in $G$, due to
$$
\langle dL_{|0}(\zeta),\zeta'\rangle = \omega(Y_{\zeta'},JY_{\zeta})_{|z}
$$
for any $\zeta$, $\zeta ' \in \mathfrak{g}$, where $Y_\zeta$ denotes the infinitesimal action of $\zeta$ on $Z$.
Therefore, when $G_z$ is finite, $dL_{|0}$ is an isomorphism. In the situation of our interest the moment map $\mu_\alpha$ is parameterized by a pair of coupling constants $\alpha_0$, $\alpha_1 \in \RR$. When $G_z$ is finite the initial zero $z \in Z$ of the moment map $\mu_{\alpha}$ can be perturbed, via the Implicit Function Theorem, to zeros of $\mu_{\alpha'}$ for closed values $\alpha'$ of the coupling constants.

We now apply this picture to our infinite dimensional setup. First we will construct an analogue of \eqref{eq:imaginaryexponentialfinite} using the map $\rho$ \eqref{eq:imaginaryexponential} defined in \secref{sec:ANgeneral-framework}. We keep the notation of the previous section for a holomorphic principal $G^c$-bundle $(E^c,I)$ over a compact complex manifold $(X,J)$. We fix a pair
$$
\kappa = (\omega,H) \in \cK \times \cR = \cKt
$$
consisting of a K\"{a}hler metric $\omega$ on the fixed K\"{a}hler class $[\omega]$ and a $G$-reduction $H$ of the underlying smooth bundle $E^c$. For any pair $\alpha = (\alpha_0,\alpha_1) \in \RR^2$ of positive coupling constants we have an associated mK-Hamiltonian space (see Definition~\ref{def:ANmarkedHamiltonian})
$$
\kappa \to (p_I,\cP,\mathbf{I},\omega_\alpha,\cX,\mu_\alpha)
$$
as in \eqref{eq:ANmarkedHamiltonian}, where only the symplectic structure $\omega_\alpha$ \eqref{eq:Sympfamily} and the $\cX$-equivariant moment map $\mu_\alpha$ \eqref{eq:thm-muX} depend on the coupling constants. Recall that (see~\eqref{eq:cP})
$$
\cP \subset \cJi \times \cA
$$
is given by pairs $p = (J',A')$ such that $J' \in \cJi$, the space of compatible complex structures on $(X,\omega)$, and $A'$ is a $(1,1)$-type connection with respect to $J'$ on the $G$-reduction $H$. We identify $\cK$ with the space of K\"ahler potentials $\phi \in C^{\infty}_0(X)$, with zero integral with respect to $\vol_\omega$. The construction of \secref{sec:ANgeneral-framework} gives a well defined map (see~\eqref{eq:imaginaryexponential})
$$
\rho \colon \cK \times \imag \LieG \to \tau_{p_I}(\cYt) \subset \cP,
$$
where $\tau_{p_I}(\cYt)$ is the complexified $\cX$-orbit of the marked point $p_I$ in the K\"{a}hler manifold given by $(\cP,\mathbf{I},\omega_\alpha)$. We will identify $\LieG \cong \imag \LieG$ via the change of variable $\dot{H} = \imag \xi$ in \eqref{eq:imaginaryexponential} to simplify the notation. Then, composing with the moment map $\mu_\alpha$, \eqref{eq:imaginaryexponential} provides a well defined map
$$
L\colon \RR^2 \times \cK\times \LieG \to C_0^\infty(X) \times \LieG
$$
given by
\begin{equation}\label{eq:implicitmap}
L(\alpha_0,\alpha_1,\phi,\xi) \defeq (\alpha_0 S_{J_{\phi,\xi}} + \alpha_1 \Lambda^2(F_{A_{\phi,\xi}}\wedge
F_{A_{\phi,\xi}}) -
c, \Lambda F_{A_{\phi,\xi}}),
\end{equation}
where $\rho(\phi,\xi) = (J_{\phi,\xi},A_{\phi,\xi}) \in \cP$ and $c \in \RR$ is as in \eqref{eq:CYMeq2}. Note that we have normalized the $\LieG$ factor in \eqref{eq:implicitmap}. This allows us to perturb solutions also in the limit $\alpha_1 = 0$.
\begin{remark}
Here $\mu_\alpha$ is regarded as a map
$$
\mu_\alpha \colon \cP \to \LieH \times \LieG,
$$
provided by the vector space splitting $\LieX \cong \LieH \times \LieG$ and the pairing \eqref{eq:pairingscX} given by any $G$-connection $A$ on the reduction $H$.
\end{remark}

Recall that $\cJi$ denotes the space $\omega$-compatible complex structures in $X$. Let
$$
P: C^{\infty}_0(X) \to T_J \cJi
$$
be the infinitesimal action of $C^{\infty}_0(X)$ on $\cJi$, where the former is regarded as the Lie algebra of the group of Hamiltonian sysmplectomorphisms on $(X,\omega)$. Then it follows from
\eqref{eq:imaginaryexponential2}, from the definition of the complex structure $\mathbf{I}$ \eqref{eq:complexstructureI} and from the moment map property of the scalar curvature (see \cite{D1}) that the
differential at the origin of $L$ is given by
\begin{equation}
\label{eq:impL}
\begin{split}
dL_{0|0}(\phi,\xi) & = \alpha_0 P^*P \phi + 2\alpha_1 \Lambda^2 (F_A \wedge d_A J(d_A \xi + i_{\eta_\phi}F_A)),\\
dL_{1|0}(\phi,\xi) & = \Lambda d_A J (d_A \xi + i_{\eta_\phi}F_A) = d_A^*(d_A \xi + i_{\eta_\phi}F_A),
\end{split}
\end{equation}
where $L =  (L_0,L_1)$, $p_I = (J,A)$ is the marked point in $\cP$ and $P^*$ denotes the adjoint operator with
respect to the metric on $\Omega^0(\End TX)$ determined by $J$ and $\omega$. The kernel of the system \eqref{eq:impL} has a geometric interpretation in terms of holomorphic $G^c$-invariant vector fields in $E^c$ when $\alpha_0$ and $\alpha_1$ have the same sign (cf. Proposition $1$ in \cite{LS1}). Given $(\phi,\xi) \in C^{\infty}_0(X) \times \LieG$ we will denote
\begin{equation}\label{eq:zetaphixi2}
\zeta_{\phi,\xi} = \xi + \theta_A^\perp(\eta_{\phi}) \in \LieX
\end{equation}
as in \eqref{eq:zetaphixi}, where $\eta_\phi$ is the Hamiltonian vector field on $(X,\omega)$ corresponding to $\phi$ and $\theta_A^\perp(\eta_\phi)$ denotes its horizontal lift with respect to the connection $A$ determined by the marked point $p_I = (J,A) \in \cP$.

\begin{lemma}\label{lemma:impfunc}
If $\alpha_0 \cdot \alpha_1 > 0$ and $\Lambda_\omega F_A = z$, then a pair $(\phi,\xi) \in C_0^{\infty}(X) \times \LieG$ is in the kernel of \eqref{eq:impL} if and only if $\zeta_{\phi,\xi}$ \eqref{eq:zetaphixi2} is a $G^c$-invariant real holomorphic vector field on $(E^c,I)$.
\end{lemma}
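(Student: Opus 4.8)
The plan is to identify membership in the kernel of \eqref{eq:impL} with the vanishing of the infinitesimal action $Y_{\zeta_{\phi,\xi}}$ at the marked point $p_I=(J,A)\in\cP$, and then to observe that this vanishing is precisely the statement that $\zeta_{\phi,\xi}$ is a $G^c$-invariant real holomorphic vector field on $(E^c,I)$. First I would record the relevant reformulation. By \eqref{eq:pairsinvacs2} the space $\cJ\times\cA$ is $\cX$-equivariantly biholomorphic to the space of $G^c$-invariant almost complex structures on $\Tot(E^c)$, with $p_I\leftrightarrow I$ and $\cP$ corresponding to the integrable ones; since the $\cX$-action there is by pushforward, one has $Y_\zeta(p_I)=-L_\zeta I$ for $\zeta:=\zeta_{\phi,\xi}=\xi+\theta_A^\perp(\eta_\phi)\in\LieX$ (viewed as a $G^c$-invariant vector field on $\Tot(E^c)$), so $\zeta$ is real holomorphic for $I$ if and only if $Y_\zeta(p_I)=0$. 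Writing $Y_\zeta(p_I)$ in the model $\cP\subset\cJi\times\cA$, its $T_J\cJ$-component is $\pm P\phi$ and, by \eqref{eq:infinit-action-connections}, its $T_A\cA$-component is $-a$ with $a:=d_A\xi+i_{\eta_\phi}F_A\in\Omega^1(\ad E)$; as \eqref{eq:pairsinvacs2} is an isomorphism, $Y_\zeta(p_I)=0$ if and only if $P\phi=0$ and $a=0$. In particular the ``if'' direction of the lemma is immediate: if $\zeta_{\phi,\xi}$ is holomorphic then $P\phi=0$ and $a=0$, hence $P^*P\phi=0$, $d_A^*a=0$ and $d_A J a=0$, so both components of $dL_{|0}(\phi,\xi)$ vanish (this half uses neither $\alpha_0\alpha_1>0$ nor $\Lambda_\omega F_A=z$).

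For the converse, suppose $(\phi,\xi)\in\ker dL_{|0}$, i.e. $d_A^*a=0$ and $\alpha_0 P^*P\phi+2\alpha_1\Lambda^2(F_A\wedge d_A J a)=0$. The conceptual point is that, by the corollary to Proposition~\ref{propo:ANlift} (see~\eqref{eq:imaginaryexponential2}), $d\rho_{|0}(\phi,\xi)=\mathbf{I}Y_\zeta$ at $p_I$, while $L$ is obtained from $\mu_\alpha\circ\rho$ through the splitting $\LieX\cong\LieH\times\LieG$ and the pairings \eqref{eq:pairingscX}, with the $\LieG$-factor rescaled as noted after \eqref{eq:implicitmap}; here the hypothesis $\Lambda_\omega F_A=z$ is what makes the term $4\alpha_1\int_X(\theta_A\zeta\wedge(\Lambda_\omega F_A-z))\vol_\omega$ of \eqref{eq:thm-muX} contribute to $dL_{|0}$ only through $d_A^*a$. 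Pairing $dL_{|0}(\phi,\xi)$ against $(\phi,\xi)$ by the $L^2$-pairing on $C_0^\infty(X)$ together with $4\alpha_1$ times the $L^2$-pairing on $\Omega^0(\ad E)$, and invoking the moment-map identity $\langle d\mu_\alpha(v),\zeta\rangle=\omega_\alpha(Y_\zeta,v)$ with $v=\mathbf{I}Y_\zeta$, all terms not proportional to $d_A^*a=dL_{1|0}(\phi,\xi)$ assemble into
$$
\int_X\phi\, dL_{0|0}(\phi,\xi)\,\vol_\omega \;+\; 4\alpha_1\int_X(\xi\wedge dL_{1|0}(\phi,\xi))\,\vol_\omega \;=\; \pm\,\omega_\alpha\!\big(Y_\zeta,\mathbf{I}Y_\zeta\big)_{p_I}.
$$
The left-hand side vanishes since $(\phi,\xi)\in\ker dL_{|0}$, and because $\alpha_0\alpha_1>0$ the symmetric bilinear form $\omega_\alpha(\cdot,\mathbf{I}\cdot)$ on $T_{p_I}\cP$ is definite (Proposition~\ref{prop:CeqintegrableI} and the compatibility of $\mathbf{I}$ with $\omega_\alpha$ recorded just after it), so $Y_\zeta(p_I)=0$; by the first paragraph this gives $P\phi=0$ and $a=0$, whence $\zeta_{\phi,\xi}$ is a $G^c$-invariant real holomorphic vector field on $(E^c,I)$.

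The step requiring care — and where $\Lambda_\omega F_A=z$ is genuinely used — is the verification of this last identity, and a self-contained alternative is a direct integration by parts. Using $d\phi=i_{\eta_\phi}\omega$, the Bianchi identity $d_A F_A=0$, and the K\"ahler identity $\Lambda_\omega d_A(J a)=d_A^*a$, one rewrites $\int_X\phi\,\Lambda^2(F_A\wedge d_A J a)\,\vol_\omega$ as a combination of $\langle i_{\eta_\phi}F_A,a\rangle_{L^2}$, $\langle d_A\xi,a\rangle_{L^2}$ and a residual term $\int_X(a(J\eta_\phi)\wedge\Lambda_\omega F_A)\,\vol_\omega$; because $\Lambda_\omega F_A=z$ is parallel and central, this residual term equals $\int_X\phi\,(d_A^*a\wedge z)\,\vol_\omega$ and vanishes on $\ker dL_{|0}$, while the remaining terms recombine with $4\alpha_1\langle d_A^*a,\xi\rangle_{L^2}$ into a positive multiple of the squared $\cA$-norm $\|a\|^2$, and $\alpha_0\langle P^*P\phi,\phi\rangle_{L^2}=\alpha_0\|P\phi\|_{L^2}^2$; with $\alpha_0\alpha_1>0$ this forces $P\phi=0$ and $a=0$. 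Thus the main obstacle is purely the bookkeeping: keeping track of signs, of the $\LieG$-normalisation, and of the cancellation of every term not proportional to $d_A^*a$, so that the combination of the two equations in \eqref{eq:impL} paired against $(\phi,\xi)$ is revealed to be a definite quadratic form in $Y_{\zeta_{\phi,\xi}}(p_I)$.
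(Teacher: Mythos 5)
Your proof is correct and follows essentially the same route as the paper's: the ``if'' direction via the equivalence between holomorphicity of $\zeta_{\phi,\xi}$ and the vanishing of its infinitesimal action at $p_I$, and the converse by pairing $dL_{|0}(\phi,\xi)$ against $(\phi,\xi)$ with the weighted $L^2$-pairings, identifying the result with $\omega_\alpha(Y_{\zeta},\mathbf{I}Y_{\zeta})_{|p_I}$ through the moment-map identity, and concluding from the definiteness of $\omega_\alpha(\cdot,\mathbf{I}\cdot)$ when $\alpha_0\alpha_1>0$. The direct integration-by-parts verification you sketch at the end is not carried out in the paper but is consistent with its argument.
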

\begin{proof}
Note that the real $G^c$-invariant vector field \eqref{eq:zetaphixi2} is holomorphic if and only if
$$
P\phi = 0, \quad d_A \xi + i_{\eta_\phi}F_A = 0,
$$
or, equivalently, if its infinitesimal action $(Y_{\zeta_{\phi,\xi}})_{|p_I}$ at the point $p_I \in \cP$ vanishes (see~\eqref{eq:infinit-action-connections}). Hence, if \eqref{eq:zetaphixi2} is holomorphic, $(\phi,\xi) \in \textrm{Ker} \; dL_{|0}$, by \eqref{eq:impL}. To prove the converse we fix $(\alpha_0,\alpha_1) \in \RR^2$ and consider the pairings
\begin{align*}
\langle\cdot,\cdot\rangle_\cH & \colon \LieH \times \LieH \to \RR,\\
\langle\cdot,\cdot\rangle_\cG & \colon \LieG \times \LieG \to \RR,
\end{align*}
defined in \eqref{eq:pairingH} and \eqref{eq:LieG-dual}. Then, given $(\phi_j,\xi_j) \in C^{\infty}_0(X) \times \LieG$ with $j = 0,1$, the differential $dL = (dL_0,dL_1)$ satisfies
\begin{align*}
\langle dL_{0|0}(\phi_0,\xi_0),\phi_1\rangle_\cH + \alpha_1 \cdot \langle dL_{1|0}(\phi_0,\xi_0),\xi_1\rangle_\cG & = \langle d\mu_\alpha(\mathbf{I}Y_{\zeta_0}),\zeta_1\rangle_{|p}\\
& = \omega_\alpha(Y_{\zeta_1},\mathbf{I}Y_{\zeta_0})_{|p},
\end{align*}
where $\zeta_j = \zeta_{\phi_j,\xi_j} \in \LieX$ is as in \eqref{eq:zetaphixi2}. Recall that the symplectic structure $\omega_\alpha$ of \eqref{eq:Sympfamily} and the moment map $\mu_\alpha$ of \eqref{eq:thm-muX} (where we set $z=0$) depend on the coupling constants $\alpha_0$ and $\alpha_1$. If $\alpha_0\cdot\alpha_1 > 0$ then $\omega_\alpha$ is compatible with either $\mathbf{I}$  or $-\mathbf{I}$ and so the previous equation implies that if $(\phi_0,\xi_0) \in \textrm{Ker} \; dL_{|0}$ then $Y_{\zeta_0} = 0$ (setting $\zeta_0 = \zeta_1$). We conclude that the vector field $\zeta_0$ is holomorphic as claimed.
\end{proof}

Note that when $\alpha_0\cdot\alpha_1 >0$, the kernel of $dL_{|0}$ can be identified with the Lie algebra of the isotropy
group $\cX_I$ of $p_I \in \cP$ inside the extended gauge group $\cX$. Let $H^0(TX)$ be the space of holomorphic sections
of $TX$ with respect to complex structure $J$. Taking suitable Sobolev completions of
$C^{\infty}_0(X) \times \LieG$ and applying the Implicit Function Theorem we obtain the following result.
\begin{proposition}\label{thm:ExistenceCYMeq}
Suppose that $\kappa = (\omega,H) \in \cKt$ is a solution to the coupled equations \eqref{eq:CYMeq2} with coupling constants $(\alpha_0,\alpha_1) \in \mathbb{R}^2$. Let $(p_I,\cP,\mathbf{I},\omega_\alpha,\cX,\mu_\alpha)$ be the associated mK-Hamiltonian space and $\mathfrak{z} \subset \LieX_I$ the centre of the Lie algebra of the compact group $G \subset G^c$. Then,
\begin{enumerate}[i)]
    \item If $\LieX_I = \mathfrak{z}$ and $\alpha_0 \cdot \alpha_1 > 0$, then there exists an open neighbourhood $U$ of
        $(\alpha_0,\alpha_1)$ in $\mathbb{R}^2$ and a $C^1$ map $f$ from $U$ to the complexified orbit of the marked point $p_I$ such that $f(t,s)$ is locally the unique solution of the coupled equations
        \eqref{eq:ceqc2} with coupling constants $(t,s)$ over this complexified orbit modulo the action of $\cX$.

    \item If $\LieX_I = \mathfrak{z}$, $H^0(TX) = 0$, $\alpha_0 \neq 0$ and $\alpha_1 = 0$, then there exists an open neighbourhood $U$ of $(\alpha_0,0)$ with the same properties as in $i)$.
\end{enumerate}
\end{proposition}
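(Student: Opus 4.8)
The plan is to solve $L(t,s,\phi,\xi)=0$ in a neighbourhood of the given solution by the Implicit Function Theorem, with the linearisation read off from \eqref{eq:impL} and its kernel controlled by Lemma~\ref{lemma:impfunc}. First I would note that, since $\kappa=(\omega,H)$ solves \eqref{eq:CYMeq2}, the marked point $p_I=(J,A)$ has $\mu_\alpha(p_I)\in\mathfrak z$, so with the normalisations built into $c$ and into the $\LieG$-component of \eqref{eq:implicitmap} one has $L(\alpha_0,\alpha_1,0,0)=0$. Next, I would fix a large integer $k$, replace $\cK$ near $\omega$ by the open set of $L^2_{k+4}$ K\"ahler potentials on $(X,J,\omega)$ (which contains $\phi=0$), complete $\LieG=\Omega^0(\ad E)$ in $L^2_{k+2}$, and the target $C_0^\infty(X)\times\Omega^0(\ad E)$ in $L^2_k$. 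Because $\rho$ in \eqref{eq:imaginaryexponential} is obtained by flowing along vector fields depending smoothly on $(\phi,\xi)$, and the operators in \eqref{eq:implicitmap} lose finitely many derivatives, $L$ extends to a $C^1$ map of the corresponding Banach manifolds, and its partial differential $dL_{(\alpha_0,\alpha_1)\,|0}$ in the $(\phi,\xi)$ variables is the bounded operator given by \eqref{eq:impL}.

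The heart of the argument is to show this partial differential is an isomorphism once its kernel is removed. For $(i)$, with $\alpha_0\alpha_1>0$, I would use the identity from the proof of Lemma~\ref{lemma:impfunc},
\[
\langle dL_{0|0}(\phi_0,\xi_0),\phi_1\rangle_\cH+\alpha_1\langle dL_{1|0}(\phi_0,\xi_0),\xi_1\rangle_\cG=\omega_\alpha(Y_{\zeta_1},\mathbf I Y_{\zeta_0})_{|p_I},
\]
which exhibits \eqref{eq:impL}, after weighting the $\LieG$-component by $\alpha_1$, as formally self-adjoint and $\pm$ semidefinite ($\omega_\alpha$ being compatible with $\pm\mathbf I$ when $\alpha_0\alpha_1>0$), with kernel exactly $\LieX_I=\mathfrak z$, i.e. $\{0\}\times\mathfrak z$. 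It is moreover an elliptic system in the Douglis--Nirenberg sense — the $\phi$-block $\alpha_0 P^*P$ is the fourth-order Lichnerowicz operator and the $\xi$-block $d_A^*d_A$ is second order — hence Fredholm of index zero on the Sobolev completions above; self-adjointness then identifies cokernel with kernel, so restricting domain and target to $\cK\times\mathfrak z^\perp$ and $C_0^\infty(X)\times\mathfrak z^\perp$ (orthogonal complements for $\langle\cdot,\cdot\rangle_\cG$, using that $dL_0$ already lands in $C_0^\infty(X)$ and $dL_1$ in $\mathfrak z^\perp$ by the Bianchi identity and covariant constancy of central sections) makes $dL_{|0}$ a Banach-space isomorphism. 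The Implicit Function Theorem then yields a $C^1$ map $(t,s)\mapsto(\phi(t,s),\xi(t,s))$ on a neighbourhood $U$ of $(\alpha_0,\alpha_1)$ with $L\bigl(t,s,\phi(t,s),\xi(t,s)\bigr)=0$ and $(\phi,\xi)(\alpha_0,\alpha_1)=0$; elliptic bootstrapping makes these data smooth, and $f(t,s)\defeq\rho(\phi(t,s),\xi(t,s))\in\tau_{p_I}(\cYt)\subset\cP$ is the asserted family of solutions of \eqref{eq:CYMeq2} with constants $(t,s)$ lying in the complexified orbit. Uniqueness modulo $\cX$ follows because the slice $\cK\times\mathfrak z^\perp$ is transverse to the $\cX_I$-orbit through $p_I$ (whose tangent space is the discarded $\mathfrak z$) and because $\rho$ intertwines the $\cX$-actions up to the finite stabiliser of $p_I$.

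For $(ii)$, with $\alpha_1=0$ and $\alpha_0\neq0$, the compatibility-with-$\omega_\alpha$ argument degenerates, so instead I would exploit the triangular form that \eqref{eq:impL} then takes: $dL_{0|0}(\phi,\xi)=\alpha_0 P^*P\phi$ depends on $\phi$ alone, while $dL_{1|0}(\phi,\xi)=d_A^*d_A\xi+d_A^*\!\left(i_{\eta_\phi}F_A\right)$. Here the initial datum is a cscK metric together with a Hermite--Yang--Mills reduction (the decoupled limit of \eqref{eq:CYMeq2}); $H^0(TX)=0$ forces $\ker(P^*P)=\ker P=0$ in $C_0^\infty(X)$, so $\alpha_0 P^*P$ is an isomorphism of $C_0^\infty(X)$, and $\LieX_I=\mathfrak z$ forces $\ker d_A=\mathfrak z$ (equivalently, $E^c$ simple), so $d_A^*d_A$ is an isomorphism of $\mathfrak z^\perp$. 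Solving first for $\phi$ and then for $\xi\in\mathfrak z^\perp$ shows $dL_{(\alpha_0,0)\,|0}$ is an isomorphism $C_0^\infty(X)\times\mathfrak z^\perp\to C_0^\infty(X)\times\mathfrak z^\perp$ of the completions, and the IFT and regularity conclude exactly as in $(i)$. The step I expect to be the main obstacle is the functional-analytic set-up of $(i)$: verifying that \eqref{eq:impL} is a genuine Douglis--Nirenberg elliptic system despite the off-diagonal, first-order-in-$\phi$ coupling $d_A^*(i_{\eta_\phi}F_A)$, and that the self-adjointness and semidefiniteness of the weighted operator persist after Sobolev completion, so that trivial kernel genuinely yields an isomorphism onto the correct closed complement; in $(ii)$ the analogous point is the purely algebraic verification that $H^0(TX)=0$ and $\LieX_I=\mathfrak z$ together kill the kernels of both blocks.
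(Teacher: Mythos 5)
Your proposal is correct and follows essentially the same route as the paper: Sobolev completions of $C_0^\infty(X)\times\LieG$, the linearisation \eqref{eq:impL} with kernel identified via Lemma~\ref{lemma:impfunc}, restriction to $\mathfrak z^\perp$ when $\mathfrak g$ is not semisimple, and the Implicit Function Theorem with elliptic bootstrapping. You are in fact more explicit than the paper on the surjectivity step (the Fredholm index-zero argument via self-adjointness of the $\alpha_1$-weighted operator, and the triangular structure in case (ii)), while the paper devotes most of its effort to the point you only sketch, namely verifying that $L$ actually maps $H^k\times W^{k-1}$ into $H^{k-4}\times W^{k-3}$ via the decomposition $f_1=s^{-1}g$ and a derivative count for $F_{A_{\phi,\xi}}$ and $S_{J_{\phi,\xi}}$.
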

\begin{proof}
Given a positive integer $k > 0$, let $C^k(X)$ and $\Omega^{0,k}(X,\ad E)$ be, respectively, the spaces of $C^k$
functions and sections of $X$ and $\ad E$ and denote by $H^k$ and $W^k$ their $L^2$ completions. Let
\begin{equation}\label{eq:implicitmap2}
L_{k}\colon U_{k} \subseteq H^k \times W^{k-1} \to H^{k-4} \times W^{k-3}
\end{equation}
be the map induced by $L$ \eqref{eq:implicitmap} in a neighbourhood $U_{k}$ of the origin. Since the map $L$ of \eqref{eq:implicitmap} is not explicit, we check that it is well defined. We take $k \gg 0$ such that $H^k$ is an algebra and all the equations below hold in a strong sense. Given $(\phi,\xi) \in U_k$ the automorphism $f_1 \in \Aut(E^c)$ involved in the definition of the map (see~\eqref{eq:imaginaryexponential})
$$
\rho(\phi,\xi) = f_1^*p
$$
has $k-2$ derivatives in $L^2$, due to the formula $\eqref{eq:vectorfieldyft3}$ and the Picard's Theorem on ordinary differential equations for Banach spaces (see~\cite[Theorem~3.1]{MB}). Recall from Remark~\ref{remark:ANcomputations} that we have a decomposition $f_1 = s^{-1} \cdot g$ where $s$ and $g$ have, respectively, $k-1$ and $k-2$ derivatives in $L^2$. Therefore, the equality
$$
F_{A_{\phi,\xi}} = F_{f_1^*A} = g^*F_{s \cdot A}
$$
implies that the terms $\Lambda F_{A_{\phi,\xi}}$ and $\Lambda^2(F_{A_{\phi,\xi}} \wedge F_{A_{\phi,\xi}})$ in \eqref{eq:implicitmap} are respectively in $W^{k-3}$ and $H^{k-3}$ (note that the symplectic form is fixed and so $\Lambda$ is a $C^{\infty}$ operator). On the other hand, if $\check{f}_1$ is the diffeomorphism on $X$ that $f_1$ covers, then
$$
S_{J_{\phi,\xi}} = S_{\check{f}_1^*J} = S_{\omega_\phi} \circ \check{f}_1,
$$
in \eqref{eq:implicitmap}, where $S_{\omega_\phi}$ denotes the scalar curvature of the $H^{k-2}$ metric provided by the complex structure $J$ and the $H^{k-2}$ symplectic form $\omega_\phi = \omega + dd^c \phi$. Therefore, the function $S_{J_{\phi,\xi}}$ is of class $H^{k-4}$. Hence, the previous computation shows that
$$
L_k(\phi,\xi) \subset H^{k-4} \times W^{k-3}
$$
for $k$ large enough, as required, provided that $(\phi,\xi) \in H^k \times W^{k-1}$.

We now proof the statement using a deformation argument. For simplicity we suppose $\mathfrak{z} = \{0\}$, i.e. $\mathfrak{g}$ is semi-simple, and we explain later how to adapt the argument. The linearization of $L_k$ at the origin in the two cases $i)$ and $ii)$ is given by \eqref{eq:impL} and it is invertible by hypothesis, provided that the induced operators $P^*P\colon H^k \to H^{k-4}$ and
$$
\Delta_A = d_A^* d_A\colon W^{k-1} \to W^{k-3}
$$
are self-adjoint elliptic operators. The result follows applying the Implicit Function Theorem in Banach spaces (see e.g.~\cite[p.~72,Theorem~3.10]{Au}). The regularity of the solutions follows, by local uniqueness, taking arbitrary large values
of $k$. If $\mathfrak{g}$ is not semi-simple the argument can be adapted by changing $W^k$ and $W^{k-2}$ in
\eqref{eq:implicitmap2} by the orthogonal complement $\mathfrak{z}^\perp$ of $\mathfrak{z}$ on each space and considering the map induced by \eqref{eq:implicitmap} and the orthogonal projection. The same argument holds since this restricted map has the same linearization as \eqref{eq:implicitmap} at the origin.
\end{proof}

As a straightforward consequence of the previous result we obtain,

\begin{theorem}\label{thm:ExistenceCYMeq2}
Let $(X,L)$ be a polarised complex projective manifold, $G^c$ be a complex reductive Lie group and $E^c$ be a holomorphic
$G^c$-bundle over $X$. If there
exists a cscK metric $\omega \in c_1(L)$, $X$ has finite automorphism group and $E^c$ is stable with respect to $L$
then, given a pair of positive
real constants $\alpha_0, \alpha_1 > 0$ with small ratio $0 < \frac{\alpha_1}{\alpha_0} \ll 1$, there exists a solution
$(\omega_\alpha,H_\alpha)$ to
\eqref{eq:CYMeq2} with these coupling constants and $\omega_\alpha \in c_1(L)$.
\end{theorem}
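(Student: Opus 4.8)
The plan is to obtain the theorem as a corollary of Proposition~\ref{thm:ExistenceCYMeq}(ii), by deforming away from the decoupled limit $\alpha_1=0$. First I would treat that limit case. Setting $\alpha_1=0$ in \eqref{eq:CYMeq2} one has $c=\alpha_0\hat S$ by \eqref{eq:constant-c}, so the system splits into the cscK equation $S_\omega=\hat S$ for the metric and the Hermite--Yang--Mills equation $\Lambda_\omega F_H=z$ for the reduction, with no coupling between the two unknowns. By hypothesis there is a cscK metric $\omega\in c_1(L)$; and since $E^c$ is stable with respect to $L$, the Hitchin--Kobayashi correspondence (Theorem~\ref{theorem:HKcorrespondence}) furnishes a unique $G$-reduction $H$ of $E^c$ whose Chern connection, computed with respect to this $\omega$, is Hermite--Yang--Mills. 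Hence $\kappa_0\defeq(\omega,H)\in\cKt$ solves \eqref{eq:CYMeq2} with coupling constants $(\alpha_0,0)$, for every $\alpha_0\neq 0$.

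Next I would check that the hypotheses of Proposition~\ref{thm:ExistenceCYMeq}(ii) are met at $\kappa_0$. Since $X$ has finite automorphism group, $H^0(TX)=\Lie\Aut(X,J)=0$. For the isotropy condition $\LieX_I=\mathfrak{z}$: stability of $E^c$ implies it is simple, so the holomorphic sections of $\ad E^c$ are exactly the centre $\mathfrak{z}^c\subset\mathfrak{g}^c$; combined with $\Lie\Aut(X,J)=0$ and the exact sequence $0\to H^0(\ad E^c)\to\Lie\Aut(E^c,I)\to\Lie\Aut(X,J)$ this forces $\Lie\Aut(E^c,I)=\mathfrak{z}^c$. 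The isotropy algebra $\LieX_I\subset\LieX$ of the marked point $p_I$ associated to $\kappa_0$ consists of the $G$-invariant holomorphic vector fields on $E^c$ covering Hamiltonian Killing fields of $X$; as the latter vanish, these reduce to the central vertical ones compatible with $H$, i.e.\ $\mathfrak{z}$. Thus $\LieX_I=\mathfrak{z}$.

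Then I would apply Proposition~\ref{thm:ExistenceCYMeq}(ii) at $\kappa_0$: one gets an open neighbourhood $U\subset\RR^2$ of $(\alpha_0,0)$ and a $C^1$ map assigning to each $(t,s)\in U$ a solution $(\omega_{t,s},H_{t,s})$ of \eqref{eq:CYMeq2} with coupling constants $(t,s)$, lying in the complexified orbit of $p_I$ and unique there modulo $\cX$. Since $U$ is open in $\RR^2$ it contains points $(t,s)$ with $s>0$ and $s/t$ as small as desired; for such a point $(\omega_{t,s},H_{t,s})$ is a solution with positive coupling constants of small ratio. The metric component of the deformation stays inside $\cK$, the Kähler metrics of the fixed class $c_1(L)$, so $\omega_{t,s}\in c_1(L)$. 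Finally, to reach an arbitrary prescribed pair $(\alpha_0,\alpha_1)$ with $\alpha_1/\alpha_0$ small I would invoke the scaling invariance of \eqref{eq:CYMeq2} under $(\alpha_0,\alpha_1)\mapsto(\lambda\alpha_0,\lambda\alpha_1)$, $\lambda>0$ (which only rescales $c$), so that it suffices to produce solutions for $\alpha_0$ fixed and $\alpha_1>0$ small, exactly as above.

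The hard part will not be the present argument but the analytic content already packaged into Proposition~\ref{thm:ExistenceCYMeq}(ii): the invertibility of the linearisation $dL_{|0}$ at the degenerate value $\alpha_1=0$ — where $H^0(TX)=0$ makes $P^*P$ invertible and simplicity of $E^c$ makes $\Delta_A$ invertible on $\mathfrak{z}^\perp$ — together with the implicit function theorem in suitable Sobolev completions. Within the present deduction, the two points requiring care are the isotropy computation $\LieX_I=\mathfrak{z}$ above and the translation of the object produced by the implicit function theorem — a solution in the complexified orbit modulo $\cX$ — into an honest pair $(\omega_\alpha,H_\alpha)\in\cKt$.
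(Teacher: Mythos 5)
Your proposal is correct and follows essentially the same route as the paper: start from the decoupled solution $(\omega,H)$ at $\alpha_1=0$ furnished by the cscK hypothesis and the Hitchin--Kobayashi correspondence, verify $\LieX_I=\mathfrak{z}$ using the finiteness of $\Aut(X,J)$ (so the projected Hamiltonian Killing field vanishes) together with stability/simplicity of $E^c$ (so the resulting vertical holomorphic field lies in $\mathfrak{z}^c$, hence in $\mathfrak{z}$ since it preserves $H$), and then invoke Proposition~\ref{thm:ExistenceCYMeq}(ii). The closing scaling remark is a harmless addition not present in the paper's proof.
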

\begin{proof}
Let $\omega \in c_1(L)$ be a cscK metric. By the Hitchin-Kobayashi correspondence for the HYM
equation \cite{D3,UY,RS}, $E^c$ admits a reduction $H$ to a maximal compact subgroup $G \subset G^c$ with
HYM Chern connection $A$. The pair $(\omega,H)$ is then a solution to the coupled equation \eqref{eq:CYMeq2} with $\alpha_0 \neq 0$, $\alpha_1 = 0$. Let $\cX$ be the extended gauge group \eqref{eq:Ext-Lie-groups} associated to the
symplectic form $\omega$ and the
reduction $H$. We claim that the
Lie algebra of the subgroup $\cX_I \subset \cX$ preserving the holomorphic structure in $E^c$ reduces to the centre
$\mathfrak{z}$ of the Lie algebra
of $G$. To see this, consider any $G$-invariant (real) holomorphic vector field $\zeta \in \LieX_I$ and let $\eta \in
\Omega^0(TX)$ be its projection
onto $X$. By definition of $\cX_I$ (see also \eqref{lemma:impfunc}), $\eta$ is a Hamiltonian Killing vector field that
defines a $1$-parameter group of isometries of $(X,\omega)$ and hence, by hypothesis, identically vanishes. The vector
field $\zeta$ is then an $A$-parallel vertical vector field and can be identified with a holomorphic $G^c$-invariant
vector field on $E^c$. Since $E^c$ is stable, the Lie algebra of holomorphic $G^c$-invariant
vector fields, canonically identified with $H^0(X,\ad E^c)$, is isomorphic to the centre $\mathfrak{z}^c$ of $G^c$.
Finally, since $\zeta$ preserves the reduction $H$, we conclude that $\zeta \in \mathfrak{z} \subset \mathfrak{z}^c$, as
claimed. In the situation considered, we are under the hypothesis of Proposition \ref{thm:ExistenceCYMeq} and the
statement holds.
\end{proof}

\begin{remark}
The hypothesis of Theorem \ref{thm:ExistenceCYMeq2} are satisfied in a large class of examples. In \secref{sec:example3} we will discuss the special case in which the polarized manifold $(X,L)$ satisfies $c_1(L) = \lambda \cdot c_1(X)$, for $\lambda \in \ZZ$.
\end{remark}

\begin{remark}
The statement of Theorem \ref{thm:ExistenceCYMeq2} is also true for arbitrary K\"ahler manifolds, not necessarily
algebraic, if we impose the additional condition of simplicity on $(E^c,I)$ (see~\cite[p.~261]{UY}).
\end{remark}

\section{Integral invariants}
\label{sec:ANIntegralinvariants}

Let $(E^c,I)$ be a holomorphic $G^c$-bundle over a compact complex manifold $(X,J)$. We fix a K\"{a}hler class
$[\omega]$ on $X$ and denote by $\cKt = \cK \times \cR$ the space of pairs $(\omega,H)$ consisting of a K\"{a}hler
metric $\omega \in [\omega]$ and a smooth $G$-reduction $H \in \cR$. We consider the coupled equations
\eqref{eq:CYMeq2} for elements in $\cKt$, with fixed positive coupling constants
$$
0 < \alpha_0,
\alpha_1 \in \RR.
$$
In this section we give an obstruction to the existence of solutions to \eqref{eq:CYMeq2} controlled by a character of the Lie algebra of $\Aut (E^c,I)$, the group of automorphisms of the holomorphic bundle over $X$. This generalizes previous constructions of A. Futaki in \cite{Ft0,Ft1}. To define the character we adapt the argument of J. P. Bourguignon in \cite{Bo}. To simplify some formulae we suppose $z = 0$ in the system \eqref{eq:CYMeq2}, as there is no significant difference with the general case.

To define the character we proceed as in \secref{subsec:KNcharacter}, so we define first a closed $1$-form $\sigma_I$ in $\cKt$. To do this we need a technical lemma that relates $\sigma_I$ with the moment map $\mu_\alpha$ whose zero locus correspond to the solutions to the coupled equations \eqref{eq:CYMeq2}. We will use this lemma to prove the analogue of Proposition~\ref{propo:oneformMabuchiclosed}, about suitable properties of the $1$-form necessary for the definition of the invariant. For a first reading one can skip Lemma~\ref{lemma:sigmaImugamma} and the proof of Proposition~\ref{propo:sigmaIclosed}.

Let us first define the $1$-form. Fix $\kappa = (\omega,H) \in \cKt$. As in \secref{sec:ANgeneral-framework}, we identify the tangent space
$$
T_\kappa \cKt \cong C^{\infty}_0(X) \times \imag \LieG,
$$
where $C^{\infty}_0(X)$ are smooth functions with zero integral with respect to $\vol_\omega$ and $\LieG$ is the gauge group of the $G$-reduction $H$. Recall from~\eqref{eq:ANkappat} that given $(\dot{\phi},\dot{H}) \in T_\kappa \cKt$, with the previous identification, we consider it as the vector given by the curve
\begin{equation}\label{eq:ANkappat2}
\kappa_t = (\omega_t,H_t) \defeq (\omega_{t \dot{\phi}},(e^{t\dot{H}})^*H) \subset \cKt,
\end{equation}
where
$$
\omega_{t \dot{\phi}} = \omega + t dd^c \dot{\phi}
$$
and $e$ denotes the exponential map in the complex gauge group $\cG^c$ of the smooth $G^c$-bundle $E^c$. Given $\dot{\kappa} = (\dot{\phi},\dot{H}) \in T_\kappa \cKt$, we define the $1$-form $\sigma_I$ at $\kappa \in \cKt$ as
\begin{equation}\label{eq:oneformsigmaI}
\sigma_I(\dot{\phi},\dot{H}) = - \int_X \big{(}\dot{\phi}(\alpha_0 S_{\omega} + \alpha_1 \Lambda_{\omega}^2(F_H \wedge
F_H) - c) + 2\alpha_1(\imag \dot{H},\Lambda_{\omega}F_H)\big{)} \frac{\omega^n}{n!},
\end{equation}
where $c \in \RR$ is as in \eqref{eq:CYMeq2}. The relation of the $1$-form \eqref{eq:oneformsigmaI} with the moment map of Section \secref{sec:Ceqcoupled-equations} is given by the following result, that we will use systematically in the following.
\begin{lemma}\label{lemma:sigmaImugamma}
Fix $\kappa \in \cKt$ and consider $(p_I,\cP,\mathbf{I},\omega_\alpha,\cX,\mu_\alpha)$, the associated mK-Hamiltonian space, where the $\cX$-equivariant moment map $\mu_\alpha$ is determined by the coupling constants $\alpha_0$, $\alpha_1$. Then, given any curve $\kappa_t$ in $\cKt$ with initial condition $\kappa$, we have
$$
\sigma_I(\dot{\kappa}_t) = \langle \mu_{\alpha}(f_t^{-1}\cdot p), f_t^*(Iy_t)\rangle,
$$
where $f_t \subset \Aut(E^c)$ is the horizontal lift of $\kappa_t$ defined in \eqref{eq:vectorfieldyft}.
\end{lemma}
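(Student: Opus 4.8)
\textbf{Plan of proof for Lemma~\ref{lemma:sigmaImugamma}.}

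The plan is to relate the right-hand side $\langle \mu_\alpha(f_t^{-1}\cdot p), f_t^*(Iy_t)\rangle$ directly to the explicit formula \eqref{eq:thm-muX} for $\mu_\alpha$, evaluated at the point $p_t = \tau_p(\omega_t,H_t,f_t) = f_t^{-1}\cdot p \in \cP$, paired against $\zeta_t = f_t^*(Iy_t)$. First I would recall from Proposition~\ref{propo:ANlift} and Remark~\ref{remark:ANcomputations} the structure of the lift: writing $H_t = s_t^*H$ and $\check f_t$ for the Moser diffeomorphism of $\omega_t$, we have $f_t = s_t^{-1}g_t$ with $g_t$ preserving $H$, and $\zeta_t$ is given by \eqref{eq:vectorfieldzetat}. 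The key observation is that, by $\cX$-equivariance of $\mu_\alpha$ and of the pairing, one may move the gauge transformation $g_t$ over: evaluating $\mu_\alpha$ at $p_t$ against $\zeta_t$ is the same as evaluating $\mu_\alpha$ at the point $(J_t, s_t\cdot A)$ — where $J_t = \check f_t^*J$, i.e. $\omega_t$ is the pushforward metric read back through $\check f_t$, and $s_t\cdot A$ is the $s_t$-transformed connection, whose curvature $F_{s_t\cdot A}$ is gauge-equivalent to $F_{H_t}$ — against the $\LieH$-part $y_{\dot\phi_t,\omega_t}$ together with the $\LieG$-part $-\imag\pi_1 y_{s_t}$. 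Here I use that $\theta_A\zeta_t$, in the splitting determined by $A$, is precisely $g_t^*(-\imag\pi_1 y_{s_t})$ and that $\pr(\zeta_t) = y_{\dot\phi_t\circ\check f_t}$.

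Next I would substitute into \eqref{eq:thm-muX}. The first term $4\alpha_1\int_X(\theta_A\zeta_t \wedge \Lambda_\omega F_A)\vol_\omega$ becomes, after pulling back by $g_t$ (which is an isometry of the relevant structures since it preserves $H$ and covers the identity on the base up to $\check f_t$) and then performing the change of variables $\check f_t$, an integral over $X$ with measure $\vol_{\omega_t}$ of $-\big(\imag\pi_1 y_{s_t}, \Lambda_{\omega_t}F_{H_t}\big)$ up to the factor $4\alpha_1$; recognizing $2\pi_1 y_{s_t}$ as $\dot H_t$ via \eqref{eq:ANHdot}, and noting $z=0$ has been assumed, this matches the term $-\int_X 2\alpha_1(\imag\dot H_t, \Lambda_{\omega_t}F_{H_t})\tfrac{\omega_t^n}{n!}$ in \eqref{eq:oneformsigmaI} evaluated along $\kappa_t$. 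Similarly, the second term of \eqref{eq:thm-muX}, namely $-\int_X\phi(\alpha_0 S_J + \alpha_1\Lambda^2_\omega(F_A\wedge F_A) - c_z)\vol_\omega$ with $\phi = \dot\phi_t\circ\check f_t$, transforms under the change of variables $\check f_t$ (using $\check f_t^*\omega_t = \omega$, $S_{J_t} = S_{\omega_t}\circ\check f_t$, and the naturality of $\Lambda^2(F\wedge F)$) into exactly $-\int_X \dot\phi_t(\alpha_0 S_{\omega_t} + \alpha_1\Lambda^2_{\omega_t}(F_{H_t}\wedge F_{H_t}) - c)\tfrac{\omega_t^n}{n!}$. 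Adding the two pieces reproduces $\sigma_I(\dot\kappa_t)$ as defined in \eqref{eq:oneformsigmaI}.

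The main obstacle I expect is bookkeeping the gauge transformation $g_t$ and the diffeomorphism $\check f_t$ consistently: one must be careful that the splitting $\LieX\cong\LieG\oplus\LieH$ used to state \eqref{eq:thm-muX} is the one determined by the connection $A$ at the \emph{marked} point $p$, not at $p_t$, so that the identifications $\theta_A\zeta_t = g_t^*(-\imag\pi_1 y_{s_t})$ and $\pr(\zeta_t) = y_{\dot\phi_t\circ\check f_t}$ (both from \eqref{eq:vectorfieldzetat}) are applied before transporting the integrals back to $\kappa_t$. A secondary technical point is checking that $g_t$ acts by isometries on the pairings $\langle\cdot,\cdot\rangle_\cG$ and $\langle\cdot,\cdot\rangle_\cH$ entering \eqref{eq:pairingscX}, which follows since $\langle\cdot,\cdot\rangle_\cG$ uses the fixed $\ad$-invariant metric on $\mathfrak g$ and $g_t$ preserves $H_0$, while $\langle\cdot,\cdot\rangle_\cH$ is $L^2$ with respect to $\vol_\omega$ and the change of variables $\check f_t$ accounts for the discrepancy between $\vol_\omega$ and $\vol_{\omega_t}$. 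Once these identifications are in place, the computation is a routine substitution, so the lemma follows.
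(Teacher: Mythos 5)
Your proposal is correct and is essentially the paper's own computation run in the opposite direction: the paper starts from the definition \eqref{eq:oneformsigmaI} of $\sigma_I(\dot\kappa_t)$, substitutes $\dot H_t = 2s_t^*(\pi_1 y_{s_t})$ and $F_{H_t}=s_t^*(F_{s_tA})$, changes variables by $\check f_t$ using the $\cG^c$-invariance of the pairing, and recognizes the result as $\langle\mu_\alpha(J_t,A_t),\zeta_t\rangle$ via \eqref{eq:vectorfieldzetat} — exactly the ingredients you list. One small caveat: the splitting in \eqref{eq:thm-muX} evaluated at $p_t=(J_t,A_t)$ is the one determined by $A_t=f_t^{-1}\cdot A$ (not by the marked connection $A$, as your closing remark suggests), which is consistent with the decomposition of $\zeta_t$ given in \eqref{eq:vectorfieldzetat} that you correctly invoke, so the identification $\theta_{A_t}\zeta_t=g_t^*(-\imag\pi_1 y_{s_t})$ goes through and the argument is sound.
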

\begin{proof}
Let $\kappa = (\omega,H)$ and consider a curve $\kappa_t = (\omega_t,H_t) = (\omega_{\phi_t},s_t^*H)$ in $\cKt$ with
initial condition $\kappa$, $s_t = e^{\xi_t}$ and $\xi_t \in \imag \LieG$. Consider the horizontal lift
$f_t$ of this curve through $f_0 = \Id$ defined by \eqref{eq:vectorfieldyft}. Recall that
$$
(\check{f}_t^*\omega_t,f^*_tH_t) = (\omega,H)
$$
and also that $f_t = s_t^{-1}g_t$, where $g_t$ preserves the fixed $G$-reduction $H$ on $E^c$, for all $t$. Define the following curves:
$$
\omega_t = \omega_{\phi_t}, \quad A_t = f_t^{-1} \cdot A, \quad J_t = \check{f}_t^*J, \quad \textrm{and} \quad \eta_t,
$$
where the latter is the Hamiltonian vector field of the function $\dot{\phi}_t \circ \check{f}_t$ on $X$ with respect to the fixed symplectic form $\omega$. Recall from Remark \ref{remark:ANcomputations} that
$$
\dot{H}_t  = 2 s_t^*(\pi_1 y_{s_t}) \quad \in \imag \LieG_t,
$$
where $y_{s_t} = \dot{s}_ts_t^{-1}$ and $\pi_1$ is, as in \secref{sec:ANgeneral-framework}, the projection onto $\imag
\LieG$. Combined with the formula $F_{H_t} = s_t^*(F_{s_t A})$ (see~\cite[Section~1.1]{D3}), we have
\begin{align*}
\sigma_I(\dot{\phi}_t,\dot{H}_t) & = - \int_X \dot{\phi}_t\(\alpha_0 S_{\omega_t} + \alpha_1
\Lambda_{\omega_t}^2(F^2_{s_tA}) - c\) + 4\alpha_1\(\imag \pi_1 y_{s_t},\Lambda_{\omega_t}F_{s_tA}\)
\frac{\omega_t^n}{n!}\\
& = - \int_X \dot{\phi}_t \circ \check{f}_t\(\alpha_0 S_{J_t} + \alpha_1 \Lambda_{\omega}^2F^2_{A_t} - c\) +
4\alpha_1 \(\imag \pi_1g_t^*y_{s_t},\Lambda_{\omega}F_{A_t}\) \frac{\omega^n}{n!}\\
& = \langle \mu_{\alpha}(J_t,A_t), - \imag \pi_1 g_t^*(y_{s_t}) + \theta^{\perp}_{A_t}(\eta_{\dot{\phi}_t
\circ \check{f}_t}) \rangle\\
& = \langle \mu_\alpha(f_t^{-1} \cdot p), f_t^*(Iy_t) \rangle,
\end{align*}
where $\hat{c} \in \RR$ is defined by \eqref{eq:constant-c-hat}. To do the changes of variable we have used the fact that the pairing $\Omega^p(\ad E^c) \times \Omega^q(\ad E^c) \to \Omega^{p+q} \otimes \CC$ induced by \eqref{eq:pairinggc} is $\cG^c$-invariant. The last equality follows from the identity \eqref{eq:vectorfieldzetat}.
\end{proof}

Note that $\cKt$ has a right action by pull-back of the group of holomorphic automorphisms $\Aut(E^c,I)$. Then, the $1$-form defined in \eqref{eq:oneformsigmaI} satisfies the following.
\begin{proposition}\label{propo:sigmaIclosed}
$\sigma_I$ is $\Aut(E^c,I)$-invariant and closed.
\end{proposition}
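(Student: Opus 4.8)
The plan is to prove both assertions by the method already used for the Mabuchi $1$-form in Proposition~\ref{propo:oneformMabuchiclosed} (and, in the finite-dimensional model, for the form $\sigma_z$): transfer $\sigma_I$ to the moment-map side through Lemma~\ref{lemma:sigmaImugamma}, and then exploit the moment-map identity together with the K\"ahler compatibility on $\cP$.

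\emph{Invariance.} The group $\Aut(E^c,I)$ acts on $\cKt$ by $(\omega,H)\mapsto(\check f^*\omega,f^*H)$, where $\check f$ is the biholomorphism of $(X,J)$ covered by $f$, and correspondingly on each $T_\kappa\cKt$. Every ingredient of the defining formula~\eqref{eq:oneformsigmaI} --- the scalar curvature $S_\omega$, the contracted curvatures $\Lambda_\omega^2(F_H\wedge F_H)$ and $\Lambda_\omega F_H$, the volume form $\omega^n/n!$, and the $G^c$-invariant pairing on $\ad E^c$ --- is natural under pullback by the holomorphic automorphism $f$, using $F_{f^*H}=f^*F_H$ and that $f$ preserves $I$. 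Hence the integrand computing $\sigma_I$ at the transported data equals the $\check f$-pullback of the integrand at $(\omega,H)$, and $\Aut(E^c,I)$-invariance follows from the change-of-variable theorem. Equivalently, one reads invariance off Lemma~\ref{lemma:sigmaImugamma}, using the $\cX$-equivariance of $\mu_\alpha$ and the naturality of the horizontal lift.

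\emph{Closedness.} Fix $\kappa=(\omega,H)$ and take $\dot\kappa_j=(\dot\phi_j,\dot H_j)\in T_\kappa\cKt$ for $j=1,2$, regarded as vector fields on $\cKt$ via the curves~\eqref{eq:ANkappat2} in the $\cK$-factor and via the infinitesimal $\cG^c$-action in the $\cR$-factor. I would first note that $[\dot\kappa_1,\dot\kappa_2]_\kappa=0$: in the $\cK$-factor the two flows commute, and in the $\cR$-factor the bracket of the infinitesimal actions of $\dot H_1,\dot H_2\in\imag\LieG$ is the infinitesimal action of $[\dot H_1,\dot H_2]\in\LieG$, which lies in the Lie algebra of the $\cG^c$-stabiliser $\cG$ of $H$ and hence vanishes at $H$. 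By~\eqref{eq:sigmazclosed1} it then suffices to show that $\dot\kappa_1(\sigma_I(\dot\kappa_2))$ is symmetric under $1\leftrightarrow2$. I would compute it by differentiating along the curve $\kappa_t$ determined by $\dot\kappa_1$: by (the change of variables underlying) Lemma~\ref{lemma:sigmaImugamma}, $\sigma_I(\dot\kappa_2)|_{\kappa_t}=\langle\mu_\alpha(p_t),\zeta_t\rangle$, where $f_t\subset\Aut_0(E^c)$ is the horizontal lift of $\kappa_t$, $p_t=f_t^*p_I$, and $\zeta_t\in\LieX$ is the $f_t$-transport of $\zeta_{\dot\kappa_2}$ from~\eqref{eq:zetaphixi} (so $\zeta_0=\zeta_{\dot\kappa_2}$), while $\dot p_0=\mathbf I Y_{\zeta_{\dot\kappa_1}}$ at $p_I$ by Proposition~\ref{propo:ANlift} and its Corollary. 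Differentiating, and using the moment-map identity for $\mu_\alpha$ (Proposition~\ref{prop:ham-act-ext-grp}) together with the compatibility of $\omega_\alpha$ with $\mathbf I$ for $\alpha_0,\alpha_1>0$ (Proposition~\ref{prop:CeqintegrableI} and the discussion following~\eqref{eq:complexstructureI}), the derivative splits as
\[
 \tfrac{d}{dt}\Big|_{0}\,\sigma_I(\dot\kappa_2)\big|_{\kappa_t}
 =\omega_\alpha\big(Y_{\zeta_{\dot\kappa_2}},\mathbf I Y_{\zeta_{\dot\kappa_1}}\big)_{|p_I}
 +\big\langle\mu_\alpha(p_I),\,\tfrac{d}{dt}\big|_0\zeta_t\big\rangle ,
\]
where the first summand equals the metric $g_{\omega_\alpha}(Y_{\zeta_{\dot\kappa_1}},Y_{\zeta_{\dot\kappa_2}})_{|p_I}$ and is therefore symmetric.

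The remaining task --- which I expect to be the main obstacle --- is to compute $\tfrac{d}{dt}|_0\zeta_t$ explicitly, using the formulas~\eqref{eq:vectorfieldyft2} and~\eqref{eq:vectorfieldzetat} of Remark~\ref{remark:ANcomputations} for the lift and for $\zeta_t$, and the expression~\eqref{eq:infinit-action-connections} for the infinitesimal action on $\cA$, and then to verify that $\langle\mu_\alpha(p_I),\tfrac{d}{dt}|_0\zeta_t\rangle$ is also symmetric under $1\leftrightarrow 2$; this is the exact analogue of the term $-\langle\mu_\cH(J),\omega(y_{\dot\phi_2},Jy_{\dot\phi_1})\rangle$ appearing in the cscK case. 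The difficulty is that $\cX$ carries no bi-invariant pairing and the splitting $\LieX\cong\LieH\times\LieG$ used throughout is connection-dependent (cf.~\eqref{eq:pairingscX}), so the terms produced by differentiating the lift $f_t$ and the connection $A_t$ along the $\dot\kappa_1$-flow do not vanish individually: they must be re-assembled --- using the Lie bracket in $\LieX$ and the linearity of $\zeta\mapsto Y_\zeta$ --- into something manifestly symmetric, while keeping the $\cH$- and $\cG$-contributions under control at the same time. Once this bookkeeping is done, $\dot\kappa_1(\sigma_I(\dot\kappa_2))$ is symmetric and hence $d\sigma_I=0$.
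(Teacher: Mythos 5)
Your strategy coincides with the paper's: prove invariance by naturality of all the ingredients of~\eqref{eq:oneformsigmaI} under pullback by a holomorphic automorphism, and prove closedness by noting that $[\dot\kappa_1,\dot\kappa_2]$ vanishes at $\kappa$, reducing via~\eqref{eq:sigmazclosed1} to the symmetry of $\dot\kappa_1(\sigma_I(\dot\kappa_2))$, rewriting this derivative through Lemma~\ref{lemma:sigmaImugamma} as $\omega_\alpha(Y_{\zeta_{\dot\kappa_2}},\mathbf{I}Y_{\zeta_{\dot\kappa_1}})+\langle\mu_\alpha(p_I),\tfrac{d}{dt}|_0\zeta_t\rangle$, and observing that the first summand is symmetric by K\"ahler compatibility. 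The invariance argument and this reduction are fine. However, the proof is not complete: the step you defer as ``the remaining task'' --- computing $\tfrac{d}{dt}|_0\zeta_t$ and checking that its pairing against $\mu_\alpha(p_I)$ is symmetric --- is exactly where the content of the proposition sits (it is the analogue of the term $-\langle\mu_\cH(J),\omega(y_{\dot\phi_2},Jy_{\dot\phi_1})\rangle$ in the cscK case, as you say), and announcing that the bookkeeping ``must be re-assembled into something manifestly symmetric'' is not the same as doing it. Nothing you have written guarantees a priori that the reassembly succeeds.

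For the record, the computation does close up, and in a cleaner way than your remarks about the absence of a bi-invariant pairing might suggest. Writing $\dot\kappa_{2,1}=\tfrac{d}{dt}|_0\bigl(g_t^*(-\imag\dot H_2)+\theta^\perp_{g_t^{-1}s_t\cdot A}(\eta_{\dot\phi_2\circ\check f_t})\bigr)$ and expanding with~\eqref{eq:infinit-action-connections}, one finds
\begin{equation*}
\dot{\kappa}_{2,1} \;=\; F_A(J\eta_{\dot{\phi}_1},\eta_{\dot{\phi}_2}) + d_A(\imag\dot{H}_1)(J\eta_{\dot{\phi}_2}) +
d_A(\imag\dot{H}_2)(J\eta_{\dot{\phi}_1})
- \theta^{\perp}_A\bigl(\eta_{\omega(\eta_{\dot{\phi}_2},J \eta_{\dot{\phi}_1})}\bigr),
\end{equation*}
and each piece is symmetric under $1\leftrightarrow 2$ for a concrete reason: $F_A$ has type $(1,1)$ at the marked point $p_I=(J,A)\in\cP$, so $F_A(JX,Y)=F_A(JY,X)$; the two covariant-derivative terms are symmetric as a pair; and $\omega(\eta_{\dot\phi_2},J\eta_{\dot\phi_1})=g_J(\eta_{\dot\phi_2},\eta_{\dot\phi_1})$ is symmetric. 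Since $\zeta\mapsto\langle\mu_\alpha(p_I),\zeta\rangle$ is linear, the second summand is symmetric and $d\sigma_I=0$ follows. You should carry out this expansion explicitly rather than leave it as an expectation; note in particular that the integrability hypothesis $(J,A)\in\cP$ (i.e.\ $F_A^{0,2}=0$) is genuinely used in the symmetry of the curvature term, a point your sketch does not surface.
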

\begin{proof}
To prove the invariance note that for any pair $(\omega,H) \in \cKt$, any K\"{a}hler potential $\phi$ and any $f
\in \Aut(E^c,I)$ we have $F_{f^*H} = f^*F_H$ and
$$
\check{f}^*\omega_{\phi} =
(\check{f}^*\omega)_{\check{f}^*\phi}.
$$
Then, the $\Aut(E^c,I)$-invariance follows, after a change of variable in \eqref{eq:oneformsigmaI}, from the
$\cG^c$-invariance of the pairing $\Omega^p(\ad E^c) \times \Omega^q(\ad E^c) \to \Omega^{p+q} \otimes \CC$ induced by
\eqref{eq:pairinggc}. To prove the second part of the statement, fix $\kappa \in \cKt$ and consider
$$
(p_I,\cP,\mathbf{I},\omega_\alpha,\cX,\mu_\alpha),
$$
the associated mK-Hamiltonian space, where the $\cX$-equivariant moment map $\mu_\alpha$ is determined
by the coupling constants $\alpha_0$, $\alpha_1$. We will prove that $\sigma_I$ is closed at the arbitrary fixed point $\kappa$. Given
$$
\dot{\kappa}_j = (\dot{\phi}_j,\dot{H}_j) \in C_0^{\infty}(X) \times \imag \LieG, \qquad j = 1,2,
$$
regarded as a vector field on $\cKt$, the Lie bracket $[\dot{\kappa}_1,\dot{\kappa}_2]$ vanishes at $\kappa$ and so,
combined with \eqref{eq:sigmazclosed1}, it is enough to prove that $\dot{\kappa}_1(\sigma_I (\dot{\kappa}_2))$ is
symmetric in $j = 1,2$ at $\kappa$. Let
$$
\kappa_t = (\omega_{t\dot{\phi_1}},s_t^*H),
$$
where $s_t = e^{t \dot{H}_1}$. Then, using the same notation as in the previous lemma, we obtain
\begin{align*}
\dot{\kappa}_1(\sigma_I (\dot{\kappa}_2)) & = \frac{d}{dt}_{|t = 0} \sigma_I (\dot{\kappa}_2)_{|\kappa_t}\\
& = \frac{d}{dt}_{|t = 0} \langle \mu_\alpha(g_t^{-1}s_t \cdot p_I), g_t^*(-\imag\dot{H}_2) +
\theta^{\perp}_{g_t^{-1}s_t A}(\eta_{\dot{\phi}_2 \circ \check{f}_t}) \rangle =\\
& = \omega_\alpha(Y_2,\mathbf{I}Y_1) + \langle \mu_\alpha(p_I), \dot{\kappa}_{2,1}\rangle,
\end{align*}
where $\omega_\alpha$ is a symplectic form on $\cP$ compatible with $\mathbf{I}$, $Y_j$ is the infinitesimal action of
the vector field $-\imag \dot{H}_j + \theta^{\perp}_A(\eta_{\dot{\phi}_j}) \in \LieX$ at $p_I \in \cP$ and
\begin{align*}
\dot{\kappa}_{2,1} & = \frac{d}{dt}_{|t = 0} g_t^*(-\imag\dot{H}_2) + \theta^{\perp}_{g_t^{-1}s_t
A}(\eta_{\dot{\phi}_2 \circ \check{f}_t})\\
& = \theta_A[\theta^{\perp}_A(-J\eta_{\dot{\phi}_1}),-\imag\dot{H}_2 + \theta^{\perp}_A(\eta_{\dot{\phi}_2})] -
[\imag\dot{H}_1,\theta^{\perp}_A(J\eta_{\dot{\phi}_2})] - \theta^{\perp}_A(\eta_{\omega(\eta_{\dot{\phi}_2},J
\eta_{\dot{\phi}_1})})\\
& = F_A(J\eta_{\dot{\phi}_1},\eta_{\dot{\phi}_2}) + d_A(\imag\dot{H}_1)(J\eta_{\dot{\phi}_2}) +
d_A(\imag\dot{H}_2)(J\eta_{\dot{\phi}_1})
- \theta^{\perp}_A(\eta_{\omega(\eta_{\dot{\phi}_2},J \eta_{\dot{\phi}_1})}).
\end{align*}
Hence, $\dot{\kappa}_1(\sigma_p (\dot{\kappa}_2)) = \dot{\kappa}_2(\sigma_p (\dot{\kappa}_1))$ and so $d \sigma_I = 0$
holds.
\end{proof}

Due to Proposition \ref{propo:sigmaIclosed}, the holomorphic structure $I$ and the K\"{a}hler class $[\omega]$ determine, as in \secref{subsec:KNcharacter}, a complex character
$$
\cF_{\alpha,I,[\omega]} \colon \Lie \Aut(E^c,I) \to \CC \colon \zeta \to \imag \sigma_I(\zeta^l) + \sigma_I(I \zeta)^l,
$$
where $\zeta^l$ denotes the infinitesimal action of $\zeta \in \Lie \Aut(E^c,I)$ on $\cKt$. When there is no possible confusion we write simply $\cF_\alpha$ for the previous character. To give an explicit expression for $\cF_\alpha$, recall from \secref{subsec:cscKFutaki} that, given a K\"{a}hler form $\omega \in
\cK$, any holomorphic vector field $y \in \Lie \Aut(X,J)$ can be written as
$$
\eta = \eta_{\phi_1} + J \eta_{\phi_2} + \beta,
$$
where $\eta_{\phi_j}$ is the $\omega$-Hamiltonian associated to $\phi_j$, $j = 1,2$, and $\beta$ is the dual of an
harmonic $1$-form with respect to the K\"{a}hler structure $(X,J,\omega)$. Then the infinitesimal action of $\zeta \in
\Lie\Aut(E^c,I)$ at $(\omega,H) \in \cKt$ is
$$
\zeta^l = (\phi_2,2\pi_1\theta_H(\zeta))
$$
if $\zeta$ covers $y$ and the value of the character $\cF_\alpha$ at $\zeta$ can be written as
\begin{equation}\label{eq:alphafutakismooth}
\cF_{\alpha}(\zeta) = - \int_X \big{(}\phi(\alpha_0 S_{\omega} + \alpha_1 \Lambda_{\omega}^2(F_H \wedge F_H) - c) -
4\alpha_1(\theta_H\zeta,\Lambda_{\omega}F_H\big{)} \frac{\omega^n}{n!},
\end{equation}
where $\phi = \phi_1 + \imag \phi_2$. The following result is the analogue of Proposition~\ref{propo:futakifinite}.
\begin{proposition}\label{prop:futakiobstruction}
The map $\cF_{\alpha}\colon \Lie \Aut(E^c,I) \to \CC$ is independent of the $(\omega,H) \in \cKt$. It defines a
character of $\Lie \Aut(E^c,I)$ that vanishes if $\cKt$ contains a solution to the coupled equations
\eqref{eq:CYMeq2}.
\end{proposition}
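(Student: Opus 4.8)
The plan is to argue exactly as in the finite‑dimensional model of Proposition~\ref{propo:futakifinite} and in the cscK case of~\secref{subsec:cscKFutaki}: the entire content of the proposition is formal once one knows that the $1$-form $\sigma_I$ of~\eqref{eq:oneformsigmaI} is closed and $\Aut(E^c,I)$-invariant, which is Proposition~\ref{propo:sigmaIclosed}. Write $\zeta^l$ for the vector field on $\cKt$ generated by the infinitesimal pull-back action of $\zeta\in\Lie\Aut(E^c,I)$. Since this action restricts to the identity component $\Aut_0(E^c,I)$, whose elements preserve the K\"ahler class $[\omega]$, it does act on $\cKt=\cK\times\cR$, and as $\Lie\Aut(E^c,I)$ is a complex Lie algebra the element $I\zeta=\imag\zeta$ lies in it too, so $(I\zeta)^l$ is defined.

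First I would prove independence of the base point. By the $\Aut(E^c,I)$-invariance of $\sigma_I$, the flow of $\zeta^l$ preserves $\sigma_I$, hence $L_{\zeta^l}\sigma_I=0$; combined with $d\sigma_I=0$, Cartan's formula gives $d(i_{\zeta^l}\sigma_I)=L_{\zeta^l}\sigma_I-i_{\zeta^l}d\sigma_I=0$, so the function $\sigma_I(\zeta^l)$ is constant on the connected space $\cKt$. The same applies to $(I\zeta)^l$, so $\cF_{\alpha}(\zeta)=\imag\,\sigma_I(\zeta^l)+\sigma_I\big((I\zeta)^l\big)$ does not depend on the chosen point of $\cKt$, i.e.\ on $(\omega,H)$; in particular it is computed by the explicit expression~\eqref{eq:alphafutakismooth} for any choice of $(\omega,H)$.

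Next, the character property. Applying the standard exterior‑derivative identity~\eqref{eq:sigmazclosed1} to the vector fields $\zeta_1^l,\zeta_2^l$ on $\cKt$ and using $d\sigma_I=0$ together with the constancy of $\sigma_I(\zeta_j^l)$ just established, one gets $\sigma_I\big([\zeta_1^l,\zeta_2^l]\big)=0$ for all $\zeta_1,\zeta_2\in\Lie\Aut(E^c,I)$, and likewise $\sigma_I\big([(\imag\zeta_1)^l,\zeta_2^l]\big)=0$. Since $\zeta\mapsto\zeta^l$ is a Lie algebra homomorphism and $\imag[\zeta_1,\zeta_2]=[\imag\zeta_1,\zeta_2]$, this yields $\cF_{\alpha}([\zeta_1,\zeta_2])=\imag\,\sigma_I\big([\zeta_1^l,\zeta_2^l]\big)+\sigma_I\big([(\imag\zeta_1)^l,\zeta_2^l]\big)=0$, exactly as in the proof of Proposition~\ref{propo:futakifinite}.

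Finally, vanishing on solutions: if $(\omega,H)\in\cKt$ satisfies~\eqref{eq:CYMeq2} then, with $z=0$, both $\alpha_0 S_{\omega}+\alpha_1\Lambda_{\omega}^2(F_H\wedge F_H)-c$ and $\Lambda_{\omega}F_H$ vanish identically on $X$, so the integrand of~\eqref{eq:alphafutakismooth} is zero and $\cF_{\alpha}(\zeta)=0$ for every $\zeta$; by the base‑point independence above, $\cF_{\alpha}\equiv0$. The only genuinely delicate issue is the infinite‑dimensional bookkeeping behind Proposition~\ref{propo:sigmaIclosed} and Lemma~\ref{lemma:sigmaImugamma} — that the pull‑back action of $\Aut_0(E^c,I)$ on $\cKt$ is smooth with generating fields $\zeta^l$, and that Cartan's formula and the constancy argument are legitimate in this setting — but this work has essentially been carried out already, so no new obstacle appears at the level of the present proposition.
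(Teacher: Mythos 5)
Your proposal is correct and follows exactly the route the paper intends: the paper's own proof is the one-line remark that the argument is formally identical to that of Proposition~\ref{propo:futakifinite}, and you have simply written out that formal argument (constancy of $\sigma_I(\zeta^l)$ via Cartan's formula from Proposition~\ref{propo:sigmaIclosed}, the character property from \eqref{eq:sigmazclosed1}, and vanishing by evaluating \eqref{eq:alphafutakismooth} at a solution). No discrepancy with the paper.
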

\begin{proof}
Formally the proof is the same as the proof of Proposition~\ref{propo:futakifinite}.
\end{proof}
When $\alpha_1 = 0$ the expression \eqref{eq:alphafutakismooth} is, up to multiplicative constant factor, the \emph{Futaki invariant} of the
complex manifold $(X,J)$ and the K\"{a}hler class $[\omega]$. When $\alpha_0 = 0$ the character $\cF_\alpha$
generalizes an invariant due also to A. Futaki (see Theorem $1.1$ in \cite{Ft1}), who considered the smaller Lie algebra of $G^c$-invariant holomorphic vector fields on $(E^c,I)$ that project onto a vector field that vanishes somewhere on $X$ (i.e. onto a holomorphic complex Hamiltonian vector field).

We finish this section with an example of how the invariant $\cF_\alpha$ controls the existence of solutions to the \emph{coupled equations} for small ratio of the coupling constants.

\begin{example}
Let $(X,L)$ be a polarized manifold that admits an extremal, non cscK, metric $\omega$ in $c_1(L)$ (e.g.
$\CC\mathbb{P}^2$ blown up in one point with
the anticanonical polarization \cite{Ca}). Recall that this means that the scalar curvature $S_\omega$ is the
Hamiltonian function of a real
holomorphic Killing vector field $\eta$. The classical Futaki invariant of the K\"{a}hler class $c_1(L)$ (take
$\alpha_0 = 1$ and
$\alpha_1 = 0$ in \eqref{eq:alphafutakismooth}) evaluated on $\eta$ is
$$
\cF(\eta) = - \int_X(S_\omega - \hat{S})^2\vol_\omega < 0.
$$
Let $E^c$ be the holomorphic $\CC^*$-principal bundle of frames of $L$. The holomorphic vector field $\eta$ can be
trivially lifted to a holomorphic
vector field on $E^c$ and it follows by definition that the associated character $\cF_\alpha$ given by \eqref{eq:alphafutakismooth} evaluated at $\eta$ is negative for sufficiently small values of $\alpha_1/\alpha_0 > 0$. Hence, the triple $(X,L,E^c)$ does not admit a solution to the coupled system \eqref{eq:CYMeq2} for this values of the coupling constants. Given an arbitrary holomorphic principal bundle $E^c$ over
$X$, the obstruction to lift the holomorphic vector field $\eta$ on $X$ to a $G^c$-invariant one in $E^c$ lies in $H^1(X,\ad E^c)$ (cf. equation \eqref{eq:infinit-action-connections}). When this obstruction vanishes the same argument can be applied.
\end{example}

\section{The integral of the moment map}
\label{sec:ANcoupledintmmap}

In this section we show that there exists a suitable integral of the moment map for the coupled equations that extends
the Donaldson functional for the HYM equation and the Mabuchi K-Energy for the cscK equation. We determine a sufficient condition for its convexity and the existence of lower bounds in terms of the existence of smooth solutions to certain
partial differential equation on $\cKt$ that generalizes the geodesic equation on $\cK$. When this condition holds, we
generalize the obstruction defined in \secref{sec:ANIntegralinvariants}. Throughout this section we will consider the coupled equations \eqref{eq:CYMeq2} with fixed positive coupling constants $0 < \alpha_0, \alpha_1 \in \RR$.

As in the previous section, let $(E^c,I)$ be a holomorphic $G^c$-bundle over a compact complex manifold $(X,J)$. We fix
a K\"{a}hler class $[\omega]$ on $X$ and denote by $\cKt = \cK \times \cR$ the space of pairs $(\omega,H)$ consisting
of a K\"{a}hler metric $\omega \in [\omega]$ and a smooth $G$-reduction $H \in \cR$. Since the space $\cKt$ is
contractible, the $1$-form $\sigma_I$ given by \eqref{eq:oneformsigmaI} integrates, i.e there exists an integral of the moment map
\begin{equation}\label{eq:ANintegralmmap}
\cM_{\kappa}: \cKt \to \RR,
\end{equation}
that satisfies $d \cM_{\kappa} = \sigma_I$ and $\cM_{\kappa}(\kappa) = 0$ for $\kappa = (\omega,H) \in \cKt$. This implies that the
critical points of $\cM_{\kappa}$ are the solutions to the coupled equations in $\cKt$. Given any curve
$\kappa_t$ in $\cKt$, we can write
\begin{equation}\label{eq:ANintegralmmap}
\cM_{\kappa}(\kappa_t) = \cM_{\kappa}(\kappa_0) + \int_0^t \sigma_I(\dot{\kappa}_s) \wedge ds.
\end{equation}
Combined with Lemma~\ref{lemma:sigmaImugamma} this implies the following.
\begin{proposition}\label{propo:ANvirtualoneparameter}
The functional $\cM_{\kappa}$ given by \eqref{eq:ANintegralmmap} is convex along smooth solutions $\kappa_t = (\omega_{\phi_t},H_t)$ of the system
\begin{equation}
\label{eq:virtualoneparameter}
\left. \begin{array}{l}
\frac{1}{2}\ddot{H}_t - d_t\dot{H}_t(Jy_{\dot{\phi}_t,\omega_t}) - \imag
F_t(y_{\dot{\phi}_t,\omega_t},Jy_{\dot{\phi}_t,\omega_t}) = 0\\
\\
\ddot{\phi}_t \; - \; |d\dot{\phi}_t|^2_{\omega_t}= 0
\end{array}\right \},
\end{equation}
where $d_t$ and $F_t$ denote, respectively, the covariant derivative and the curvature of the Chern connection determined by $H_t$.
\end{proposition}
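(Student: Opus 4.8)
The plan is to exploit the characterization of the complexified-orbit construction from \secref{sec:ANgeneral-framework}, and in particular Lemma~\ref{lemma:sigmaImugamma}, which expresses $\sigma_I$ along a horizontally lifted curve as a pairing of the moment map $\mu_\alpha$ with the tangent vector field $\zeta_t = f_t^*(Iy_t)$. Recall from \eqref{eq:ANintegralmmap} that $\frac{d}{dt}\cM_\kappa(\kappa_t) = \sigma_I(\dot\kappa_t)$, so by Lemma~\ref{lemma:sigmaImugamma} we have $\frac{d}{dt}\cM_\kappa(\kappa_t) = \langle \mu_\alpha(p_t),\zeta_t\rangle$, where $p_t = f_t^{-1}\cdot p_I = \tau_{p_I}(\kappa_t,f_t) \in \cP$ is the image of the horizontal lift under $\tau_{p_I}$ \eqref{eq:taup}. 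Differentiating once more, $\frac{d^2}{dt^2}\cM_\kappa(\kappa_t) = \langle d\mu_\alpha(\dot p_t),\zeta_t\rangle + \langle \mu_\alpha(p_t),\dot\zeta_t\rangle$. The first step is therefore to show that the two equations in the system \eqref{eq:virtualoneparameter} are precisely the statement that $\dot\zeta_t = 0$ along $\kappa_t$; granting that, the second term vanishes and we are left with $\frac{d^2}{dt^2}\cM_\kappa(\kappa_t) = \langle d\mu_\alpha(\dot p_t),\zeta_t\rangle$.

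The second step is to identify $\dot p_t$. By Proposition~\ref{propo:ANlift}, $\dot p_t = \mathbf{I}Y_{\zeta_t}$, where $Y_{\zeta_t}$ is the infinitesimal action of $\zeta_t$ at $p_t$. Substituting, $\frac{d^2}{dt^2}\cM_\kappa(\kappa_t) = \langle d\mu_\alpha(\mathbf{I}Y_{\zeta_t}),\zeta_t\rangle$. Now I invoke the moment map identity $\langle d\mu_\alpha, \zeta\rangle = i_{Y_\zeta}\omega_\alpha$: applying it with the vector $\mathbf{I}Y_{\zeta_t}$ gives $\langle d\mu_\alpha(\mathbf{I}Y_{\zeta_t}),\zeta_t\rangle = \omega_\alpha(Y_{\zeta_t},\mathbf{I}Y_{\zeta_t})$. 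Since the coupling constants are positive, $\omega_\alpha$ is compatible with $\mathbf{I}$ (Proposition~\ref{prop:CeqintegrableI} together with the discussion after \eqref{eq:complexstructureI}), so $\omega_\alpha(Y_{\zeta_t},\mathbf{I}Y_{\zeta_t}) = \|Y_{\zeta_t}\|^2 \geq 0$, the norm being taken with respect to the K\"ahler metric $\omega_\alpha(\cdot,\mathbf{I}\cdot)$ on $\cP$. This gives convexity. (This is exactly parallel to the computation in the proof of Lemma~\ref{lemma:KempfNess}, where $\frac{d^2}{dt^2}\cM_z([e^{\imag t\zeta}]) = \omega(Y_\zeta,JY_\zeta) = \|Y_\zeta\|^2$, and to the corresponding convexity statements for the Donaldson functional and the Mabuchi K-energy recalled in \secref{sec:HYM} and \secref{subsec:cscKHK}.)

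The main obstacle I anticipate is the first step: verifying carefully that the system \eqref{eq:virtualoneparameter} is equivalent to $\frac{d}{dt}\zeta_t = 0$ with $\zeta_t = f_t^*(Iy_t)$ as in \eqref{eq:derivativept2}. This requires unwinding the definition \eqref{eq:vectorfieldyft} of the horizontal lift $y_t = -\tfrac12\dot H_t + \theta_t^\perp(\check y_t)$, using the expression \eqref{eq:vectorfieldzetat} for $\zeta_t$ in terms of the curve $s_t$ and the $H_0$-preserving curve $g_t$, and pushing the derivative condition down to the base and the fiber directions separately. The base component should reproduce the geodesic equation $\ddot\phi_t - |d\dot\phi_t|^2_{\omega_t} = 0$ exactly as in \eqref{eq:geodesiccscK} and \eqref{eq:derivativeJtcscK} (this is the content of the second line of \eqref{eq:virtualoneparameter}, and was already flagged in Proposition~\ref{propo:ANvirtualoneparameter}'s surrounding discussion as ``the K\"ahler metric $\omega_t$ is a geodesic in $\cK$''). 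The vertical component, after differentiating $g_t^*(-\imag\pi_1 y_{s_t})$ and using the structure equation for the Chern connection curvature $F_t$ of $H_t = s_t^*H$, should yield the first line $\tfrac12\ddot H_t - d_t\dot H_t(Jy_{\dot\phi_t,\omega_t}) - \imag F_t(y_{\dot\phi_t,\omega_t},Jy_{\dot\phi_t,\omega_t}) = 0$; here one uses that $Jy_{\dot\phi_t,\omega_t} = \check y_t$ as recorded in \secref{subsec:cscKorbits} and Example~\ref{example:cscKgeodesic}. Once this identification is in place, the convexity argument is the soft moment-map computation sketched above, and requires nothing beyond the compatibility of $\omega_\alpha$ with $\mathbf{I}$ for $\alpha_0,\alpha_1 > 0$.
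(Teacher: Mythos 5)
Your proposal is correct and follows essentially the same route as the paper's proof: the second derivative of $\cM_\kappa$ is computed via Lemma~\ref{lemma:sigmaImugamma} as $\omega_\alpha(Y_{\zeta_t},\mathbf{I}Y_{\zeta_t}) + \langle\mu_\alpha(f_t^{-1}\cdot p),\tfrac{d}{dt}\zeta_t\rangle$, the first term is nonnegative by K\"ahler compatibility, and the system \eqref{eq:virtualoneparameter} is identified with $\tfrac{d}{dt}\zeta_t=0$. The one step you defer---the explicit verification that the horizontal and vertical components of $\zeta_t' = [y_t,Iy_t]+I\dot y_t$ reproduce the two equations of the system---is exactly the computation the paper carries out, and it proceeds along the lines you sketch.
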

\begin{proof}
We fix $\kappa = (\omega,H) \in \cKt$ and consider the mK-Hamiltonian space \eqref{eq:ANmarkedHamiltonian} and the integral of the moment map $\cM_\kappa$ determined by $\kappa$ . Given a smooth curve $\kappa_t = (\omega_t,H_t) \subset \cKt$ with $\omega_t = \omega_{\phi_t}$ let $f_t \subset \Aut(E^c)$ be the associated lift defined by \eqref{eq:vectorfieldyft}. Denote $\zeta_t = f_t^*(Iy_t) \in \LieX$ as in Proposition~\ref{propo:ANlift}. Then, applying \eqref{eq:ANintegralmmap} and Lemma \ref{lemma:sigmaImugamma} we have
\begin{equation}\label{eq:convexityintmmap}
\begin{split}
\frac{d^2}{dt^2} \cM_{\kappa}(\kappa_t) & = \frac{d}{dt} \sigma_I(\dot{\kappa}_t)\\
& = \frac{d}{dt} \langle \mu_\alpha(f_t^{-1}\cdot p), \zeta_t\rangle\\
& = \omega_\alpha(Y_{\zeta_t},\mathbf{I}Y_{\zeta_t}) + \langle \mu_\alpha(f_t^{-1}\cdot
p),\frac{d}{dt}\zeta_t\rangle,
\end{split}
\end{equation}
where $\omega_\alpha(Y_{\zeta_t},\mathbf{I}Y_{\zeta_t}) = \|Y_{\zeta_t}\|^2_\alpha$ is the squared norm of the
infinitesimal action of $\zeta_t$ in the K\"{a}hler manifold $(\cP,\mathbf{I},\omega_\alpha)$ associated to
$(\omega,H)$. Therefore, $\cM_\kappa$ is convex along the solutions to $\frac{d}{dt}\zeta_t = 0$ and strictly convex if
$Y_{\zeta_t}$ do not vanishes. Note that if $Y_{\zeta_t} = 0$, then $f_t$ preserves the complex structure $I$ for all
$t$ and so the curve $\kappa_t = (f_t)_*\kappa_0$ is linked by a smooth curve of holomorphic automorphisms on
$\Aut(E^c,I)$. We now check that the system \eqref{eq:virtualoneparameter} is equivalent to the vanishing of
$\frac{d}{dt}\zeta_t$. Note first that the latter condition is equivalent to the vanishing of $\zeta_t' =
(f_t)_*\frac{d}{dt}\zeta_t$, that we can write as
\begin{align*}
\zeta_t' & = [y_t,Iy_t] + I\dot{y}_t\\
& = [-\frac{1}{2}\dot{H}_t + \theta_t^{\perp}(\check{y}_t),-\frac{\imag}{2}\dot{H}_t +
\theta_t^{\perp}(J\check{y}_t)] \\
& -\frac{\imag}{2}\ddot{H}_t + (\frac{d}{dt}\theta_t^{\perp})(J\check{y}_t) +
\theta_t^{\perp}(\frac{d}{dt}J\check{y}_t)\\
& = [\theta_t^{\perp}(\check{y}_t),\theta_t^{\perp}(J\check{y}_t)] -
\frac{\imag}{2} d_t\dot{H}_t(\check{y}_t) + \frac{1}{2}
d_t\dot{H}_t(J\check{y}_t)\\
& -\frac{\imag}{2}\ddot{H}_t + (\frac{d}{dt}\theta_t^{\perp})(J\check{y}_t) +
\theta_t^{\perp}(\frac{d}{dt}J\check{y}_t),
\end{align*}
where $\theta_t^{\perp}$ denotes the horizontal lift of vector fields with respect to the Chern connection determined by $H_t$. Denote respectively by $\eta_t$ and $y'_t$ the Hamiltonian vector field associated to the smooth function $\dot{\phi}_t \circ \check{f}_t$ with respect to $\omega$, and $\phi'_t = \ddot{\phi}_t - |d\dot{\phi}_t|^2_{\omega_t}$ with respect to $\omega_t$. Then we have that
$$
\frac{d}{dt}J\check{y}_t = \frac{d}{dt}(\check{f}_t)_*(\eta_t) =
- [\check{y}_t,J\check{y}_t] + y'_t,
$$
due to the equality $J \check{y}_t = y_{\dot{\phi}_t,\omega_t}$, that combined with
$$
(\frac{d}{dt}\theta_t^{\perp})(J\check{y}_t) = -
\partial_t(\dot{H}_t)(J\check{y}_t),
$$
gives
\begin{align*}
\zeta'_t & = \theta_t[\theta_t^{\perp}(\check{y}_t),\theta_t^{\perp}(J\check{y}_t)] +
(\dbar_t - \partial_t)\dot{H}_t(J\check{y}_t) - \frac{\imag}{2}\ddot{H}_t +
\theta_t^{\perp}(y'_t)\\
& = F_t(Jy_{\dot{\phi}_t,\omega_t},y_{\dot{\phi}_t,\omega_t}) + \imag d_t\dot{H}_t(Jy_{\dot{\phi}_t,\omega_t})
- \frac{\imag}{2}\ddot{H}_t + \theta_t^{\perp}(y'_t).
\end{align*}
The statement follows by splitting $\zeta'_t$ into its $A_t$ vertical and horizontal part for each value of $t$.
\end{proof}

The existence of smooth solutions to \eqref{eq:virtualoneparameter} has important consequences for the
existence and uniqueness problem for the coupled equations that we describe in the following proposition. Recall from
Example \ref{example:Ceqgeodesic} that any real holomorphic vector field in $\Lie \cX_I$ gives a smooth solution to
\eqref{eq:virtualoneparameter} defined for all time, that we call a \emph{trivial solution}. As mentioned in \secref{sec:ANgeneral-framework}, it is very likely that a short time existence result as the one for the geodesics in $\cK$ (see~\cite[Remark~3.3]{Mab1} holds for \eqref{eq:virtualoneparameter}. This would imply that the first of the following obstructions extends
the one coming from the character $\cF_\alpha$ defined in the previous section. Long time existence of smooth solutions is not
even true in general for the geodesic equation in $\cK$ (see~\cite{ChT}).

\begin{proposition}\label{propo:ANuniqueness}
\hspace{0.5cm}
\begin{enumerate}
  \item If there exists a solution $\kappa \in \cKt$ to the coupled equations  then for any smooth solution
      $\kappa_t \subset \cKt$ to \eqref{eq:virtualoneparameter}, with $t \in [0,s[$, $s \in \RR \cup \{\infty\}$
and initial condition $\kappa_0 = \kappa$, the following inequality holds
      $$
      \lim_{t \to s} \sigma_I(\dot{\kappa}_t) \geq 0,
      $$
      with equality only if $\kappa_t$ is a trivial solution.
  \item If any two points in $\cKt$ are joined by a smooth solution to \eqref{eq:virtualoneparameter}, then there
      exists at most one solution to the coupled equations modulo the action of $\Aut(E^c,I)$. Under the same
      hypothesis, if there exists one solution, then the integral of the moment map is bounded from below.
\end{enumerate}
\end{proposition}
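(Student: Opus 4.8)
The plan is to run the Kempf--Ness convexity argument of \secref{subsec:KNproof} in this infinite-dimensional setting, using Proposition~\ref{propo:ANvirtualoneparameter} and Lemma~\ref{lemma:sigmaImugamma} as the only nontrivial inputs. The key preliminary remark is that a solution $\kappa=(\omega,H)$ of \eqref{eq:CYMeq2} (recall $z=0$ in this section) is a zero of the $1$-form $\sigma_I$: by the explicit formula \eqref{eq:oneformsigmaI}, both summands of $\sigma_I|_\kappa$ vanish exactly when $\Lambda_\omega F_H=0$ and $\alpha_0 S_\omega+\alpha_1\Lambda_\omega^2(F_H\wedge F_H)=c$. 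Equivalently, a solution is a critical point of the integral $\cM_\kappa$ of the moment map. Now let $\kappa_t$ be a smooth solution of \eqref{eq:virtualoneparameter} with $\kappa_0=\kappa$; by Proposition~\ref{propo:ANvirtualoneparameter}, $\frac{d^2}{dt^2}\cM_\kappa(\kappa_t)=\|Y_{\zeta_t}\|_\alpha^2\geq 0$, so the function $t\mapsto\frac{d}{dt}\cM_\kappa(\kappa_t)=\sigma_I(\dot\kappa_t)$ is nondecreasing and vanishes at $t=0$.

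For part $(1)$: since $\sigma_I(\dot\kappa_t)$ is nondecreasing with $\sigma_I(\dot\kappa_0)=0$, we obtain $\sigma_I(\dot\kappa_t)\geq 0$ for every $t\in[0,s[$, hence $\lim_{t\to s}\sigma_I(\dot\kappa_t)\geq 0$ (a monotone function has a limit in $\RR\cup\{+\infty\}$). If the limit equals $0$, then $\sigma_I(\dot\kappa_t)\equiv 0$ on $[0,s[$, so its derivative $\|Y_{\zeta_t}\|_\alpha^2$ vanishes identically; therefore $Y_{\zeta_t}\equiv 0$, and as recorded in the proof of Proposition~\ref{propo:ANvirtualoneparameter} this forces the lift $f_t$ to preserve the complex structure $I$. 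Together with $\frac{d}{dt}\zeta_t=0$ (valid because $\kappa_t$ solves \eqref{eq:virtualoneparameter}), $\zeta_t$ is the constant holomorphic element $\zeta_0\in\LieX_I$, and comparison with Example~\ref{example:Ceqgeodesic} identifies $\kappa_t$ with the trivial solution generated by $\zeta_0$.

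For part $(2)$: assume any two points of $\cKt$ are joined by a smooth solution of \eqref{eq:virtualoneparameter}. If $\kappa$ and $\kappa'$ both solve \eqref{eq:CYMeq2}, choose such a curve $\kappa_t$, $t\in[0,1]$, with $\kappa_0=\kappa$ and $\kappa_1=\kappa'$. By the preliminary remark applied at both endpoints, $\sigma_I(\dot\kappa_0)=\sigma_I(\dot\kappa_1)=0$; since $\sigma_I(\dot\kappa_t)$ is nondecreasing on $[0,1]$ it must vanish identically, hence $Y_{\zeta_t}\equiv 0$ and, as before, $f_t$ preserves $I$. Thus $\kappa'=\kappa_1$ lies in the $\Aut(E^c,I)$-orbit of $\kappa$, which proves uniqueness modulo $\Aut(E^c,I)$. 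For the lower bound, fix a solution $\kappa$ and an arbitrary $\kappa'\in\cKt$, joined by a smooth solution $\kappa_t$, $t\in[0,1]$, with $\kappa_0=\kappa$, $\kappa_1=\kappa'$. By Proposition~\ref{propo:ANvirtualoneparameter}, $\cM_\kappa(\kappa_t)$ is convex with $\frac{d}{dt}\big|_{t=0}\cM_\kappa(\kappa_t)=\sigma_I(\dot\kappa_0)=0$, so $\cM_\kappa(\kappa')=\cM_\kappa(\kappa_1)\geq\cM_\kappa(\kappa_0)=0$. As $\kappa'$ is arbitrary this gives $\cM_\kappa\geq 0$, and since any two integrals of the moment map differ by an additive constant, the integral of the moment map is bounded from below.

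The argument is essentially formal once Proposition~\ref{propo:ANvirtualoneparameter} is available; the one delicate point, and the main obstacle I expect, is the rigidity statement, i.e. passing from $Y_{\zeta_t}\equiv 0$ to the conclusion that $\kappa_t$ is a \emph{trivial} solution in the precise sense of Example~\ref{example:Ceqgeodesic}. Here one must check carefully that $Y_{\zeta_t}\equiv 0$ together with $\frac{d}{dt}\zeta_t=0$ forces $\zeta_t$ to be a constant element of $\LieX_I$ and that the resulting curve $\kappa_t=(f_t)_*\kappa_0$ coincides with the flow-generated curve produced in Example~\ref{example:Ceqgeodesic}; for this one uses the $\cX$-equivariant identification \eqref{eq:pairsinvacs2} of $\cP$ with $G^c$-invariant integrable complex structures on $E^c$ to see that $f_t^*I$ is constant in $t$.
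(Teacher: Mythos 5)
Your proof is correct and follows essentially the same route as the paper's: both rest on the observation that a solution of \eqref{eq:CYMeq2} is a zero of $\sigma_I$ together with the convexity identity $\frac{d}{dt}\sigma_I(\dot\kappa_t)=\|Y_{\zeta_t}\|^2_\alpha\geq 0$ from Proposition~\ref{propo:ANvirtualoneparameter}, and both derive the equality/uniqueness cases from the vanishing of $Y_{\zeta_t}$ forcing $\zeta_t=\zeta_0\in\LieX_I$ and $\kappa_t$ to be a trivial solution. Your version is in fact slightly more careful than the paper's at the one point it flags (passing from $\lim_{t\to s}\sigma_I(\dot\kappa_t)=0$ to $\sigma_I(\dot\kappa_t)\equiv 0$ via monotonicity), which the paper elides.
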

\begin{proof}
We use the same notation as in Proposition \ref{propo:ANvirtualoneparameter}. To prove $1)$ let $\kappa_t$, $t \in
[0,s[$ with $s \in \RR \cup \infty$ be a smooth solution to \eqref{eq:virtualoneparameter}. Then, it follows from
\eqref{eq:convexityintmmap} that
$$
\frac{d}{dt} \sigma_I(\dot{\kappa}_t) = \|Y_{\zeta_0}\|^2_\gamma \geq 0,
$$
where $\zeta_0 = \zeta_t$ for all $t$, since \ref{propo:ANvirtualoneparameter} is satisfied. If $\kappa_0 = \kappa$, a
solution to the coupled equations, then $\sigma_I(\dot{\kappa}_0) = 0$ and so $\sigma_I(\dot{\kappa}_t) \geq 0$. If
$\sigma_I(\dot{\kappa}_t) = 0$ for all $t$, then $\zeta_0$ is holomorphic and $\kappa_t$ is therefore a trivial
solution. Thus, we have an equality
$$
\lim_{t \to s} \sigma_I(\dot{\kappa}_t) = \sigma_I(\dot{\kappa}_0) = \cF_\alpha(\zeta_0).
$$
To prove $2)$, fix a solution $\kappa_0 \in \cKt$ to the coupled equations. Then, if $\kappa_1$ is another solution, it
is linked to $\kappa_0$, by hypothesis, by a smooth solution $\kappa_t$ to \ref{propo:ANvirtualoneparameter}, with $t
\in [0,1]$. It follows then from \eqref{eq:convexityintmmap} and $1)$ that $\kappa_t$ is a trivial solution since
$\sigma_I(\dot{\kappa}_1) = \sigma_I(\dot{\kappa}_0)$. The lower bound of $\cM_\kappa$ for any $\kappa \in \cKt$
follows from $1)$ and \eqref{eq:ANintegralmmap}.
\end{proof}

\section{Complex surfaces}
\label{sec:ANcomplexsurfaces}

In this section we restrict to the case in which the base $X$ is a compact complex surface. We use the integral of
the moment map constructed in the previous section to prove some results about the uniqueness and existence problem for the coupled equations \eqref{eq:CYMeq2} with fixed positive coupling constants $0 < \alpha_0, \alpha_1 \in \RR$. In this section we suppose that $\mathfrak{z} \ni z = 0$ in \eqref{eq:CYMeq2}.

Let $(E^c,I)$ be a holomorphic $G^c$-bundle over a compact complex surface $(X,J)$. We fix a K\"{a}hler class
$[\omega]$ on $X$ and suppose that $(\omega,H) \in \cKt$ is a solution to \eqref{eq:CYMeq2}. Consider the functional on the space $\cK$, of K\"{a}hler metrics with fixed K\"{a}hler class $[\omega]$, given by
$$
\cM_\kappa(\cdot,H)\colon \cK \to \RR,
$$
where $\cM_\kappa$ is the integral of the moment  map defined in \eqref{eq:ANintegralmmap} for a choice of $\kappa \in
\cKt$. Without loss of generality we suppose that $\kappa = (\omega,H)$ and denote by $\cM_H$ the functional above.
Then, along a curve $\omega_t = \omega_{\phi_t}$ in $\cK$, we can write
\begin{equation}
\label{eq:ANcMH}
\begin{split}
\cM_H(\omega_t) & = - \int_0^t \int_X \big{(}\dot{\phi}_s(\alpha_0 S_{\omega_s} + \alpha_1 \Lambda_{\omega_s}^2(F_H
\wedge F_H) - c)\big{)}\frac{\omega_s^2}{2!} \wedge ds\\
& = - \int_0^t \int_X \dot{\phi}_s(\alpha_0 S_{\omega_s} - c)\frac{\omega_s^2}{2!} - 2 \alpha_1 \int_0^t \int_X
\dot{\phi}_s (F_H \wedge F_H) \wedge ds.
\end{split}
\end{equation}
The crucial fact for our discussion concerning complex surfaces is the following: since $(\omega,H)$ is a solution to
\eqref{eq:CYMeq2} with $\mathfrak{z} \ni z = 0$, the Chern connection $A_H$ is Anti-Self-Dual and so
$$
- 2(F_H \wedge F_H) = |F_H|^2 \cdot \omega^2,
$$
that is, a non-negative real $4$-form, where the point-wise norm $|F_H|^2$ is taken with respect to $\omega$.
Straightforward computations as in Lemma~\ref{lemma:sigmaImugamma} show then that
\begin{equation}\label{eq:ANconvexitycM}
\frac{d^2}{dt^2} \cM_H(\omega_t) = 2\alpha_0\|Y_{\eta_t}\|^2 - \int_X (\ddot{\phi}_t -
|d\dot{\phi}_t|^2_{\omega_t})(\alpha_0 S_{\omega_t} - c)\frac{\omega_t^2}{2!} + \alpha_1 \int_X \ddot{\phi}_t |F_H|^2
\cdot \omega^2,
\end{equation}
where $\|Y_{\eta_t}\|^2$ denotes the squared norm of the infinitesimal action of the time dependent Hamiltonian vector field $\eta_t = \eta_{\dot{\phi}_t \circ f_t} \in \LieH$ in the space of complex structures $\cJi$ compatible with $\omega$, where $f_t$ is the Moser curve of $\omega_t$ as in \eqref{eq:sigmaJmuH}. As an immediate consequence we obtain
\begin{lemma}
If any point in $\cK$ is linked to $\omega$ by a smooth geodesic, then $\cM_H$ is bounded from below. For any other
metric $\omega' \in \cK$ such that $(\omega',H)$ is a solution to \eqref{eq:CYMeq2}, there exists a holomorphic
transformation $f \in \Aut (X,J)$ such that $f^*\omega' = \omega$. Moreover, if the connection $A_H$ is not flat, then
$\omega$ is the unique metric in $\cK$ with this property.
\end{lemma}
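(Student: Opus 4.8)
The plan is to read everything off the second variation formula \eqref{eq:ANconvexitycM}, which is the point‑wise analogue for surfaces of Proposition~\ref{propo:ANvirtualoneparameter}: on a K\"ahler surface with $z=0$ the anti‑self‑duality of $A_H$ makes the bundle term in \eqref{eq:ANconvexitycM} manifestly signed, so \emph{ordinary} geodesics in $\cK$ (with $H$ frozen) already detect convexity of $\cM_H:=\cM_\kappa(\cdot,H)$. Along a smooth geodesic $\omega_t=\omega_{\phi_t}$ the geodesic equation \eqref{eq:geodesiccscK} gives $\ddot\phi_t=|d\dot\phi_t|^2_{\omega_t}\ge 0$, so the middle term of \eqref{eq:ANconvexitycM} vanishes and
\begin{equation*}
\frac{d^2}{dt^2}\cM_H(\omega_t)=2\alpha_0\,\|Y_{\eta_t}\|^2+\alpha_1\int_X \ddot\phi_t\,|F_H|^2\,\omega^2\;\ge\;0,
\end{equation*}
since $\alpha_0,\alpha_1>0$ and the second integrand is the fixed non‑negative $4$–form $-2(F_H\wedge F_H)$. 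Thus $\cM_H$ is convex along smooth geodesics. Since $(\omega,H)$ solves \eqref{eq:CYMeq2}, the differential of $\cM_H$ at $\omega$, namely $\dot\phi\mapsto-\int_X\dot\phi(\alpha_0 S_\omega+\alpha_1\Lambda^2_\omega(F_H\wedge F_H)-c)\frac{\omega^n}{n!}$, vanishes, and $\cM_H(\omega)=\cM_\kappa(\omega,H)=0$ by the normalisation $\kappa=(\omega,H)$. Hence, taking for any $\omega'\in\cK$ a smooth geodesic from $\omega$ to $\omega'$, the function $g(t):=\cM_H(\omega_t)$ is convex with $g(0)=g'(0)=0$, so $g\ge 0$; this proves $\cM_H\ge 0$, i.e.\ the boundedness from below.

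Next, suppose $(\omega',H)$ is \emph{also} a solution of \eqref{eq:CYMeq2} and join $\omega$ to $\omega'$ by a smooth geodesic $\omega_t$, $t\in[0,1]$. Both endpoints are critical points of $\cM_H$, so $g'(0)=g'(1)=0$, and convexity of $g$ forces $g'\equiv 0$, hence $\frac{d^2}{dt^2}\cM_H(\omega_t)\equiv 0$. As the two summands above are non‑negative this yields, for all $t$,
\begin{equation*}
Y_{\eta_t}=0\qquad\text{and}\qquad\int_X\ddot\phi_t\,|F_H|^2\,\omega^2=0 .
\end{equation*}
From $Y_{\eta_t}=0$ and \eqref{eq:derivativeJtcscK} we get $\dot J_t=\mathbf{J}Y_{\eta_t}=0$, so the complex structures $J_t=f_t^*J$ obtained from the Moser lift $f_t\in\Diff_0(X)$ of $\omega_t$ are constant equal to $J_0=J$; thus each $f_t\in\Aut(X,J)$ and $f_t^*\omega_t=\omega$ gives $\omega_t=(f_t^{-1})^*\omega$. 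In particular $f:=f_1\in\Aut(X,J)$ satisfies $f^*\omega'=\omega$, which is the second assertion.

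Now assume $A_H$ is not flat. Then $\Phi:=-2(F_H\wedge F_H)=|F_H|^2_\omega\,\omega^2$ is a non‑negative $4$–form which is not identically zero, so $U:=\{x\in X:\ F_H(x)\neq 0\}$ is a non‑empty open set; from $\ddot\phi_t\ge 0$ and $\int_X\ddot\phi_t\,\Phi=0$ we obtain $\ddot\phi_t=0$ on $U$, and then the geodesic equation forces $d\dot\phi_t=0$ on $U$. On the other hand, because $J_t\equiv J$, the Hamiltonian vector field $\eta_t=y_{\dot\phi_t\circ f_t,\,\omega}$ satisfies both $\mathcal{L}_{\eta_t}\omega=0$ and $\mathcal{L}_{\eta_t}J=Y_{\eta_t}=0$, i.e.\ it is a Killing vector field of the metric $g_\omega=\omega(\cdot,J\cdot)$; moreover $\iota_{\eta_t}\omega=d(\dot\phi_t\circ f_t)=f_t^*(d\dot\phi_t)$ vanishes on the non‑empty open set $f_t^{-1}(U)$, hence $\eta_t=0$ on $f_t^{-1}(U)$. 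By unique continuation for Killing fields on the connected manifold $X$, $\eta_t\equiv 0$, so $d(\dot\phi_t\circ f_t)\equiv 0$, whence $\dot\phi_t\circ f_t$ and therefore $\dot\phi_t$ is constant; thus $\dot\omega_t=dd^c\dot\phi_t=0$, $\omega_t\equiv\omega$, and $\omega'=\omega$. This gives the strengthened uniqueness.

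I expect the genuinely delicate point to be this last step — upgrading the pointwise vanishing $\ddot\phi_t\,|F_H|^2\equiv 0$ to the global statement $\omega_t\equiv\omega$. The convexity/critical‑point bookkeeping in the first two paragraphs is routine (it mirrors the Mabuchi $K$–energy and Proposition~\ref{propo:ANuniqueness}), but the improvement to honest uniqueness uses crucially that $\{F_H\neq 0\}$ is open — which is why the hypothesis $z=0$, forcing $A_H$ anti‑self‑dual on a surface, is built in — and that, \emph{along a smooth geodesic}, the infinitesimal motion of $\omega_t$ is generated by a Hamiltonian Killing field of the fixed K\"ahler structure, for which the $1$–jet unique continuation applies. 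Smoothness of the geodesic (not merely the $C^{1,1}$ regularity of \cite{ChT}) is what allows the Moser lift $f_t$ and the point‑wise use of \eqref{eq:geodesiccscK}; one should check that these two ingredients survive for the weak geodesics used later in \secref{sec:ANcomplexsurfaces}.
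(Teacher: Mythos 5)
Your proposal is correct and follows essentially the same route as the paper: convexity of $\cM_H$ along smooth geodesics via \eqref{eq:ANconvexitycM2}, forcing both non-negative terms of the second variation to vanish when two critical points are joined, which gives holomorphy of $\eta_t$ (hence $f_1\in\Aut(X,J)$ with $f_1^*\omega'=\omega$) and, when $A_H$ is not flat, the vanishing of $d\dot\phi_t$ on the open set $\{F_H\neq 0\}$. Your appeal to unique continuation for Killing fields is just a rephrasing of the paper's observation that the holomorphic field $\eta_0$ with constant Hamiltonian on a non-empty open set must vanish identically, so the two arguments coincide.
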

\begin{proof}
The first part of the statement follows from $d \cM_{H|\omega} = 0$ and \eqref{eq:ANconvexitycM}, due to the
inequality
\begin{equation}\label{eq:ANconvexitycM2}
\frac{d^2}{dt^2} \cM_H(\omega_t) = 2\alpha_0\|Y_{\eta_t}\|^2 + \alpha_1 \int_X |d\dot{\phi}_t|^2_{\omega_t} |F_H|^2
\cdot \omega^2 \geq 0
\end{equation}
along smooth geodesics. For the second part, suppose that $\omega_{\phi_t}$ joins $\omega$ to another solution $\omega'$
of \eqref{eq:CYMeq2}. Then, $\eta_t = \eta_0$ since $\phi_t$ is a geodesic in the space of K\"{a}hler potentials and $d
\cM_{H|\omega'} = 0$. This implies that \eqref{eq:ANconvexitycM2} is an equality and that $\eta_0$ is a holomorphic
vector field with $f_t$ being its flow. We conclude that $f_1^*\omega' = \omega$ with $f_1$ holomorphic and also that
$$
|d\dot{\phi}_0|^2_{\omega_t} \cdot |F_H|^2 = 0
$$
everywhere on $X$ for all values of $t$. If the connection $A_H$ is not flat, then $\dot{\phi}_0$ is constant in a non
empty open set of $X$, but since $\eta_0 = \eta_{\dot{\phi}_0}$ is holomorphic, then $\dot{\phi}_0$ is constant on $X$.
Therefore $f_1$ is the identity transformation and $\omega' = \omega$ as claimed.
\end{proof}
As a corollary of the previous lemma we obtain a result for toric complex surfaces, where smooth geodesics joining any
two invariant metrics in $\cK$ are known to exist. Recall that a complex manifold is said to be toric if it is endowed with a holomorphic action of a complex torus with a dense orbit. We fix a a maximal compact subgroup $T$ of the complex torus and consider $\cK^T$, the subspace of $T$-invariant K\"{a}hler metrics in $\cK$.
\begin{proposition}\label{propo:ANtoric}
Let $(E^c,I)$ be a holomorphic $G^c$-bundle over a smooth toric compact complex surface $(X,J)$. If there exists a
solution $(\omega,H)$ to \eqref{eq:CYMeq2} with $\omega \in \cK^T$, then $\cM_H\colon \cK^T \to \RR$ is bounded from below. For any other K\"{a}hler metric $\omega' \in \cK^T$ such that $(\omega',H)$ is a solution to \eqref{eq:CYMeq2}, there exists a holomorphic transformation $f \in \Aut (X,J)$ such that $f^*\omega' = \omega$. Moreover, if the bundle is not topologically flat, then $\omega$ is the unique metric in $\cK^T$ with this property.
\end{proposition}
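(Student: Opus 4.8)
The statement is, in essence, the previous Lemma with $\cK$ replaced by $\cK^T$, so the plan is to supply the one extra ingredient that the toric hypothesis provides --- smooth geodesics inside $\cK^T$ --- and then run the same convexity argument.

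\emph{Step 1: the toric input.} First I would recall Guillemin's description of $\cK^T$ via symplectic potentials on the Delzant polytope of $(X,J,[\omega])$: each $T$-invariant K\"ahler metric in $[\omega]$ corresponds to a strictly convex function on the polytope differing from a fixed canonical potential by a smooth function on the closed polytope. Under the Legendre transform identifying a $T$-invariant K\"ahler potential with its symplectic potential, the geodesic equation $\ddot{\phi}_t-|d\dot{\phi}_t|^2_{\omega_t}=0$ becomes the linear equation $\ddot{u}_t=0$; hence for any $\omega_0,\omega_1\in\cK^T$ the affine path $u_t=(1-t)u_0+tu_1$ of symplectic potentials is again a valid symplectic potential for every $t$, and transporting it back gives an \emph{explicit smooth} curve $\omega_t\in\cK^T$, $t\in[0,1]$, that solves the geodesic equation and joins $\omega_0$ to $\omega_1$ (cf.\ \cite{D8}). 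In particular the $C^{1,1}$-regularity obstruction that is present in general in $\cK$ disappears, so every point of $\cK^T$ is linked to $\omega$ by a smooth geodesic lying in $\cK^T$.

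\emph{Step 2: boundedness below.} Taking $\kappa=(\omega,H)$ as in the surrounding discussion and writing $\cM_H=\cM_\kappa(\cdot,H)$, the hypothesis that $(\omega,H)$ solves \eqref{eq:CYMeq2} (with $z=0$) makes $\sigma_I$ vanish at $(\omega,H)$, hence $d\cM_H$ vanishes at $\omega$. Given $\omega'\in\cK^T$, take the smooth geodesic $\omega_t$ of Step 1 from $\omega$ to $\omega'$; the computation \eqref{eq:ANconvexitycM}--\eqref{eq:ANconvexitycM2} --- which uses $\alpha_0,\alpha_1>0$ together with the Anti-Self-Duality of $A_H$, giving $-2(F_H\wedge F_H)=|F_H|^2\,\omega^2\ge0$ --- shows $\frac{d^2}{dt^2}\cM_H(\omega_t)\ge0$ along it. Convexity together with $\frac{d}{dt}\big|_{t=0}\cM_H(\omega_t)=\sigma_I(\dot{\kappa}_0)=0$ gives $\cM_H(\omega')\ge\cM_H(\omega)=0$; since $\omega'$ was arbitrary, $\cM_H$ is bounded below on $\cK^T$.

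\emph{Step 3: uniqueness and rigidity.} If $\omega'\in\cK^T$ also makes $(\omega',H)$ a solution of \eqref{eq:CYMeq2}, I would apply the same geodesic $\omega_t=\omega_{\phi_t}$ from $\omega$ to $\omega'$: now $\sigma_I(\dot{\kappa}_t)$ vanishes at $t=0$ and at $t=1$ while being non-decreasing, so $\frac{d^2}{dt^2}\cM_H(\omega_t)\equiv0$, forcing both non-negative summands in \eqref{eq:ANconvexitycM2} to vanish identically. Along a geodesic $\eta_t=\eta_{\dot{\phi}_t\circ f_t}$ is the constant Hamiltonian field $\eta_0$ ($f_t$ being the Moser flow of $\omega_t$), and $\|Y_{\eta_0}\|^2=0$ means $\eta_0$ is a Hamiltonian Killing field of $(X,J,\omega)$, so $f_t\subset\Aut(X,J)$ and $f\defeq f_1$ satisfies $f^*\omega'=\omega$. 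If in addition $E^c$ is not topologically flat then no connection on it is flat, so $F_H\not\equiv0$ and $U\defeq\{|F_H|^2>0\}$ is a non-empty open set; vanishing of $\int_X|d\dot{\phi}_t|^2_{\omega_t}|F_H|^2\,\omega^2$ then forces $d\dot{\phi}_0\equiv0$ on $U$, whence the holomorphic field $\eta_0=\eta_{\dot{\phi}_0}$ vanishes on $U$ and therefore identically, $\dot{\phi}_0$ is constant, the geodesic is stationary in $\cK$, and $\omega'=\omega$.

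The only genuinely new point, and the one I expect to need care, is Step 1: checking that the interpolation of symplectic potentials really stays inside $\cK^T$ (smoothness, strict convexity and Guillemin's boundary behaviour) and that the resulting curve solves the geodesic equation exactly, so that \eqref{eq:ANconvexitycM2} applies with no regularity caveat. Steps 2 and 3 are then a verbatim repetition of the proof of the preceding Lemma; the auxiliary implication ``$E^c$ not topologically flat $\implies A_H$ not flat'' is merely the contrapositive of the fact that a bundle admitting a flat connection is topologically flat.
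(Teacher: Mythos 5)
Your proposal is correct and follows exactly the route the paper takes: the proposition is deduced as an immediate corollary of the preceding lemma, with the only new input being the existence of smooth geodesics in $\cK^T$ (which you correctly supply via Guillemin's symplectic potentials and the linearization of the geodesic equation under the Legendre transform, so that affine interpolation of potentials gives exact smooth geodesics). Your Steps 2 and 3, together with the observation that a non-topologically-flat bundle admits no flat connection, reproduce the paper's argument verbatim.
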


We obtain a similar result for a complex surface with $c_1(X) \leq 0$. For the proof we adapt the argument of X. X. Chen in \cite{Ch1} concerning the Mabuchi K-energy.

\begin{proposition}\label{propo:ANboundedcMH}
Let $(E^c,I)$ be a holomorphic $G^c$-bundle over a smooth compact complex surface $(X,J)$ with $c_1(X) \leq 0$.
\begin{enumerate}
  \item If there exists a solution $(\omega,H) \in \cKt$ to \eqref{eq:CYMeq2}, then $\cM_H$ is bounded from below.
  \item If the topological constant $c$ vanishes, then $\omega$ is the unique metric in $\cK$ such that $(\omega,H)$ is a solution to \eqref{eq:CYMeq2}.
\end{enumerate}
\end{proposition}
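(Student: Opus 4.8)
The plan is to exploit the convexity formula \eqref{eq:ANconvexitycM2} for $\cM_H$ along geodesics in $\cK$ together with the existence (up to regularity) of geodesics joining any two K\"ahler metrics, following the strategy of Chen in \cite{Ch1} and using the $C^{1,1}$-geodesics of Chen and Tian \cite{ChT} as a substitute for the lack of smooth ones. Recall that since $(\omega,H)$ solves \eqref{eq:CYMeq2} with $z = 0$, the Chern connection $A_H$ is anti-self-dual, so $-2(F_H\wedge F_H) = |F_H|^2\omega^2 \geq 0$; hence the term $\alpha_1\int_X |d\dot\phi_t|^2_{\omega_t}|F_H|^2\omega^2$ in \eqref{eq:ANconvexitycM2} is non-negative, and $\alpha_0 > 0$ makes the first term non-negative as well. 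Therefore $\cM_H$ is formally convex along geodesics. Since $(\omega,H)$ is a solution, $d\cM_{H|\omega} = 0$, so $\omega$ is a critical point of a convex functional, which should force $\cM_H(\omega) \leq \cM_H(\omega')$ for all $\omega' \in \cK$ that can be joined to $\omega$ by a geodesic along which the convexity argument is valid.

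First I would make the convexity argument rigorous for the $\cM_H(\cdot,H)$ part coming from the scalar-curvature term, which is (up to the additive constant $-c$ and the coupling constant $\alpha_0$) exactly the Mabuchi K-energy; here Chen's theorem in \cite{Ch1}, valid when $c_1(X)\le 0$, shows that the K-energy is bounded below along $C^{1,1}$-geodesics and attains its minimum at a cscK metric if one exists. The extra term $-2\alpha_1\int_X\dot\phi_s(F_H\wedge F_H)$ in \eqref{eq:ANcMH} is the integral of the linear (in $\dot\phi$) functional $\omega'\mapsto -2\alpha_1\int_X\phi\,(F_H\wedge F_H)$ along the curve, which by the ASD property equals $\alpha_1\int_X\phi\,|F_H|^2\omega^2$; its second variation along a geodesic is $\alpha_1\int_X(\ddot\phi_t - |d\dot\phi_t|^2_{\omega_t})|F_H|^2\omega^2$ plus correction terms, and on a genuine geodesic this reduces to the manifestly non-negative $\alpha_1\int_X |d\dot\phi_t|^2_{\omega_t}|F_H|^2\omega^2$ as in \eqref{eq:ANconvexitycM2}. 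So the full functional $\cM_H$ is a sum of the K-energy (times $\alpha_0$, plus a constant) and this auxiliary convex term; boundedness below of each summand gives $(1)$.

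For $(2)$, when $c = 0$, suppose $\omega' \in \cK$ also gives a solution $(\omega',H)$. Then both $\omega$ and $\omega'$ are critical points of $\cM_H$. Joining them by a $C^{1,1}$-geodesic $\omega_{\phi_t}$, $t\in[0,1]$, convexity plus $d\cM_{H|\omega} = d\cM_{H|\omega'} = 0$ forces $\frac{d^2}{dt^2}\cM_H(\omega_t) \equiv 0$ along the geodesic, hence by \eqref{eq:ANconvexitycM2} both $\|Y_{\eta_t}\|^2 = 0$ and $|d\dot\phi_t|^2_{\omega_t}|F_H|^2 = 0$ everywhere. The first says $\eta_t$ is a holomorphic (Killing) vector field, generating a $\cH$-isotropy of $J$; the second, combined with the vanishing of $c = 0$ (so that $c_1(X)\le 0$ actually forces $c_1(X)$ or the relevant degree to vanish, making the K-energy strictly convex modulo holomorphic vector fields — here I would invoke that when $c_1(X) < 0$ there are no holomorphic vector fields at all, and when $c_1(X) = 0$ one uses $c = 0$ to rule out the nontrivial behaviour), shows $\dot\phi_0$ is constant on an open set and hence, being associated to a holomorphic vector field, constant on all of $X$. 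Thus the geodesic is constant and $\omega' = \omega$.

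The main obstacle will be the regularity: the formula \eqref{eq:ANconvexitycM2} and the differentiation under the integral sign in \eqref{eq:ANcMH} are only justified a priori along \emph{smooth} geodesics, which do not exist in general. Following \cite{Ch1} one must work with Chen's $C^{1,1}$-geodesics, or with $\epsilon$-approximate geodesics, and carefully pass to the limit $\epsilon\to 0$ in the convexity inequality — the subtle point being that $\cM_H$ involves the scalar curvature, which is a second-derivative quantity of $\omega_t$, so one only has it in an averaged/weak sense and must control error terms of order $O(\epsilon)$ uniformly. The auxiliary term involving $|F_H|^2$ is comparatively harmless since $F_H$ is fixed and smooth, and $|d\dot\phi_t|^2_{\omega_t}$ is controlled by the $C^{1,1}$-bounds; so the real work is in reproducing Chen's estimates for the K-energy part and checking the additional term does not spoil them. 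The case $c_1(X) = 0$ with $c = 0$ will also require an extra argument to exclude the possibility that $\cM_H$ is merely non-strictly convex due to a nonzero holomorphic vector field, which is where the hypothesis $c = 0$ enters essentially.
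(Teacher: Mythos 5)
Your overall strategy --- convexity of $\cM_H$ along (approximate) geodesics combined with the critical-point condition $d\cM_{H|\omega}=0$ --- is the same as the paper's, but the execution you propose has a genuine gap in part (1). You reduce the lower bound to ``boundedness below of each summand'' in the decomposition $\cM_H=\alpha_0\cM_\omega+c'I+\alpha_1\int_X\phi\,|F_H|^2\omega^2$. This step fails: the last term is affine in the potential $\phi$ and is unbounded below on $\cK$ (one can make $\phi$ arbitrarily negative on the support of $|F_H|^2$ while keeping $\omega_\phi>0$ and zero average), and the K-energy summand is \emph{not} critical at $\omega$, because $\omega$ is not cscK --- the second coupled equation says precisely that $\alpha_0(S_\omega-\hat S)$ is cancelled by the curvature term, so only the full functional has vanishing derivative at the solution. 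The convexity-plus-critical-point argument must therefore be run on the sum, not termwise, and the black-box lower bound for the K-energy you invoke from \cite{Ch1} (let alone ``attains its minimum at a cscK metric if one exists'') is not the relevant ingredient.

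Relatedly, you flag the regularity problem (the second derivative of $\cM_H$ involves $S_{\omega_t}$, which is not defined pointwise along a $C^{1,1}$ geodesic) but do not resolve it, and in doing so you miss where $c_1(X)\le 0$ actually enters. The paper works along Chen's $\epsilon$-approximate \emph{smooth} geodesics $(\ddot\phi_t-|d\dot\phi_t|^2_{\omega_t})\,\omega_t^2=\epsilon\,(\omega')^2$, where $\omega'$ is the K\"ahler--Einstein metric in $[\omega]$ given by Aubin--Yau ($c_1<0$) or Yau ($c_1=0$); Chen's identity $-\int_X\varphi_tS_{\omega_t}\tfrac{\omega_t^2}{2}=\int_X|d\varphi_t|^2_{\omega_t}\varphi_t^{-1}\tfrac{\omega_t^2}{2}-\int_X\varphi_t\Lambda_{\omega_t}\rho_{\omega'}\tfrac{\omega_t^2}{2}$, together with $\Lambda_{\omega_t}\rho_{\omega'}\le 0$ and $\ddot\phi_t\ge 0$, yields $\tfrac{d^2}{dt^2}\cM_H(\omega_t)\ge\epsilon\, c\,\Vol(X)$; integrating twice from the critical point and letting $\epsilon\to 0$ gives (1). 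For (2), $c=0$ makes this inequality exact, so the vanishing of $\tfrac{d}{dt}\cM_H$ at both endpoints forces each non-negative term, in particular $\|Y_{\eta_t}\|^2$, to vanish; then $\eta_t$ is a holomorphic complex Hamiltonian field, hence vanishes somewhere, while $c_1(X)\le 0$ forces holomorphic vector fields to be parallel, so $\eta_t\equiv 0$. Your appeal to ``$c=0$ forcing $c_1(X)$ or the relevant degree to vanish'' is not the mechanism by which the hypothesis is used.
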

\begin{proof}
Let $\omega'$ be the unique K\"{a}hler--Einstein metric in $[\omega]$, that it is known to exists due to the work of Aubin and Yau when $c_1(X) < 0$ and Yau's solution to the Calabi Conjecture when $c_1(X) = 0$ (see~\cite{Au}). Choose an arbitrary K\"{a}hler metric $\omega_\phi \in \cK$. By \cite[Lemma~7]{Ch1} for every $0 < \epsilon \ll 1$ there exists an $\epsilon$-approximate smooth geodesic $\phi_t \colon [0,1] \times X \to \RR$ in the space of K\"{a}hler potentials joining $\omega$ with $\omega_\phi$, i.e. such that
\begin{equation}\label{eq:epsilongeodesic}
(\ddot{\phi}_t - |d\dot{\phi}_t|^2_{\omega_t}) \cdot \omega_t^2 = \epsilon \cdot (\omega')^2.
\end{equation}
Set $\varphi_t = (\ddot{\phi}_t - |d\dot{\phi}_t|^2_{\omega_t}) > 0$, $\omega_t = \omega_{\phi_t}$. Then, the functional $\cM_H$ along the curve $\omega_t$ satisfies
\begin{align*}
\frac{d^2}{dt^2} \cM_H(\omega_t) & = 2\alpha_0\|Y_{\eta_t}\|^2 - \int_X \varphi_t(\alpha_0 S_{\omega_t} - c)\frac{\omega_t^2}{2!} + \alpha_1 \int_X \ddot{\phi}_t |F_H|^2
\cdot \omega^2\\
& = 2\alpha_0\|Y_{\eta_t}\|^2 - \alpha_0 \int_X \varphi_t S_{\omega_t}\frac{\omega_t^2}{2!} + \epsilon \cdot c \int_X \frac{(\omega')^2}{2!} + \alpha_1 \int_X \ddot{\phi}_t |F_H|^2
\cdot \omega^2\\
& \geq - \alpha_0 \int_X \varphi_t S_{\omega_t}\frac{\omega_t^2}{2!} + \epsilon \cdot c \cdot \Vol(X),
\end{align*}
where we have used that $\ddot{\phi}_t \geq 0$ due to \eqref{eq:epsilongeodesic}. Using now the identity (see~\cite[p.~223]{Ch1})
$$
- \int_X \varphi_t S_{\omega_t}\frac{\omega_t^2}{2!} = \int_X |d \varphi_t|^2_{\omega_t} \frac{1}{\varphi_t}\frac{\omega_t^2}{2!} - \int_X \varphi_t\Lambda_{\omega_t}\rho_{\omega'} \frac{\omega_t^2}{2!}
$$
we conclude that
\begin{equation}\label{eq:cMHconvex}
\frac{d^2}{dt^2} \cM_H(\omega_t) \geq \epsilon \cdot c \cdot \Vol(X),
\end{equation}
since $\Lambda_{\omega_t}\rho_{\omega'} \leq 0$ due to the fact that $\omega'$ is a K\"{a}hler--Einstein metric and $c_1(X) \leq 0$. Therefore, from
$$
\frac{d}{dt}_{|t = 0} \cM_H(\omega_t) = 0,
$$
integrating on $[0,1]$ it follows that
$$
\cM_H(\omega_{\phi}) - \cM_H(\omega) \geq \epsilon \cdot c \cdot \Vol(X) \cdot \frac{t^2}{2}|_0^1 =  \epsilon \cdot \frac{c \cdot \Vol(X)}{2}.
$$
Since $c \in \RR$ is a topological constant, making $\epsilon \to 0$, we conclude that $\cM_H$ is bounded in $\cK$ as claimed in $1)$. To prove $2)$ suppose that $\omega_\phi$ is also a solution to \eqref{eq:CYMeq2} in $\cK$. Note that if $c = 0$ then \eqref{eq:cMHconvex} together with $\frac{d}{dt}_{|t = 1} \cM_H(\omega_t) = 0$ gives
$$
0 = 2\alpha_0\|Y_{\eta_t}\|^2 + \alpha_0 \int_X |d \varphi_t|^2_{\omega_t} \frac{1}{\varphi_t}\frac{\omega_t^2}{2!} + \alpha_1 \int_X \ddot{\phi}_t |F_H|^2 \cdot \omega^2,
$$
from which $\eta_t$ is a holomorphic vector field on $(X,J)$ for all $t \in [0,1]$. Since $\eta_t$ is a complex Hamiltonian vector field, it vanishes somewhere on $X$ but since $c_1(X) \leq 0$ then $\eta_t$ is parallel and so it identically vanishes. Therefore $\omega = \omega_\phi$ as claimed.
\end{proof}
\begin{remark}
Note that as $\mathfrak{z} \ni z = 0$ in \eqref{eq:CYMeq2}, $c = 0$ implies that $c_1(X) = 0$ and $\hat c  = 0$ (defined in~\eqref{eq:constant-c-hat}), and so the bundle in topologically trivial.
\end{remark}

We finish this chapter proving that the lower bound on $\cM_H$ is an obstruction to the existence of solutions to \eqref{eq:CYMeq2} in any compact complex surfaces which admits a K\"ahler metric with nonnegative bisectional curvature. We use the result about weak solutions of the geodesic equation in $\cK$ found by X. X. Chen and G. Tian in \cite{ChT} and adapt their argument about the Mabuchi K-energy. The condition on the bisectional curvature is imposed in order to have weak solutions to the geodesic equation in the space of K\"ahler potentials with bounded second derivatives (see~\cite{Bl}).

\begin{proposition}\label{propo:ANboundedcMH}
Let $(E^c,I)$ be a holomorphic $G^c$-bundle over a smooth compact complex surface $(X,J)$ which admits a K\"ahler metric $\omega_0$ with nonnegative bisectional curvature. If there exists a solution
$(\omega,H) \in \cKt$ to \eqref{eq:CYMeq2} with $[\omega] = [\omega_0]$, then $\cM_H$ is bounded from below.
\end{proposition}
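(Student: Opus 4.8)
The plan is to run the same $\epsilon$-approximate geodesic argument as in Proposition~\ref{propo:ANboundedcMH} (the $c_1(X)\leq 0$ case), but replacing the smooth $\epsilon$-approximate geodesics of \cite[Lemma~7]{Ch1} by the $C^{1,1}$ weak geodesics of Chen--Tian \cite{ChT}, which under the nonnegative bisectional curvature hypothesis on $[\omega_0]$ have bounded mixed second derivatives (see~\cite{Bl}). First I would fix the unique K\"ahler--Einstein reference metric only if it helps; here, since we no longer assume $c_1(X)\leq 0$, the relevant positivity comes instead from the Anti-Self-Duality of $A_H$, exactly as in \secref{sec:ANcomplexsurfaces}: because $(\omega,H)$ solves \eqref{eq:CYMeq2} with $z=0$ on a surface, $\Lambda_\omega F_H = 0$ forces $A_H$ to be ASD, hence $-2(F_H\wedge F_H) = |F_H|^2\,\omega^2 \geq 0$. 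This is the sign that makes the last term of \eqref{eq:ANconvexitycM} have a favorable sign whenever $\ddot\phi_t\geq 0$.

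Next I would take an arbitrary $\omega_\phi\in\cK$ with $[\omega_\phi]=[\omega_0]=[\omega]$ and, for $0<\epsilon\ll 1$, produce a smooth $\epsilon$-approximate geodesic $\phi_t$, $t\in[0,1]$, joining $\omega$ to $\omega_\phi$ with $(\ddot\phi_t - |d\dot\phi_t|^2_{\omega_t})\cdot\omega_t^2 = \epsilon\cdot(\omega_0)^2$, so that $\varphi_t := \ddot\phi_t - |d\dot\phi_t|^2_{\omega_t} > 0$. Along this curve I would compute, as in \eqref{eq:ANconvexitycM},
\begin{equation*}
\frac{d^2}{dt^2}\cM_H(\omega_t) = 2\alpha_0\|Y_{\eta_t}\|^2 - \alpha_0\int_X \varphi_t S_{\omega_t}\frac{\omega_t^2}{2!} + \epsilon\cdot c\int_X\frac{(\omega_0)^2}{2!} + \alpha_1\int_X \ddot\phi_t |F_H|^2\cdot\omega^2,
\end{equation*}
where the last term is $\geq \alpha_1\int_X \varphi_t |F_H|^2\cdot\omega^2 \geq 0$ since $\ddot\phi_t \geq \varphi_t \geq 0$ (using $|d\dot\phi_t|^2_{\omega_t}\geq 0$). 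The term $-\alpha_0\int_X \varphi_t S_{\omega_t}\frac{\omega_t^2}{2!}$ must now be handled \emph{without} a nonpositivity assumption on the Ricci form; I would use the identity $-\int_X \varphi_t S_{\omega_t}\frac{\omega_t^2}{2!} = \int_X |d\varphi_t|^2_{\omega_t}\frac{1}{\varphi_t}\frac{\omega_t^2}{2!} - \int_X \varphi_t\Lambda_{\omega_t}\rho_{\omega_0}\frac{\omega_t^2}{2!}$ from \cite[p.~223]{Ch1}, and then control $\int_X \varphi_t\Lambda_{\omega_t}\rho_{\omega_0}\frac{\omega_t^2}{2!}$ from above by a constant depending only on $[\omega_0]$ and $\omega_0$ (not on $\phi$), exploiting that $\rho_{\omega_0}$ is a fixed form and that $\int_X\varphi_t\frac{\omega_t^2}{2!}$ is controlled (indeed $\int_X \varphi_t\,\omega_t^2 = \epsilon\int_X\omega_0^2$). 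This gives $\frac{d^2}{dt^2}\cM_H(\omega_t) \geq -C_0\,\epsilon$ for a uniform constant $C_0$, hence integrating twice on $[0,1]$ together with $\frac{d}{dt}\big|_{t=0}\cM_H(\omega_t) = d\cM_{H|\omega} = 0$ yields $\cM_H(\omega_\phi) - \cM_H(\omega) \geq -C_0\,\epsilon/2$; letting $\epsilon\to 0$ gives the lower bound.

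The main obstacle I anticipate is precisely the uniform estimate on $\int_X \varphi_t\Lambda_{\omega_t}\rho_{\omega_0}\frac{\omega_t^2}{2!}$: in the $c_1(X)\leq 0$ case one uses $\Lambda_{\omega_t}\rho_{\omega'}\leq 0$ to simply drop this term, but here one needs a genuine a~priori bound, and this is where the bounded second derivatives of the weak $C^{1,1}$ geodesics of \cite{ChT} (available under the nonnegative bisectional curvature hypothesis, cf.~\cite{Bl}) are used — one approximates the weak geodesic by smooth $\epsilon$-geodesics with $C^{1,1}$ bounds uniform in $\epsilon$, so that $\omega_t$ stays in a fixed $C^{1,1}$-bounded family of K\"ahler metrics and $\Lambda_{\omega_t}\rho_{\omega_0}$ is uniformly bounded. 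A secondary technical point is to justify passing the second-derivative inequality through the regularity limit $\epsilon\to 0$; this follows the argument of Chen--Tian for the Mabuchi K-energy verbatim, since all the extra terms involving $F_H$ have a sign and the K\"ahler part of $\cM_H$ is exactly the K-energy up to the fixed affine term $\int_0^t\int_X \dot\phi_s\cdot c\,\frac{\omega_s^2}{2!}$ plus the manifestly convex bundle contribution.
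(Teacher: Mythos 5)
Your overall strategy --- reducing the lower bound to an almost-convexity estimate for $\cM_H$ along approximate geodesics and using the ASD sign $-2(F_H\wedge F_H)=|F_H|^2\,\omega^2\ge 0$ for the bundle term --- starts in the right place, but the key step fails. The term you must control is
$$
\int_X\varphi_t\,\Lambda_{\omega_t}\rho_{\omega_0}\,\frac{\omega_t^2}{2!}=\int_X\varphi_t\,\rho_{\omega_0}\wedge\omega_t,
\qquad \varphi_t\,\omega_t^2=\epsilon\,\omega_0^2,
$$
and your proposed bound rests on the claim that $\Lambda_{\omega_t}\rho_{\omega_0}$ is uniformly bounded because the ($\epsilon$-approximations of the) weak geodesics have uniformly bounded second derivatives. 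This does not follow: a $C^{1,1}$ bound on $\phi_t$ gives an \emph{upper} bound on $\omega_t=\omega+dd^c\phi_t$ but no positive lower bound, and along Chen--Tian weak geodesics $\omega_t$ is in general only a nonnegative bounded $(1,1)$-form which may degenerate (this is exactly the sharpness of the $C^{1,1}$ regularity; the paper itself remarks that $\omega_{\phi^l(s,t)}$ is a priori only a bounded two-form). Since $\Lambda_{\omega_t}$ involves $\omega_t^{-1}$, the quantity $\Lambda_{\omega_t}\rho_{\omega_0}$ (equivalently $\varphi_t=\epsilon\,\omega_0^2/\omega_t^2$) blows up on the degeneracy set, and the normalisation $\int_X\varphi_t\,\omega_t^2=\epsilon\int_X\omega_0^2$ gives no control on $\int_X\varphi_t\,\rho_{\omega_0}\wedge\omega_t$, whose integrand behaves like $\epsilon\,|\omega_t|/|\omega_t|^2$ in terms of eigenvalues. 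In the $c_1(X)\le 0$ case this term is simply \emph{dropped} thanks to the pointwise sign $\Lambda_{\omega_t}\rho_{\omega'}\le 0$, which requires no bound at all; without that sign your argument has no replacement for it.

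This is precisely why the paper's proof takes a different route. It decomposes $\cM_H=\alpha_0\,\cM_\omega+c'\,I+\alpha_1\int_X\phi\,|F_H|^2\,\omega^2$ as in \eqref{eq:cMHdecomposed}, solves the homogeneous complex Monge--Amp\`ere equation on a strip $\Sigma_l\times X$ with the prescribed boundary data, and then uses: (i) the weak subharmonicity of $(s,t)\mapsto\cM_\omega(\omega_{\phi^l(s,t)})$ on $\Sigma_l$ from \cite[Theorem~6.1.1]{ChT}, which packages the troublesome curvature term once and for all; (ii) the pointwise inequality $\Delta_{s,t}\phi^l\ge 0$ together with the ASD sign for the bundle contribution; and (iii) a cutoff $\nu^l$ in the $s$-variable so that the $d^2/ds^2$ contribution can be integrated by parts onto $\nu^l$, giving $f^l(1)-f^l(0)\ge -C'/(2l)$ and the lower bound as $l\to\infty$. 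The nonnegative bisectional curvature hypothesis enters only to guarantee bounded second-order partials of $\phi^l$, so that $\int_X\Delta_{s,t}\phi^l\,|F_H|^2\,\omega^2$ is well defined and nonnegative; it is never used to bound $\Lambda_{\omega_t}\rho_{\omega_0}$. To salvage your one-parameter approach you would have to reprove the content of Chen--Tian's subharmonicity theorem, which is exactly the hard analytic input you cannot bypass.
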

\begin{proof}
Let $(\omega,H) \in \cKt$ be a solution to \eqref{eq:CYMeq2}. Let $\omega_\phi = \omega + dd^c \phi$ be any other K\"{a}hler metric in $\cK$
for a given K\"{a}hler potential $\phi$. We will prove that
\begin{equation}\label{eq:inequalitycMH}
\cM_H(\omega_\phi) - \cM_H(\omega) \geq 0.
\end{equation}
To do this, we will use the following decomposition formula (cf.~\cite[formula~6.1]{ChT})
\begin{equation}\label{eq:cMHdecomposed}
\begin{split}
\cM_H(\omega_{\phi}) & = \alpha_0 \cdot \cM_\omega(\omega_{\phi}) + c'\cdot I(\omega_\phi) + \alpha_1 \int_X \phi |F_H|^2 \cdot \omega^2, \quad \textrm{with}\\
I(\omega_\phi) & = \frac{1}{3!}\sum_{j = 0}^2\int_X \phi \cdot \omega_\phi^j \wedge \omega^{2-j},
\end{split}
\end{equation}
where $c' = c - \alpha_0 \hat S \in \RR$ and $\cM_\omega$ is the Mabuchi K-energy on $\cK$. Given any positive integer $l$, let $\Sigma_l = [-l,l] \times [0,1]$ be the rectangle domain, regarded as Riemann surface with boundary, and consider the homogeneous complex Monge--Amp\`ere equation
\begin{equation}\label{eq:ANcomplexMongeAmp}
(\pi^*\omega + 2\imag\partial\dbar\phi)^{2+1} = 0,
\end{equation}
where $\phi: \Sigma_l \times X \to \RR$ is a real function and $\pi:\Sigma_l \times X \to X$ denotes the projection. Recall from~\cite{D6} that a curve of K\"{a}hler potentials $\phi_t$, $t \in [0,1]$, satisfies the geodesic equation
\eqref{eq:geodesiccscK} if and only the function $\phi(s,t,x) = \phi_t(x)$ on $\Sigma_l \times X$ satisfies
\eqref{eq:ANcomplexMongeAmp}, and also that an arbitrary solution $\phi(s,t)$ of \eqref{eq:ANcomplexMongeAmp} on the square $\Sigma_l$ satisfies
\begin{equation}\label{eq:geodesicSigmal}
\Delta_{s,t} \phi^l \defeq \(\frac{d^2}{ds^2} + \frac{d^2}{dt^2}\) \phi^l =  |d(\frac{d}{dt}\phi^l) - d^c(\frac{d}{ds}\phi^l)|_{\omega_{\phi^l}}^2.
\end{equation}
By \cite[Theorem~1.1]{ChT}, there exists an almost smooth solution $\phi^l$ to \eqref{eq:ANcomplexMongeAmp} (see~\cite[Definition~1.3.3]{ChT}) with boundary values
$$
\phi^l(s,0) = 0, \; \phi^l(s,1) = \phi, \;  \phi^l(\pm l,t) = (1-t)\phi \qquad (s,t) \in \Sigma_l.
$$
To be more precise, one may modify slightly the boundary of $\Sigma_l$ and the boundary data so that the domain is smooth without corners (see~\cite[Section~6.2]{ChT}). Let us fix a K\"{a}hler metric on $\Sigma_l \times X$ that does not depend on $l$. According to \cite{ChT}, there exists a positive constant $C > 0$ independent of $l$ such that
\begin{equation}\label{eq:boundphil}
0 \leq |\partial \dbar \phi^l| \leq C,
\end{equation}
where the norm is taken with respect to the fixed K\"{a}hler metric on $\Sigma_l \times X$ and
$$
0 \leq (\omega_{\phi^l(s,t)})^2
$$
for any $(s,t) \in \Sigma_l$. Note here that $\omega_{\phi^l(s,t)}$ can be less regular than $C^{\infty}$ (a priori it is only a bounded two form on $X$). Moreover, the additional assumption on the existence of a K\"ahler metric on $\cK$ with nonnegative bisectional curvature, implies that all the partial derivatives of second order of $\phi^l$ are bounded. Due to the decomposition formula for the K-energy (see~\cite[formula~6.1]{ChT}), it induces a well defined map
$$
\cM_\omega\colon \Sigma_l \to \RR \colon (s,t) \mapsto \cM_\omega(\omega_{\phi^l(s,t)}).
$$
This map is $C^1$ continuous up to the boundary of $\Sigma_l$ and weakly subharmonic in the interior (see~\cite[Theorem~6.1.1]{ChT}). The previous bounds on $\phi^l$ and the decomposition formula \eqref{eq:cMHdecomposed} implies then that we have a well defined map
$$
\cM_H\colon \Sigma_l \to \RR \colon (s,t) \mapsto \cM_H(\omega_{\phi^l(s,t)}).
$$
Let $\nu: \RR \to \RR$ be a smooth non-negative function such that $\nu \equiv 1$ on $[-\frac{1}{2},\frac{1}{2}]$ and
vanishes outside $[-\frac{3}{4},\frac{3}{4}]$. Set
$$
\nu^l(s) = \frac{\nu(\frac{s}{l})}{\int_\RR\nu(s)ds}
$$
and define
\begin{align*}
f^l(t) & = \int_\RR \nu^l(s)\cM_H(s,t) ds\\
& = \alpha_0 \cdot \int_\RR \nu^l(s) \cM_\omega(s,t) ds + c'\cdot \int_\RR \nu^l(s) I(s,t) ds\\
& + \alpha_1 \cdot \int_\RR \nu^l(s) \int_X \phi^l(s,t) |F_H|^2 \omega^2 \wedge ds,
\end{align*}
where $I(s,t) = I(\omega_{\phi^l(s,t)})$. Then,
$$
f^l(0) = \int_\RR \nu^l(s)\cM_H(\omega) ds = \cM_H(\omega)
$$
$$
f^l(1) = \int_\RR \nu^l(s)\cM_H(\omega_\phi) ds = \cM_H(\omega_\phi)
$$
and
$$
\frac{d f^l}{dt}_{|t=0} = 0,
$$
since $(\omega,H)$ is a solution to \eqref{eq:CYMeq2}. Then, we have that
\begin{align*}
f^l(1) - f^l(0) & = \int_0^1 \int_0^r\frac{d^2f^l}{dt^2} dt \wedge dr\\
& = \int_0^1 \int_0^r \int_\RR \nu^l(s)\frac{d^2 \cM_H(s,t)}{dt^2} ds \wedge dt \wedge dr\\
& = \int_0^1 \int_0^r \int_\RR \nu^l(s) \Delta_{s,t}\cM_H(s,t) ds \wedge dt \wedge dr \\
& - \int_0^1 \int_0^r \int_\RR \nu^l(s)\frac{d^2 \cM_H(s,t)}{ds^2} ds \wedge dt \wedge dr\\
& = \alpha_0 \int_0^1 \int_0^r \int_\RR \nu^l(s) \Delta_{s,t}\cM_\omega(s,t) ds \wedge dt \wedge dr \\
& + \alpha_1 \cdot \int_0^1 \int_0^r \int_\RR \nu^l(s) \int_X \Delta_{s,t} \phi^l |F_H|^2 \cdot \omega^2 \wedge ds \wedge dt \wedge dr\\
& - \int_0^1 \int_0^r \int_\RR \nu^l(s)\frac{d^2 \cM_H(s,t)}{ds^2} ds \wedge dt \wedge dr,
\end{align*}
where we have used that $\phi^l$ has bounded second derivatives and that the $I$ term makes no contribution to the second derivatives along solutions to \eqref{eq:ANcomplexMongeAmp} (see the comments before the proof of~\cite[Theorem~6.1.1]{ChT}).
Note also that
$$
\Delta_{s,t} \phi^l = \frac{\partial^2\phi^l}{\partial z \partial \overline{z}},
$$
which is bounded by \eqref{eq:boundphil} and is non negative due to \eqref{eq:geodesicSigmal}, where $z = t + \imag s$ is the complex coordinate on $\Sigma_l$. The previous facts imply that we have a lower bound
\begin{align*}
f^l(1) - f^l(0) & \geq - \int_0^1 \int_0^r \int_\RR \nu^l(s)\frac{d^2 \cM_H(s,t)}{ds^2} ds \wedge dt \wedge dr\\
& = - \int_0^1 \int_0^r \int_\RR \frac{d^2\nu^l(s)}{ds^2}\cM_H(s,t) ds \wedge dt \wedge dr\\
& = - \frac{1}{l^2 v} \int_0^r \int_\RR \frac{d^2\nu^l}{ds^2}_{|\frac{s}{l}}\cM_H(s,t) ds \wedge dt \wedge dr,
\end{align*}
where $v = \int_\RR \nu(s) ds$. The bound \eqref{eq:boundphil} and the decomposition formulae \eqref{eq:cMHdecomposed} for $\cM_H$ and $\cM_\omega$ provide a uniform bound $|\cM_H(s,t)| \leq C'$ that, as in the proof of~\cite[Theorem~6.2.1]{ChT}, gives
$$
\cM_H(\omega_\phi) - \cM_H(\omega) = f^l(1) - f^l(0) \geq - \frac{C'}{2l}.
$$
This shows that \eqref{eq:inequalitycMH} holds as $l \to \infty$.
\end{proof}

\chapter{\bf Algebraic stability}
\label{chap:AlgStability}

Let $(X,L,E)$ be a triple consisting of a compact complex manifold $X$, an ample line bundle $L$ and a holomorphic vector bundle $E$ over $X$. We know from Section \secref{chap:analytic} that the pairs $(\omega,H)$ consisting of a K\"{a}hler metric $\omega \in c_1(L)$ and a Hermitian metric $H$ on $E$ correspond to points in the ``complexified orbit'' of the holomorphic structure of the bundle of frames of $E$ with respect to the action of the extended gauge group. This fact, together with the moment map interpretation of the coupled equations \eqref{eq:CYMeq2} leads to formulate a conjecture in the vain of the usual identification of algebraic and symplectic quotients of projective manifolds provided by the Kempf-Ness
Theorem, namely: \emph{the existence of solutions $(\omega,H)$ to the coupled equations \eqref{eq:CYMeq2}, with $\omega \in c_1(L)$, is equivalent to a suitable polystability of the triple $(X,L,E)$ in the sense of GIT.} This chapter is devoted to the formulation of a stability notion adapted to this
problem and to give evidence that the above conjecture holds.

\section{K-stability}
\label{sec:ASAlgebraicstb}

In this section we propose a notion of stability for a triple $(X,L,E)$ consisting of a projective scheme $X$, an ample invertible sheaf $L$ and a coherent sheaf $E$ over $X$. Our stability notion depends on a positive real parameter $\alpha$ and is modelled on Donaldson's definition of $K$-stability for polarized varieties (see~\secref{subsec:cscKalgKstab}). As in \cite{D4}, it is formulated in terms of flat degenerations of the triple over $\CC$. The central fiber of each degeneration is endowed with a $\CC^*$-action and the stability of the triple, that we call $\alpha$-K-stability, is controlled by a suitable numerical invariant $F_\alpha$ (see \eqref{eq:extendedfutaki}) associated to this action. We are mainly interested in the case in which $E$ is a holomorphic vector bundle over a smooth projective variety $X$, but even in this case we need to consider singular degenerations and hence our geometric objects in this section will be arbitrary algebraic schemes and coherent sheaves, rather than vector bundles and manifolds. When the degeneration considered is smooth, $F_\alpha$ will coincide, up to a multiplicative constant factor, with the value of the character $\cF_\alpha$ given by \eqref{eq:alphafutakismooth} at the generator of the $S^1 \subset \CC^*$ action (see~\eqref{sec:ASDiffeformalphainvariant}).

Let $(X,L,E)$ be a triple consisting of a projective scheme $X$, an ample invertible sheaf $L$ and a coherent sheaf $E$ over $X$. We start defining the degenerations of $(X,L,E)$ that we will consider. In what follows, a \emph{$\CC^*$-action} on such a triple means a $\CC^*$-action on $X$ with $\CC^*$-linearisations on $L$ and $E$.

\begin{definition}\label{def:testconfiguration}
A \emph{test configuration for $(X,L,E)$} is a triple $(\mathcal{X},\cL,\cE)$ consisting of a scheme $\mathcal{X}$, an invertible sheaf $\cL$ and a coherent sheaf $\cE$ over $\mathcal{X}$, together with a $\CC^*$-action on $(\mathcal{X},\cL,\cE)$ and a flat morphism $\pi\colon \mathcal{X}\to \CC$, such that
\begin{enumerate}
\item[(1)] %
the invertible sheaf $\cL$ is ample and the sheaf $\cE$ is flat over $\CC$,
\item[(2)] %
$\pi: \mathcal{X} \to \CC$ is $\CC^*$-equivariant, where $\CC^*$ acts
on $\CC$ by multiplication in the standard way,
\item[(3)]
the fibre $(X_t,L_t,E_t)$ is isomorphic to $(X,L,E)$ for all
$t \in\CC\setminus\{0\}$, where $X_t = \pi^{-1}(t)$, $L_t =
\cL_{|X_t}$ and $E_t=\cE_{|X_t}$.
\end{enumerate}
\end{definition}

Any $\CC^*$-action on $(X,L,E)$ determines a special type of test configuration, called a \emph{product configuration}, where $\mathcal{X} \cong X \times \CC$ with the induced $\CC^*$-action, $\cL$ and $\cE$ are obtained by pulling the $\CC^*$-linearised sheaves $L$ and $E$, and $\pi$ is the canonical projection. This product configuration is called a \emph{trivial configuration} if the given $\CC^*$-action on $(X,L,E)$ is trivial, i.e. if it consists of the trivial $\CC^*$-action on $X$ and the $\CC^*$-action defined by scalar mutiplication on $L$ and $E$.

Let $(X,L,E)$ be as in Definition~\ref{def:testconfiguration} and such that $E$ has $n$-dimensional support, where $n=\dim X$. Let $(\mathcal{X},\cL,\cE)$ be a test configuration for $(X,L,E)$. Then, the fibres $(X_t,L_t,E_t)$ are all necessarily isomorphic for $t\neq 0$, due to the $\CC^*$-action. Since $\pi$ and $\cE$ are flat the Hilbert polynomial of $E_t$ is independent of $t \in \CC$~\cite[Ch. III, Thm 9.9]{Ha}, so the central fibre $(X_0,L_0,E_0)$ satisfies that $X_0$ is an $n$-dimensional projective scheme, $L_0$ is ample and $E_0$ has $n$-dimensional support. Recall that the degree of the Hilbert polynomial of $E_t$ equals the dimension of its support (see~\cite[Ch. III, Ex. 5.7(a)]{Ha} for the fact that $L_t$ is ample). As $0\in\CC$ is fixed, the central fibre $(X_0,L_0,E_0)$ has an induced $\CC^*$-action. For each integer $k$, we define the integer $w(k)$ as the weight of the induced $\CC^*$-action on the determinant of the cohomology of $E_0\otimes L_0^k$, i.e. on the line (cf. e.g.~\cite[\S\S 1.2 and 2.1]{HL})
\[
  \det H^*(X_0,E_0\otimes L_0^k)
  \defeq \bigotimes_{p=0}^n \(\det H^p (X_0,E_0\otimes L_0^k)\)^{(-1)^{p}}.
\]
Then, by the equivariant Riemann--Roch theorem, $w(k)$ is a polynomial of degree at most $n+1$ in $k$ (cf.~\cite[\S 2.1]{D4}).  Thus the following quotient has an expansion
\begin{equation}\label{eq:defF(W,k)}
F(E_0,k) \defeq \frac{w(k)}{kP_{L_0}(E_0,k)} = F_0(E_0) + k^{-1} F_1(E_0) + k^{-2} F_2(E_0) + O(k^{-3})
\end{equation}
with rational coefficients $F_i(E_0)$, where $P_{L_0}(E_0,k)$ is the Hilbert polynomial of $E_0$, which has degree $n$ in $k$. Now, given a real number $\alpha>0$, the \emph{$\alpha$-invariant} of the test configuration $(\mathcal{X},\cL,\cE)$ is
\begin{equation}
\label{eq:extendedfutaki}
F_{\alpha}(\mathcal{X},\cL,\cE) \defeq -F_1(\mathcal{O}_{X_0}) - \alpha \(F_2(E_0) - F_2(\mathcal{O}_{X_0}) \),
\end{equation}
where $F_j(\mathcal{O}_{X_0})$, $0 \leq j \in \ZZ$, are obtained from formula \eqref{eq:defF(W,k)} setting $E_0 = \mathcal{O}_{X_0}$. When there is no confusion we will denote it just by $F_{\alpha}$.

We already have all the ingredients to define the notion of $\alpha$-K-stability. As usual, we define the \emph{degree} of a coherent sheaf over a projective scheme with respect to a given ample invertible sheaf in terms of the first two leading coefficients of its Hilbert polynomial (see~\cite[Definition~1.2.11]{HL} for the precise definition).

\begin{definition}
\label{def:extKstable}
Let $(X,L,E)$ be a triple consisting of a projective scheme of dimension $n$, an ample invertible sheaf and a coherent sheaf of degree zero which is pure of dimension $n$. Let $\alpha$ be a positive real number. Then the triple
$(X,L,E)$ is said to be \emph{$\alpha$-K-semistable} if
\begin{equation}
  \label{eq:extKstable}
  F_{k\alpha}(\mathcal{X},\cL,\cE) \geq 0
\end{equation}
for all integers $k>0$ and all test configurations $(\mathcal{X},\cL,\cE)$
for $(X,L^k,E)$. This triple is \emph{$\alpha$-K-stable} if the inequality~\eqref{eq:extKstable} is strict for all non-trivial $(\mathcal{X},\cL,\cE)$. It is called \emph{$\alpha$-K-polystable} if it is $\alpha$-K-semistable and the inequality~\eqref{eq:extKstable} is always strict unless $(\mathcal{X},\cL,\cE)$ is a product configuration.
\end{definition}

The assumption in Definition~\ref{def:extKstable} that $E$ has degree zero is used to make the formulation of $\alpha$-K-stability simpler. When the sheaf $E$ has non-zero degree, the notion of $\alpha$-K-stability involves a formula for the definition of the invariant $F_\alpha$ which is more complicated than~\eqref{eq:extendedfutaki}. This is an interesting issue to which we wish to return in future work.

As usual, any triple as in Definition~\ref{def:extKstable} satisfies the following chain of implications: $(X,L,E)$ is $\alpha$-K-stable
$\implies$ $(X,L,E)$ is $\alpha$-K-polystable $\implies$ $(X,L,E)$ is $\alpha$-K-semistable.

An \emph{$\alpha$-K-unstable} triple is a triple which is not $\alpha$-K-semistable, and an \emph{$\alpha$-destabilizing test configuration} for $(X,L,E)$ is a test configuration with $F_{\alpha} < 0$. Note that $\alpha$-K-unstability is an open condition in $\alpha$, i.e. if a triple is not $\alpha$-K-semistable then it admits a $\alpha$-destabilizing test configuration $(\mathcal{X},\cL,\cE)$ and, by definition, it also destabilizes the triple for close values of the constant $\alpha$.

\begin{remark}\label{rem:Kstable-L^k}
Note that if $(X,L,E)$, as in Definition~\ref{def:extKstable}, is $\alpha$-K-(semi/poly)stable then for any integer $k>0$ the triple $(X,L^k,E)$ is $(k\alpha)$-K-(semi/poly)stable. Note also that
$$
F_{k\alpha}(\mathcal{X},\cL^k,\cE) = F_\alpha(\mathcal{X},\cL,\cE),
$$
for any test configuration $(\mathcal{X},\cL,\cE)$. This follows simply because the invariants $F_i$ defined in~\eqref{eq:defF(W,k)} satisfy $F_i(L^k_0,E_0) = k^{1-i} F_i(L_0,E_0)$, as $F(L_0^k,E_0,h) = k F(L_0,E_0,hk)$.

\end{remark}

\section{A differential-geometric formula for the $\alpha$-invariant}
\label{sec:ASDiffeformalphainvariant}

In this section we relate the character $\cF_\alpha$ defined in \eqref{eq:alphafutakismooth} and the invariant $F_\alpha$ \eqref{eq:extendedfutaki} of a test configuration when it has smooth central fiber. As a consequence we prove that the existence of solutions to the coupled equations \eqref{eq:CYMeq2} implies the vanishing of the $\alpha$-invariant on product test configurations.

Let $(X,L,E)$ be a triple consisting of a smooth compact polarized variety $(X,L)$ and a holomorphic vector bundle $E$ over $X$ with rank $r$ and degree zero. Given a positive integer $k > 0$, let $(\mathcal{X},\cL,\cE)$ be a test configuration for the triple $(X,L^k,E)$ with smooth central fiber $(X_0,L_0,E_0)$. Associated to $E_0$ we have the holomorphic $\GL(r,\CC)$-principal bundle of frames $(E^c_0,I)$ and the character
$$
\cF_\alpha: \Lie \Aut(E^c_0,I) \to \CC
$$
defined in \eqref{eq:alphafutakismooth}, corresponding to the holomorphic structure $I$, the K\"{a}hler class $c_1(L_0)$, the biinvariant pairing $(\cdot,\cdot) = - \tr$ in the Lie algebra of $\GL(r,\CC)$ and a pair of real constants $\alpha_0$, $k \cdot \alpha_1 \in \RR$. Let $\zeta \in \Lie \Aut(E^c_0,I)$ be the holomorphic vector field generating the induced $S^1$-action on $E_0$ and define the real constant $\alpha = \frac{\alpha_1\pi^2 r}{\alpha_0}$.

\begin{proposition}
\label{prop:extendedalgebraicfutaki}
\begin{equation}
\label{eq:alphafutakirelation}
F_{k\alpha}(\mathcal{X},\cL,\cE) = \frac{-1}{4 \vol(X) \alpha_0}\cF_{\alpha}(\zeta).
\end{equation}
\end{proposition}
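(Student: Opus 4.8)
The plan is to compute both sides in terms of equivariant Riemann--Roch data on the smooth central fibre and to match the answer against the differential-geometric integral in \eqref{eq:alphafutakismooth}. The key point is that for a \emph{smooth} central fibre the $\CC^*$-action is generated by a holomorphic vector field $\zeta$ on $E_0^c$ covering a holomorphic vector field $v$ on $X_0$, and the various weights $w(k)$ entering \eqref{eq:defF(W,k)} can be expressed by the localization/Atiyah--Bott-type formula as integrals over $X_0$ of equivariant characteristic forms. First I would recall, exactly as in Donaldson~\cite{D4} for the cscK case, that the coefficient $-F_1(\mathcal{O}_{X_0})$ of the structure-sheaf expansion is, up to the explicit normalizing constant, the classical Futaki invariant of $(X_0,c_1(L_0))$ evaluated on $v$; this gives the $\alpha_0 S_\omega$-part of $\cF_\alpha$. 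Then I would do the analogous expansion for $E_0 \otimes L_0^k$, using the equivariant Riemann--Roch theorem
\[
  w(k) \;=\; \int_{X_0} \operatorname{ch}_{\CC^*}(E_0) \operatorname{ch}_{\CC^*}(L_0^k)\operatorname{Td}_{\CC^*}(X_0),
\]
and extract the coefficients $F_1(E_0)$ and $F_2(E_0)$ of $F(E_0,k) = w(k)/(kP_{L_0}(E_0,k))$. Subtracting the structure-sheaf contribution as in the definition \eqref{eq:extendedfutaki} of $F_{k\alpha}$, the terms that survive are exactly those involving $\operatorname{ch}_2$ of the equivariant bundle, i.e.\ the curvature term $\Lambda^2(F_H\wedge F_H)$ and the $(\theta_H\zeta,\Lambda F_H)$ term in \eqref{eq:alphafutakismooth}.

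The middle steps are the bookkeeping: one fixes a Hermitian metric $H$ on $E_0$ and a K\"ahler metric $\omega\in c_1(L_0)$, represents the equivariant Chern/Todd classes by the curvature forms $F_H$, the Ricci form $\rho_\omega$ and the Hamiltonian-type functions generated by $v$ and $\zeta$ (the moment-map components of $v$ acting on $L_0$ and of $\zeta$ acting on $E_0$), and then carries out the degree count in $k$. The normalization $\alpha=\tfrac{\alpha_1\pi^2 r}{\alpha_0}$ and the factor $\tfrac{-1}{4\vol(X)\alpha_0}$ are forced precisely by comparing (i) the leading volume term $P_{L_0}(E_0,k)\sim r\,\Vol([\omega])k^n/ \cdots$ in the denominator, (ii) the $(2\pi)$-factors relating $F_H$ to the rational Chern character used in $w(k)$, and (iii) the already-known cscK normalization from \secref{subsec:cscKalgKstab} for the $F_1(\mathcal{O}_{X_0})$ piece. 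I would organize the computation so that the $\alpha$-independent part reproduces Donaldson's formula verbatim and only the new $O(\alpha)$ part needs fresh work; this localizes the argument.

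The main obstacle I expect is the precise matching of constants and signs in the $F_2$-terms: $F_2(E_0)-F_2(\mathcal{O}_{X_0})$ is a subleading coefficient of a ratio of polynomials, so one must expand $w(k)/(kP(k))$ to order $k^{-2}$ keeping careful track of how the subleading terms of the Hilbert polynomial $P_{L_0}(E_0,k)$ (which, since $\deg E=0$, agrees with $r\cdot P_{L_0}(\mathcal{O}_{X_0},k)$ to the first two orders) feed back into the quotient. Here the degree-zero hypothesis on $E$ is essential: it is what makes $F_2(E_0)-F_2(\mathcal{O}_{X_0})$ depend only on $\operatorname{ch}_2$-type data and not on a spurious cross term with the slope, so that the clean formula \eqref{eq:extendedfutaki} matches \eqref{eq:alphafutakismooth}. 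Once the equivariant Riemann--Roch expansion is written out and these subleading terms are handled, the identification of the integrand with $\phi(\alpha_0 S_\omega + \alpha_1\Lambda^2(F_H\wedge F_H)-c) - 4\alpha_1(\theta_H\zeta,\Lambda F_H)$ is a direct comparison, and \eqref{eq:alphafutakirelation} follows. Finally, since $F_{k\alpha}(\mathcal{X},\cL,\cE)$ is independent of the choice of $k$ with $L$ replaced by $L^k$ (Remark~\ref{rem:Kstable-L^k}), and $\cF_\alpha$ is independent of $(\omega,H)$ (Proposition~\ref{prop:futakiobstruction}), both sides are well defined and the stated equality is the natural one.
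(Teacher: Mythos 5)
Your plan is correct and follows essentially the same route as the paper's proof: one fixes $S^1$-invariant metrics on $L_0$ and $E_0$, writes $w(k)$ via the equivariant Riemann--Roch theorem in the Cartan (de Rham) model with $\zeta=\theta^\perp_{A_H}(\eta)+a$, $d_{A_H}a+i_\eta F_H=0$, $df=i_\eta\omega$, expands $w(k)/(kP_{L_0}(E_0,k))$ to order $k^{-2}$, and matches $-F_1(\mathcal{O}_{X_0})$ with the classical Futaki term and $F_2(E_0)-F_2(\mathcal{O}_{X_0})$ with the $\Lambda^2(F_H\wedge F_H)$ and $(\theta_H\zeta,\Lambda F_H)$ terms of \eqref{eq:alphafutakismooth}, the degree-zero hypothesis being used exactly as you say to kill the cross terms. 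No gaps.
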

\begin{proof}
To simplify the notation, we will drop the zero subscript in the smooth triple corresponding to the central fiber of the test configuration. We have to compute the coefficients of the normalized weight
\[
F(E,m) = \frac{w_m}{mP_L(E,m)} = F_0(E) + m^{-1} F_1(E) + m^{-2} F_2(E) + O(m^{-3})
\]
of the $\CC^*$-action on the sheaf cohomology of $E(m)$ though since the action is algebraic it is enough to compute
the weight of the induced $S^1$-action. We will use the Riemann-Roch formula and its equivariant version
for what we choose metrics $h$ on $L$ and $H$ on $E$ such that $\omega = \frac{\imag}{2\pi}F_h$ is a K\"{a}hler metric on $X$ and $\tr F_H = 0$ (we are allowed to suppose this since the degree of $E$ is zero). Averaging over $S^1$ if necessary we can suppose in addition that $h$ and $H$ are fixed by the $S^1$-action. Recall that we have expansions
\begin{align*}
\frac{P_L(E,m)}{m^n} & = d_0 + m^{-1} d_1 + m^{-2} d_2  + O(m^{-3})\\
\frac{w_m}{m^{n+1}} & = c_0 +  m^{-1} c_1 + m^{-2} c_2  + O(m^{-3}).
\end{align*}
A direct computation shows then that
\begin{equation}
\label{eq:F(E)1}
\begin{split}
F_0(L,E) & = \frac{c_0}{d_0}\\
F_1(L,E) & = \frac{c_1 d_0 - c_0 d_1}{d_0^2}\\
F_2(L,E) & = \frac{- d_1 F_1(E)}{d_0} + \frac{c_2 d_0 - c_0 d_2}{d_0^2}.
\end{split}
\end{equation}
By Riemann-Roch formula we have
\[
P_L(E,m) = \int_X ch(E \otimes L^m) Td(X) = \int_X \tr e^{m \omega
\Id + \frac{i}{2\pi}F_H} Td(X).
\]
Taking representatives $\rho_i \in \Omega^i(X)$ of the Todd
class of $X$, i.e.
\[
Td(X) = [1 + \rho_1 + \rho_2 + \ldots] \in H^*(X,\mathbb{Z}),
\]
we obtain coefficients
\begin{align*}
d_0 & = r \cdot \vol(X)\\
d_1 & = r \cdot \int_X \tr \Lambda \rho_1 \frac{\omega^n}{n!}\\
d_2 & = \frac{1}{2} \int_X \Lambda^2_\omega [\frac{-1}{8\pi^2} \tr
F_H \wedge F_H  + r \cdot \rho_2] \cdot \frac{\omega^n}{n!}.
\end{align*}

To compute the weight we use, as in \cite{D1}, the equivariant Riemann-Roch formula.
By this formula, $w_m$ is given by the coefficient of $t$ in
\[
\int_X ch(E \otimes L^m)(t) \cdot Td(X)(t),
\]
where now the Chern and Todd classes are regarded as equivariant
cohomology classes. Let $\zeta$ be holomorphic vector field generating the $S^1$-action on $E$ and $\eta$ the holomorphic vector field on $X$ that it covers. To make the calculation we use the de Rham model
of equivariant cohomology with the complex $(\Omega_X^*)^{S^1}[t]$ and differential $d + ti_\eta$. Now, writing
\[
\zeta = \theta^\perp_{A_H}(\eta) + a
\]
where $a \in \LieG_H$, we have relations (see~\eqref{eq:infinit-action-connections} and cf.~\cite[Section~2.2]{D1})
\[
d_{A_H} a + i_y F_H = 0; \qquad df = i_\eta \omega
\]
for some $f \in C^{\infty}(X,\RR)$, provided that $\zeta$ and $\eta$
are real holomorphic vector fields, $\eta$ lifts to a holomorphic
vector field on $L$ and $A_H$ is the Chern connection of $H$. Then
the equivariant Chern caracter $ch(E \otimes L^m)(t) \in H^2_{S^1}(X)$ is represented by
\[
ch(t) = \tr e^{m[\omega - t f] + \frac{i}{2\pi}[F_H + t a]}
\]
in this model. Let $Td(t) \defeq 1 + (\rho_1 + t R_1) + (\rho_2 + t
R_2) + \ldots$ be the representative for $Td(X)$, where $R_1$ is a
function on $X$ and $R_2$ is a $2$-form. Then
\begin{align*}
\int_X  & ch(t) \cdot Td(X)(t) = \int_X \tr e^{m (\omega \Id - tf) + \frac{i}{2\pi}(F_H + t a)}
\cdot Td(t) \\
& = \int_X \tr \sum_{l = 0}^{n+1} \frac{1}{l!}(m (\omega \Id - tf) +
\frac{i}{2\pi}(F_H + t a))^l \cdot \\
&\qquad \cdot [1 + (\rho_1 + t R_1) + (\rho_2
+ t R_2)] + O(t^2)\\
& = r \cdot \int_X (m^n \frac{\omega^n}{n!} + m^{n-1}\frac{\omega^{n-1}}{(n-1)!}) \wedge (1 + \rho_1 + \rho_2) \\
& + t \cdot r \cdot \int_X (m^n \frac{\omega^n}{n!} + m^{n-1}\frac{\omega^{n-1}}{(n-1)!}) \wedge (R_1 +
R_2) \\
& + t \cdot \int_X \tr [-f m^{n+1} \frac{\omega^n}{n!}\Id +
m^n(\frac{\imag}{2\pi}a \frac{\omega^n}{n!} - f \frac{\omega^{n-1}}{(n-1)!}\Id) \\
& + m^{n-1}(\frac{- a \Lambda F_H}{4\pi^2} + \frac{\imag a
\omega^{n-1}}{2 \pi (n-1)!} + \frac{f \Lambda^2 F_H^2}{16\pi^2}
\frac{\omega^n}{n!} \\
& - \frac{f \omega^{n-2}}{(n-2)!}\Id)] \wedge (1 + \rho_1 + \rho_2) + m^{n+1}O(t^2,m^{-3}).
\end{align*}
Hence
\begin{align*}
w_m & = \frac{d}{dt}_{|t=0}\int_X ch(L^m \otimes E)(t) \cdot
Td(X)(t) \\
& = m^{n+1} \big{(}-r \cdot \int_X f \frac{\omega^n}{n!}\big{)} + m^{n}
\big{(} \int_X (\tr \frac{\imag}{2\pi} a + r \cdot (R_1 - f \Lambda \rho_1))
\frac{\omega^n}{n!}\big{)} + \\
& + m^{n-1}\big{(} \int_X \tr \frac{\imag a}{2\pi} \cdot
\Lambda(\frac{\imag F_H}{2\pi} + \rho_1 \Id) \frac{\omega^n}{n!} +
r \cdot \int_X \Lambda R_2 \frac{\omega^n}{n!} + \\
& - \frac{1}{2} \int_X f \tr \Lambda^2 (\frac{-1}{8\pi^2} F_H^2 + \rho_2 \Id)\frac{\omega^n}{n!}
\big{)} + m^{n+1}O(m^{-3})
\end{align*}
and the coefficients of the weight are
\begin{equation}
\label{eq:Cs}
\begin{split}
c_0 & = - r \cdot \int_X f \frac{\omega^n}{n!},\\
c_1 & =  \int_X (\tr \frac{\imag}{2\pi} a + r \cdot(R_1 - f \Lambda \rho_1))
\frac{\omega^n}{n!},\\
c_2 & = \int_X \tr \frac{\imag a}{2\pi} \cdot
\Lambda(\frac{\imag F_H}{2\pi} + \rho_1 \Id) \frac{\omega^n}{n!} +
r \cdot \int_X \Lambda R_2 \frac{\omega^n}{n!},\\
& - \frac{1}{2} \int_X f \tr \Lambda^2 (\frac{-1}{8\pi^2} F_H^2 + \rho_2 \Id)\frac{\omega^n}{n!}.
\end{split}
\end{equation}
Note that taking traces in $d_{A_H} a + i_y F_H = 0$ we obtain that
$\tr a$ is constant. Now, if for any function $f \in C^{\infty}(X)$
we define its average as
$$
\widehat{f} = \frac{\int_X f \frac{\omega^n}{n!}}{\vol(X)}
$$
it follows from \eqref{eq:F(E)1} that
\begin{equation}
\label{eq:F(E)2}
\begin{split}
F_0(E) & = - \widehat{f}\\
F_1(E) & = \frac{\imag \tr a}{2 \pi r} + \widehat{R}_1 - \frac{1}{\vol(X)} \int_X f(\Lambda \rho_1 -
\widehat{\Lambda \rho_1})\frac{\omega^n}{n!},\\
F_2(E) & = \frac{-1}{4\pi^2\vol(X) r} \int_X \tr \big{(}a \Lambda F_H  - \frac{1}{4}f(\Lambda^2F_H^2 -
\widehat{\Lambda^2F_H^2})\big{)}\frac{\omega^n}{n!},\\
& - \widehat{\Lambda \rho_1} \cdot F_1(\mathcal{O}_X) + \widehat{\Lambda R_2} - \frac{1}{2\Vol(X)}\int_X
f(\Lambda^2\rho_2 - \widehat{\Lambda^2\rho_2})\frac{\omega^n}{n!},
\end{split}
\end{equation}
where $F_1(\mathcal{O}_X)$ has been obtained from the second formula setting $E = \mathcal{O}_X$. A similar manipulation with the $F_2$ term shows that
\[
F_2(E) - F_2(\mathcal{O}_X) = \frac{-\int_X \tr \big{(}a \Lambda F_H  - \frac{1}{4}f(\Lambda^2F_H^2 -
\widehat{\Lambda^2F_H^2})\big{)}\frac{\omega^n}{n!}}{4\pi^2\vol(X) r}.
\]
Note that $\rho_\omega/2$ also represents the cohomology class of $\rho_1$. Since the previous computation does not depend on the chosen representantive we can set $\rho_\omega/2 = \rho_1$. Then, we now from Lemma $2.2.3$ in \cite{D6} that $\widehat{R}_1 = 0$ so
\[
F_1(\mathcal{O}_X) = \frac{-1}{4\vol(X)} \int_X f(S_\omega - \widehat{S})\frac{\omega^n}{n!}
\]
where we have used the identity $\Lambda_\omega \rho_\omega = S_\omega/2$. Taking coupling constants $\alpha_0$, $k \alpha_1 \in \RR$ for the definition of $\cF_\alpha$ and setting $\alpha = \frac{r\pi^2\alpha_1}{\alpha_0}$ we get
\[
F_{k \alpha}(\mathcal{X},\cL,\cE) = \frac{-1}{4\vol(X)\alpha_0}\cF_\alpha(\zeta).
\qedhere
\]
\end{proof}

As a corollary of the previous result we obtain the following relation between the existence of solutions to the coupled equations \eqref{eq:CYMeq2} and product test configurations for the triple.

\begin{corollary}\label{cor:ASVanishingalphainvariant}
Let $(X,L,E)$ be a triple consisting of a smooth compact polarized variety $(X,L)$ and a holomorphic vector bundle $E$ over $X$ with rank $r$ and degree zero. Suppose that there exists a solution $(\omega,H)$ to the coupled equations \eqref{eq:CYMeq2} with $\omega \in c_1(L)$ and positive coupling constants $\alpha_0$, $\alpha_1 \in \RR$. Let $\alpha = \frac{r\pi^2\alpha_1}{\alpha_0}$. Then, given a positive integer $k > 0$, and a product test configuration $(\mathcal{X},\cL,\cE)$ for the triple $(X,L^k,E)$, we have
$$
F_{k\alpha}(\mathcal{X},\cL,\cE) = 0.
$$
\end{corollary}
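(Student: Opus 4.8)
The plan is to deduce the statement from the differential-geometric formula of Proposition~\ref{prop:extendedalgebraicfutaki} together with the obstruction result of Proposition~\ref{prop:futakiobstruction}, the point being that a product test configuration has smooth central fibre isomorphic to the original triple, so that both propositions genuinely apply.

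First I would observe that any product test configuration $(\mathcal{X},\cL,\cE)$ for $(X,L^k,E)$ has smooth central fibre: by definition $\mathcal{X}\cong X\times\CC$ with the induced $\CC^*$-action and $\cL$, $\cE$ are the pullbacks of $\CC^*$-linearised sheaves $L^k$, $E$, so $(X_0,L_0,E_0)$ is just the smooth triple $(X,L^k,E)$ equipped with a $\CC^*$-action on $X$ and $\CC^*$-linearisations on $L^k$ and $E$. Consequently Proposition~\ref{prop:extendedalgebraicfutaki} applies and yields
\[
F_{k\alpha}(\mathcal{X},\cL,\cE)=\frac{-1}{4\vol(X)\alpha_0}\,\cF_{\alpha}(\zeta),
\]
where $\zeta\in\Lie\Aut(E^c,I)$ is the holomorphic vector field on the frame bundle $E^c$ of $E$ generating the induced $S^1\subset\CC^*$ action, and $\cF_\alpha$ is the character of $\Lie\Aut(E^c,I)$ attached to the holomorphic structure $I$, the K\"ahler class $c_1(L^k)$, the invariant pairing $-\tr$ on the Lie algebra of $\GL(r,\CC)$, and the pair of coupling constants $\alpha_0$, $k\alpha_1$.

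Next I would produce, from the hypothesis, a solution of the coupled equations~\eqref{eq:CYMeq2} for the rescaled data. If $(\omega,H)$ solves~\eqref{eq:CYMeq2} with $\omega\in c_1(L)$ and constants $\alpha_0,\alpha_1$, then replacing $\omega$ by $k\omega\in c_1(L^k)$ and using the homogeneity relations $S_{k\omega}=k^{-1}S_\omega$ and $\Lambda_{k\omega}=k^{-1}\Lambda_\omega$, one checks that $(k\omega,H)$ solves~\eqref{eq:CYMeq2} with $k\omega\in c_1(L^k)$ and constants $\alpha_0$, $k\alpha_1$ (the scalar equation gets multiplied through by $k^{-1}$, which only rescales the topological constant $c$ accordingly). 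In particular $\cKt$ for $(E^c,I)$ over $(X,J)$ with K\"ahler class $c_1(L^k)$ contains a solution for the constants $\alpha_0$, $k\alpha_1$, so by Proposition~\ref{prop:futakiobstruction} the character $\cF_\alpha$ appearing above vanishes identically on $\Lie\Aut(E^c,I)$. Hence $\cF_\alpha(\zeta)=0$ and the displayed identity gives $F_{k\alpha}(\mathcal{X},\cL,\cE)=0$. Alternatively one may avoid rescaling the metric altogether by invoking Remark~\ref{rem:Kstable-L^k} to reduce immediately to the case $k=1$, where the character $\cF_\alpha$ of $(X,L,E)$ with constants $\alpha_0,\alpha_1$ vanishes directly by hypothesis.

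The argument is short and its substance lies entirely in the two propositions cited; the only point to watch is bookkeeping, namely the compatibility of the metric rescaling $\omega\mapsto k\omega$ with the change of coupling constants $\alpha_1\mapsto k\alpha_1$ (so that $\tfrac{r\pi^2(k\alpha_1)}{\alpha_0}=k\alpha$, matching the index on $F_{k\alpha}$), and the identification of the $\CC^*$-action on the central fibre of the product configuration with a one-parameter subgroup of $\Aut(E^c,I)$ whose $S^1$-generator is the vector field $\zeta$ of Proposition~\ref{prop:extendedalgebraicfutaki}. I do not expect a genuine obstacle here, since for a product configuration there is no degeneration to control and the central fibre is literally the original smooth triple.
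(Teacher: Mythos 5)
Your proposal is correct and follows essentially the same route as the paper: the author likewise notes that $(k\omega,H)$ solves the coupled equations for the constants $(\alpha_0,k\alpha_1)$ with $k\omega\in c_1(L^k)$, that the central fibre of a product configuration is the smooth triple $(X,L^k,E)$ itself, and then combines Proposition~\ref{prop:extendedalgebraicfutaki} with Proposition~\ref{prop:futakiobstruction}. Your extra bookkeeping on the homogeneity of $S_\omega$ and $\Lambda_\omega$ under $\omega\mapsto k\omega$ is a correct elaboration of what the paper leaves implicit.
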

\begin{proof}
Note that given $k > 0$, the initial solution $(\omega,H)$ to \eqref{eq:CYMeq2} provides another solution $(k \cdot \omega,H)$ on $(X,L^k,E)$ with coupling constants $(\alpha_0,k\alpha_1)$. Note also that since $(\mathcal{X},\cL,\cE)$ is a product test configuration, the central fiber is isomorphic to $(X,L^k,E)$. The result follows now from Proposition \ref{prop:extendedalgebraicfutaki} and Proposition \ref{prop:futakiobstruction}.
\end{proof}

Proposition \ref{prop:extendedalgebraicfutaki} shows that $F_{k\alpha}$ is, up to multiplicative constant factor, equal to $\cF_\alpha(\zeta)$ and hence to the moment map \eqref{eq:thm-muX} whose zero locus corresponds to the solutions of
the coupled equations \eqref{eq:CYMeq2}. Once more this fits in the general picture of the Kempf-Ness Theorem explained in \secref{sec:KNtheorem} if we think of $F_{k\alpha}$ as the Mumford weight (see \secref{subsec:KNproof} and cf. \eqref{eq:weightmmap1}) of a suitable $1$-parameter subgroup associated to each test configuration. This interpretation of a test configuration is the topic of the next section.

\section{Test configurations as 1-parameter subgroups}
\label{sec:ASTconfigoneparameter}

Test configurations for triples $(X,L,E)$ can be interpreted as $1$-parameter
subgroups of a certain group acting on a relative Quot-scheme over a Hilbert
scheme. This generalizes a similar description of test configurations for
polarized varieties $(X,L)$ in terms of $1$-parameter subgroups of a certain
group acting on a Hilbert scheme, first given by Donaldson~\cite{D4} and
studied further by Ross \& Thomas~\cite{RRT2}. When $\dim_\CC X=1$ and $L$ is
a power of the canonical bundle, Pandharipande~\cite{Pd} used Geometric
Invariant Theory, for the same group action that we consider on a relative Quot-scheme to
compactify the moduli space of pairs consisting of a smooth curve of genus
greater than one and a semistable vector bundle over the curve.

Let $H$ and $P$ be polynomials of the same degree $n$ with positive leading
coefficients.
Let $V_k$ and $W_k$ be complex vector spaces of dimensions $H(k)$ and $P(k)$,
respectively, for each $k$ large enough such that $H(k)$ and $P(k)$ are positive.
Let
\[
  \Hilb=\Hilb^{H'}_{\PP}
\]
be the Hilbert scheme parametrising closed subschemes of the projective space
$\PP=\PP V_k$ with Hilbert polynomial $H'(h)\defeq H(kh)$. Let
$\XXX\subset\PP\times\Hilb$ be the universal family of subschemes of
$\PP$ parametrised by $\Hilb$ and $\pi_H\colon \XXX\to\Hilb$ the composition
of the closed embedding $\XXX\hra\PP\times\Hilb$ and the canonical
projection $\PP\times\Hilb\to\Hilb$.
Then $\XXX$ is a relative projective scheme over $\Hilb$ via the map $\pi_H$,
and $\LLL = \pr^*\cO_{\PP}(1)$ is relatively very ample, where
$\pr\colon\XXX\to\PP$ is the composition of $\XXX\hra\PP\times\Hilb$ and
the projection $\PP\times\Hilb\to\PP$. Furthermore, the fibre
$X_x=\pi_H^{-1}(x)$ over a point $x\in\Hilb$ is the closed subscheme of
$\PP$ parametrised by $x$, and the restriction $L_x = \LLL_{|X_x}$ is very
ample. Let
\[
  \Quot=\Quot^{P'}_{\XXX/\Hilb}(W_k \otimes \LLL^{-1})
\]
be the relative Quot-scheme which parametrises quotient sheaves of the fibres
of $W_k \otimes \LLL^{-1}$ over the fibres of $\pi_H$ with Hilbert
polynomial $P'(h) \defeq P(kh)$ with respect to $\LLL$. Let
\[
  \varphi\colon W_k \otimes q^*\LLL \surj \EEE
\]
be the universal family of quotient sheaves parametrised by $\Quot$, where
$q$ is the projection $\XXX\times\Quot\to\XXX$.
Note that there is a natural projective map $\kappa \colon \Quot \to
\Hilb$.
Then $\EEE$ is a coherent sheaf over
$\XXX\times\Quot$, flat over $\Quot$, and the fibre of $\varphi$ over a
point $e\in\Quot$ is the quotient sheaf $\varphi_e\colon W_k\otimes L_x
\surj E_e$ parametrised by $e$, where $x=\kappa(e)$.

The group $\GL(V_k)$ acts canonically on $\PP$, inducing an action on
$\Hilb$ and hence on $\PP\times\Hilb$. As the latter preserves $\XXX$,
$\GL(V_k)$ also acts on $\Quot$ and $\pi_H$ is clearly
$\GL(V_k)$-equivariant. As $\Quot$ also has a canonical $\GL(W_k)$-action
and both actions commute, the product $G_k = \GL(V_k) \times \GL(W_k)$ also
acts on $\Quot$.

Let $(X,L,E)$ be a triple consisting of a projective scheme, an ample invertible
sheaf and a coherent sheaf, such that the Hilbert polynomials of $(X,L)$ and
$E$ with respect to $L$ are $H$ and $P$, respectively, that is,
\[
  H(k)=\chi(X, L^k) \text{ and } P(k)=\chi(X, E\otimes L^k) \text{ for all $k$.}
\]
Let $k$ be large enough that $L^k$ is very ample,
\[
  H^i(X,L^k)=0 \text{ and } H^i(X,E\otimes L^k)=0 \text{ for all $i>0$.}
\]
and $E\otimes L^k$ is globally generated. Then there are isomorphisms
\begin{equation}
\label{eq:Tconfigoneparameter-1}
  H^0(X,L^k) \cong V_k^*
  \text{ and }
  H^0(X,E\otimes L^k) \cong W_k^*.
\end{equation}

\begin{proposition}
Under these assumptions, there is a bijection between the set of 1-parameter subgroups of $G_k$, up to the choice of a pair of isomorphisms~\eqref{eq:Tconfigoneparameter-1}, and the set of test  configurations $(\mathcal{X}, \cL, \cE)$ for $(X,L^k,E)$ with $\cL$ very ample, $\cE\otimes\cL$ globally generated,
$$
R^i\pi_*\cL = R^i\pi_*(\cE\otimes\cL)=0
$$
for $i>0$ and surjective evaluation map $\pi_*(\cE\otimes\cL)\otimes\cL^{-1}\to\cE$, up to the choice of isomorphisms $H^0(X_0,L_0)\cong V_k^*$ and $H^0(X_0,E_0\otimes L_0)\cong W_k^*$
(with $\pi$ as in Definition~\textup{\ref{def:testconfiguration}}).
\end{proposition}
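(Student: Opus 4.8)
The plan is to mimic the arguments of Donaldson~\cite{D4} and Ross \& Thomas~\cite{RRT2}, which identify test configurations with orbits of $1$-parameter subgroups, but to run the construction \emph{simultaneously} on the Hilbert scheme $\Hilb$ and on the relative Quot-scheme $\Quot$, as in Pandharipande~\cite{Pd}. Throughout, fix a pair of isomorphisms~\eqref{eq:Tconfigoneparameter-1}; since $L^k$ is very ample and $E\otimes L^k$ is globally generated with the assumed vanishings, the complete linear system embeds $X\hookrightarrow\PP=\PP V_k$ with Hilbert polynomial $H'$, giving a point $x_0\in\Hilb$, and the surjection $W_k\otimes L^{-k}\surj E$ on $X_{x_0}$ gives a point $e_0\in\Quot$ lying over $x_0$, with $\kappa(e_0)=x_0$.

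\textbf{From $1$-parameter subgroups to test configurations.} Given $\lambda\colon\CC^*\to G_k=\GL(V_k)\times\GL(W_k)$, consider the morphism $\CC^*\to\Quot$, $t\mapsto\lambda(t)\cdot e_0$. As $\Quot$ is projective over $\Hilb$, hence projective, the valuative criterion extends it to $\bar\lambda\colon\CC\to\Quot$. Pulling the universal subscheme $\XXX$ back along $\kappa\circ\bar\lambda$ and the universal quotient $\EEE$ back along $\bar\lambda$ produces $(\mathcal{X},\cL,\cE)$ over $\CC$; equivalently, $\mathcal{X}$ is the closure inside $\PP\times\CC$ of $\{(\xi,t):\xi\in\lambda(t)\cdot X,\ t\neq0\}$, and the $\CC^*$-action $s\cdot(\xi,t)=(\lambda(s)\cdot\xi,st)$ preserves this closure and covers multiplication on $\CC$, using on the $\cE$-side the $\GL(W_k)$-linearisation. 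Flatness of $\EEE$ over $\Quot$ gives flatness of $\cE$ over $\CC$; the fibres over $t\neq0$ are isomorphic to $(X,L^k,E)$ by $\CC^*$-equivariance; and relative very ampleness of $\LLL$, global generation of $\EEE\otimes\LLL$, the vanishing of $R^i(\pi_H)_*$-type sheaves (for $k$ chosen large enough, via cohomology and base change) and the surjectivity of the evaluation map are all inherited by the pullback, so $(\mathcal{X},\cL,\cE)$ is a test configuration of the asserted type.

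\textbf{From test configurations to $1$-parameter subgroups.} Conversely, let $(\mathcal{X},\cL,\cE)$ be a test configuration with the listed properties. The vanishing $R^i\pi_*\cL=0$ for $i>0$, together with base change, makes $\pi_*\cL$ a vector bundle of rank $H(k)$ on $\CC$, carrying a $\CC^*$-equivariant structure, and relative very ampleness embeds $\mathcal{X}\hookrightarrow\PP_\CC(\pi_*\cL)$ as a flat family of subschemes. Since $\CC$ is affine with trivial Picard group, $\pi_*\cL$ is a trivial bundle; a trivialisation compatible over the central fibre with the fixed isomorphism $H^0(X_0,L_0)\cong V_k^*$ turns the embedded family into a $\CC^*$-equivariant morphism $\CC\to\Hilb$. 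Likewise $\pi_*(\cE\otimes\cL)$ is a trivial rank-$P(k)$ bundle whose pullback surjects onto $\cE\otimes\cL$; twisting down by $\cL$ and composing with the surjective evaluation map gives a $\CC^*$-equivariant morphism $\CC\to\Quot$ lifting the previous one. Restricting the two $\CC^*$-equivariant trivial bundles to the fixed point $0\in\CC$ and reading off their weight decompositions yields a cocharacter $\lambda\colon\CC^*\to G_k$; here one uses the elementary fact that an equivariant structure on a trivial bundle over $\CC$, encoded by a transition automorphism $A(t,s)$ with $A(t't,s)=A(t',ts)A(t,s)$, restricts at $s=0$ to an honest homomorphism $t\mapsto A(t,0)$.

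\textbf{Mutual inverseness and the main obstacle.} That the two assignments are inverse to one another (modulo the indicated choices of isomorphisms) reduces to a rigidity statement: a flat $\CC^*$-equivariant family over $\CC$ whose restriction to $\CC^*$ is $\CC^*$-equivariantly isomorphic to $(X,L^k,E)\times\CC^*$ equals the scheme-theoretic (flat) closure of that restriction, and after the $\PP_\CC(\pi_*\cL)$-embedding this closure is exactly the $\lambda$-orbit-closure of the forward construction. Flatness of $\cL$ and $\cE$ over $\CC$, explicitly demanded in Definition~\ref{def:testconfiguration}, is what licenses ``closure determines the family'', and the very ampleness, generation and vanishing hypotheses are precisely what is needed to realise a nice test configuration as a pulled-back universal family. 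The hard part will be this last identification: verifying that the $\CC^*$-equivariant embedding $\mathcal{X}|_{\CC^*}\hookrightarrow\PP_{\CC^*}(\pi_*\cL|_{\CC^*})$ intertwines the given $\CC^*$-action with the $\lambda$-action recovered from the central fibre — equivalently, that the equivariant structure is constant in the $\CC$-direction after trivialising — while keeping the $\GL(V_k)$- and $\GL(W_k)$-parts and their respective linearisations on $L_0$ and $E_0$ correctly synchronised. The remaining points (flatness, the relevant vanishings for $k\gg0$, compatibility of the Hilbert polynomials $H'(h)=H(kh)$ and $P'(h)=P(kh)$) are routine bookkeeping.
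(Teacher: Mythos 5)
Your proposal follows essentially the same route as the paper: fix the isomorphisms~\eqref{eq:Tconfigoneparameter-1} to get points $x\in\Hilb$, $e\in\Quot$, extend the orbit maps $t\mapsto\lambda(t)\cdot e$ over $0$ by properness and pull back the universal families, and conversely use the vanishing hypotheses to trivialise $\pi_*\cL$ and $\pi_*(\cE\otimes\cL)$ as $\CC^*$-linearised bundles and read off $\lambda=(\lambda_0,\lambda_1)$ from the central fibre. The step you flag as ``the hard part'' is dispatched in the paper more cheaply than you anticipate: once $f\colon\CC\to\Hilb$ and $g\colon\CC\to\Quot$ are $\CC^*$-equivariant and satisfy $(X,L^k,E)\cong(X_x,L_x,E_e)$ with $x=f_1$, $e=g_1$, equivariance forces $f_t=\lambda_0(t)\cdot x$ and $g_t=\lambda_1(t)\cdot e$ on $\CC^*$, and separatedness of $\Hilb$ and $\Quot$ over $\CC$ makes the limit at $0$ unique, which is all that is needed to identify the two constructions.
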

\begin{proof}
To see this, fix the isomorphisms~\eqref{eq:Tconfigoneparameter-1}. Then the
left-hand isomorphism in~\eqref{eq:Tconfigoneparameter-1} and the linear
system $\lvert L^k\rvert$ determine a closed embedding
\begin{equation}
\label{eq:Tconfigoneparameter-2}
  i_x\colon X\hra \PP.
\end{equation}
Since $E\otimes L^k$ is globally generated, the evaluation map $H^0(E\otimes
L^k)\otimes L^{-k} \surj E$ is an epimorphism which, together with the
right-hand isomorphism in~\eqref{eq:Tconfigoneparameter-1}, determine another
one
\begin{equation}
\label{eq:Tconfigoneparameter-3}
  \varphi_e \colon W_k \otimes L^{-k} \surj E.
\end{equation}
Since $\cO_{\PP}(1)_{|X}=L^k$, the embedding $i_x$
of~\eqref{eq:Tconfigoneparameter-2} and the epimorphism $\varphi_e$
of~\eqref{eq:Tconfigoneparameter-3} define points
\begin{equation}
\label{eq:Tconfigoneparameter-4}
  x\in\Hilb, \quad e\in\Quot, \quad \text{with } \kappa(e)=x,
\end{equation}
and so we have identifications $X = X_x, L^k = L_x$ and $E=E_e$.
Note that each pair of isomorphisms~\eqref{eq:Tconfigoneparameter-1} defines a
point $e$ as in~\eqref{eq:Tconfigoneparameter-4} and the set of all points
defined by the set of all the isomorphisms~\eqref{eq:Tconfigoneparameter-1} is
an orbit under the $G_k$-action on $\Quot$.

Let $\lambda$ be a 1-parameter subgroup of $G_k$, i.e. a morphism of groups
\[
  \lambda=(\lambda_0,\lambda_1) \colon \CC^* \to G_k,
\]
where $\lambda_0\colon \CC^*\to\GL(V_k)$, $\lambda_1 \colon \CC^* \to
\GL(W_k)$. Let $f_t=\lambda_0(t)\cdot x$, $g_t=\lambda(t)\cdot e$, for
$t\in\CC^*$. As $\Hilb$ and $\Quot$ are proper over $\CC$,
\begin{equation}
\label{eq:Tconfigoneparameter-6}
  f_0= \lim_{t\to 0} f_t \in \Hilb, \quad
  g_0= \lim_{t\to 0} g_t \in \Quot
\end{equation}
exist, and $f_t$ and $g_t$ extend to morphisms $f\colon \CC\to \Hilb \colon
t\mapsto f_t$ and $g\colon \CC\to \Quot \colon t\mapsto g_t$, which satisfy
$f=\kappa\circ g$. 
Using now $f$ to pull back the universal family $\XXX \subset \PP
\times \Hilb$ and the invertible sheaf $\LLL$, and $g$ to pull back the
universal quotient sheaf $\EEE$, we obtain a test configuration
\[
  (\mathcal{X}, \cL, \cE) = (\XXX \times_{\Hilb} \CC, f^*\LLL, g^*\EEE)
\]
for $(X,L^k,E)$. By construction, $\cL$ is very ample over the canonical
projection $\pi\colon \mathcal{X} \to\CC$, $R^i\pi_*\cL =
R^i\pi_*(\cE\otimes\cL)=0$ for $i>0$ and the natural evaluation map
$\pi_*(\cE\otimes\cL)\otimes\cL^{-1} \to \cE$ is surjective.

For the converse, let $(\mathcal{X}, \cL, \cE)$ be a test configuration for
$(X,L^k,E)$, with $\cL$ very ample over $\pi\colon \mathcal{X} \to\CC$,
$$
R^i\pi_*\cL = R^i\pi_*(\cE\otimes\cL)=0
$$
for $i>0$ and surjective evaluation
map $\pi_*(\cE\otimes\cL)\otimes\cL^{-1} \to \cE$. These conditions clearly
imply that $\pi_*\cL$ and $\pi_*(\cE \otimes \cL)$ are vector bundles of ranks
$H(k)$ and $P(k)$, respectively. Since the central fibre $(X_0,L_0,E_0)$ has
an induced $\CC^*$-action, there are also induced $\CC^*$-actions on
$H^0(X_0,L_0)$ and $H^0(X_0,E_0\otimes L_0)$, so each choice of isomorphisms
\begin{equation}
\label{eq:Tconfigoneparameter-5}
H^0(X_0,L_0)\cong V_k^* \text{ and } H^0(X_0,E_0\otimes L_0)\cong W_k^*
\end{equation}
defines a 1-parameter subgroup $\lambda = (\lambda_0, \lambda_1) \colon \CC^*
\to G_k$.

To see that $(\mathcal{X}, \cL, \cE)$ is recovered from $\lambda$, note that
the assumptions on $\cL$ provide a $\CC^*$-equivariant closed embedding
$\mathcal{X} \subset \PP_\CC(\pi_*\cL)$ of schemes over $\CC$ with
$\cL\cong\cO(1)_{|\mathcal{X}}$, where $\cO_{\PP_\CC}(1)$ is the standard
invertible sheaf on the projective space bundle $\PP_\CC(\pi_*\cL)$ over $\CC$
associated to $\pi_*\cL$.
Now, $\pi_*\cL$ and $\pi_*(\cE \otimes \cL)$ are trivial as vector bundles (as
they are defined over $\CC$), with possibly non-trivial $G_k$-actions and
central fibres
\[
  (\pi_*\cL)_0=H^0(X_0,L_0) \text{ and }
  (\pi_*(\cE\otimes\cL))_0=H^0(X_0,E_0\otimes L_0),
\]
since $\cL$ and $\cE$ are flat over $\CC$. Hence two trivialisations of these
vector bundles together with the isomorphisms~\eqref{eq:Tconfigoneparameter-5}
define isomorphisms
\begin{equation}
\label{eq:Tconfigoneparameter-7}
  \pi_*\cL\cong V_k\times\CC, \quad \pi_*(\cE\otimes\cL) \cong W_k\times\CC,
\end{equation}
of $\CC^*$-linearised vector bundles, where $\CC^*$ acts on $V_k$ and $W_k$
via $\lambda_0$ and $\lambda_1$, and $V_k\times\CC$ and $W_k\times\CC$ are
vector bundles over $\CC$ with their diagonal $\CC^*$-actions. Then the
left-hand isomorphism~\eqref{eq:Tconfigoneparameter-7} induces an isomorphism
of $\CC^*$-linearised varieties $(\PP_\CC(\pi_*\cL),\cO_{\PP_\CC}(1)) \cong
(\PP\times\CC,\pr^*\cO_\PP(1))$ over $\CC$, where $\pr\colon
\PP\times\CC\to\PP$ is the canonical projection. This isomorphism together
with the closed embedding $\mathcal{X} \subset \PP_\CC(\pi_*\cL)$ provide a
$\CC^*$-equivariant map $f\colon \CC\to\Hilb\colon t\mapsto f_t$, such that
\[
  (\mathcal{X},\cL) \cong f^*(\XXX,\LLL) \text{ as $\CC^*$-linearised schemes.}
\]
Furthermore, the right-hand isomorphism~\eqref{eq:Tconfigoneparameter-7}
together with the surjective evaluation map $\pi_*(\cE\otimes\cL) \otimes
\cL^{-1} \surj \cE$ provide a $\CC^*$-equivariant surjective morphism
$W_k\otimes\cL^{-1} \surj \cE$, which induces a $\CC^*$-equivariant map
$g\colon \CC\to \Quot\colon t\mapsto g_t$ such that $f=\kappa \circ g$ and
\[
  \cE \cong g^*\EEE \text{ as $\CC^*$-linearised sheaves.}
\]
In particular, $(X,L^k,E)\cong (X_x,L_x,E_e)$ where $x= f_1$ and $e=g_1$, so
\[
  f_t = \lambda_0(t) \cdot x \text{ and } g_t=\lambda_1(t) \cdot e
  \text{ for all $t\in\CC^*$,}
\]
because $f$ and $g$ are $\CC^*$-equivariant, and therefore $(\mathcal{X}, \cL,
\cE)$ is recovered from $\lambda$ because $\Hilb$ and $\Quot$ are projective
and hence separated over $\CC$, so the limits~\eqref{eq:Tconfigoneparameter-6}
are unique. This concludes the construction of the bijection between
1-parameter subgroups of $G_k$ and test configurations.
\end{proof}

\begin{remark}\label{rem:lift-test-config}
As already mentioned, the above correspondence between test configurations for a triple $(X,L^k,E)$ and 1-parameter subgroups of $G_k$ generalizes a similar correspondence between test configurations for a pair $(X,L^k)$ and 1-parameter subgroups of $\GL(V_k)$ (see~\cite{D4,RRT2}). Let $(X,L,E)$ be as in Definition~\ref{def:testconfiguration} and $k$ be large enough that $L^k$ is very ample, $H^i(X,L^k)=H^i(X,E\otimes L^k)=0$ for all $i>0$ and $E\otimes L^k$ is globally generated. Let $(\mathcal{X},\cL)$ be a test configuration for $(X,L^k)$. Then we can lift $(\mathcal{X},\cL)$ to a test configuration $(\mathcal{X},\cL,\cE)$ for $(X,L^k,E)$. Furthermore, if $(\mathcal{X},\cL)$ is a product or a trivial configuration, then it can be respectively lifted to a product or a trivial configuration for $(X,L^k,E)$. This follows simply because a test configuration for $(X,L^k)$ corresponds to a 1-parameter subgroup $\lambda_0 \colon \CC^* \to \GL(V_k)$ and so a possible lift is the one associated to the 1-parameter subgroup $\lambda=(\lambda_0,\Id_{W_k}) \colon \CC^* \to G_k$. When $(\mathcal{X},\cL)$ is a product test configuration then $\lambda_0$ fixes the point $x \in \Hilb$, defined by $(X,L^k)$ as before. Then $\lambda$ fixes $\epsilon \in \Quot$, defined by $(X,L^k,E)$, and so the associated test configuration is a product. The statement about trivial configurations can now be readily checked.
\end{remark}

We will now see that $\alpha$-K-stability reduces to K-stability of the underlying polarised variety in the limit case $\alpha=0$ (note that Definition~\ref{def:extKstable} also makes sense for $\alpha=0$). We will also prove that $\alpha$-K-semistability for all $0<\alpha\ll 1$ implies $K$-semistability of the polarised variety $(X,L)$. In the following proposition, K-(semi/poly)stability for a polarised variety (see~\secref{subsec:cscKalgKstab}) is defined as first introduced by Donaldson~\cite{D4} (after Tian~\cite{T2}), but in the slightly different formulation of Ross\&Thomas~\cite[Definition~2.4]{RRT1}.

\begin{proposition}\label{prop:stab-alpha=0}
A triple $(X,L,E)$ is $\alpha$-K-(semi/poly)stable, with $\alpha=0$, if and only if the polarised variety $(X,L)$ is algebraically K-(semi/poly)stable. If $(X,L,E)$ is $\alpha$-K-semistable for all $0<\alpha\ll 1$, then $(X,L)$ is algebraically K-semistable.
\end{proposition}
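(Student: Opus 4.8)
The plan is to exploit the fact that, when $\alpha=0$, the $\alpha$-invariant collapses to the classical generalized Futaki invariant of the polarised scheme underlying the triple — which does not involve the sheaf $\cE$ at all — and then to transport test configurations between the triple $(X,L^k,E)$ and the pair $(X,L^k)$ by means of the dictionary recorded in Remark~\ref{rem:lift-test-config}. The first, purely formal, point is that for any test configuration $(\mathcal{X},\cL,\cE)$ of $(X,L^k,E)$ the expansion~\eqref{eq:defF(W,k)} with $E_0$ replaced by $\mathcal{O}_{X_0}$ is literally the expansion~\eqref{eq:defF(W,k)cscK} computing the generalized Futaki invariant $-F_1$ of the test configuration $(\mathcal{X},\cL)$ for $(X,L^k)$ in the sense of Definition~\ref{def:extKstablecscK} (since $\mathcal{O}_{X_0}\otimes L_0^h=L_0^h$ and the relevant Hilbert polynomial is that of $(X_0,L_0)$). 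Consequently, by~\eqref{eq:extendedfutaki},
\[
  F_{k\alpha}(\mathcal{X},\cL,\cE)=-F_1(\mathcal{O}_{X_0})-k\alpha\bigl(F_2(E_0)-F_2(\mathcal{O}_{X_0})\bigr),
\]
so that at $\alpha=0$ one has $F_{k\alpha}(\mathcal{X},\cL,\cE)=-F_1(\mathcal{O}_{X_0})$, a number depending only on $(\mathcal{X},\cL)$ and equal to its generalized Futaki invariant.

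Next I would set up the dictionary. Forgetting $\cE$ sends a test configuration of $(X,L^k,E)$ to a test configuration of $(X,L^k)$, carrying product (resp.\ trivial) configurations to product (resp.\ trivial) ones; conversely, by Remark~\ref{rem:lift-test-config} every test configuration of $(X,L^k)$ — regarded as a $1$-parameter subgroup $\lambda_0$ of $\GL(V_k)$ — lifts, via $\lambda=(\lambda_0,\Id_{W_k})$ on $G_k=\GL(V_k)\times\GL(W_k)$, to a test configuration of $(X,L^k,E)$, again preserving the product and trivial labels. Since $-F_1(\mathcal{O}_{X_0})$ is invariant under both operations, asking that $F_{k\alpha}\ge 0$ at $\alpha=0$ for all $k$ and all test configurations of $(X,L^k,E)$ is the same as asking that the generalized Futaki invariant of every test configuration of every $(X,L^k)$ be $\ge 0$, i.e.\ that $(X,L)$ be K-semistable. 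Running the same argument with the strict inequality and restricting to non-trivial (resp.\ non-product) test configurations yields the K-stable (resp.\ K-polystable) cases; here one works throughout with the Ross--Thomas form of the definitions (Definition~\ref{def:extKstable} and \cite[Definition~2.4]{RRT1}), so that the almost-trivial test configurations that would otherwise upset the strict inequalities are discarded on both sides at once.

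For the statement about small positive $\alpha$ I would use that the displayed formula is affine in $\alpha$: if $(X,L,E)$ is $\alpha$-K-semistable for every $\alpha$ in some interval $(0,\alpha_0)$, then for each fixed $k$ and each test configuration $(\mathcal{X},\cL,\cE)$ of $(X,L^k,E)$ one has $F_{k\alpha}(\mathcal{X},\cL,\cE)\ge 0$ for all such $\alpha$, whence, letting $\alpha\to 0^+$, $-F_1(\mathcal{O}_{X_0})\ge 0$. Thus $(X,L,E)$ is $\alpha$-K-semistable with $\alpha=0$, and the second paragraph delivers the K-semistability of $(X,L)$.

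The step I expect to be the genuine obstacle is the bookkeeping of product and trivial configurations in the dictionary: forgetting $\cE$ can turn a non-product (or non-trivial) test configuration of the triple into a product (or trivial) one of the pair — this happens precisely when the non-triviality lives only in the $\CC^*$-linearisation of $\cE$ or in a degeneration of the sheaf over a fixed base — so the poly- and stable cases of the equivalence genuinely depend on working with the refined (Ross--Thomas) notion rather than Donaldson's original one; the semistable case (which is all the final assertion needs) is the clean one. One should also verify the routine point that the configuration produced by the lift of Remark~\ref{rem:lift-test-config} satisfies all of Definition~\ref{def:testconfiguration} — flatness of $\cE$ over $\CC$, ampleness of $\cL$, and the relevant vanishing and global-generation hypotheses.
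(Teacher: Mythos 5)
Your proposal is correct and follows essentially the same route as the paper: the same key observation that at $\alpha=0$ the invariant \eqref{eq:extendedfutaki} collapses to $-F_1(\mathcal{X},\cL)$, the same transport of test configurations via Remark~\ref{rem:lift-test-config} (the paper passes to $\cL^l$ with $l\gg 0$ so that the lifting hypotheses hold, using $F_1(\mathcal{X},\cL^l)=F_1(\mathcal{X},\cL)$), and for the small-$\alpha$ statement the paper's contrapositive via a lifted destabilizing configuration is equivalent to your limit argument in the affine parameter $\alpha$. The bookkeeping subtlety you flag in the stable/polystable cases of the ``if'' direction --- a non-product configuration of the triple whose underlying base configuration is trivial has vanishing invariant at $\alpha=0$ --- is real, and is precisely the point the paper's proof elides with ``follows immediately''.
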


\begin{proof}
Let $(\mathcal{X},\cL,\cE)$ be a test configuration for $(X,L^k,E)$. Note that, in the limit case $\alpha=0$, $F_{\alpha}(\mathcal{X},\cL,\cE)$ given by~\eqref{eq:extendedfutaki} is the Futaki invariant $-F_1(\mathcal{X},\cL)$, as defined by Donaldson~\cite[\S 2.1]{D4} but with opposite sign. The ``if'' part follows immediately from this observation. To prove the ``only if'' part for K-semistability let $(\mathcal{X},\cL)$ be a test configuration for $(X,L^k)$. Taking $l \gg 0$ large enough that Remark~\ref{rem:lift-test-config} applies $(\mathcal{X},\cL^l)$ lifts to a test configuration $(\mathcal{X},\cL^l,\cE)$ for $(X,L^{lk},E)$. Then, in the limit case $\alpha=0$,
$$
F_{lk\alpha}(\mathcal{X},\cL^l,\cE) = - F_1(\mathcal{X},\cL^l) = - F_1(\mathcal{X},\cL),
$$
and so the ``only if'' part holds for K-semistability. The ``only if'' parts for K-stability and K-polystability now follow from the results in Remark~\ref{rem:lift-test-config} about lifts of product and trivial configurations.

To prove the second part of the statement suppose that $(X,L)$ is not K-semistable. Then there exists a destabilizing test configuration $(\mathcal{X},\cL)$ for $(X,L^k)$ and taking $l\gg 0$ we can lift $(\mathcal{X},\cL^l)$ to a test configuration for $(X,L^{lk},E)$. Therefore, by definition, the corresponding $\alpha$-invariant is negative for $0 <\alpha\ll 1$ and so $(X,L,E)$ is $\alpha$-K-unstable for these values of $\alpha$.
\end{proof}

\section{Base-preserving configurations and stable vector bundles}
\label{sec:ASBptestconfig}

In this section, we show that $\alpha$-K-stability of a triple implies Mumford--Takemoto stability of the coherent sheaf. First, we use a special type of test configurations to reformulate the usual definition of Mumford stability for a coherent sheaf (see~\secref{subsec:HYMslope}) in terms of the invariant $F_2$ defined in~\eqref{eq:defF(W,k)}.

Let $X$ be an $n$-dimensional projective scheme with an ample invertible sheaf $L$. Let $E$ be a coherent sheaf on $X$
which is pure of dimension $n$. We will use the following special type of test configurations. A \emph{base-preserving
test configuration} for $(X,L,E)$ is a test configuration $(\mathcal{X},\cL,\cE)$ such that $(\mathcal{X},\cL)$ is a
trivial configuration for $(X,L)$, i.e. the product of $(X,L)$ and $\CC$, where $\CC^*$ acts trivially on $(X,L)$ and
by multiplication on $\CC$ (see Remark~\ref{rem:lift-test-config} for details).

\begin{proposition}\label{prop:Bptestconfig}
  Let $E$ be a coherent sheaf over an $n$-dimensional projective scheme $X$
  with an ample invertible sheaf $L$. Suppose that $E$ is pure of dimension
  $n$. Then $E$ is Mumford semistable if and only if
\begin{equation}
  \label{eq:Bptestconfig}
  F_2(E_0) \leq 0,
\end{equation}
for all base-preserving test configurations $(\mathcal{X},\cL,\cE)$ for
$(X,L^k,E)$, for all integers $k>0$. The sheaf $E$ is Mumford stable if and only if the
inequality~\eqref{eq:Bptestconfig} is strict for all
non-trivial base-preserving configurations $(\mathcal{X},\cL,\cE)$, and
Mumford polystable if and only if the inequality~\eqref{eq:Bptestconfig} is
strict unless $(\mathcal{X},\cL,\cE)$ is a product base-preserving
configuration.
\end{proposition}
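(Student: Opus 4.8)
The plan is to build a dictionary between base--preserving test configurations for $(X,L^k,E)$ and finite filtrations of $E$ by coherent subsheaves, and then to read off the sign of the coefficient $F_2(E_0)$ of \eqref{eq:defF(W,k)} directly from the slopes of the filtration steps.

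First I would recall the Rees--module description of $\CC^*$--equivariant degenerations over a trivial base. A base--preserving configuration has $\mathcal{X}=X\times\CC$ with $\CC^*$ acting trivially on $(X,L^k)$ and by multiplication on $\CC$, so its only datum is the $\CC$--flat sheaf $\cE$ on $X\times\CC$; this is the same as a finitely generated, $t$--torsion--free, $\ZZ$--graded $\cO_X[t]$--module $M=\bigoplus_p M_p$ with $\deg t=-1$. Torsion--freeness forces $t\colon M_p\hookrightarrow M_{p-1}$; identifying the generic fibre $M/(t-1)M$ with $E$ one checks that each $M_p$ embeds in $E$, so letting $F_p$ be the image of the composite $M_p\hookrightarrow M\to M/(t-1)M\cong E$ one obtains a decreasing filtration $E=F_{p_0}\supseteq F_{p_0+1}\supseteq\cdots\supseteq F_{p_1}\supseteq F_{p_1+1}=0$ by coherent subsheaves, with central fibre $E_0=\bigoplus_p F_p/F_{p+1}$ carrying $\CC^*$--weight $p$ on $F_p/F_{p+1}$. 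Conversely a filtration gives back the Rees sheaf $\bigoplus_p F_p t^{-p}$, and this correspondence is a bijection which identifies product configurations with split filtrations and the trivial configuration with the filtration $E\supseteq 0$. Throughout I would use that $E$ is pure of dimension $n$: then every nonzero subsheaf of $E$ is pure of dimension $n$, so its Hilbert polynomial has positive leading coefficient $a_0(\cdot)>0$ and a well--defined slope $\mu_L(\cdot)$ in the sense of \secref{subsec:HYMslope}.

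Next comes the computation of $F_2(E_0)$. After an irrelevant shift of weights (which does not change $F_2$) I may assume the filtration is $E=F_0\supsetneq F_1\supsetneq\cdots\supsetneq F_m\supsetneq F_{m+1}=0$ with weight $p$ on $F_p/F_{p+1}$. For $k\gg0$ the weight on $\det H^*(X,E_0\otimes L^k)$ is $w(k)=\sum_{p=0}^m p\bigl(P_L(F_p,k)-P_L(F_{p+1},k)\bigr)$, and summation by parts collapses this, with the conventions of \secref{sec:ASAlgebraicstb}, to $w(k)=\sum_{j=1}^m P_L(F_j,k)$. Since $P_L(E_0,k)=P_L(E,k)$ by flatness, expanding \eqref{eq:defF(W,k)} yields $F_0(E_0)=0$ and
\begin{equation}
\label{eq:F2Bpt}
F_2(E_0)=\frac{1}{a_0(E)}\sum_{j=1}^{m} a_0(F_j)\,\bigl(\mu_L(F_j)-\mu_L(E)\bigr),
\end{equation}
a combination of the quantities $\mu_L(F_j)-\mu_L(E)$ with strictly positive coefficients $a_0(F_j)/a_0(E)$. (For $(X,L^k,E)$ the same formula holds with $\mu_L$ replaced by $k\mu_L$, so the sign of $F_2(E_0)$ is independent of $k$ and it suffices to treat $k=1$.)

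From \eqref{eq:F2Bpt} the three equivalences follow. If $E$ is Mumford semistable then $\mu_L(F_j)\leq\mu_L(E)$ for all $j$, so $F_2(E_0)\leq0$ for every base--preserving configuration; conversely, applied to the two--step configuration of an arbitrary proper subsheaf $F\subset E$, the inequality $F_2(E_0)\leq0$ reads $\mu_L(F)\leq\mu_L(E)$. The stable case is identical with all inequalities made strict, using that a non--trivial base--preserving configuration has some $F_j$ a nonzero proper subsheaf. For the polystable case, one direction uses that $E$, being semisimple in the abelian category of slope--$\mu_L(E)$ semistable sheaves, has the property that any subsheaf of slope $\mu_L(E)$ is a direct summand: if $E$ is polystable and $F_2(E_0)=0$ then every term in \eqref{eq:F2Bpt} vanishes, each $F_j$ has slope $\mu_L(E)$, hence each $F_j$ (and each $F_{j+1}\subseteq F_j$) splits off, the filtration splits, and the configuration is a product. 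For the reverse implication, if $E$ were semistable but not polystable its socle $F$ (the maximal polystable subsheaf of slope $\mu_L(E)$) would be a proper, non--split subsheaf whose two--step configuration is non--trivial, non--product, yet has $F_2(E_0)=0$. I expect the main obstacle to be making the first step fully rigorous — the flatness and coherence bookkeeping for the Rees module, the identification of the central fibre with the associated graded sheaf, and the claim that the configuration is a product exactly when the filtration splits — together with the Jordan--Hölder input needed in the polystable case; once that dictionary and \eqref{eq:F2Bpt} are in hand, the rest is routine.
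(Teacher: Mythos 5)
Your proposal is correct and follows essentially the same route as the paper: both reduce base-preserving configurations to filtrations of $E$ by coherent subsheaves, compute the total weight as $w(k)=\sum_j P_L(F_j,k)$, and read off $F_2(E_0)$ as a positive combination of the slope differences $\mu_L(F_j)-\mu_L(E)$. The only differences are organizational --- you set up the Rees-module dictionary once and derive a single closed formula covering both directions, whereas the paper treats one direction via explicit two-step degenerations of subsheaves and the other via the weight filtration coming from the $1$-parameter subgroup on the Quot scheme, and your socle argument for the polystable case replaces the paper's iteration of direct-sum splittings; neither difference changes the substance (your remark that the weights rescale as $k\mu_L$ should read $k^{-1}\mu_L$, but the sign conclusion is unaffected).
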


\begin{proof}
It is helpful to observe that a base-preserving test configuration is
equivalent to the data of a $\CC^*$-linearised coherent sheaf $\cE$ over
$X\times\CC$ (where $\CC^*$ acts trivially on $X$ and by multiplication on
$\CC$), which is flat over $\CC$ and has fibres $E_t\cong E$ for
$t\in\CC\setminus\{0\}$. This is a product configuration precisely when
$\cE=\pr^*E$, where $\pr\colon X\times\CC\to X$ is the canonical projection,
for a $\CC^*$-action on $E$ covering the trivial $\CC^*$-action on $X$, and it
is a trivial configuration if $\CC^*$ acts on $E$ by scalar multiplication.

For the ``only if'' part, let $F \subset E$ be a subsheaf and $P_L(F,k) = \chi(F
\otimes L^k)$ its Hilbert polynomial. Then there exists a base-preserving configurations for $(X,L,E)$ whose central
fibre is $E_0 = F \oplus E/F$ where $t \in \CC^*$ acts as $\textrm{diag}(t,1)$, i.e. with weight $1$ on $F$ and weight $0$ on $E/F$ (see e.g \cite[p~43]{RT}). Recall that, since $E$ is pure of dimension $n = \dim
X$, we have
\[
P_L(F,k) = k^n d_0(F) + k^{n-1} d_1(F) + \ldots,
\]
for all integer $k$ with $d_0(F) > 0$, and that the slope $\mu_L(F)$ of $F$ is defined as the quotient $\frac{d_1(F)}{d_0(F)}$. Computing now the normalized weight $F(E_0,k)$ for this degeneration, we obtain
\[
F(E_0,k) = \frac{P_L(F,k)}{kP_L(E,k)} = \frac{d_0(F)}{d_0(E)}\(k^{-1} + k^{-2}\(\mu_L(F) -
\mu_L(E)\) + O(k^{-3})\)\!.
\]
In other words, $F_2(E_0)$ equals the difference of slopes $\mu_L(F) - \mu_L(E)$, up to a multiplicative positive
constant, so the condition that the weak/strict inequality holds in~\eqref{eq:Bptestconfig} for all non-trivial base-preserving test configurations implies that $E$ is Mumford semistable/stable.

For the converse, let $\cE$ be a $\CC$-flat $\CC^*$-linearised coherent sheaf over $X\times\CC$. Choosing $k \gg 0$, we obtain a surjective map
\begin{equation}
\label{eq:sujectsheaf}
\epsilon: W_{k} \otimes L^{-k} \twoheadrightarrow E
\end{equation}
as in~\secref{sec:ASTconfigoneparameter}, where $W_{k}^* \cong H^0(X,E\otimes L^{k})$. We can choose $k$ such that \eqref{eq:sujectsheaf} is surjective for any fiber $\cE_t$ of $\cE$, and so the $1$-parameter subgroup $\lambda_k : \CC^* \to GL(W_k)$ determined by the action on the central fiber $E_0$ splits $W_k^* \cong H^0(X,E_0\otimes L^{k})$ into eigenspaces
$$
W_k = \bigoplus_{n \in \ZZ} V_n,
$$
where $\CC^*$ acts with weight $n$ on $V_n$ and only a finite number of the $V_n$ are non trivial. The eigenspaces $V_n$ give a weight filtration $V_{\leq n} = \oplus_{j \leq n} V_j$ of $W_k$. Their images $F_{\leq n}$ under \eqref{eq:sujectsheaf} give a filtration of $E$ such that $E_0$ is the graded sheaf (see~\cite[Lemma~4.4.3]{HL})
\[
E_0 \cong \bigoplus_{n \in \ZZ} F_{\leq n}/F_{\leq n-1}.
\]
The weight $w_l$ of the $\CC^*$-action on $\det H^0(X,E_0\otimes L^l)$ for $l \gg k$ is then given by
\begin{align*}
w_l & = \sum_{n \in \ZZ} (-n) P_L(F_{\leq n}/F_{\leq n-1},l)\\
& = \sum_{n \in \ZZ} (-n) \(P_L(F_{\leq n},l) - P_L(F_{\leq n-1},l)\)\\
& = \sum_{n \in \ZZ} P_L(F_{\leq n},l)
\end{align*}
(see~\cite[Lemma~4.4.4]{HL}). Thus, the normalized weight $F(E_0,l)$ equals
\begin{align*}
F(E_0,l) & = \sum_{n \in \ZZ} \frac{P_L(F_{\leq n},l)}{l \cdot P_{L}(E,l)}\\
& = \sum_{n \in \ZZ} \frac{d_0(F_{\leq n})}{d_0(E)}\(l^{-1} + l^{-2}\(\mu_L(F_{\leq n}) - \mu_L(E)\) + O(l^{-3})\)\!.
\end{align*}
This equality together with equation \eqref{eq:defF(W,k)} imply that if $E$ is Mumford--Takemoto semistable/stable, then the weak/strict inequality holds in~\eqref{eq:Bptestconfig} for all non-trivial base-preserving test configurations $(\mathcal{X},\cL,\cE)$.

Finally, we will prove the result about polystability. If the equality in~\eqref{eq:Bptestconfig} holds only for product base-preserving configurations, then we have seen that $E$ must be Mumford--Takemoto semistable. If $E$ is strictly semistable, there exists a proper subsheaf $E'\subset E$ with $\mu_L(F) = \mu_L(E)$. Since the degeneration associated to $F$ has zero weight, $E \cong F \oplus E/F$ and therefore we conclude by iteration that $E$
must be Mumford polystable. For the converse, suppose that $E$ is Mumford polystable, i.e.
\[
  E \cong \bigoplus_i F_i
\]
where $F_i$ are Mumford stable sheaves with $\mu(F_i) = \mu(E)$. Given a base-preserving configuration for which we have an equality in~\eqref{eq:Bptestconfig}, the corresponding central fibre must be equal to
\[
  E_0 \cong \bigoplus_j F_j
\]
i.e. $E_0\cong E$, so this configuration must be a product configuration.
\end{proof}

\begin{corollary}\label{cor:Bptestconfig}
Let $(X,L,E)$ be a triple consisting of a projective scheme of dimension $n$, an ample invertible sheaf and a coherent sheaf, pure of dimension $n$. Then we have the following implications for all $\alpha>0$: if $(X,L,E)$ is $\alpha$-K-(semi/poly)stable, then $E$ is Mumford (semi/poly)stable with respect to $L$.
\end{corollary}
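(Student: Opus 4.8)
The plan is to deduce the corollary directly from Proposition~\ref{prop:Bptestconfig}, which characterises Mumford (semi/poly)stability of $E$ in terms of the sign of the invariant $F_2(E_0)$ on base-preserving test configurations, once we observe that on such configurations the $\alpha$-invariant $F_\alpha$ of~\eqref{eq:extendedfutaki} reduces, up to a positive multiplicative constant, to $F_2(E_0)$.

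First I would unwind the definition of a base-preserving test configuration. If $(\mathcal{X},\cL,\cE)$ is base-preserving for $(X,L^k,E)$, then by definition $(\mathcal{X},\cL)$ is the trivial configuration for $(X,L^k)$; in particular $X_0\cong X$, $L_0\cong L^k$, $\mathcal{O}_{X_0}\cong\mathcal{O}_X$, and the induced $\CC^*$-action on each $H^0(X_0,L_0^h)=H^0(X,L^{kh})$ acts with a single weight, uniform over the whole space. Hence the weight $w(h)$ on $\det H^*(X_0,\mathcal{O}_{X_0}\otimes L_0^h)$ equals that weight times $P_{L_0}(\mathcal{O}_{X_0},h)$, so the normalised quotient $F(\mathcal{O}_{X_0},h)$ of~\eqref{eq:defF(W,k)} is constant in $h$; consequently $F_1(\mathcal{O}_{X_0})=F_2(\mathcal{O}_{X_0})=0$. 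Substituting this into~\eqref{eq:extendedfutaki} gives
\[
F_{k\alpha}(\mathcal{X},\cL,\cE)=-k\alpha\,F_2(E_0)
\]
for every base-preserving test configuration for $(X,L^k,E)$ and every $k>0$.

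With this identity the three implications follow. If $(X,L,E)$ is $\alpha$-K-semistable, then in particular $F_{k\alpha}(\mathcal{X},\cL,\cE)\geq 0$ for all base-preserving $(\mathcal{X},\cL,\cE)$ and all $k>0$, i.e.\ $F_2(E_0)\leq 0$ always, so $E$ is Mumford semistable by Proposition~\ref{prop:Bptestconfig}. For the stable case I would record the compatibility of terminology: a base-preserving test configuration is trivial as a test configuration for the triple $(X,L^k,E)$ precisely when it is a trivial base-preserving configuration in the sense of Proposition~\ref{prop:Bptestconfig} (the $\CC^*$-action on $E$ is by scalar multiplication), and it is a product configuration for the triple precisely when it is a product base-preserving configuration there (namely $\cE\cong\pr^*E$ for some $\CC^*$-action on $E$ over the trivial action on $X$). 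Granting this, $\alpha$-K-stability forces $F_2(E_0)<0$ on every non-trivial base-preserving configuration, so $E$ is Mumford stable; and $\alpha$-K-polystability forces $F_2(E_0)<0$ unless the configuration is a product base-preserving one, so $E$ is Mumford polystable. I would also remark, via Remark~\ref{rem:lift-test-config}, that base-preserving test configurations exist in abundance (every $\CC^*$-linearised $\CC$-flat coherent sheaf over $X\times\CC$ with generic fibre $E$ yields one), so the $\alpha$-K-(semi/poly)stability hypothesis is genuinely tested against the full family appearing in Proposition~\ref{prop:Bptestconfig}.

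The argument uses no analytic input and is essentially a bookkeeping exercise; the only point requiring care is precisely that bookkeeping — matching the notions of ``trivial'' and ``product'' configuration between Definition~\ref{def:extKstable} and Proposition~\ref{prop:Bptestconfig}, and checking that the normalisation built into~\eqref{eq:extendedfutaki} really does annihilate $F_1(\mathcal{O}_{X_0})$ and $F_2(\mathcal{O}_{X_0})$ for base-preserving configurations. I expect this to be the main (and essentially the only) obstacle, and it is routine once the weight computation above is in place.
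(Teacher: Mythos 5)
Your proposal is correct and follows essentially the same route as the paper: the paper's proof likewise observes that for a base-preserving configuration the central fibre of $(\mathcal{X},\cL)$ is $(X,L^k)$ with trivial action, so $F_i(\cO_{X_0})=0$ for $i>0$ and \eqref{eq:extendedfutaki} collapses to $F_{\alpha}=-\alpha F_2(E_0)$, whose sign is independent of $\alpha>0$, and then invokes Proposition~\ref{prop:Bptestconfig}. Your extra care in matching the trivial/product terminology between Definition~\ref{def:extKstable} and Proposition~\ref{prop:Bptestconfig} is a harmless elaboration of what the paper leaves implicit.
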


\begin{proof}
Immediate from Proposition~\ref{prop:Bptestconfig} and the fact that for any base-preserving configuration $(\mathcal{X},\cL,\cE)$ for $(X,L^k,E)$, the central fibre of $(\mathcal{X},\cL)$ is $(X_0,L_0) = (X,L^k)$ with the trivial $\CC^*$-action, so $F_i (\cO_{X_0})=0$ for all $i>0$ and hence~\eqref{eq:extendedfutaki} reduces to
\[
  F_{\alpha}(\mathcal{X},\cL,\cE) = - \alpha F_2(E_0),
\]
whose sign does not depend on the actual value of $\alpha>0$.
\end{proof}

Another consequence of Proposition~\ref{prop:Bptestconfig} is the following relation between the existence of solutions to the coupled equations \eqref{eq:CYMeq2} and base-preserving configurations for the triple.

\begin{corollary}\label{cor:ASnonnegativealphainvariant}
Let $(X,L,E)$ be a triple consisting of a smooth compact polarized variety $(X,L)$ and a holomorphic vector bundle $E$ over $X$ with rank $r$ and degree zero. Suppose that there exists a solution $(\omega,H)$ to the coupled equations \eqref{eq:CYMeq2} with $\omega \in c_1(L)$. Then, given a positive integer $k > 0$, any $\alpha > 0$ and a base-preserving test configuration $(\mathcal{X},\cL,\cE)$ for the triple $(X,L^k,E)$, we have
$$
F_{k\alpha}(\mathcal{X},\cL,\cE) \geq 0,
$$
with equality only if the test configuration is a product.
\end{corollary}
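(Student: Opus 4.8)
The plan is to deduce Corollary~\ref{cor:ASnonnegativealphainvariant} as a direct consequence of Corollary~\ref{cor:Bptestconfig}, Corollary~\ref{cor:ASVanishingalphainvariant}, and the polystability results already established in the excerpt. The key observation is that, for a \emph{base-preserving} test configuration $(\mathcal{X},\cL,\cE)$ for $(X,L^k,E)$, the central fibre of $(\mathcal{X},\cL)$ is $(X,L^k)$ with the \emph{trivial} $\CC^*$-action, so $F_i(\cO_{X_0})=0$ for all $i>0$ and formula~\eqref{eq:extendedfutaki} collapses to
\[
F_{k\alpha}(\mathcal{X},\cL,\cE) = -k\alpha\, F_2(E_0),
\]
exactly as in the proof of Corollary~\ref{cor:Bptestconfig}. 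Thus the sign of $F_{k\alpha}$ on base-preserving configurations is controlled entirely by the sign of $F_2(E_0)$, which by Proposition~\ref{prop:Bptestconfig} is governed by the Mumford stability of $E$.

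First I would invoke the Hitchin--Kobayashi correspondence for the HYM equation (Theorem~\ref{theorem:HKcorrespondence}, in the slope-stability formulation of~\secref{subsec:HYMslope}): since $(\omega,H)$ solves the coupled equations \eqref{eq:CYMeq2} with $z=0$, the Chern connection $A_H$ is Hermite--Yang--Mills with $\Lambda_\omega F_H = 0$, hence $E$ is Mumford polystable with respect to $L$ (and therefore also with respect to $L^k$ for every $k>0$). By the polystability half of Proposition~\ref{prop:Bptestconfig}, this gives $F_2(E_0)\le 0$ for every base-preserving test configuration $(\mathcal{X},\cL,\cE)$ for $(X,L^k,E)$, with equality precisely when the configuration is a product base-preserving configuration. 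Combining this with the displayed identity above yields $F_{k\alpha}(\mathcal{X},\cL,\cE) = -k\alpha\,F_2(E_0) \ge 0$, with equality only for product configurations, which is the assertion.

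There is one small subtlety to address: the equality case. For a general product base-preserving configuration one has $F_2(E_0)=0$, hence $F_{k\alpha}=0$, and this configuration is indeed a product, so the statement ``equality only if the test configuration is a product'' is consistent; conversely, if $F_{k\alpha}=0$ then $F_2(E_0)=0$, and Proposition~\ref{prop:Bptestconfig} (polystable case) forces the configuration to be a product base-preserving configuration. Alternatively, and perhaps more cleanly, one can note that for product base-preserving configurations the vanishing $F_{k\alpha}=0$ also follows from Corollary~\ref{cor:ASVanishingalphainvariant}, since a product base-preserving configuration is in particular a product configuration. I would mention both routes briefly but rely on Proposition~\ref{prop:Bptestconfig} for the sharp equality statement.

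The main (and essentially only) obstacle is making sure the reduction to $F_2(E_0)$ is airtight: one must check that base-preserving configurations for $(X,L^k,E)$ really do have $(X_0,L_0)=(X,L^k)$ with trivial $\CC^*$-action — this is immediate from the definition of base-preserving configuration in~\secref{sec:ASBptestconfig} and was already used in the proof of Corollary~\ref{cor:Bptestconfig}, so no new work is required. Everything else is a formal assembly of results proved earlier in the chapter, so the proof is short; the only genuine input is the Hitchin--Kobayashi correspondence, which supplies the polystability of $E$ from the existence of the HYM connection buried in the coupled system.
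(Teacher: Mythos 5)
Your proof is correct and follows essentially the same route as the paper: deduce Mumford polystability of $E$ from the Hitchin--Kobayashi correspondence applied to the HYM part of the coupled system, then reduce $F_{k\alpha}$ on base-preserving configurations to $-k\alpha F_2(E_0)$ exactly as in Corollary~\ref{cor:Bptestconfig} and conclude via Proposition~\ref{prop:Bptestconfig}. The paper's proof is just a terser version of the same assembly, so no further comment is needed.
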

\begin{proof}
If there exists a solution to \eqref{eq:CYMeq2} then $E$ is Mumford--Takemoto polystable by the Hitchin--Kobayashi correspondence for vector bundles and so the statement holds from Proposition~\ref{prop:Bptestconfig} as in Corollary \ref{cor:Bptestconfig}.
\end{proof}

\section{A conjecture for degree zero holomorphic vector bundles}
\label{sec:ASTheconjecture}

In this section we formulate a conjecture relating the notion of $\alpha$-K-stability for triples $(X,L,E)$ defined in section \secref{sec:ASAlgebraicstb} with the existence of solutions to the coupled equations \eqref{eq:CYMeq2}. We will suppose that $(X,L)$ is a smooth compact polarized variety and $E$ is a holomorphic vector bundle over $X$ with rank $r$ and degree zero. Recall from Proposition \ref{prop:extendedalgebraicfutaki} that the numerical invariant $F_\alpha$ associated to any test configuration with smooth central fiber is, up to multiplicative constant factor, equal to the moment map \eqref{eq:thm-muX} at the generator of the $S^1 \subset \CC^*$ action and  that the zero locus of this moment map corresponds to the solutions of the coupled equations. Once more this fits in the general picture of the Kempf-Ness Theorem explained in \secref{sec:KNtheorem} if we view a test configuration as a $1$-parameter subgroup (see \secref{sec:ASTconfigoneparameter}) and the associated $\alpha$-invariant as the corresponding Mumford weight (see \secref{subsec:KNproof} and cf. \eqref{eq:weightmmap1}). This fact motivates Definition~\ref{def:extKstable} and leads to think that the notion of $\alpha$-K-stability should play an important role in the existence problem for the coupled equations \eqref{eq:CYMeq2}.

Concerning direct relations between the equations \eqref{eq:CYMeq2} and the notion of $\alpha$-K-stability, we have proved in Corollary~\ref{cor:ASVanishingalphainvariant} that if there exists a solution to the coupled equations \eqref{eq:CYMeq2} with positive coupling constants $\alpha_0$, $\alpha_1 \in \RR$, then $F_\alpha = 0$ for any product test configuration with $\alpha = \frac{r\pi^2\alpha_1}{\alpha_0}$. Under the same hypothesis, we have proved in Corollary~\ref{cor:ASnonnegativealphainvariant} that $F_\alpha \geq 0$ for any base preserving test configuration, with equality only if the test configuration is a product. We have also proved in Proposition \ref{prop:stab-alpha=0} that in the limit case $\alpha \to 0$ the notion of $\alpha$-K-stability is equivalent to the notion of algebraic K-stability of the underlying polarized manifold.

These facts provide supporting evidence for the following conjecture.

\begin{conjecture}\label{conjecture:alphaKpolystab}
If there exists a K\"{a}hler metric $\omega \in c_1(L)$ and a Hermitian metric $H$ on $E$ satisfying the coupled equations \eqref{eq:CYMeq2} with coupling constants $\alpha_0,\alpha_1$ then the triple $(X,L,E)$ is $\alpha$-K-polystable.
\end{conjecture}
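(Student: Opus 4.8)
The plan is to adapt, in the infinite-dimensional setting provided by the extended gauge group $\cX$, the standard Kempf--Ness argument that an $S^1$-orbit through a zero of the moment map has non-negative Mumford weight, with equality only for fixed points. Concretely, suppose $(\omega,H)\in\cKt$ is a solution of \eqref{eq:CYMeq2} with coupling constants $\alpha_0,\alpha_1>0$, and let $\alpha=\tfrac{r\pi^2\alpha_1}{\alpha_0}$. Fix an integer $k>0$ and a test configuration $(\mathcal X,\cL,\cE)$ for $(X,L^k,E)$. By Remark~\ref{rem:Kstable-L^k} the solution $(\omega,H)$ rescales to a solution $(k\omega,H)$ of the coupled equations on $(X,L^k,E)$ with constants $(\alpha_0,k\alpha_1)$, so it suffices to show $F_{k\alpha}(\mathcal X,\cL,\cE)\ge 0$, with equality only for a product configuration. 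First I would use the interpretation of $(\mathcal X,\cL,\cE)$ as a $1$-parameter subgroup $\lambda=(\lambda_0,\lambda_1)\colon\CC^*\to G_k$ acting on the relative Quot scheme (Section~\secref{sec:ASTconfigoneparameter}), together with the Kempf--Ness picture of Section~\secref{sec:KNtheorem}, to express $F_{k\alpha}$ as the limit of the moment map $\mu_\alpha$ \eqref{eq:thm-muX} along the path generated by $\lambda$; Proposition~\ref{prop:extendedalgebraicfutaki} already establishes this identification for test configurations with smooth central fibre, where $F_{k\alpha}=\tfrac{-1}{4\vol(X)\alpha_0}\cF_\alpha(\zeta)$ with $\zeta$ the generator of the induced $S^1$-action.

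The heart of the argument is then the convexity of the integral of the moment map $\cM_\kappa\colon\cKt\to\RR$ of Section~\secref{sec:ANcoupledintmmap}. The test configuration provides, via the $\CC^*$-action on the central fibre, a path in the ``complexified orbit'' $\tau_{p_I}(\cYt)$ of the marked point, hence — at least formally — a curve $\kappa_t$ in $\cKt$ along which the vector field $\zeta_t=f_t^*(Iy_t)\in\LieX$ of \eqref{eq:derivativept2} is constant, i.e. a solution of the geodesic-type equation \eqref{eq:virtualoneparameter}. Along such a curve \eqref{eq:convexityintmmap} gives $\tfrac{d^2}{dt^2}\cM_\kappa(\kappa_t)=\|Y_{\zeta_0}\|^2_\alpha\ge 0$, and Lemma~\ref{lemma:sigmaImugamma} identifies $\sigma_I(\dot\kappa_t)=\langle\mu_\alpha(f_t^{-1}\cdot p),\zeta_t\rangle$. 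Since $\kappa_0$ is a solution of the coupled equations, $\sigma_I(\dot\kappa_0)=0$, so $\sigma_I(\dot\kappa_t)$ is non-decreasing and non-negative; its limit as $t\to\infty$ is exactly (a positive multiple of) $F_{k\alpha}$ by the Kempf--Ness identification of maximal weights with asymptotic slopes of $\cM_\kappa$ (cf. \eqref{eq:weightmmap1}, \eqref{eq:weightmmap2} and Proposition~\ref{propo:ANuniqueness}(1)). This yields $F_{k\alpha}\ge 0$. For the polystable refinement, equality $F_{k\alpha}=0$ forces $\sigma_I(\dot\kappa_t)\equiv 0$, hence $Y_{\zeta_0}=0$, so $\zeta_0$ is a holomorphic $G^c$-invariant vector field on $(E^c,I)$ and the flow $f_t$ lies in $\Aut(E^c,I)$; this should mean the degenerating family is isotrivial in a way that pins down $(\mathcal X,\cL,\cE)$ as a product configuration, analogously to the last paragraph of Section~\secref{subsec:KNproof}.

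The main obstacle is precisely the analytic point at which the cscK theory is also stuck: a test configuration with singular central fibre corresponds only to a \emph{weak} geodesic-type path in $\cKt$, not a smooth solution of \eqref{eq:virtualoneparameter}, and worse, \eqref{eq:virtualoneparameter} is not even known to admit short-time smooth solutions in general (this is flagged repeatedly in Sections~\secref{sec:ANgeneral-framework} and~\secref{sec:ANcoupledintmmap}). So the convexity computation \eqref{eq:convexityintmmap} cannot be applied verbatim. One would need the analogue, for the coupled functional $\cM_\kappa$, of the results of Chen--Tian \cite{ChT} and Phong--Sturm on $C^{1,1}$ (weak) geodesics and the continuity of the K-energy along them — upgraded to handle the extra gauge-theoretic variable $H$ — or, alternatively, a purely algebro-geometric computation of $F_{k\alpha}$ in terms of the filtration data (as in Proposition~\ref{prop:Bptestconfig} for base-preserving configurations) combined with a slope-type inequality. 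Obtaining either of these, with enough regularity to justify passing to the limit and to characterise the equality case, is the hard part; the rest of the argument is a formal transcription of the finite-dimensional Kempf--Ness proof of Lemma~\ref{lemma:KempfNess} and its non-projective extension Theorem~\ref{theorem:Kempf-Ness2}.
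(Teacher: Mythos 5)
The statement you are asked to prove is stated in the paper as a \emph{conjecture}, and the paper does not prove it; it only assembles supporting evidence. Specifically, the paper proves the vanishing of $F_{k\alpha}$ on product test configurations (Corollary~\ref{cor:ASVanishingalphainvariant}, via Proposition~\ref{prop:extendedalgebraicfutaki} and the character $\cF_\alpha$), the non-negativity of $F_{k\alpha}$ on base-preserving configurations with equality only for products (Corollary~\ref{cor:ASnonnegativealphainvariant}, via the purely algebraic Proposition~\ref{prop:Bptestconfig} and the Hitchin--Kobayashi correspondence), and the reduction to K-stability in the limit $\alpha=0$ (Proposition~\ref{prop:stab-alpha=0}). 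The author explicitly says the general case is open even for $\dim_\CC X=1$ and suggests that an eventual proof would more likely go through Donaldson's or Mabuchi's asymptotic Chow-stability methods than through the Kempf--Ness/geodesic route.

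Your proposal is therefore not a proof, and you correctly identify why: the convexity computation \eqref{eq:convexityintmmap} and Lemma~\ref{lemma:sigmaImugamma} apply only along \emph{smooth} solutions of \eqref{eq:virtualoneparameter}, which are not known to exist even for short time, let alone along the infinite ray a test configuration would produce; and Proposition~\ref{prop:extendedalgebraicfutaki} identifies $F_{k\alpha}$ with $\cF_\alpha(\zeta)$ only for test configurations with \emph{smooth} central fibre, whereas a general degeneration has singular central fibre and no associated element of $\LieX$. A further gap you do not flag: even granting weak geodesic-type rays, you would need to prove that the asymptotic slope of $\cM_\kappa$ along the ray attached to a test configuration equals $F_{k\alpha}$ (the analogue of \eqref{eq:weightmmap1}--\eqref{eq:weightmmap2} in this infinite-dimensional, singular setting), and that vanishing of the slope forces the configuration to be a product rather than merely forcing $\zeta_0$ to be holomorphic on the generic fibre. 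Both steps are substantial open problems already in the cscK case ($E$ trivial), to which your argument would specialize. So the proposal is a reasonable strategy outline, consistent with the heuristics the paper itself offers, but the gaps it leaves open are exactly the content of the conjecture.
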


\begin{remark}
The previous conjecture for the limit case $\alpha \to 0$, i.e. for the cscK problem, has been verified mainly thanks to the work of Tian \cite{T2}, Donaldson \cite{D8}, Mabuchi \cite{Mab2} and Stoppa \cite{Stp}.
\end{remark}

%

This conjecture is a difficult problem even in complex dimension $1$. Note that in this case the coupled equation are a system in separated variables independent of the coupling constants. In fact, they reduce to the HYM equation for the bundle due to existence of cscK metrics in any compact Riemann surface. When $\dim_\CC X = 1$, Conjecture~\ref{conjecture:alphaKpolystab} reduces to prove that Mumford--Takemoto stability of the bundle implies $\alpha$-K-stability of the triple for any $\alpha > 0$. The algebro-geometric approach to the proof of this fact seems to be very difficult in the light of previous work of R. Thomas and J. Ross in \cite{RRT2}. A more likely approach, valid for arbitrary dimension of $X$, seems to be provided by Donaldson's methods in \cite{D8} or Mabuchi's methods in \cite{Mab2}. The main issue to adapt the arguments in \cite{D8} or \cite{Mab2} will be to prove that $\alpha$-K-stability arises as a suitable assymtotic version of a Chow-type stability notion in the relative quot scheme considered in Section \secref{sec:ASTconfigoneparameter}. The verification of this stability issue will provide a strong evidence of the fact that \eqref{eq:CYMeq2} are gauge theoretic equations related to the algebro-geometric moduli problem for triples $(X,L,E)$. As an immediate future project we will try to prove that the existence of solutions to \eqref{eq:CYMeq2} implies the $\alpha$-K-semistability of the triple adapting Donaldson's methods in \cite{D8}. This conjecture was first verified for the limit case $\alpha = 0$, i.e. for the cscK problem, after the work of S. Zhang, H. Luo and S.K. Donaldson.

Concerning the converse of the statement in Conjecture~\ref{conjecture:alphaKpolystab}, as remarked by J. Ross and R. Thomas~\cite{RRT1}, a recent example in \cite{ACGT} suggests that the algebraic notion of K-stability defined in~\cite{D4} for a polarised variety may not be sufficient to guarantee the existence of a cscK metric and so a stronger notion may be required. Since the $\alpha$-K-stability of a triple $(X,L,E)$ for $\alpha = 0$ coincides with the $K$-stability of $(X,L)$, it seems equally necessary to strengthen the $\alpha$-K-stability condition introduced in Definition~\ref{def:extKstable}, so that the converse of the statement in Conjecture~\ref{conjecture:alphaKpolystab} may actually hold. This can be done exactly as in \cite{RRT1} for polarised varieties, by requiring the inequality~\eqref{eq:extKstable} in Definition~\ref{def:extKstable} for configurations $(\mathcal{X},\cL,\cE)$, where $\cL$ may now be an ample $\RR$-divisor.

To finish this chapter, let us analyze two particulary simple situations, when $X$ is a Riemann surface and when $E$ is the trivial bundle of rank $r$ over a toric complex surface, where the $\alpha$-K-polystability of the triple implies the existence of solutions to the equations \eqref{eq:CYMeq2}. Recall from \secref{sec:ASBptestconfig}, Corollary \ref{cor:Bptestconfig}, that $\alpha$-K-(semi/poly)stability implies Mumford (semi/poly)stability for any $\alpha > 0$. Due to the Theorem of Donaldson-Uhlembeck-Yau, the latter stability condition is equivalent to the existence of solutions to the Hermitian-Yang-Mills equation~\eqref{eq:HYM} for each K\"{a}hler metric in $c_1(L)$.

Suppose first that $X$ is a Riemann surface. Given a positive real constant $\alpha > 0$, if the triple $(X,L,E)$ is $\alpha$-K-polystable in the sense of Definition \ref{def:extKstable} then there exists a solution to the HYM equation for any K\"{a}hler metric $\omega \in c_1(L)$. Chose $\omega$ to be the unique cscK metric in this K\"{a}hler class and $H$ the associated Einstein--Hermitian metric on $E$. Then $(\omega,H)$ is a solution to \eqref{eq:CYMeq2} since the term $\Lambda_\omega^2(F_H \wedge F_H)$ in \eqref{eq:CYMeq2} vanishes. We conclude that if $\dim_{\CC} X = 1$ and the triple $(X,L,E)$ is $\alpha$-K-polystable with $\alpha > 0$, then there exists a solution to \eqref{eq:CYMeq2}, and so the converse of the statement in Conjecture~\ref{conjecture:alphaKpolystab} is verified.

Suppose now that $E \cong \oplus^r \cO_X$ is the trivial bundle and $(X,L,E)$ is $\alpha$-K-polystable for some $\alpha \geq 0$. We claim that $(X,L)$ is algebraically K-polystable. Let $(\mathcal{X},\cL)$ be a test configuration for $(X,L^k)$. Then, by its very definition, this test configuration defines one for the triple $(X,L^k,\cO_X)$ and so one for $(X,L,\oplus^r \cO_X)$ that we denote by $(\mathcal{X},\cL,\cE)$. Then, if $(X_0,L_0,E_0)$ is its central fiber we have that $E_0 \cong \oplus^r \cO_{X_0}$ as $\CC^*$-linearized sheaves over $X_0$. Then it is immediate to check that the normalized weight $F(E_0,k)$ satisfies
$$
F(E_0,k) = \frac{w_k(E_0)}{k P_{L_0}(E_0,k)} = \frac{r w_k(\cO_{X_0}) }{k r P_{L_0}(\cO_{X_0},k)} = F(\cO_{X_0},k)
$$
and so we have an equality
$$
0 \leq F_{lk\alpha}(\mathcal{X},\cL^l,\cE) = - F_1(\mathcal{X},\cL^l) = - F_1(\mathcal{X},\cL).
$$
Thus, $(X,L)$ is algebraically K-semistable. If the $\alpha$-invariant vanishes then $(\mathcal{X},\cL,\cE)$ is a product and so is $(\mathcal{X},\cL)$. We conclude that $(X,L)$ is K-polystable as claimed. Suppose in addition that the Yau--Tian--Donaldson conjecture is satisfied for $(X,L)$ (e.g. $(X,L)$ is a toric polarized surface or $L^k \cong K_X$, $0 \geq k \in \ZZ$ \cite{Au,D9}). Then, by the previous reasoning, $\alpha$-K-polystability of $(X,L,\oplus^r \cO_X)$ implies the existence of a cscK metric $\omega \in c_1(L)$. Thus, the pair $(\omega,A)$, with $A$ any flat connection on $\oplus^r \cO_X$, satisfies the coupled equations \eqref{eq:CYMeq2} for any $\alpha$ and we conclude the following. If the triple $(X,L,\oplus^r\cO_X)$ is $\alpha$-K-polystable and $(X,L)$ satisfies the Yau--Tian--Donaldson conjecture, then there exists a solution to the coupled equations \eqref{eq:CYMeq2}. If $X$ is a toric complex surface, Donaldson has proved the Yau--Tian--Donaldson conjecture \cite{D9} and so $\alpha$-K-polystability of $(X,L,\oplus^r\cO_X)$ implies the existence of solutions to \eqref{eq:CYMeq2}.

\chapter{\bf Examples and cscK metrics on ruled manifolds}
\label{chap:examples}

This chapter contains some examples of solutions to \eqref{eq:CYMeq00}. In \secref{sec:example4} we also discuss how the existence of solutions to the limit case $\alpha_0 = 0$ can be applied, using results of Y. J. Hong in \cite{Ho2}, to obtain cscK metrics on ruled manifolds.

In  \secref{sec:example1} we discuss the case of vector bundles over Riemann surfaces and projectively flat bundles over K\"{a}hler manifolds satisfying a topological constraint. These are the situations considered in the moduli constructions for pairs in \cite{Pd} and \cite{ST}. In both cases, the gauge theoretical equations equivalent to the moduli problem provide particular solutions to the coupled equations \eqref{eq:CYMeq00}. In this examples the equations \eqref{eq:CYMeq00} reduce to the limit case $\alpha_1 = 0$ (cscK equation and HYM equation) and we give a geometric explanation of this fact.

In \secref{sec:example2} we consider homogeneous Hermitian holomorphic bundles $E$ over homogeneous K\"{a}hler manifolds and show the existence of
invariant
solutions to
\eqref{eq:CYMeq00} in any K\"{a}hler class provided that the representation defining $E$ is irreducible. The invariant
solutions arise as
(simultaneous) solutions to the limit systems $\alpha_0 = 0$ and $\alpha_1 = 0$ in \eqref{eq:CYMeq00}. When the
complex
dimension of the base is $2$
the invariant solutions satisfy in addition the K\"{a}hler analogue of the Einstein--Yang--Mills equation (see equation~\eqref{eq:CEYMeq}) considered
in  \secref{sec:CeqscalarKal-K}.

In \secref{sec:example3} we discuss some (well known) examples of stable bundles over K\"{a}hler-Einstein manifolds without infinitesimal automorphisms where Theorem \ref{thm:ExistenceCYMeq00} applies. This provide our first examples of solutions in which the K\"ahler metric is not cscK, showing that the coupled equations \eqref{eq:CYMeq00} are more than the cscK equation and the HYM equation together.

In \secref{sec:example4} we discuss the relation of the limit $\alpha_0 = 0$ in \eqref{eq:CYMeq00} with the cscK equation on ruled manifolds as mentioned above.

\section{Riemann surfaces and projectively flat bundles}
\label{sec:example1}

Let $E^c$ be a holomorphic principal $G^c$ bundle over a polarized compact Riemann surface $(X,L)$. We fix a maximal compact subgroup $G \subset G^c$. In this case the coupled equations \eqref{eq:CYMeq2}, for a $G$-reduction $H$ and a K\"ahler metric $\omega$, split into the system in separated variables
\begin{equation}\label{eq:CYMeqdecoupled}
\left. \begin{array}{l}
F_H = z \cdot \omega\\
S_\omega \; = \;\; \hat S
\end{array}\right \},
\end{equation}
since $\dim_\CC X = 1$ and the term $(F_H \wedge F_H)$ vanishes. Recall that $F_H$ denotes the curvature of the Chern connection $A$ of $H$ and that $\hat S \in \RR$ and $z \in \mathfrak{z}$ are constant over $X$. Then, the solutions to the coupled equations \eqref{eq:CYMeq00} are given by pairs $(\omega,H)$, where $\omega$ is the unique cscK metric in $c_1(L)$ modulo automorphisms and $H$ is a reduction such that $A$ is a HYM connection \eqref{eq:HYM}. Due to the Hitchin-Kobayashi correspondence, examples of solutions satisfying \eqref{eq:projectivelyflatomega} are given by polystable $G^c$-bundles over $X$. When $L$ is a multiple of the canonical bundle of $X$, Pandharipande~\cite{Pd} used Geometric Invariant Theory to compactify the moduli space of pairs $(X,E)$ consisting of a smooth curve of genus greater than one and a semistable vector bundle over the curve. In \cite[Proposition~8.2.1]{Pd} the author proves that such a pair is GIT stable if and only if $E$ is Mumford stable. Therefore, the gauge theoretical equation corresponding to this moduli construction is simply the HYM equation. The latter is equivalent to \eqref{eq:CYMeqdecoupled} since there is always a cscK metric in $c_1(L)$.

Let now $(X,\omega)$ be a compact K\"{a}hler manifold of arbitrary dimension. Suppose that $E$ is a projectively flat vector bundle over $X$ satisfying the topological constraint
\begin{equation}\label{eq:topconstraint}
c_1(E) = - \frac{r\lambda}{2\pi}[\omega],
\end{equation}
where $\lambda \in \RR$ is determined by the first Chern class of the bundle and the K\"{a}hler class $[\omega]$. Then, doing a conformal change if necessary, there exists a Hermitian metric $H$ on $E$ which is a solution to (see~\cite[Corollary~2.7]{Ko})
\begin{equation}\label{eq:projectivelyflatomega}
F_H = z \cdot \omega,
\end{equation}
where $z = \imag \lambda \Id \in \mathfrak{z}$. Therefore, as can be readily checked from the equations, the pair $(\omega,H)$ is a solution to \eqref{eq:CYMeq2} if and only if $\omega$ is a cscK metric. We conclude that, when $E$ is projectively flat and \eqref{eq:topconstraint} is satisfied, there exists a solution to \eqref{eq:CYMeq2} if $[\omega]$ admits a cscK metric. Suppose that
$$
X \cong \CC^n/\Gamma
$$
is a complex torus given by a lattice $\Gamma$ in $\ZZ^{2n}$. In this case, examples of holomorphic vector bundles $E$ admitting a projectively flat Hermitian structure $H$ are given by representations of a central extension of $\Gamma$ into $U(r) \subset GL(r,\CC)$. It follows from~\cite[Theorem~7.54]{Ko} that $\pm c_1(E)$ is a K\"{a}hler class admitting a cscK metric $\omega$ so $E$ satisfies the constraint \eqref{eq:topconstraint}.

In \cite{ST}, G. Schumacher and M. Toma build a moduli space of (non-uniruled) polarized K\"{a}hler manifolds equipped with isomorphism classes of stable vector bundles via the method of versal deformations. In this work the authors were able to endow the moduli space with a K\"ahler metric, under the cohomological constraint \eqref{eq:topconstraint}, when the base is K\"ahler Einstein and the bundle is projectively flat. The gauge theoretical equations corresponding to this moduli construction are therefore equivalent to \eqref{eq:CYMeqdecoupled} and so, by the discussion in the previous paragraph, they provide particular solutions to the coupled equations \eqref{eq:CYMeq2}. Recall that the cscK equation and the KE equation are equivalent, using Hodge theory, if the class of the polarization is a multiple of the first Chern class of $X$.

In the two situations discussed the coupled equations \eqref{eq:CYMeq2} admit solutions coming from the system in separated variables \eqref{eq:CYMeqdecoupled}. There is an underlying geometric reason for this fact provided by the extended gauge group $\cX$ \eqref{eq:Ext-Lie-groups} associated to a solution $(\omega,H)$ to \eqref{eq:projectivelyflatomega} (in the case of curves we assume stability of the bundle in order to have such solution). The point is that the Chern connection $A$ of $H$ determines a Lie algebra splitting of the short exact sequence
$$
0 \to \LieG \to \LieX \to \LieH \to 0,
$$
where $\LieG$ and $\LieH$ denote, respectively, the Lie algebras of the gauge of the reduction $H$ and the group of Hamiltonian symplectomorphisms in $(X,\omega)$. The splitting is given by the Lie algebra homomorphism
\begin{equation}\label{eq:splittingLiecX}
\Gamma\colon \LieH \cong C^{\infty}(X)/\RR \to \LieX: \phi \to \theta_A^{\perp}(\eta_\phi) - z \phi,
\end{equation}
where $i_{\eta_\phi}\omega = d\phi$ and $\theta_A^\perp$ denotes the horizontal lift with respect to the connection $A$. To see this note that
\begin{align*}
[\Gamma(\phi_1),\Gamma(\phi_2)] & = [\theta_A^{\perp}(\eta_{\phi_1}) - z \phi_1,\theta_A^{\perp}(\eta_{\phi_2}) - z \phi_2]\\
& = \theta_A^{\perp}([\eta_{\phi_1},\eta_{\phi_2}]) - z \{\phi_1,\phi_2\} + (F_A - z\omega)(\eta_{\phi_1},\eta_{\phi_2})\\
& = \Gamma(\{\phi_1,\phi_2\}) + (F_A - z\omega)(\eta_{\phi_1},\eta_{\phi_2}),
\end{align*}
where $\{\phi_1,\phi_2\}$ is the Poisson bracket in $C^{\infty}(X)/\RR$ given by $\omega$. Note that this homomorphism does not extend in
general to the Lie algebra of the group of diffeomorphisms of $X$. Therefore, when $\dim_\CC X = 1$ or $E$ is projectively flat, the coupled system \eqref{eq:CYMeq2} is able to admit `decoupled' solutions due to the fact that $\LieX$ is a semidirect product of $\LieG$ and $\LieH$.

\section{Homogeneous bundles over homogeneous K\"{a}hler manifolds}
\label{sec:example2}

For the basic material on this topic we refer to \cite{Be} and \cite{Ko}. Let $X$ be a compact homogeneous
K\"{a}hlerian manifold (i.e. admitting a K\"{a}hler metric), of a compact group $G$. In other words $X = G/G_o$, for a closed subgroup $G_o \subset G$,
equipped with the canonical $G$-invariant complex structure (see Remark $8.99$ in \cite{Be}). In this situation, homogeneous holomorphic vector bundles $E$ of rank $r$ over $X$ are in one to one correspondence with representations
of $G_o$ on $GL(r,\CC)$. For any invariant K\"{a}hler metric $\omega$ on $X$ there exists a unique $G$-invariant
Hermitian--Yang--Mills unitary connection $A$ provided that the representation inducing E is irreducible (see~\cite[Proposition~6.1]{Ko}). Moreover, for any such choice of invariant metric and connection, the function $\Lambda_\omega^2\tr F_A \wedge F_A$ on $X$ is $G$-invariant and hence constant. Therefore, it turns that $A$ satisfies the stronger equation
\begin{equation}\label{eq:ext-HYM}
\left. \begin{array}{l}
\Lambda_{\omega} F_A = \imag \lambda \Id\\
\Lambda^2_{\omega} \tr F_A \wedge F_A = - \frac{4\hat{c}}{(n-1)!}
\end{array}\right \},
\end{equation}
where $\hat c \in \RR$ is as in \eqref{eq:constant-c-hat} and $\lambda \in \RR$ is determined by the first Chern class of the bundle and the K\"{a}hler class $[\omega]$. The system \eqref{eq:ext-HYM} corresponds to the limit
$$
\alpha_0 \to 0
$$
in \eqref{eq:CYMeq2}. Let us fix a pair of arbitrary coupling constants $\alpha_0, \alpha_1 >0$ and a homogeneous holomorphic vector bundle $E$ over $X$ associated to an irreducible representation. Then, any K\"{a}hler class on $X$ determines a unique $G$-invariant solution $(\omega,A)$ to the coupled
equations with coupling constants $\alpha_0$ and $\alpha_1$. To see this note that every de Rham class on $X$ (in particular, every
K\"{a}hler class) contains a unique $G$-invariant representative, obtained from an arbitrary representative by averaging. Trivially, the scalar
curvature of any $G$-invariant K\"{a}hler metric is constant. Therefore, the unique $G$-invariant solution of \eqref{eq:CYMeq2} arises as a simultaneous solution of the cscK equation and \eqref{eq:ext-HYM}, corresponding to the limit cases $\alpha_0 = 0$, and $\alpha_1 = 0$.

We will now give some examples of solutions to the Einstein--Yang--Mills type equations~\eqref{eq:CEYMeq} in the case of complex surfaces. Let $X$ be a compact complex surface and $E$ a homogeneous holomorphic vector bundle over $X$ of rank $r$ and zero degree. Then, the condition to be Hermitian--Yang--Mills for a unitary connections $A$ in $E$ is equivalent to the Anti-Self-Dual (ASD) equation and so \eqref{eq:CEYMeq} can be restated as
\begin{equation}
\label{eq:CEYMeqASD} \left. \begin{array}{l}
F_A^{+} = 0\\
\alpha_0 (\rho_\omega - c'\omega) = \alpha_1 (2(\Lambda_{\omega} F_A,F_A) - \Lambda_\omega (F_A \wedge F_A) - c'' \omega)
\end{array}\right \},
\end{equation}
for real constants $c'$ and $c'' \in \RR$. Since $\dim_\CC X = 2$, if $A$ is ASD with respect to $\omega$ we have that (see~\eqref{eq:identitysquaredFA})
\begin{align*}
2(\Lambda_{\omega} F_A,F_A) - \Lambda_\omega (F_A \wedge F_A) & = - \Lambda_\omega (F_A \wedge F_A)\\
& = \Lambda_\omega (|F_A|^2 \cdot \omega^2)\\
& = |F_A|^2 \cdot \omega,
\end{align*}
where the pointwise norm of the curvature is taken with respect to $\omega$. Suppose now that $(X,\omega)$ is a compact homogeneous K\"{a}hler-Einstein surface, i.e. it is a complex torus or is simply-connected (see Corollary $8.98$ in \cite{Be}). We suppose also that the vector bundle $E$ is homogeneous and associated to an irreducible representations of $G_o$ on $GL(r,\CC)$. Consider the Hermitian metric $H$ on $E$ whose Chern connection $A$ is HYM. Since the function $|F_A|^2$ is constant over $X$ by invariance, the pair $(\omega,A)$ satisfies the equation \eqref{eq:CEYMeqASD}. Relying in the discussion of \secref{sec:CeqscalarKal-K} the invariant Riemannian metric associated to $(\omega,A)$ in the total space of the $U(r)$-bundle of frames of $(E,H)$ over $X$ is Einstein.

\section{Stable bundles over K\"{a}hler-Einstein manifolds}
\label{sec:example3}

We see now some examples where Theorem~\ref{thm:ExistenceCYMeq2} can be applied. The following will provide our first examples of solutions to the coupled equations \eqref{eq:CYMeq2} in which the K\"ahler metric is not cscK.

Let $X$ be a high degree hypersurface of $\mathbb{P}^3$. By theorems of T. Aubin and S. T. Yau (see e.g. Theorem $11.7$ in \cite{Be}), there exists a unique K\"{a}hler-Einstein metric $\omega \in c_1(X)$ with negative (constant) scalar curvature. Moreover, $c_1(X) < 0$ implies that the group of automorphisms of the complex manifold $X$ is discrete (see~\cite[Proposition~2.138]{Be}). Let $E$ be a smooth $SU(2)$-principal bundle over $X$ with second Chern number
$$
k = \frac{1}{8\pi^2}\int_X \tr F_A \wedge F_A \in \ZZ,
$$
where $A$ is a connection on $E$. When $k$ is sufficiently large, the moduli space $M_k$ of Anti-Self-Dual (ASD) connections $A$ on $E$ with respect to $\omega$ is non-empty (see~\cite[Section~10.1.14]{DK}). Moreover, taking $k$ large enough, $M_k$ is non-compact but admits a compactification. Let $A$ be a connection that determines a point in $M_k$. Then, $A$ is irreducible and so we can apply Theorem \ref{thm:ExistenceCYMeq2} obtaining solutions $(\omega_\alpha,A_\alpha)$ to \eqref{eq:CYMeq00} for nonzero values of the coupling constants $\alpha_0$, $\alpha_1$ and small ratio
$$
\alpha = \frac{\alpha_1}{\alpha_0}.
$$
We claim that, if the pointwise norm
\begin{equation}\label{eq:examplesFAnorm}
|F_{A_0}|_{\omega_0}^2\colon X \to \RR
\end{equation}
of the initial HYM connection $A_0 = A$ with respect to the KE metric $\omega_0 = \omega$ is not constant, then $\omega_\alpha$ is not cscK for $0 < \alpha \ll 1$. To see this note that $(\omega_\alpha,A_\alpha)$ approaches uniformly to $(\omega_0,A_0)$ (see Lemma~\ref{lemma:impfunc}) and so
$$
\lim_{\alpha \to 0} \||F_{A_\alpha}|_{\omega_\alpha}^2 - |F_{A_0}|_{\omega_0}^2\|_{L^{\infty}} \to 0.
$$
Therefore, if $\eqref{eq:examplesFAnorm}$ is not constant then $|F_{A_\alpha}|_{\omega_\alpha}^2$ is not constant for small $\alpha$ and our claim follows from
$$
S_{\omega_\alpha} = \frac{c}{\alpha_0} - \alpha \cdot \Lambda^2_{\omega_\alpha}(F_{A_\alpha} \wedge F_{A_\alpha})  = \frac{c}{\alpha_0} + \alpha \cdot |F_{A_\alpha}|_{\omega_\alpha}^2,
$$
where $c \in \RR$. This last equation is satisfied since $(\omega_\alpha,A_\alpha)$ is a solution to \eqref{eq:CYMeq00}. To choose an ASD connection for which \eqref{eq:examplesFAnorm} is not a constant, we consider a sequence of ASD connections $\{A^l\}_{l=0}^\infty$ on $M_k$ approaching to a point in the boundary of the compactification. When $l \gg 0$ the connections $A_l$ start bubbling. This bubbling is reflected in the fact that the function \eqref{eq:examplesFAnorm} becomes more and more concentrated in a finite number of points of the manifold. Therefore, eventually, we obtain an ASD irreducible connection for which \eqref{eq:examplesFAnorm} is not a constant. To be more precise, recall that any point in the the boundary of the compactification of $M_k$ is given by an ideal connection (see~\cite[Definition~4.4.1]{DK}), i.e. an unordered $d$-tuple $(p_1, \ldots, p_d)$ of points on $X$ and a connection $A_\infty$ in $M_{k-d}$, the moduli space of ASD connections on suitable smooth $SU(2)$-bundle $E_{k-d}$ with second Chern number $k-d$. If $[A_l] \to [A_\infty]$ then, for any continuous function $f$ on $X$ (see~\cite[Theorem~4.4.4]{DK})
\begin{equation}\label{eq:convergencemeasure}
\lim_{l \to \infty} \int_X f \tr F_{A_l} \wedge F_{A_l} = \int_X f \tr F_{A_\infty} \wedge F_{A_\infty} + 8\pi^2 \sum_{m=1}^{d}f(p_m)
\end{equation}
We take $A_\infty$ in $M_{k -d}$ with $d > 0$. If $|F_{A_l}|_{\omega}^2$ is constant for every $l$, using the previous equation and the equality
$$
|F_{A_l}|_{\omega}^2 \cdot \omega^2 = \tr F_{A_l} \wedge F_{A_l}
$$
we obtain that $d = 0$, and so a contradiction (e.g. take in \eqref{eq:convergencemeasure} a sequence of test functions $f_n$ approaching the characteristic function of $X^0$ on $X$).

The hypothesis of Theorem~\ref{thm:ExistenceCYMeq2} hold in much more generality. A family of examples generalizing the previous one is provided by stable bundles over K\"ahler--Einstein manifolds. Let $(X,L)$ be a compact polarised manifold whose first Chern class $c_1(X)$ is positive negative or zero and
$$
c_1(L) = \lambda c_1(X), \qquad \lambda \in \ZZ.
$$
When $c_1(X) < 0$ (e.g. if $X$ is a high degree hypersurface of $\mathbb{P}^m$), $X$ has finite group of automorphisms and there exists, by the already mentioned results of Aubin and Yau, a unique K\"{a}hler-Einstein metric $\omega \in c_1(L)$. If $c_1(X) = 0$ there exists, by S. T. Yau solution to Calabi's Conjecture (see e.g.~\cite[Theorem~11.7]{Be}), a unique Ricci-flat metric on
$c_1(L)$. The dimension of the group of automorphisms of such manifolds is equal to its first Betti number (see \cite[Remark~11.22]{Be}), hence
simply connected ones (e.g. $K3$ surfaces) are examples of complex Ricci-flat manifolds with finite group of automorphisms. If $c_1(X) > 0$ it is still unknown  in general if $X$ carries or not a K\"{a}hler-Einstein metric. Let us restrict to the case
$$
X = \mathbb{P}^2 \; \sharp \; m \overline{\mathbb{P}}^2,
$$
the complex surface obtained by blowing up $\mathbb{P}^2$ at $m$ generic points (see \cite{TY}). If we take $m$ such that $3 < m < 8$ then $c_1(X) > 0$, $X$ has finite automorphism group (see Remark $3.12$ in \cite{T4}) and it was proved in \cite{TY} that $X$ admits a K\"{a}hler-Einstein metric.

On the other hand, given a compact polarised manifold $(X,L)$ (without any assumption on $c_1(X)$), an asymptotic
result of Maruyama \cite{Ma} states
that $L$-stable vector bundles $E$ over $X$  of rank $r$ exist for $r > \dim X > 2$ and
\begin{equation}\label{eq:stablebundlesufcondition}
c_2(E) \cdot c_1(L)^{n-2} >> 0.
\end{equation}
If $X$ has finite group of automorphisms and it is endowed with a K\"{a}hler-Einstein metric $\omega \in c_1(L)$ as before, we can apply Theorem~\ref{thm:ExistenceCYMeq2}.

\section{CscK metrics on ruled manifolds}
\label{sec:example4}

We now briefly discuss the relation between equation \eqref{eq:ext-HYM}, given by the limit
$$
\alpha_0 \to 0
$$
in \eqref{eq:CYMeq2}, and the existence of solutions to the cscK equation on ruled manifolds. We use existence results of Y. J. Hong~\cite{Ho1,Ho2}.

Let $E$ be a holomorphic stable vector bundle with degree zero over a compact K\"{a}hler manifold $(X,\omega)$ with constant scalar curvature. Let $H$ be
the (unique) Hermitian metric on $E$ whose Chern connection $A$ is HYM, given by Donaldson--Uhlenbeck--Yau Theorem (\cite{D3}\cite{UY}). Let $\mathbb{P}(E)$ be the projectivized bundle of $E$ and $L$ the tautological bundle over $\mathbb{P}(E)$. We consider the induced connection in $L^{\ast}$ by $A$ and denote its curvature by $F_{\textrm{A}_{L^{\ast}}}$. The $2$-form
$$
\frac{i}{2\pi} F_{\textrm{A}_{L^{\ast}}}
$$
is nondegenerate on the fibres since it induces the Fubini-Study metric. Then, for $k$ large enough
$$
\widehat{\omega}_k = \frac{i}{2\pi} F_{\textrm{A}_{L^{\ast}}} + k
\pi^{\ast}\omega
$$
is a Kähler metric in $\mathbb{P}(E)$. When $X$ has finite automorphisms group Y.J. Hong proved in~\cite{Ho1} that $[\widehat{\omega}_k]$ has a cscK metric for $k \gg 0$, using a deformation argument. Let denote by $\cX$ the extended gauge group of the $PU(r)$ bundle of frames associated to the reduction $H$ and the symplectic form $\omega$. We denote by $\cX_I \subset \cX$ the subgroup preserving the connection $A$. In~\cite{Ho2} he was able to remove the assumption on $\Aut X$ under the additional assumptions (see~\cite[Definition~I.A]{Ho2}):
$$
\cX_I \subset \Aut \mathbb{P}(E)
$$
finite and
\begin{equation}\label{eq:Hongcondition}
\Lambda^2_{\omega} (\tr F_A \wedge \tr F_A + \tr F_A \wedge
\rho_{\omega} + F_A \wedge F_A) = const.,
\end{equation}
that, by our assumption on the degree of $E$, reduces to
$$
\Lambda^2_{\omega} \tr F_A \wedge F_A = - \frac{4\hat{c}}{(n-1)!} \in \RR.
$$
The condition \eqref{eq:Hongcondition} appears when one splits the linearization of the cscK equation on $\mathbb{P}(E)$ into vertical and horizontal parts with respect to the connection $A$.

Then, we conclude that the existence of a solution to \eqref{eq:ext-HYM} when $c_1(E) = 0$ and $\cG_I$ is finite is a sufficient condition for the existence of a cscK metric in $[\widehat{\omega}_k]$ for $k \gg 0$ (see~\cite[Theorem~III.A]{Ho2}). We would like to study further this relation trying to prove that the existence of solutions to the coupled equations for small $\frac{\alpha_1}{\alpha_0} > 0$ implies the existence of constant scalar curvature K\"{a}hler metrics on $\mathbb{P}(E)$ with K\"{a}hler class $c_1(L_k)$ for large $k$. This would provide a generalization of Hong's results in \cite{Ho2}.

\chapter{\bf Solutions on $\CC^2$}
\label{chap:C2}

The idea of this chapter was suggested to the author by Professor Olivier Biquard and developed during a visit to the Institute de Mathematiques de Jussie under his supervision, for which the author is very grateful. Let us consider $\CC^2$ with the standard complex structure endowed with the trivial smooth principal bundle $E = \CC^2 \times SU(2)$. Given a real coupling constant $\alpha \in \RR$ and any integer $k \in \ZZ$ we will look for pairs $(\omega,A)$, where $\omega$ is a K\"{a}hler metric on $\CC^2$ and $A$ is an $SU(2)$-connection on $E$, satisfying the coupled equations \eqref{eq:CYMeq00}, that we can rewrite in this context using the Hodge star operator $*_{\omega}: \Omega^k \to \Omega^{4-k}$ as
\begin{equation}\label{eq:ceqc2}
\left. \begin{array}{l}
F_A^{+_{\omega}} = 0\\
S_\omega \; + \; \alpha *_{\omega} \tr F_A \wedge F_A = \alpha 8 \pi^2 k
\end{array}\right \}.
\end{equation}
The connections and metrics that we will consider have special asymptotic properties at infinity. In fact, any connection
$A$ will be required to satisfy
the topological constraint
\begin{equation}\label{eq:topologyfixed}
\int_{\CC^2} \tr F_A \wedge F_A = 8 \pi^2 k, \qquad \qquad k \in
\ZZ.
\end{equation}
The fixed asymptotic behavior at infinity is analogue in this non-compact case of fixing the K\"{a}hler class and the
topological type of the bundle. We will show that in $\CC^2$ there is no obstruction for finding solutions for
arbitrary coupling constant $\alpha$.
Solutions with $\alpha = 0$ are provided by pairs $(\omega_0,A)$, where $\omega_0$ is the euclidean metric
$\frac{\imag}{2}\sum_j dz_j \wedge
d\overline{z}_j$ and $A$ is any $k$-instanton, i.e. any solution to the Anti-Self-Duality equation
\begin{equation}\label{eq:ASDeq}
F_A^{+_{\omega_0}} = 0
\end{equation}
with fixed topological invariant \eqref{eq:topologyfixed}. In this section we will show that for each $\alpha \in \RR$
and $k \in \ZZ$ there exists a
solution $(\omega_\alpha,A_\alpha)$ of the coupled equations \eqref{eq:ceqc2} with coupling constant $\alpha$ and
fixed
topological invariant $8\pi^2
k$. The pair $(\omega_\alpha,A_\alpha)$ converges uniformly at infinity to a pair $(\omega_0,A_{0,\alpha})$, where
$\omega_0$ is standard Euclidean
metric and $A_{0,\alpha}$ is a $k$-instanton. Moreover, the will give solutions for which the K\"ahler metric is not cscK, making explicit the `bubbling' argument in \secref{sec:example3}. The method we will follow to find solutions to \eqref{eq:ceqc2} has two
steps: first, we deform a solution for zero coupling constant $\gamma$ via the application of the implicit function theorem in weighted
H\"older spaces. For this it will be
crucial the previous work of C. Arezzo and F. Pacard in \cite{AP} concerning the mapping properties of the scalar
curvature in weighted spaces.
Second, we apply a rescaling procedure via real homotheties. The uniform convergence at infinity will be
provided by the fact that
$A_{\alpha} - A_{0,\alpha}$ has coefficients in a weighted H\"older space. For simplicity in the first part we develop
the arguments starting with the
basic $1$-instanton (whose definition we recall below) and explain how the argument generalizes for a suitable choice
of $k$-instanton.

\section{Preliminaries: Instantons on $\CC^2$.}
\label{sec:C2instantons}

We recall now some well known facts about instantons on $\CC^2 \cong \RR^4$ (see \cite{At}, \cite{ADHM} and
\cite{DK}). Following \cite{At} we identify $\mathbb{R}^4$ with the algebra of quaternions $\mathbb{H}$ via the map
$$
x = (x_1,x_2,x_3,x_4) \to x_1 + x_2 \cdot \imag + x_3 \cdot \mathbf{j} + x_4 \cdot \mathbf{k} \in \mathbb{H}
$$
where $\imag,\mathbf{j},\mathbf{k}$ satisfy the usual relations. The group $Sp(1)$ of quaternions with unit norm
$$
x \overline{x} = |x|^2 = 1
$$
gets identified with $SU(2)$ and its lie algebra can be viewed as the pure imaginary quaternions with basis $i,j,k$. The basic $1$-instanton is defined as the $SU(2)$ connection
$$
A = \textrm{Im} \frac{\overline{x}dx}{1 + |x|^2} = \frac{1}{2}
\cdot \frac{\overline{x}dx - d\overline{x}x}{1 + |x|^2}.
$$
The curvature of $A$ can be written as
$$
F_A = \frac{d\overline{x} \wedge dx}{(1 + |x|^2)^2},
$$
where
\begin{align*}
d\overline{x} \wedge dx & = 2(dx_1 \wedge dx_2 - dx_3 \wedge dx_4)\cdot \imag + 2(dx_1 \wedge dx_3 +
dx_2 \wedge dx_4) \cdot \mathbf{j}\\
& + 2(dx_1 \wedge dx_4 - dx_2 \wedge dx_3) \cdot \mathbf{k}.
\end{align*}
The components in $\imag$, $\mathbf{j}$ and $\mathbf{k}$ form a basis of ASD $2$-forms and with respect to the flat metric $\omega_0$ so $F_A^+ = 0$.

Any other $1$-instanton $A'$ with respect to $\omega_0$ is produced by a conformal transformation
$$
f_{\beta,b}: x \to \beta(x - b),
$$
where $\beta$ is a positive real scalar and $b \in \mathbb{H}$. Multi-instantons or $k$-instantons, are given by
pulling back the direct sum of the basic instanton $A$ on $\mathbb{H}^k$ via maps $u: \mathbb{H} \to \mathbb{H}^k$ of the form
\begin{equation}\label{eq:umultiinstanton}
u(x) = (x\Id - B)\lambda,
\end{equation}
where $B$ is a symmetric $k \times k$ matrix of quaternions and $\lambda$ is column vector of quaternions in
$\mathbb{H}^m$ satisfying some compatibility conditions (see \cite[p.~25]{At}). Hence, any $k$-instanton can be written as
$$
A_u = \textrm{Im} \frac {u^* du}{1 + |u|^2},
$$
where the norm in $\mathbb{H}^k$ is taken with respect to the direct sum of the euclidean norm.

\section{Mapping properties of the connection-laplacian}
\label{sec:C2Mapconnectlap}

Let $A$ be the basic $1$-instanton in $\CC^2$ and consider its covariant derivative $d_A$ acting on sections $a \in
C^{\infty}(\RR^4,\mathfrak{su}(2))$. In this section we study the mapping properties of the Hodge laplacian $\Delta_A$
as a map between weighted
H\"{o}lder spaces. This is a self-adjoint elliptic operator of order $2$ with respect to the Hermitian product
provided by the trace $\tr$ and
the Hermitian product in forms given by $\omega_0$. We shall discuss for which values of the weight $\Delta_A$ is an
isomorphism. First we define
weighted H\"{o}lder spaces of our interest. The general theory about weighted spaces that we will use can be found in
\cite{LM}, \cite{B}, \cite{PR},
\cite{LP} and \cite{CC}.

Let $k \in \mathbb{N}$, $\beta \in (0,1)\subseteq\RR$, $\delta, r \in \RR$, $r>0$ be numbers and consider
$$
B_r = \{z \in \CC^2: |z|<r\}
$$
and $\overline{B}_r$ its clousure. For some fixed $r_0$ the space $C^{k,\beta}_\delta(\CC^2 \backslash B_{r_0})$ is
defined to be the space
$C^{k,\beta}_{loc}(\CC^2 \backslash B_{r_0})$ of locally H\"{o}lder functions $\phi$ for which the following norm
$$
\|\phi\|_{C^{k,\beta}_\delta(\CC^2 \backslash B_{r_0})} \defeq
\textrm{sup}_{r \geq r_0} r^{-\delta}
\|\phi(r\cdot)\|_{C^{k,\beta}(\overline{B}_2 \backslash B_1)}
$$
is finite. From now on we fix $r_0 > 0$ and consider the following.
\begin{definition}\label{def:Ckalphadelta}
Given $k$, $\beta$ and $\delta$ as before, the space $C^{k,\beta}_\delta$ is defined to be the space of functions
$\phi
\in C^{k,\beta}_{loc}(\CC^2)$
for which the following norm is finite
$$
\|\phi\|_{C^{k,\beta}_\delta} \defeq \|\phi\|_{C^{k,\beta}(\overline{B}_{2r_0})} + \|\phi\|_{C^{k,\beta}_\delta(\CC^2
\backslash B_{r_0})}.
$$
\end{definition}
In a similar way we define the spaces $D^{k,\beta}_\delta$ of sections of $\CC^2 \times \mathfrak{su}(2)$.
\begin{definition}\label{def:Dkalphadelta}
Given $k$, $\beta$ and $\delta$ as before, the space $D^{k,\beta}_\delta$ is defined to be the space of functions $a:
\CC^2 \to \mathfrak{su}(2)$, $a = a_1 \cdot \imag + a_2 \cdot \mathbf{j} + a_3 \cdot \mathbf{k}$, such that $a_l \in C^{k,\beta}_\delta$ for $l = 1,2,3$.
\end{definition}
\begin{remark}
The sign notation we use for the weight $\delta$ agrees with that in \cite{AP}, \cite{LP} and \cite{B} but has
opposite sign with respect to the one in \cite{LM}. With our notation the weight reflects order of growth: functions in $C^{k,\beta}_\delta$ grow at most like $|x|^\delta$ when $|x| \to
\infty$.
\end{remark}

In the next proposition we summarize the properties of the spaces $C^{k,\beta}_{\delta}$ that we will need for our discussion. Analogue properties are satisfied by the spaces $D^{k,\beta}_\delta$.
\begin{proposition}[\cite{CC},\cite{PR},\cite{LP}]\label{propo:propertiesweighted}
\hspace{0.5cm}
\begin{enumerate}
    \item There is a continuous embedding $C^{k,\beta}_{\delta} \subseteq C^{k',\beta'}_{\delta'}$ if $\alpha > \alpha'$,
    $k > l$ and $\delta' > \delta$.
    \item If $\phi \in C^{k,\beta}_{\delta}$ then $\nabla \phi \in
    C^{k-1,\beta}_{\delta-1}$.
    \item The product is a continuous map from $C^{k,\beta}_{\delta} \times C^{k,\beta}_{\delta'}$ to $C^{k,\beta}_{\delta + \delta'}$.
\end{enumerate}
\end{proposition}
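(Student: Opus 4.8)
The statement to be established is Proposition~\ref{propo:propertiesweighted}, collecting three elementary properties of the weighted H\"older spaces $C^{k,\beta}_{\delta}$ on $\CC^2$: (1) a continuous inclusion $C^{k,\beta}_{\delta} \subseteq C^{k',\beta'}_{\delta'}$ when $\beta > \beta'$, $k > k'$ and $\delta' > \delta$; (2) the fact that $\nabla$ maps $C^{k,\beta}_{\delta}$ continuously to $C^{k-1,\beta}_{\delta-1}$; and (3) that multiplication is continuous from $C^{k,\beta}_{\delta} \times C^{k,\beta}_{\delta'}$ to $C^{k,\beta}_{\delta+\delta'}$. These are standard; the plan is simply to reduce each claim to the corresponding statement for the ordinary H\"older norm on the fixed annulus $\overline{B}_2 \setminus B_1$ via the rescaling that defines $\|\cdot\|_{C^{k,\beta}_\delta(\CC^2\setminus B_{r_0})}$, and to quote the cited references (\cite{CC},\cite{PR},\cite{LP}) for the parts that need nothing beyond this.

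\textbf{Key steps.} First I would fix notation: write $\phi_r(y) = \phi(ry)$ for the rescaled function on $\overline{B}_2 \setminus B_1$, so that by definition the exterior part of the norm is $\sup_{r\ge r_0} r^{-\delta}\|\phi_r\|_{C^{k,\beta}(\overline{B}_2\setminus B_1)}$, while the interior part is the usual $C^{k,\beta}$ norm on the compact ball $\overline{B}_{2r_0}$. For property (2): the chain rule gives $\nabla(\phi_r) = r\,(\nabla\phi)_r$ on the annulus, hence $r^{-(\delta-1)}\|(\nabla\phi)_r\|_{C^{k-1,\beta}} = r^{-\delta}\|\nabla(\phi_r)\|_{C^{k-1,\beta}} \le r^{-\delta}\|\phi_r\|_{C^{k,\beta}}$, which bounds the exterior part of $\|\nabla\phi\|_{C^{k-1,\beta}_{\delta-1}}$ by $\|\phi\|_{C^{k,\beta}_\delta}$; the interior part is the trivial bound $\|\nabla\phi\|_{C^{k-1,\beta}(\overline{B}_{2r_0})} \le \|\phi\|_{C^{k,\beta}(\overline{B}_{2r_0})}$. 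For property (3): since $(\phi\psi)_r = \phi_r\,\psi_r$, the submultiplicativity of the ordinary H\"older norm on the annulus, $\|\phi_r\psi_r\|_{C^{k,\beta}} \le C\|\phi_r\|_{C^{k,\beta}}\|\psi_r\|_{C^{k,\beta}}$ with $C$ independent of $r$ (the annulus is fixed), gives $r^{-(\delta+\delta')}\|(\phi\psi)_r\|_{C^{k,\beta}} \le C\,\big(r^{-\delta}\|\phi_r\|_{C^{k,\beta}}\big)\big(r^{-\delta'}\|\psi_r\|_{C^{k,\beta}}\big)$, and again the interior contribution is the standard Banach-algebra estimate on $\overline{B}_{2r_0}$. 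For property (1): on the annulus one has $\|\phi_r\|_{C^{k',\beta'}} \le C\|\phi_r\|_{C^{k,\beta}}$ for $k>k'$, $\beta>\beta'$ (a fixed compact domain); multiplying by $r^{-\delta'} = r^{\delta-\delta'}\cdot r^{-\delta}$ and using $\delta-\delta' < 0$ together with $r \ge r_0 \ge 1$ (choosing $r_0\ge 1$, or absorbing the constant $r_0^{\delta-\delta'}$) yields $r^{-\delta'}\|\phi_r\|_{C^{k',\beta'}} \le C\,r^{-\delta}\|\phi_r\|_{C^{k,\beta}}$, uniformly in $r$.

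\textbf{Main obstacle.} There is essentially no serious obstacle; the only point requiring a little care is that the rescaling identities ($\nabla(\phi_r) = r(\nabla\phi)_r$ and the product rule) be applied consistently with the sign convention for $\delta$ adopted in Definition~\ref{def:Ckalphadelta} (the remark after Definition~\ref{def:Dkalphadelta} warns that this sign is opposite to that of \cite{LM}), and that in step (1) one uses $r \ge r_0$ to control the factor $r^{\delta-\delta'}$ — this is where the hypothesis $\delta' > \delta$ is genuinely used, and where one might need to either assume $r_0 \ge 1$ or insert a harmless constant depending on $r_0$. Once these bookkeeping points are settled, the statement follows, and since all three items are attributed to \cite{CC},\cite{PR},\cite{LP} in the proposition header it suffices to indicate that the proofs reduce, as above, to elementary facts about the fixed annulus and refer the reader there for details. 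The analogous statements for $D^{k,\beta}_\delta$ follow componentwise, since the $\mathfrak{su}(2)$-valued norm is defined through the three scalar components.
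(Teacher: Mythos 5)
Your argument is correct and is exactly the standard rescaling-to-the-fixed-annulus proof; the paper itself gives no proof of this proposition, simply attributing it to the cited references, so there is nothing to diverge from. Two minor remarks: you rightly (and silently) repaired the typo in item (1) of the statement, whose hypotheses should read $\beta>\beta'$ and $k>k'$ rather than ``$\alpha>\alpha'$, $k>l$''; and your observation that the factor $r^{\delta-\delta'}$ is controlled by $r_0^{\delta-\delta'}$ for $r\geq r_0$ (since $\delta-\delta'<0$) is precisely where the hypothesis $\delta'>\delta$ enters, so that point is correctly placed.
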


Let $\Delta_A$ be the Hodge Laplacian of the basic $1$-instanton given by
$$
\Delta_A = d_A^*d_A = - *_e d_A *_e d_A,
$$
where $*_e$ denotes the Hodge operator of the euclidean metric $\omega_0$ acting on forms. To apply the theory of elliptic operators on weighted spaces it is convenient to express the previous Hodge Laplacian with respect to the radial coordinate $0 < r < \infty$ in $\RR^4$. Let  $*_{S^3}$ and $d_{S^3}$ be, respectively, the Hodge operator on forms and the De Rham differential in the Riemannian three sphere $S^3 \subseteq \RR^4$.
\begin{lemma}
There exists a connection $B$ in the trivial $SU(2)$-bundle over the sphere $S^3$ such that
\begin{equation}\label{eq:Laplacianpolarcoordinates}
\Delta_A = - \partial_{rr}^2 - \frac{3}{r} \partial_{r} -
\frac{1}{r^2} *_{S^3} (d_{S^3} + \frac{r^2}{1 + r^2} B) *_{S^3}
(d_{S^3} + \frac{r^2}{1 + r^2} B).
\end{equation}
\end{lemma}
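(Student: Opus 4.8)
The goal is to rewrite the connection-Laplacian $\Delta_A = d_A^* d_A = -*_e d_A *_e d_A$ of the basic $1$-instanton in terms of the radial coordinate $r = |x|$ and the geometry of the unit three-sphere $S^3 \subset \RR^4$. The plan is to proceed in three steps: first, split $\RR^4 \setminus \{0\}$ into the product $(0,\infty) \times S^3$ via polar coordinates and record how the Euclidean Hodge operator $*_e$ on $\mathfrak{su}(2)$-valued forms decomposes under this splitting; second, express the covariant derivative $d_A$ in terms of the radial derivative $\partial_r$ and a connection on the restricted bundle over each sphere of radius $r$; third, assemble the two pieces and identify the connection $B$ on the trivial $SU(2)$-bundle over $S^3$ that appears in the statement.

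\textbf{Step 1: the flat Laplacian in polar coordinates.} First I would recall the standard fact that on $\RR^4 \setminus \{0\} \cong (0,\infty)_r \times S^3$ the Euclidean metric is $\omega_0 = dr^2 + r^2 g_{S^3}$, where $g_{S^3}$ is the round metric of radius $1$. For a scalar function this already gives the classical radial form $\Delta_0 = -\partial_{rr}^2 - \tfrac{3}{r}\partial_r - \tfrac{1}{r^2}\Delta_{S^3}$, and the coefficient $3 = 4 - 1 = \dim\RR^4 - 1$ comes from the conformal factor $r^{\dim - 1}$ in the volume form. Writing $*_e$ and $*_{S^3}$ for the Hodge stars in dimensions $4$ and $3$ respectively, one has the usual intertwining formulas relating $*_e$ on a $k$-form on $(0,\infty)\times S^3$ to $*_{S^3}$ applied to its ``radial'' and ``spherical'' components, weighted by appropriate powers of $r$. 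Since $d_A$ acts componentwise on the $\mathfrak{su}(2)$-valued forms, these identities carry over verbatim to the bundle-valued setting; this step is essentially bookkeeping and I would not grind through it.

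\textbf{Step 2: the instanton connection in polar coordinates.} The key computational input is the explicit form $A = \operatorname{Im}\tfrac{\bar x\, dx}{1+|x|^2}$. Writing $x = r\,\theta$ with $\theta \in S^3 \subset \mathbb{H}$, $|\theta| = 1$, one computes $\bar x\, dx = r^2\,\bar\theta\, d\theta + r\,\bar\theta\,\theta\, dr = r^2\,\bar\theta\, d\theta + r\,dr$; taking imaginary parts and dividing by $1 + r^2$, the $dr$-term is real and drops out, leaving
\begin{equation}
A = \frac{r^2}{1+r^2}\,\operatorname{Im}(\bar\theta\, d\theta) = \frac{r^2}{1+r^2}\,B,
\end{equation}
where $B \defeq \operatorname{Im}(\bar\theta\, d\theta)$ is a connection $1$-form on the trivial $SU(2)$-bundle over $S^3$ (indeed the pullback of the canonical invariant connection under $S^3 \cong Sp(1)$). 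Thus $A$ has no $dr$-component, and $d_A = d + A\wedge(\cdot) = \partial_r\, dr + (d_{S^3} + \tfrac{r^2}{1+r^2}B)$ where $d_{S^3} + \tfrac{r^2}{1+r^2}B$ is the covariant derivative along the spheres.

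\textbf{Step 3: assembly.} Finally I would substitute the decomposition of $d_A$ from Step 2 into $\Delta_A = -*_e d_A *_e d_A$ and use the $*_e$-vs-$*_{S^3}$ identities from Step 1. The radial part of $d_A$ contributes exactly the scalar radial Laplacian $-\partial_{rr}^2 - \tfrac{3}{r}\partial_r$ (the cross-terms between radial and spherical directions cancel because $A$ has no $dr$-component and because $\partial_r$ commutes, up to the explicit $r$-dependence, with the spherical operators), while the spherical part contributes $-\tfrac{1}{r^2}\,*_{S^3}\!\big(d_{S^3} + \tfrac{r^2}{1+r^2}B\big)*_{S^3}\!\big(d_{S^3} + \tfrac{r^2}{1+r^2}B\big)$, giving precisely \eqref{eq:Laplacianpolarcoordinates}. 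The main obstacle I anticipate is Step 3: one must be careful that the $r$-dependence inside the spherical covariant derivative (through the factor $\tfrac{r^2}{1+r^2}$) does not produce extra terms when commuted past $\partial_r$ and the powers of $r$ coming from the Hodge stars — that is, one needs to check that all such commutator terms either vanish or reorganize into the stated form. This is where a careful but routine verification is required; conceptually everything is forced by the fact that $A$ is pulled back from $S^3$ with the explicit conformal weight $\tfrac{r^2}{1+r^2}$.
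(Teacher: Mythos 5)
Your proposal follows essentially the same route as the paper's proof: pass to polar coordinates, observe that the $dr$-component of $A$ drops out so that $A = \frac{r^2}{1+r^2}B$ with $B = \operatorname{Im}(\bar w\, dw) = \frac{1}{2}(\bar w\, dw - d\bar w\, w)$ the invariant connection on $S^3$, and then assemble $\Delta_A = -*_e d_A *_e d_A$ using the radial/spherical splitting of the Euclidean Hodge star. The cancellation you flag in Step 3 does go through exactly as you anticipate (the spherical covariant derivative of the radial piece wedges $\vol_{S^3}$ against a spherical $1$-form and so vanishes on the $3$-dimensional sphere, while $dr\wedge dr=0$ kills the other cross-term), so the remaining verification is the routine bookkeeping that the paper carries out explicitly.
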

\begin{proof}
To compute formula \eqref{eq:Laplacianpolarcoordinates} let us take polar coordinates in $\RR^4$
\begin{align*}
x_1 & = r \cos \phi_1 = r w_1,\\
x_2 & = r \cos \phi_2 \sin \phi_1 = r w_2,\\
x_3 & = r \sin \phi_3 \sin \phi_2 \sin \phi_1= r w_3,\\
x_4 & = r \cos \phi_3 \sin \phi_2 \sin \phi_1 = r w_4,
\end{align*}
with $0 < r < \infty$ and $\sum_{l=1}^4 w_l^2 = 1$. Note that the basic instanton has the following expression in polar coordinates $x = r w$
\begin{align*}
A & = \frac{1}{2(1 + r^2)}(\overline{x} \cdot dx - d\overline{x} \cdot x)\\
& = \frac{1}{2(1 + r^2)}(r\overline{w} \cdot d(rw) - d(r\overline{w}) \cdot r w)\\
& = \frac{r^2}{2(1 + r^2)}(\overline{w} \cdot dw - d\overline{w}
\cdot w) = \frac{r^2}{1 + r^2} B,
\end{align*}
where $B$ is the $\mathfrak{su}(2)$-valued $1$-form
\begin{equation}\label{eq:connectionB}
B = \frac{1}{2}(\overline{w} \cdot dw - d\overline{w} \cdot w).
\end{equation}
Given $w \in S^3$ and any tangent vector $v_{w} \in T_{w}S^3$ we have
$$
B_w(v_{w}) = \lim_{r \to \infty} A_{rw}(r \cdot v_{w}),
$$
where $r:\RR^4 \to \RR^4$ is the homothety given by multiplication by $r$. Thus, $B$ has a coordinate free expression and so it extends to a true connection on $S^3 \times SU(2)$. It can be readily checked from \eqref{eq:connectionB} that $B$ is $SU(2)$-invariant and flat, although we will not need this properties for our discussion. The covariant derivative of $A$ can be now written as
$$
d_A = \partial_r + d_{S^3} + \frac{r^2}{1 + r^2} B.
$$
Let $a \in C^{\infty}(\RR^4,\mathfrak{su}(2))$. If we denote by $*_e$ the euclidean Hodge operator in $\RR^4$ and by $\vol_{S^3}$ the standard volume form in $S^3$ we have
\begin{align*}
\Delta_A a & = d_A^* d_A = - *_e d_A *_e d_A a \\
& = - *_e (\partial_r + d_{S^3} + \frac{r^2}{1 + r^2} B) *_e
(\partial_r a \cdot dr + d_{S^3}a + \frac{r^2}{1 + r^2} [B,a])\\
& = - *_e (\partial_r + d_{S^3} + \frac{r^2}{1 + r^2} B)(\partial_r
a \cdot r^3 \vol_{S^3}  - r dr \wedge (d_{S^3}a + \frac{r^2}{1 +
r^2} [B,a]))\\
& = - *_e (\partial_r(r^3 \partial_r a) dr \wedge \vol_{S^3} + r dr
\wedge (d_{s^3} + \frac{r^2}{1 + r^2} B)*_{S^3}(d_{S^3}a +
\frac{r^2}{1 + r^2} [B,a]))\\
& = - \partial^2_{rr}a - \frac{3}{r}\partial_r a - \frac{1}{r^2}
*_{S^3}(d_{s^3} + \frac{r^2}{1 + r^2} B)*_{S^3}(d_{S^3}a +
\frac{r^2}{1 + r^2}B)a.
\end{align*}
\end{proof}

Let $k \geq 2$ be a fixed positive integer. It follows from the properties of the weighted H\"{o}lder
spaces summarized in Proposition \ref{propo:propertiesweighted}, that, for all $\beta \in [0,1)$ and $\delta \in \RR$, the differential operator $\Delta_A$ induces a well defined bounded linear operator
$$
\Delta_{A,\delta}: D^{k,\beta}_\delta \to D^{k-2,\beta}_{\delta-2}.
$$
Following the notation in \cite{LM} we consider the operator
\begin{equation}\label{eq:Laplacianinfinity}
\Delta_{\infty} = - r^2 \partial_{rr}^2 - 3 r \partial_{r} + \Delta_B,
\end{equation}
where $\Delta_B$ denotes the Hodge Laplacian of the connection \eqref{eq:connectionB}. The operator \eqref{eq:Laplacianinfinity} induces well defined bounded linear operator $\Delta_{\infty,\delta}: D^{k,\beta}_\delta \to D^{k-2,\beta}_{\delta}$ for each value of the weight $\delta \in \RR$. Making the change of variable $t = \ln r$ we obtain a translation invariant operator (see \cite[p.~414]{LM})
\begin{equation}\label{eq:Laplacianinfinityt}
\Delta_{\infty,t} = - \partial_{tt}^2 - 2 \partial_{t} + \Delta_B
\end{equation}
with associated operator
\begin{equation}\label{eq:Laplacianinfinitytdelta}
\Delta_{\infty,t}(\delta) = - \delta^2 - 2 \delta + \Delta_B, \qquad \qquad \delta \in \RR,
\end{equation}
acting on sections $b \in C^{\infty}(S^3,\mathfrak{su}(2))$. Let $0 \leq \lambda_1 \leq \lambda_2 \leq \ldots$ be the
eigenvalues, counted with multiplicity, of the operator $\Delta_B$, the Hodge Laplacian of the connection \eqref{eq:connectionB}. Then, the homogeneous problem $\Delta_{\infty,t}(\delta) b = 0$ has nontrivial solutions if and only if
\begin{equation}\label{eq:indicialroots}
\delta_l^{\pm} = - 1 \pm \sqrt{1 + \lambda_l},
\end{equation}
where $\delta_l^{\pm} \in \RR$ are the indicial roots of $\Delta_{A}$ (see~\cite[p.~416]{LM}). Then, since the $\lambda_l$ are all greater or equal than $0$ we obtain the following result.
\begin{lemma}\label{lemma:indicialinterval}
There is no indicial root of $\Delta_{A}$ in $]-2,0[ \subseteq
\RR$.
\end{lemma}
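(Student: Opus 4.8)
The plan is to use the explicit expression \eqref{eq:indicialroots} for the indicial roots of $\Delta_A$, namely $\delta_l^{\pm} = -1 \pm \sqrt{1+\lambda_l}$, together with the non-negativity of the eigenvalues $\lambda_l \geq 0$ of the Hodge Laplacian $\Delta_B$ of the connection \eqref{eq:connectionB} on $S^3$. The key point is that an indicial root in the open interval $]-2,0[$ would force a constraint on some $\lambda_l$ that is incompatible with $\lambda_l \geq 0$, unless $\lambda_l = 0$, and in that borderline case the roots land precisely on the endpoints $-2$ and $0$ rather than inside the interval.

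First I would observe that for each fixed index $l$ the two indicial roots $\delta_l^{-} = -1 - \sqrt{1+\lambda_l}$ and $\delta_l^{+} = -1 + \sqrt{1+\lambda_l}$ satisfy $\delta_l^{-} \leq -2 \leq -1 \leq 0 \leq \delta_l^{+}$ whenever $\lambda_l \geq 0$, since $\sqrt{1+\lambda_l} \geq 1$ in that range. In particular $\delta_l^{-} \leq -2$ and $\delta_l^{+} \geq 0$, so neither root can lie strictly between $-2$ and $0$. Running this over all $l$ (using that the $\lambda_l$, counted with multiplicity, are exactly $0 \leq \lambda_1 \leq \lambda_2 \leq \cdots$, all $\geq 0$) shows that the set of indicial roots of $\Delta_A$ is contained in $]-\infty,-2] \cup [0,\infty[$, which is exactly the assertion that $]-2,0[$ contains no indicial root.

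The only subtlety, and the one small thing worth checking carefully, is that the indicial roots are genuinely given by \eqref{eq:indicialroots} and nothing else — i.e. that the homogeneous problem $\Delta_{\infty,t}(\delta)b = 0$ for the translation-invariant model operator \eqref{eq:Laplacianinfinityt} has a nonzero solution precisely when $-\delta^2 - 2\delta + \lambda_l = 0$ for some eigenvalue $\lambda_l$ of $\Delta_B$. This is standard: expanding $b$ in an orthonormal basis of eigensections of the self-adjoint elliptic operator $\Delta_B$ on $C^\infty(S^3,\mathfrak{su}(2))$ decouples $\Delta_{\infty,t}(\delta)b = 0$ into the scalar equations $(-\delta^2 - 2\delta + \lambda_l)c_l = 0$, whose nontrivial solvability is the quadratic $\delta^2 + 2\delta - \lambda_l = 0$, with roots $\delta = -1 \pm \sqrt{1+\lambda_l}$. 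I would cite \cite[p.~416]{LM} for this, exactly as the excerpt already does.

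There is really no serious obstacle here — the statement is a one-line consequence of $\lambda_l \geq 0$ via the formula for the roots, and the ``hard part'' is only the bookkeeping needed to be sure that all indicial roots arise this way and that $\Delta_B \geq 0$ (the latter being immediate since $\Delta_B = d_B^* d_B$ is a non-negative self-adjoint operator, so its spectrum lies in $[0,\infty[$). The reason this lemma is worth isolating is its role downstream: the absence of indicial roots in the half-open or open intervals adjacent to the critical weights is precisely what guarantees that $\Delta_{A,\delta}$ is Fredholm, and in fact an isomorphism, for weights $\delta$ in the relevant range, which is what the deformation/implicit-function-theorem argument of \secref{sec:C2Mapconnectlap} and beyond requires.
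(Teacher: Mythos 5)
Your argument is exactly the paper's: the indicial roots are $\delta_l^{\pm} = -1 \pm \sqrt{1+\lambda_l}$ by \eqref{eq:indicialroots}, and since $\lambda_l \geq 0$ implies $\sqrt{1+\lambda_l} \geq 1$, one gets $\delta_l^{-} \leq -2$ and $\delta_l^{+} \geq 0$, so no root lies in $]-2,0[$. The proposal is correct and matches the paper's (one-line) proof, with the added bookkeeping about diagonalising $\Delta_B$ being a reasonable and standard justification of \eqref{eq:indicialroots}.
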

It follows from \cite[Theorem~6.1]{LM} that the operator $\Delta_{A,\delta}$ is Fredholm if and only if $\delta$
is not an indicial root of $\Delta_A$. From \cite[Lemma~7.1]{LM} it also follows that if $\delta_1, \delta_2 \in \RR$ are weights such that there is no indicial root in $[\delta_1,\delta_2]$ then the dimension of Kernel and coKernel do not change. The heuristic reason for this fact is that the solutions to the homogeneous problem corresponding to $\Delta_\infty$ (equivalently to $\Delta_{\infty,t}$) do determine at $\infty$ all the possible asymptotic
behaviors of the solutions of the homogeneous problem corresponding to $\Delta_A$. Since $\Delta_A$ is self adjoint, using duality arguments in weighted Sobolev spaces (see the comments after the proof of Proposition~\ref{propo:rangeinjectsurjec}) we also obtain that
\begin{equation}\label{eq:dualitysobolev}
\textrm{CoKer} \Delta_{A,\delta} \cong \textrm{Ker} \Delta_{A,- \delta - 2},
\end{equation}
provided that $\delta$ is not an indicial root. We now apply this facts to prove the main result of this section.
\begin{proposition}\label{propo:rangeinjectsurjec}
The operator $\Delta_{A,\delta}$ is injective if $\delta < 0$. It is surjective if $\delta$ is not an indicial root
and
$\delta > -2$.
\end{proposition}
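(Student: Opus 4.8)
The plan is to prove the two assertions separately, using the mapping theory for $\Delta_A$ between weighted H\"older spaces together with the explicit structure of the basic instanton.

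\textbf{Injectivity for $\delta<0$.} First I would argue that a weighted decay estimate forces a solution of the homogeneous problem to be in $L^2$. Suppose $a\in D^{k,\beta}_\delta$ with $\Delta_A a=0$ and $\delta<0$. Since $a$ decays like $|x|^\delta$ with $\delta<0$, the integration by parts
\begin{equation}
\int_{\CC^2}(d_A a\wedge *_e d_A a) = \int_{\CC^2}(\Delta_A a\wedge *_e a) = 0
\end{equation}
is justified: the boundary terms over $\partial B_r$ are controlled by $r^3\cdot r^{\delta}\cdot r^{\delta-1}=r^{2\delta+2}$, which tends to $0$ along a sequence $r_j\to\infty$ provided $\delta<-1$, while for $-1\le\delta<0$ one first uses the embedding of Proposition~\ref{propo:propertiesweighted}(1) to improve to a slightly more negative weight, or more cleanly one uses that $\Delta_A$ has no indicial root in $]-2,0[$ (Lemma~\ref{lemma:indicialinterval}) so $\operatorname{Ker}\Delta_{A,\delta}$ is independent of $\delta$ on the whole interval $]-2,0[$, reducing to the case $\delta$ close to $-2$. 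Then $d_A a=0$ on all of $\CC^2$, so $a$ is $d_A$-parallel; being $\mathfrak{su}(2)$-valued and tending to $0$ at infinity, it must vanish identically. Hence $\Delta_{A,\delta}$ is injective for all $\delta\in]-2,0[$, and then for all $\delta<0$ by the embedding $D^{k,\beta}_\delta\subseteq D^{k,\beta}_{\delta'}$ when $\delta<\delta'$ together with the fact that indicial roots below $-2$ only shrink the kernel further (use \cite[Lemma~7.1]{LM} crossing each indicial root).

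\textbf{Surjectivity for $\delta>-2$, $\delta$ not an indicial root.} Here I would combine Fredholmness with the duality identity. By \cite[Theorem~6.1]{LM}, since $\delta$ is not an indicial root, $\Delta_{A,\delta}$ is Fredholm, so it suffices to show $\operatorname{CoKer}\Delta_{A,\delta}=0$. By \eqref{eq:dualitysobolev}, $\operatorname{CoKer}\Delta_{A,\delta}\cong\operatorname{Ker}\Delta_{A,-\delta-2}$. Now $\delta>-2$ gives $-\delta-2<0$, so by the injectivity statement just proved, $\operatorname{Ker}\Delta_{A,-\delta-2}=0$. Therefore $\Delta_{A,\delta}$ is surjective, completing the proof.

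\textbf{Main obstacle.} The delicate point is the justification of the integration by parts precisely on the borderline range $-1\le\delta<0$, where the naive boundary-term estimate $r^{2\delta+2}$ does not decay. My preferred route around this is the ``no indicial root in $]-2,0[$'' fact from Lemma~\ref{lemma:indicialinterval}, which by the dimension-stability result of \cite[Lemma~7.1]{LM} lets me compute $\operatorname{Ker}\Delta_{A,\delta}$ at any convenient weight in that interval --- in particular at a weight sufficiently close to $-2$ where the decay is strong enough for the boundary terms to vanish and the parallel-section argument to run cleanly. A secondary technical check is that the $L^2$-duality isomorphism \eqref{eq:dualitysobolev} genuinely transfers to the H\"older setting; this is standard (one passes through weighted Sobolev spaces via the Schauder and Sobolev embeddings of Proposition~\ref{propo:propertiesweighted} and the self-adjointness of $\Delta_A$), so I would only sketch it rather than belabor the functional-analytic details.
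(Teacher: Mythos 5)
Your proposal is correct and follows essentially the same route as the paper: integration by parts to kill $d_Aa$ when $\delta<-1$, extension to all $\delta<0$ via the absence of indicial roots in $]-2,0[$ together with the dimension-stability of the kernel, and surjectivity via Fredholmness and the duality $\operatorname{CoKer}\Delta_{A,\delta}\cong\operatorname{Ker}\Delta_{A,-\delta-2}$. The only cosmetic difference is that you conclude $a=0$ from the constancy of the norm of a parallel section decaying at infinity, whereas the paper invokes irreducibility of $A$; both are valid.
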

\begin{proof}
Suppose first that there exists $a \in D^{k,\beta}_{\delta}$ such that $\Delta_A a = 0$. By elliptic regularity it follows that $a$ is smooth. Let $B_r \subseteq \RR^4$ be
the open ball of radius $r$. If $\delta < -1$ then we can integrate by parts obtaining
\begin{align*}
0 & = \int_{\RR^4} \tr a \Delta_A a \vol_e = \lim_{r \to \infty} \int_{B_r} \tr a \Delta_A a \vol_e\\
& = \lim_{r \to \infty} \int_{B_r} \tr d_A a \wedge *_e d_A a + \lim_{r \to \infty} \int_{\partial B_r} \tr a \wedge *_e d_A a\\
& = \lim_{r \to \infty} \int_{B_r} \tr d_A a \wedge *_e d_A a = \int_{\RR^4} \tr d_A a \wedge *_e d_A a = \|d_A a\|^2_{\RR^4},
\end{align*}
where $\vol_e$ denotes the usual volume form in $\RR^4$. For the previous computation we have used that $|d_A a|^2_{\RR^4} \in D^{k-2,\beta}_{2\delta - 2} \subseteq L^1(\RR^4)$ due to $2\delta - 2 + 3 = 2\delta + 1 < -1$ and that the term $|\int_{\partial B_r} \tr a \wedge *_e d_A a|$ grows at most like $r^{2\delta + 2}$ as $r \to \infty$. Since the connection $A$ is irreducible $d_A a = 0$ implies $a = 0$ thus $\Delta_{A,\delta}$ is injective if $\delta < -1$. Now, since there is no indicial root of $\Delta_A$ in $]-2,0[$, we conclude that $\Delta_{A,\delta}$ is injective if $\delta < 0$. To prove $(2)$ we recall from \eqref{eq:dualitysobolev} that $\Delta_{A,\delta}$ is surjective if and only if $\Delta_{A,-2 - \delta}$ is injective, provided that $\delta$ is not an indicial root. This holds if $\delta > -2$ due to $(1)$.
\end{proof}

Note that in \cite{LM} the authors work in weighted Sobolev spaces, though it follows from standard inclusion relations between weighted H\"{o}lder and Sobolev spaces (see e.g. \cite{LP}) that their results hold also in weighted H\"{o}lder spaces. The rest of this section is devoted to clarify this point and to give a proof of~\eqref{eq:dualitysobolev}. The reader familiarized with the theory of weighted spaces can skip this part going directly to \secref{sec:C2Firstsol}. Let us first define weighted Sobolev spaces: we denote by $W^{0,p}_\delta$ be the space of locally integrable functions $\phi: \RR^4 \to \RR$ such that the following norm is finite
$$
\|\phi\|_{p,0,\delta} = (\int_{\RR^4} |<x>^{-\delta} \phi|^p
<x>^{-4} \vol_e)^{1/p}
$$
where $<x> = (1 + |x|^2)^{1/2}$ and $\vol_e = \frac{\omega_0^2}{2!}$ denotes the usual volume form in $\RR^4$.
For a non-negative integer $k$ the weighted Sobolev space $W^{k,p}_\delta$ is the set of locally integrable functions $\phi$ such that $|\nabla^l \phi| \in W^{0,p}_{\delta - l}$ for $0 \leq l \leq k$ with norm
$$
\|\phi\|_{p,k,\delta} = \sum_{l=0}^k \|\nabla^l \phi\|_{p,0,\delta - l}.
$$
In a similar way we define weighted sobolev spaces for sections in $\CC^2 \times \mathfrak{su}(2)$ that we denote by
$U^{k,p}_\delta$. Then, the spaces $W^{k,p}_\delta$ satisfy the following properties (see~\cite{LP})
\begin{enumerate}
    \item There is a continuous embedding $W^{k,p}_{\delta} \subseteq W^{k',p}_{\delta'}$ if $k > l$ and $\delta' > \delta$.
    \item If $\phi \in W^{k,p}_{\delta}$ then $\nabla \phi \in W^{k-1,p}_{\delta-1}$.
    \item There is a continuous embedding $C^{k,\beta}_{\delta-\epsilon} \subseteq W^{k,p}_{\delta}$ for every $p \geq 1$ and $\epsilon > 0$.
    \item Suppose $p > 1$ and $l - k - \beta > 4/p$, then there is a continuous embedding $W^{l,p}_{\delta} \subseteq C^{k,\beta}_{\delta}$.
    \item The dual space of $W^{k,p}_{\delta}$ is $W^{k,p'}_{-4-\delta}$, with $\frac{1}{p'} + \frac{1}{p} = 1$.
\end{enumerate}
Analogue properties are satisfied by the spaces $U^{k,p}_\delta$. Property $2)$ implies that $\Delta_A$ induces a well defined operator
$$
\Delta^U_{\delta}: U^{k,2}_\delta \to U^{k-2,2}_{\delta -2}
$$
for each $\delta \in \RR$. We will prove from the previous inclusion properties that there exists isomorphisms $\textrm{Ker} \Delta^U_{\delta} \cong \textrm{Ker} \Delta_{A, \delta}$ and $\textrm{coKer} \Delta^U_{\delta} \cong \textrm{coKer} \Delta_{A, \delta}$. This proves that our previous assertions concerning H\"older spaces follow from the results in \cite{LM}. Note first that the dimension of the Kernel and coKernel in weighted Sobolev spaces of a self adjoint elliptic operator do not depend on $k$. For the Kernel it follows from elliptic regularity and for the coKernel it follows from the fact that, since our operator $\Delta_A$ is self-adjoint, any Kernel is isomorphic to some coKernel with dual weight. Choosing any $\epsilon >0$ and large $l >> 0$ from proposition \ref{propo:propertiesweighted} we have the inclusions
\begin{equation}\label{eq:inclusionSobHolder}
U^{l,2}_{\delta} \subseteq D^{k,\beta}_{\delta} \subseteq
U^{k,2}_{\delta + \epsilon}
\end{equation}
inducing others $\textrm{Ker} \Delta^U_{\delta} \subseteq \textrm{Ker} \Delta_{A, \delta}^{k,\alpha} \subseteq \textrm{Ker} \Delta^U_{\delta + \epsilon}$. From this we conclude, choosing $\epsilon$ small, $\textrm{Ker} \Delta^U_{\delta} \cong \textrm{Ker} \Delta_{A, \delta}$ since the dimension of the Kernel
in weighted Sobolev spaces do not change if we do not cross an indicial root (see~\cite[Lemma~7.1]{LM}). On the other hand, using \eqref{eq:inclusionSobHolder} and elliptic regularity we obtain
$$
\textrm{Im}(\Delta^{U}_{\delta}: U^{l,2}_{\delta} \to
U^{l-2,2}_{\delta - 2}) = D^{k-2,\beta}_{\delta-2} \cap
\textrm{Im}(\Delta_A: D^{k,\beta}_{\delta} \to
D^{k-2,\beta}_{\delta - 2})
$$
By the second theorem of isomorphism $\textrm{coKer} \Delta^{U}_{\delta} \leq \textrm{coKer} \Delta_{A,\delta}$. A
similar argument using the right hand-side inclusion in \eqref{eq:inclusionSobHolder} shows $\textrm{coKer}
\Delta_{A,\delta} \leq \textrm{coKer} \Delta^{U}_{\delta + \epsilon}$, from where we obtain $\textrm{coKer} \Delta^U_{\delta} \cong \textrm{coKer} \Delta_{A, \delta}$ choosing $\epsilon$ small enough.

Finally, we give a proof of \eqref{eq:dualitysobolev} using the previous arguments. Since $\Delta_A$ is self-adjoint and the dual space $(U^{k-2,2}_{\delta - 2})'$ is isomorphic to $U^{k-2,2}_{- \delta - 2}$, if $\delta$ is not an indicial root we have an isomorphism $\textrm{CoKer} \Delta^U_{\delta} \cong \textrm{Ker} \Delta^U_{- \delta - 2}$ (see~\cite[Proposition~4.1]{LM}). Hence, the operator $\Delta_{A,\delta}$ is surjective if and only if the operator $\Delta_{A,-2 - \delta}$ is injective, provided that $\delta$ is not an indicial root.

\section{Solutions for arbitrary coupling constant}
\label{sec:C2Firstsol}

Now we prove the existence of solutions to \eqref{eq:ceqc2} for arbitrary coupling constant. First we deform the
solution with $\alpha = 0$ provided
by the basic $1$-instanton $A$ in $\CC^2$ and the standard flat K\"{a}hler metric $\omega_0$. Let $E^c = \CC^2 \times
SL(2,\CC)$ be the $SL(2,\CC)$
principal bundle endowed with the holomorphic structure provided by $A$. We fix the standard Hermitian metric $H_0$ in
$\CC^2$. Given a smooth map $a
\in C^{\infty}(\CC^2,\mathfrak{su}(2))$ we
can consider the induced hermitian metric $H_0 \cdot e^{\imag a}$ and its Chern connection associated to the
holomorphic structure in $E^c$. We denote
by $F_{e^{\imag a}}$ the curvature of this connection. Note that
$$
\frac{d}{dt}_{t=0}F_{e^{\imag ta}} = \imag \dbar_A\partial_Aa.
$$
Let $\cK \subseteq C^{\infty}(\CC^2,\RR)$ be the space of K\"{a}hler potentials of the euclidean metric $\omega_0$.
Let
$$
L: \mathbb{R} \times \cK \times C^{\infty}(\CC^2,\mathfrak{su}(2)) \to C^{\infty}(\CC^2,\RR) \times
C^{\infty}(\CC^2,\mathfrak{su}(2))
$$
be the differential operator
\begin{equation}\label{eq:operatorL}
L(\alpha,\phi,a) = (S_{\omega_{\phi}} + \alpha *_{\omega_{\phi}}\tr F_{e^{\imag a}}^2 - \alpha 8 \pi^2,
\Lambda_{\omega_{\phi}}F_{e^{\imag a}})
\end{equation}
where $\omega_\phi = \omega_0 + dd^c\phi$. Due to the fact that $A$ is an Hermitian--Yang--Mills
connection the derivative of $L$
with respect to the last two variables at $(0,0,0)$ is
\begin{equation}\label{eq:differentialphia}
d_{\phi,a}L_{|(0,0,0)} = (\Delta_{\omega_0}^2
\phi,\frac{1}{2}\Delta_{A_0}a + <F_{A_0},\partial \dbar \phi>)
\end{equation}
where $\Delta_{\omega_0}$ is the Hodge laplacian of the euclidean
metric.
\begin{lemma}\label{lemma:C2impfunc}
Let $\delta_1, \delta_2 \in \RR$ with $0 < \delta_1 < 1$ and $-2 < \delta_2 < 0$. We fix $k \geq 4$ a non-negative
integer and $0 < \beta < 1$, $\beta
\in \RR$. Then the differential operator $L$ defined in \eqref{eq:operatorL} induces a well defined $C^1$ map
\begin{equation}\label{eq:operatorLinduced}
L_{\delta_1,\delta_2}: V_{\delta_1,\delta_2} \to C^{k-4,\beta}_{\delta_1-4} \times D^{k-4,\beta}_{\delta_2-2}
\end{equation}
where $V_{\delta_1,\delta_2}$ is a neighbourhood of $(0,0,0)$ in $\RR \times C^{k,\beta}_{\delta_1} \times
D^{k-2,\beta}_{\delta_2}$. For such a pair
$(\delta_1,\delta_2)$ there exists $\epsilon = \epsilon(\delta_1,\delta_2)
> 0$ and a $C^1$ curve
$$
\Phi:]-\epsilon,\epsilon[ \to C^{\infty}_{\delta_1} \times D^{\infty}_{\delta_2}
$$
of solutions to the system $L(\alpha,\Phi(\alpha)) = 0$.
\end{lemma}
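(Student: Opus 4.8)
The plan is to apply the Implicit Function Theorem in Banach spaces to the map $L$ at the point $(0,0,0)$, exactly in the spirit of the finite-dimensional picture sketched in Section~\ref{sec:ANdeformingsolutions} and of Proposition~\ref{thm:ExistenceCYMeq}, but now in the weighted H\"older spaces introduced in Definitions~\ref{def:Ckalphadelta} and~\ref{def:Dkalphadelta}. First I would verify that $L$ indeed induces a well-defined $C^1$ map~\eqref{eq:operatorLinduced} between the stated weighted spaces. This is the routine but slightly tedious part: using the multiplication and differentiation properties of weighted spaces (Proposition~\ref{propo:propertiesweighted}), one checks that for $\phi \in C^{k,\beta}_{\delta_1}$ with $0<\delta_1<1$ the K\"ahler form $\omega_\phi = \omega_0 + dd^c\phi$ is a genuine metric near $\phi=0$ and differs from $\omega_0$ by a term in $C^{k-2,\beta}_{\delta_1-2}$, so that $S_{\omega_\phi}$ lands in $C^{k-4,\beta}_{\delta_1-4}$; and that for $a \in D^{k-2,\beta}_{\delta_2}$ with $-2<\delta_2<0$ the curvature $F_{e^{\imag a}}$ of the deformed Chern connection differs from $F_{A_0}$ by a term controlled in $D^{k-4,\beta}_{\delta_2-2}$, so that $\Lambda_{\omega_\phi}F_{e^{\imag a}}$ and $*_{\omega_\phi}\tr F_{e^{\imag a}}^2$ lie in the asserted target spaces. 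Here the sign constraint $\delta_2<0$ is what guarantees that the quadratic curvature term $\Lambda^2 \tr F^2$ decays, so adding the coupling correction does not spoil the mapping property.

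Next I would compute the linearization $d_{\phi,a}L_{|(0,0,0)}$, which by~\eqref{eq:differentialphia} is the lower-triangular operator $(\phi,a)\mapsto (\Delta_{\omega_0}^2\phi,\ \tfrac12\Delta_{A_0}a + \langle F_{A_0},\partial\bar\partial\phi\rangle)$; note the $\alpha$-derivative does not enter because at $\alpha=0$ the solution is exactly the flat metric together with the basic instanton, an HYM connection. To invert this it suffices, by the triangular structure, to invert $\Delta_{\omega_0}^2 \colon C^{k,\beta}_{\delta_1}\to C^{k-4,\beta}_{\delta_1-4}$ and $\Delta_{A_0}\colon D^{k-2,\beta}_{\delta_2}\to D^{k-4,\beta}_{\delta_2-2}$. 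The second is precisely the operator analysed in Section~\ref{sec:C2Mapconnectlap}: by Proposition~\ref{propo:rangeinjectsurjec}, $\Delta_{A,\delta_2}$ is an isomorphism for $-2<\delta_2<0$ (it is injective since $\delta_2<0$, surjective since $\delta_2>-2$ and there is no indicial root in $]-2,0[$ by Lemma~\ref{lemma:indicialinterval}), so one only needs that $\delta_2$ is not an indicial root, which can be arranged within the open interval. For the bilaplacian $\Delta_{\omega_0}^2$ I would invoke the corresponding mapping theory for the flat Laplacian on weighted spaces (as in~\cite{AP}, cf.\ the mapping properties of the scalar curvature operator in weighted H\"older spaces referenced there): for $0<\delta_1<1$ the flat Laplacian $\Delta_{\omega_0}$ is an isomorphism $C^{k,\beta}_{\delta_1}\to C^{k-2,\beta}_{\delta_1-2}$ onto functions of the appropriate decay, hence so is its square onto $C^{k-4,\beta}_{\delta_1-4}$. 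Composing these, $d_{\phi,a}L_{|(0,0,0)}$ is an isomorphism of Banach spaces.

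With the mapping property and the invertibility of the linearization in hand, the Implicit Function Theorem in Banach spaces yields an $\epsilon = \epsilon(\delta_1,\delta_2)>0$ and a $C^1$ curve $\alpha\mapsto\Phi(\alpha)=(\phi_\alpha,a_\alpha)$ in $C^{k,\beta}_{\delta_1}\times D^{k-2,\beta}_{\delta_2}$ with $L(\alpha,\Phi(\alpha))=0$ and $\Phi(0)=(0,0)$. Finally, bootstrapping: since the equation $L(\alpha,\Phi(\alpha))=0$ is elliptic in $(\phi,a)$ (the leading part is $\Delta_{\omega_\phi}^2\oplus\Delta_{A_\alpha}$) and the coefficients are smooth, elliptic regularity applied on the weighted scale — using that raising $k$ does not change the solution by local uniqueness, exactly as in the proof of Proposition~\ref{thm:ExistenceCYMeq} — shows $\Phi(\alpha)\in C^\infty_{\delta_1}\times D^\infty_{\delta_2}$. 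I expect the main obstacle to be the careful verification that all the nonlinear terms — in particular $*_{\omega_\phi}\tr F_{e^{\imag a}}^2$ and the way the Hodge star and the contraction depend on $\phi$ — genuinely map into the weighted target with the stated weights and depend $C^1$ on $(\alpha,\phi,a)$; the invertibility of the linearization, by contrast, is essentially handed to us by Section~\ref{sec:C2Mapconnectlap} together with the standard flat theory.
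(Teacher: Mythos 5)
Your overall route is the same as the paper's: verify the mapping property of $L$ between the weighted H\"older spaces using Proposition~\ref{propo:propertiesweighted} and the results of \cite{AP}, observe that the linearization \eqref{eq:differentialphia} is triangular, invert the diagonal entries ($\Delta_{A,\delta_2}$ via Proposition~\ref{propo:rangeinjectsurjec}, the bilaplacian via \cite{AP}), apply the Implicit Function Theorem, and bootstrap regularity by raising $k$ and using local uniqueness. That is exactly the paper's argument.

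There is, however, one genuine error in your treatment of the bilaplacian. You assert that for $0<\delta_1<1$ the flat Laplacian is an isomorphism $C^{k,\beta}_{\delta_1}\to C^{k-2,\beta}_{\delta_1-2}$, hence that $\Delta_{\omega_0}^2$ and therefore $d_{\phi,a}L_{|(0,0,0)}$ are isomorphisms of Banach spaces. This is false: since $\delta_1>0$, the constant functions belong to $C^{k,\beta}_{\delta_1}$ (they grow like $|x|^0\leq |x|^{\delta_1}$) and are harmonic, so $\Delta_{\omega_0}$ and $\Delta_{\omega_0}^2$ have nontrivial kernel. What \cite[Proposition~5.5]{AP} actually gives, and what the paper uses, is that $\Delta_{\omega_0}^2\colon C^{k,\beta}_{\delta_1}\to C^{k-4,\beta}_{\delta_1-4}$ is surjective with kernel \emph{equal to} the constants for $0<\delta_1<1$. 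Consequently the linearization is only an isomorphism modulo constant functions, and before invoking the Implicit Function Theorem you must either quotient the domain by the constants (harmless, because $\omega_{\phi+c}=\omega_\phi$, so $L$ is invariant under $\phi\mapsto\phi+c$) or restrict to a closed complement of the constants. With that correction the rest of your argument goes through as written. A minor further point: by Lemma~\ref{lemma:indicialinterval} there are no indicial roots of $\Delta_A$ in $]-2,0[$ at all, so no choice of $\delta_2$ needs to be "arranged"; every $\delta_2$ in that interval gives an isomorphism.
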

\begin{proof}
Let $\delta_1, \delta_2 \in \RR$ with $0 < \delta_1 < 1$ and $-2 < \delta_2 < 0$. We fix the basic $1$-instanton $A$
with curvature $F_A$. Note that the components of $F_A$ are in $C^{\infty}_{-4}$ and the components of the euclidean metric $\omega_0$ are in $C^{\infty}_{0}$. We can take a neighbourhood
$$
(0,0,0) \in V_{\delta_1,\delta_2} \subseteq \RR \times C^{k,\beta}_{\delta_1} \times D^{k-2,\beta}_{\delta_2}
$$
such that if $(0,\phi,a) \in V_{\delta_1,\delta_2}$ then $\det(\omega_\phi) \neq 0$. We denote by $\propto$ equality modulo multiplicative constants. Let $\phi$ such that $(0,\phi,0) \in V_{\delta_1,\delta_2}$. If $s_{\phi}$ denotes the scalar curvature of $\omega_\phi$, then $s_{\phi} \in C^{k-4,\beta}_{\delta-4}$ by \cite[Proposition~4.1]{AP}, provided that $0 < \delta_1 < 1$. On the other hand, if $a \in D^{k-2,\beta}_{\delta_2}$ then
$$
h = e^{\imag a} = \sum^{\infty}_{l=0} \frac{(\imag a)^l}{l!} \in D^{k-2,\beta}_{0},
$$
by properties of the weighted H\"older spaces summarized in Proposition~\ref{propo:propertiesweighted}. Recall that $F_{e^{\imag a}} = F_A + \dbar_A h^{-1} \partial_A h$. then, since the components of $A$ are in $C^\infty_{-1}$, we have
\begin{align*}
\partial_A h \in D^{k-3,\beta}_{\delta_2-1} &  \Rightarrow \dbar_A h^{-1} \partial_A h \in D^{k-4,\beta}_{\delta_2-2}\\
&  \Rightarrow F_{e^{\imag a}} \in D^{k-4,\beta}_{\textrm{max}\{\delta_2 - 2,-4\}} = D^{k-4,\beta}_{\delta_2 - 2}\\
& \Rightarrow F_{e^{\imag a}}^2 \in D^{k-4,\beta}_{2\delta_2 - 4}.
\end{align*}
Finally, denoting by $\propto$ equality modulo multiplicative constant, we have
\begin{align*}
*_{\omega_{\phi}} \tr F_{e^{\imag a}}^2 & \propto (\det \omega_{\phi})^{-1} \Lambda^2 F_{e^{\imag a}}^2 \in C^{k-4,\beta}_{2\delta_2 - 4} \subset C^{k-4,\beta}_{\delta_1 - 4},\\
\Lambda_{\omega_\phi} F_{e^{\imag a}} & \propto (\det \omega_{\phi})^{-1} \sum_{j = 0}^1 \Lambda^j F_{e^{\imag a}}\wedge (dd^c\phi)^j \in C^{k-2,\beta}_{\delta_2 -2},
\end{align*}
where $\Lambda$ denotes the adjoint of the Lefschetz operator $\alpha \to \alpha \wedge \omega_0$ acting on forms and multiplicative constants on each summand has been omitted. For the last computation we have used that (omitting multiplicative constant factors)
$$
\det \omega_\phi \propto 1 + \sum_{l=1}^{2} \Lambda^l (dd^c\phi)^{l} \in C^{k-2,\beta}_0.
$$
We thus conclude that the differential operator $L$ defined in \eqref{eq:operatorL} induces well defined $C^1$
operators $L_{\delta_1,\delta_2}:
V_{\delta_1,\delta_2} \to C^{k-4,\beta}_{\delta_1-4} \times D^{k-4,\beta}_{\delta_2-2}$ for the chosen values of the
weights. From \cite[Proposition~5.5]{AP} we know that the double Laplacian
$$
\Delta_{\omega_0}^2:C^{k,\beta}_{\delta_1} \to C^{k-4,\beta}_{\delta_1-4}
$$
is surjective and has Kernel equal to the constant functions if $0 < \delta_1 < 1$. We also know that the connection-laplacian $\Delta_{A,\delta_2}$ is an isomorphism for
$-2 < \delta_2 < 0$. From \eqref{eq:differentialphia} we conclude that $d_{\phi,a}L_{\delta_1,\delta_2|(0,0,0)}$ is an
isomorphism modulo constant functions in the weighted space $C^{k,\beta}_{\delta_1}$. Applying the implicit function theorem we obtain a curve of solutions
$\Phi:]-\epsilon,\epsilon[ \to
V_{\delta_1,\delta_2}$ to $L(\alpha,\Phi(\alpha)) = 0$ for small $\epsilon > 0$. The regularity of the solutions
follows from local uniqueness choosing arbitrary large values of the integer $k$.
\end{proof}

\begin{remark}
A crucial fact for the existence result is that the operator $L$ defined in \eqref{eq:operatorL} induces well defined
operators
$L_{\delta_1,\delta_2}$ for that values of the weights for which its linearization (see equation
\eqref{eq:differentialphia}) induces an isomorphism
in weighted H\"{o}lder spaces. This is not obvious a priori and strongly depends on the shape of the equations
\eqref{eq:ceqc2}.
\end{remark}

The curve $\Phi$ in the statement of Lemma~\ref{lemma:C2impfunc} provides a curve of pairs $(\omega_\alpha,A'_{\alpha})$
where $\omega_\alpha = \omega_0
+ dd^c \phi_\alpha$ is a K\"{a}hler metric in $\CC^2$ and $A'_\alpha$ is the Chern connection of the
Hermitian metric $e^{\imag
a_\alpha}H_0$. This curve of pairs is a curve of solutions to \eqref{eq:ceqc2} when we consider the system for
$SL(2,\CC)$ connections. To obtain
$SU(2)$-connections solving \eqref{eq:ceqc2} we must do a complex a gauge transformation $h_\alpha = e^{\imag
a_\alpha}$.
The new curve of
$SU(2)$-connections is
\begin{equation}\label{eq:familyinstantons}
A_\alpha = h_\alpha A^{1,0} h_\alpha^{-1} + h_\alpha \partial
h_\alpha^{-1} + h_\alpha^{-1} A^{1,0} h_\alpha + h_\alpha^{-1}
\dbar h_\alpha
\end{equation}
that tends uniformly to $A$ at infinity since $a_\alpha \in C^{\infty}_{\delta_2}$ with weight $-2 < \delta_2 < 0$.
Note also that the $2$-form
$\omega_\alpha - \omega_0 = dd^c \phi_\alpha$ has coefficients in $C^{\infty}_{\delta_1 - 2}$ with $0
< \delta_1 < 1$ so
$\omega_\alpha$ is asymptotically euclidean at infinity. Finally, a similar discussion as in the proof of
Proposition~\ref{propo:rangeinjectsurjec} shows that we can write
$$
\int_{B_r} \tr F_A \wedge F_A - \int_{B_r} \tr F_{A_\gamma} \wedge
F_{A_\gamma} = \int_{B_r} d\sigma = \int_{\partial B_r} \sigma
$$
where $B_r \subseteq \RR^4$ is the open ball of radius $r$ and $\sigma$ is a three form such that
$$
\lim_{r \to \infty} \int_{\partial B_r} \sigma = 0,
$$
due to the fact that its coefficients are in suitable $C^{\infty}_{\delta}$ spaces. This proves that the topological
constraint \eqref{eq:topologyfixed} is satisfied for every $\alpha$. An important remark is that the point-wise squared norm $|F_A|^2$ of the curvature of the basic instanton is non-constant with respect to the euclidean metric. In fact,
$$
|F_A|^2 = \frac{24}{(1 + |x|^2)^4},
$$
which is a bell-shaped function on $\CC^2$ centered at the origin. Similar arguments as in \secref{sec:example3}, show that when we apply Lemma~\ref{lemma:C2impfunc} to this initial data it produces solutions of the coupled system \eqref{eq:ASDeq} in which the K\"ahler metric has non constant scalar curvature.

A similar discussion can be done for $k$-instantons, with arbitrary $k \in \ZZ$ choosing suitable functions $u$ in
\eqref{eq:umultiinstanton}. For example, one can take
$$
u(x) = (x\Id - B)\lambda
$$
with $B$ a diagonal matrix with different quaternionic entries and $\lambda$ equals $(\lambda_1, \ldots,\lambda_k)$
with $\lambda_j$ real positive
scalars. The resulting $k$-instanton therefore looks like a superposition of $k$ instantons with scales $\lambda_j$
and centres $b_j$ (see \cite{At}).
This correspond to the classical solutions discovered by t' Hooft. For this particular choice of $u$'s the coefficients
of the corresponding instanton $A_u$ are again in $C^{\infty}_{-1}$, the coefficients of the curvature $F_{A_u}$ are
in $C^{\infty}_{-4}$, the connection-Laplacian admits an expression similar to \eqref{eq:Laplacianpolarcoordinates}
and, therefore, similar results as that of Lemma~\ref{propo:rangeinjectsurjec} and Lemma~\ref{lemma:solutionsmallcoupling} can be proved. We sum up this discussion in the following Lemma.

\begin{lemma}\label{lemma:solutionsmallcoupling}
Let $k \in \ZZ$, $\delta_1 \in ]0,1[ \subseteq \RR$ and $\delta_2 \in ]-2,0[ \subseteq \RR$. For each triple
$(k,\delta_1,\delta_2)$ and suitable
choice of a $k$-instanton $A$ there exists a small interval $I \subseteq \RR$ containing the origin such that for each
$\alpha \in I$ there exists a
solution $(\omega_\alpha,A_\alpha)$ of the coupled equations with coupling constant $\alpha$ and fixed topological
invariant \eqref{eq:topologyfixed}.
The metric $\omega_\alpha$ is an asymptotically euclidean K\"{a}hler metric with $\omega_\alpha - \omega_0 \in
C^{\infty}_{\delta_1 - 2}$ and
$A_\alpha$ converges uniformly to $A$ with $A_\alpha - A \in D^\infty_{\delta_2 - 1}$.
\end{lemma}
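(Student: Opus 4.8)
The plan is to repeat, essentially \emph{verbatim}, the argument proving Lemma~\ref{lemma:C2impfunc}, replacing the basic $1$-instanton by a 't~Hooft $k$-instanton. Fix $k\in\ZZ$ and take $A=A_u$ with $u$ as in \eqref{eq:umultiinstanton}, namely $u(x)=(x\Id-B)\lambda$ with $B$ a diagonal $k\times k$ matrix whose quaternionic entries $b_1,\dots,b_k$ are pairwise distinct and $\lambda=(\lambda_1,\dots,\lambda_k)$ a vector of positive real scalars. First I would record the decay of this initial datum: since $u$ is affine and injective, $|u(x)|\sim|x|$ as $|x|\to\infty$, and a direct inspection of $A_u=\textrm{Im}\bigl(u^{*}du/(1+|u|^{2})\bigr)$ shows that its components lie in $C^{\infty}_{-1}$ and those of $F_{A_u}$ in $C^{\infty}_{-4}$, exactly as for the basic instanton; moreover $A_u$ is irreducible because the centres $b_j$ are distinct. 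Then, as in \secref{sec:C2Firstsol}, I would fix $\delta_1\in\,]0,1[$, $\delta_2\in\,]-2,0[$, $0<\beta<1$ and a large integer $k$ for the order of differentiability (no confusion with the instanton number will arise), and consider the operator
\[
  L(\alpha,\phi,a)=\bigl(S_{\omega_{\phi}}+\alpha*_{\omega_{\phi}}\tr F_{e^{\imag a}}^{2}-\alpha 8\pi^{2}k,\ \Lambda_{\omega_{\phi}}F_{e^{\imag a}}\bigr)
\]
on a neighbourhood of $(0,0,0)$ in $\RR\times C^{k,\beta}_{\delta_1}\times D^{k-2,\beta}_{\delta_2}$, where $\omega_\phi=\omega_0+dd^{c}\phi$ and $F_{e^{\imag a}}$ is the curvature of the Chern connection of $e^{\imag a}H_0$ for the holomorphic structure on $E^c=\CC^2\times SL(2,\CC)$ determined by $A_u$.

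Next I would check that $L$ induces a well defined $C^{1}$ map $L_{\delta_1,\delta_2}\colon V_{\delta_1,\delta_2}\to C^{k-4,\beta}_{\delta_1-4}\times D^{k-4,\beta}_{\delta_2-2}$. This is the same bookkeeping as in the proof of Lemma~\ref{lemma:C2impfunc}, using only the product and composition properties of the weighted H\"older spaces (Proposition~\ref{propo:propertiesweighted}) and the decay $A_u\in D^{\infty}_{-1}$, $F_{A_u}\in D^{\infty}_{-4}$: one has $e^{\imag a}\in D^{\infty}_{0}$, hence $\partial_{A}(e^{\imag a})\in D^{k-3,\beta}_{\delta_2-1}$, hence $\dbar_{A}\bigl((e^{\imag a})^{-1}\partial_{A}e^{\imag a}\bigr)\in D^{k-4,\beta}_{\delta_2-2}$, so $F_{e^{\imag a}}\in D^{k-4,\beta}_{\delta_2-2}$ and $\tr F_{e^{\imag a}}^{2}\in D^{k-4,\beta}_{2\delta_2-4}$; together with the mapping property of the scalar curvature in weighted spaces \cite[Proposition~4.1]{AP}, which is independent of the bundle data and gives $S_{\omega_\phi}\in C^{k-4,\beta}_{\delta_1-4}$ for $0<\delta_1<1$, this lands $L$ in the asserted target. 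Since $A_u$ is Hermitian--Yang--Mills (an instanton has $\Lambda_{\omega_0}F_{A_u}=0$), the linearization at the origin is, as in \eqref{eq:differentialphia},
\[
  d_{\phi,a}L_{|(0,0,0)}(\phi,a)=\bigl(\Delta_{\omega_0}^{2}\phi,\ \tfrac12\Delta_{A_u}a+\langle F_{A_u},\partial\dbar\phi\rangle\bigr),
\]
which is block triangular; so I must show both diagonal blocks are isomorphisms onto the relevant weighted spaces. The double Laplacian $\Delta_{\omega_0}^{2}\colon C^{k,\beta}_{\delta_1}\to C^{k-4,\beta}_{\delta_1-4}$ is onto with kernel the constants for $0<\delta_1<1$ by \cite[Proposition~5.5]{AP}, so only $\Delta_{A_u}\colon D^{k-2,\beta}_{\delta_2}\to D^{k-4,\beta}_{\delta_2-2}$ remains.

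The main obstacle is precisely this last isomorphism, i.e. reproving Proposition~\ref{propo:rangeinjectsurjec} with $A_u$ in place of the basic instanton. Passing to polar coordinates $x=rw$ as in \secref{sec:C2Mapconnectlap} and using that $u$ is affine of maximal rank, the limit $B_u(v_w)\defeq\lim_{r\to\infty}A_{u,rw}(r\,v_w)$ should define a smooth $SU(2)$-connection on $S^{3}$, and $\Delta_{A_u}$ will have the same shape as \eqref{eq:Laplacianpolarcoordinates} with $B$ replaced by $B_u$; its asymptotic operator then has the form \eqref{eq:Laplacianinfinity} with Hodge Laplacian $\Delta_{B_u}$, whose spectrum $0\le\lambda_1\le\lambda_2\le\cdots$ is nonnegative, so the indicial roots $-1\pm\sqrt{1+\lambda_\ell}$ again avoid $]-2,0[$. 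Hence $\Delta_{A_u,\delta_2}$ is Fredholm with $\delta_2$-independent index on $]-2,0[$ by the theory of \cite{LM}; the integration-by-parts argument of Proposition~\ref{propo:rangeinjectsurjec}, which uses only irreducibility of the connection and the decay of $a$ and $d_{A_u}a$, gives injectivity for $\delta_2<0$, and self-adjointness together with the weighted-Sobolev duality \eqref{eq:dualitysobolev} upgrades this to surjectivity for $\delta_2>-2$. (The phrase ``suitable choice of a $k$-instanton'' is exactly the requirement that $A_u$ be irreducible, guaranteed by the genericity of $B$ and positivity of $\lambda$.)

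With both blocks invertible modulo the constants, I would apply the implicit function theorem in Banach spaces to $L_{\delta_1,\delta_2}$ at $(0,0,0)$, after normalizing $\phi$ (say, to vanish at the origin) to kill the kernel of $\Delta_{\omega_0}^{2}$, obtaining for every $\alpha$ in a small interval $I\ni 0$ a solution $(\phi_\alpha,a_\alpha)$ of $L(\alpha,\phi_\alpha,a_\alpha)=0$, smooth by elliptic regularity and local uniqueness over increasing differentiability order. Finally I would set $\omega_\alpha=\omega_0+dd^{c}\phi_\alpha$ and let $A_\alpha$ be the $SU(2)$-connection obtained from the Chern connection of $e^{\imag a_\alpha}H_0$ by the complex gauge transformation $e^{\imag a_\alpha}$ as in \eqref{eq:familyinstantons}; then $\omega_\alpha-\omega_0=dd^{c}\phi_\alpha\in C^{\infty}_{\delta_1-2}$ and $A_\alpha-A\in D^{\infty}_{\delta_2-1}$, so both have the asserted asymptotics, and the Stokes' theorem argument of \secref{sec:C2Firstsol} (writing $\tr F_{A}\wedge F_{A}-\tr F_{A_\alpha}\wedge F_{A_\alpha}=d\sigma$ with $\sigma$ of sufficient decay) shows $\int_{\CC^2}\tr F_{A_\alpha}\wedge F_{A_\alpha}=\int_{\CC^2}\tr F_{A}\wedge F_{A}=8\pi^{2}k$, so the topological constraint \eqref{eq:topologyfixed} holds for all $\alpha\in I$.
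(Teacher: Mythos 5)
Your proposal is correct and takes essentially the same approach as the paper, which itself only sketches this lemma by reducing it to the argument of Lemma~\ref{lemma:C2impfunc} after observing that the 't~Hooft $k$-instanton again has coefficients in $C^{\infty}_{-1}$, curvature in $C^{\infty}_{-4}$, and a connection-Laplacian of the form \eqref{eq:Laplacianpolarcoordinates}. Your write-up in fact supplies more detail than the paper on the points that need re-verification (irreducibility of $A_u$, the limit connection on $S^3$, and the indicial-root computation).
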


\begin{remark}
Note that the interval $I$ in the previous proposition depends a priori on the concrete $k$-instanton we chose to
apply
the implicit function theorem
(thus on the concrete function $u$ in formula \eqref{eq:umultiinstanton}). Note also that the connection $A_\alpha$ has
bounded $L^2$ norm in $\RR^4$
with respect to the Hermitian metric provided by
$\omega_\alpha$ due to the formula
$$
\|F_{A_\alpha}\|^2_{\omega_\alpha} =
\|F_{A_\alpha}^{-}\|^2_{\omega_\alpha} = \int_{\RR^4} \tr
F_{A_\alpha} \wedge F_{A_\alpha} = 8 \pi^2 k.
$$
\end{remark}

We finish with the main result of this section that holds by the previous Lemma together with a suitable re-scaling of
the solutions to
\eqref{eq:ceqc2} obtained for small coupling constant $\alpha$.

\begin{theorem}\label{proposition:solutionsarbitrarycoupling}
Let $k \in \ZZ$, $\delta_1 \in ]0,1[ \subseteq \RR$ and $\delta_2\in ]-2,0[ \subseteq \RR$. For each triple
$(k,\delta_1,\delta_2)$ and each $\alpha
\in \RR$ there exists a solution $(\omega_\alpha,A_\alpha)$ of the coupled equations with coupling constant $\alpha$
and fixed topological invariant
\eqref{eq:topologyfixed}. The metric $\omega_\alpha$ is an asymptotically euclidean K\"{a}hler metric with
$\omega_\alpha - \omega_0 \in
C^{\infty}_{\delta_1 - 2}$. For each $\alpha$ there exists a $k$-instanton $A_\alpha'$, such that $A_\alpha -
A_\alpha'
\in D^\infty_{\delta_2 - 1}$.
\end{theorem}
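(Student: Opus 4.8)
The plan is to deduce the theorem for an arbitrary coupling constant from the small-coupling solutions provided by Lemma~\ref{lemma:solutionsmallcoupling}, by exploiting the homothety covariance of the system~\eqref{eq:ceqc2} on the non-compact manifold $\CC^2$. For $\lambda>0$ let $r_\lambda\colon\CC^2\to\CC^2$ be the homothety $x\mapsto\lambda x$, and for a pair $(\omega,A)$ set $\Psi_\lambda(\omega,A)\defeq(\lambda^{-2}r_\lambda^*\omega,\,r_\lambda^*A)$. First I would check that $\Psi_\lambda$ sends a solution of~\eqref{eq:ceqc2} with coupling constant $\alpha$ and topological invariant $8\pi^2k$ to a solution with coupling constant $\lambda^{-2}\alpha$ and the same topological invariant. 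The first equation of~\eqref{eq:ceqc2} is immediate: in real dimension $4$ the self-duality operator on $2$-forms is conformally invariant, so $F_A^{+_\omega}=0$ forces $F_{r_\lambda^*A}^{+_{r_\lambda^*\omega}}=0$ and hence $F_{r_\lambda^*A}^{+_{\lambda^{-2}r_\lambda^*\omega}}=0$; moreover $r_\lambda$ is an orientation-preserving diffeomorphism, so $\int_{\CC^2}\tr F_{r_\lambda^*A}\wedge F_{r_\lambda^*A}=\int_{\CC^2}\tr F_A\wedge F_A=8\pi^2k$, and since $r_\lambda^*A$ is ASD it has finite energy $\|F_{r_\lambda^*A}\|^2=8\pi^2k$.

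For the scalar equation I would use the transformation laws $S_{cg}=c^{-1}S_g$ and $*_{cg}=c^{-2}*_g$ on $4$-forms for a constant conformal factor $c>0$, together with the naturality of the scalar curvature and of the Hodge star under the diffeomorphism $r_\lambda$. The crucial observation is that, because both $S_\omega$ and $*_\omega\tr F_A\wedge F_A$ lie in weighted spaces $C^\infty_\delta$ with $\delta<0$ (this uses the decay of $F_A$ at infinity established in Lemma~\ref{lemma:solutionsmallcoupling}), the constant on the right-hand side of~\eqref{eq:ceqc2} is forced to vanish, so the scalar equation is genuinely homogeneous; applying the transformation laws then gives $S_{\lambda^{-2}r_\lambda^*\omega}+\lambda^{-2}\alpha\,*_{\lambda^{-2}r_\lambda^*\omega}\tr F_{r_\lambda^*A}\wedge F_{r_\lambda^*A}=\lambda^2\big(S_\omega+\alpha\,*_\omega\tr F_A\wedge F_A\big)\circ r_\lambda=0$, which is precisely~\eqref{eq:ceqc2} with coupling constant $\lambda^{-2}\alpha$.

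Next I would verify that $\Psi_\lambda$ preserves the asymptotic structure demanded in the statement. Writing $\omega=\omega_0+dd^c\phi$ with $\phi\in C^\infty_{\delta_1}$, one has $\lambda^{-2}r_\lambda^*\omega=\omega_0+dd^c(\lambda^{-2}\phi\circ r_\lambda)$; since the spaces $C^\infty_{\delta_1}$ are stable under precomposition with $r_\lambda$ (up to a $\lambda$-dependent multiplicative constant), the new K\"ahler potential lies again in $C^\infty_{\delta_1}$, so $\lambda^{-2}r_\lambda^*\omega-\omega_0\in C^\infty_{\delta_1-2}$ and the rescaled metric is asymptotically Euclidean. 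For the connection, if a solution $(\omega,A)$ has $A$ converging uniformly to a $k$-instanton $A'$ with $A-A'\in D^\infty_{\delta_2-1}$, then $r_\lambda^*A'$ is again a $k$-instanton by the conformal invariance of the (anti-)instanton equation recalled in \secref{sec:C2instantons}, and $r_\lambda^*A-r_\lambda^*A'=r_\lambda^*(A-A')\in D^\infty_{\delta_2-1}$ because these weighted spaces are likewise stable under $r_\lambda$.

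With these two points in hand the theorem follows by choosing the scale appropriately. By Lemma~\ref{lemma:solutionsmallcoupling} there is $\epsilon>0$ and a suitable $k$-instanton such that~\eqref{eq:ceqc2} has a solution $(\omega_{\alpha_0},A_{\alpha_0})$ for every $\alpha_0\in(-\epsilon,\epsilon)$, with the required asymptotics; in particular the case $\alpha=0$ is already contained there. Given $\alpha>0$, fix any $\alpha_0\in(0,\epsilon)$ and put $\lambda=\sqrt{\alpha_0/\alpha}>0$, so that $\Psi_\lambda(\omega_{\alpha_0},A_{\alpha_0})$ solves~\eqref{eq:ceqc2} with coupling constant $\lambda^{-2}\alpha_0=\alpha$. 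Given $\alpha<0$, fix any $\alpha_0\in(-\epsilon,0)$ and put $\lambda=\sqrt{\alpha_0/\alpha}>0$ (note $\alpha_0/\alpha>0$), so that $\Psi_\lambda(\omega_{\alpha_0},A_{\alpha_0})$ solves~\eqref{eq:ceqc2} with coupling constant $\alpha$; in each case the previous paragraph delivers the asserted asymptotic properties. I expect the only delicate step to be the bookkeeping in the scalar equation — in particular, confirming that the right-hand constant of~\eqref{eq:ceqc2} vanishes in this asymptotically flat setting, so that the system is scale-covariant — everything else being a routine verification that weighted H\"older norms, and the class of $k$-instantons, are stable under homotheties. (One may also note that $|F_{r_\lambda^*A}|^2_{\lambda^{-2}r_\lambda^*\omega}=\lambda^4\,|F_A|^2_\omega\circ r_\lambda$, so the non-constancy of the curvature density, hence the failure of the rescaled K\"ahler metric to be cscK for a suitable initial instanton, is inherited from the small-coupling solutions.)
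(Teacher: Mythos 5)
Your proposal is correct and follows essentially the same route as the paper: there, too, arbitrary coupling constants are reached by taking a small-coupling solution from Lemma~\ref{lemma:solutionsmallcoupling}, rescaling the metric by the constant conformal factor $c=\beta/\alpha$ and pulling back by the homothety $h_{1/\sqrt{c}}$ (which is exactly your $\Psi_\lambda$ with $\lambda=1/\sqrt{c}$), using conformal invariance of the ASD equation and the scaling law of the scalar curvature. Your observation that the right-hand constant of \eqref{eq:ceqc2} must vanish for solutions with decaying scalar curvature and curvature density is in fact a more careful handling of the one step where the paper's bookkeeping of the transformed constant is loose.
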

\begin{proof}
Let $(\omega,A)$ be a solution to \eqref{eq:ceqc2} with non-vanishing coupling constant $\alpha$ and topological
invariant \eqref{eq:topologyfixed}
for some $k \in \ZZ$. We claim that for each $\beta \in \RR$ such that $\alpha \cdot \beta > 0$ there exists a
solution
$(\omega,A_\beta)$ with
coupling constant $\beta$ and the same topology. Let $c = \beta/\alpha > 0$ and consider the K\"{a}hler form
$c\omega$.
Then the pair $(c\omega,A)$
satisfies
\begin{align*}
S_{c\omega} \; + \; \beta *_{c\omega} \tr F_A \wedge F_A & =
\frac{1}{c}S_\omega \; + \; \frac{\beta}{c^2} *_{\omega} \tr F_A
\wedge F_A\\
& = \frac{1}{c}(S_\omega \; + \; \alpha *_{\omega} \tr F_A \wedge
F_A)\\
& = \beta 8 \pi^2 k
\end{align*}
so it is a solution with coupling constant $\beta$, due to the fact that the ASD equation \eqref{eq:ASDeq} is
conformally invariant. Consider now the
homothety $h_{1/\sqrt{c}}(z) = z/\sqrt{c}$, that satisfies $h_{1/\sqrt{c}}^* c\omega = \omega$. Then it clear that
$(\omega,A_\beta)$ is a solution
with coupling constant $\beta$. The connection $A_\beta$ has fixed topological invariant $8 \pi^2 k$ in
\eqref{eq:topologyfixed} since the instanton
number $k$ is preserved by conformal transformations (see \cite{At} and \cite{DK}). It is clear from the proof that if
the initial connection
converges at infinity to some $k$-instanton $A'$, then the re-scaled connection converges to $h_{1/\sqrt{c}}^*A'$.
\end{proof}

\chapter{\bf Conclusions and future directions}

In this thesis we have defined coupled equations for K\"ahler metrics and Hermite--Yang--Mills connections with symplectic and variational interpretations. We have seen that these equations are well suited for the moduli problem with K\"ahler structure for metrics and connections. We have constructed families of examples of solutions and we have related the coupled equations with the cscK problem on ruled manifolds. We have defined a new notion of stability for triples $(X,L,E)$, consisting of a projetive polarised manifold $(X,L)$  and a holomorphic vector bundle $E$. A conjecture relating this stability notion with the existence of solutions of the coupled system has been stated and supporting evidence has been given.

This thesis starts a programme, the one of studying coupled equations for pairs consisting of a K\"ahler metric and a connection, and sets many open questions that will be considered in future work. Possible future directions are: $1)$ construct moduli spaces of metrics and connections with K\"ahler structure, $2)$ proving a Hitchin--Kobayashi correspondence for the coupled equations, starting with the case of toric manifolds and toric vector bundles, $3)$ development of gluing methods for the coupled equations adapting those in \cite{AP} for asymptotically Euclidean K\"ahler metrics and the well known methods for instantons of H.C. Taubes and S.K. Donaldson (see~\cite{DK}), $4)$ study the relation of the coupled equations with the cscK equation for a K\"ahler metric on the total space of an associated bundle.

As an immediate future project we will try to prove that the existence of solutions to \eqref{eq:CYMeq00} implies the $\alpha$-K-semistability of the triple adapting Donaldson's methods in \cite{D8}. This fact was first verified for the cscK problem, after the work of S. Zhang, H. Luo and S.K. Donaldson.

\newpage

\end{document}